\newif\ifmsbmloaded@
\def\loadmsbm{\msbmloaded@true
  \font\tenmsb=msbm10 scaled 1\@ptsize00
  \font\sevenmsb=msbm7 scaled 1\@ptsize00
  \font\fivemsb=msbm5 scaled 1\@ptsize00
  \alloc@8\fam\chardef\sixt@@n\msbfam
  \textfont\msbfam=\tenmsb
  \scriptfont\msbfam=\sevenmsb
  \scriptscriptfont\msbfam=\fivemsb
  }
\def\R{\mathbb R}
\def\C{\mathbb C}
\def\T{{\mathbb T}}
\def\ir{\mathrm{i}}
\def\no{\noindent}
\def\f#1#2{\frac{#1}{#2}}
\def\f{\frac}
\def\pa{\partial}
\def\p{\partial}
\def\om{\omega}
\def\Om{\Omega}
\def\na{\nabla}
\def\la{\lambda}
\def\ph{\varphi}
\def\cL{{\mathcal L}}
\def\rd{{\mathrm d}}
\newcommand{\beq}{\begin{equation}}
\newcommand{\eeq}{\end{equation}}
\newcommand{\ben}{\begin{eqnarray}}
\newcommand{\een}{\end{eqnarray}}
\newcommand{\beno}{\begin{eqnarray*}}
\newcommand{\eeno}{\end{eqnarray*}}
\newtheorem{Theorem}{Theorem}[section]
\newtheorem{Lemma}[Theorem]{Lemma}
\newtheorem{remark}{Remark}[section]
\newtheorem{Proposition}{Proposition}[section]
\begin{document}

\title[Linear stability of pipe Poiseuille flow ]
{Linear stability of pipe Poiseuille flow at high Reynolds number regime}

\author{Qi Chen}
\address{School of Mathematical Science, Peking University, 100871, Beijing, P. R. China}
\email{chenqi940224@gmail.com}

\author{Dongyi Wei}
\address{BICMR, Peking University, 100871, Beijing, P. R. China}
\email{jnwdyi@163.com}

\author{Zhifei Zhang}
\address{School of Mathematical Science, Peking University, 100871, Beijing, P. R. China}
\email{zfzhang@math.pku.edu.cn}

\date{\today}

\maketitle

\begin{abstract}
In this paper, we prove the linear stability of the pipe Poiseuille flow for general perturbations at high Reynolds number regime.
This is a long-standing  problem since the experiments of Reynolds in 1883. Our work lays a foundation for the theoretical  analysis 
of hydrodynamic stability of pipe flow, which is one of the oldest yet unsolved problems of fundamental fluid dynamics. 
\end{abstract}

\tableofcontents

\section{Introduction}

Since the experiments of Reynolds in 1883, many physicists and applied mathematicians have devoted enormous efforts  to understand the transition mechanism from laminar to turbulent flow via theoretical analysis,  experiments and numerical simulations \cite{Rey, DR, Yag, Gro, TTR, Ker}. The stability of pipe Poiseuille flow and its transition to turbulence is of special interest, because it is one of the simplest mathematical idealizations of Reynolds experiments.  All theoretical and numerical work indicates that this flow is linearly stable for any Reynolds number \cite{DR}, yet it could exhibit transition to turbulence in practice when Reynolds number exceeds a certain critical number.  Another important laminar flow is the plane Couette flow, which is also  linearly  stable for any Reynolds number but also becomes turbulent in practice. This is a well-known paradox of hydrodynamic stability theory.  A resolution of this paradox is a long-standing unsolved problem in fluid mechanics. 

This kind of transition called subcritical transition or by-pass transition in physical literature is very different from those due to the existence of growing modes(or unstable eigenvalues). There are many attempts to resolve this paradox \cite{Cha}.  One important attempt is focused on the study of transient growth effect due to the nonnormality of the linearized operator, which was initiated by Trefethen at al \cite{TTR}. For shear flows, the linearized operator of the Navier-Stokes equations is highly non-normal at high Reynolds   number.  As a result,  the solutions of the linear stability problem could exhibit an algebraic growth before they begin to decay exponentially, which is so-called transient growth. This in turn may  lead to the instability and transition to turbulence via complex nonlinear interactions \cite{Zik, Beg, BB, TTS, Sch-JFM, SB, Hen}. Moreover, experimental and numerical evidence suggest that the transition is extremely sensitive to the size and structure of the perturbations \cite{RSB, Mes, Zik}.  An important open question proposed by Trefethen et al \cite{TTR} is to study the {\bf transition threshold problem}, which is concerned with how much disturbance will lead to the instability of the flow and the dependence of disturbance on the Reynolds number. The main motivation behind seeking the threshold is that insights into the transition process can be uncovered. The mathematical version of transition threshold problem formulated in \cite{BGM-AM, BGM-BAMS}  can be stated as follows.

{\it
Given a norm $\|\cdot\|_X$, find a $\beta=\beta(X)$ so that
\beno
&&\|u_0\|_X\le Re^{-\beta}\Longrightarrow  {stability},\\
&&\|u_0\|_X\gg Re^{-\beta}\Longrightarrow  {instability}.
\eeno
}
The exponent $\beta$ is referred to as the transition threshold. {\bf  It was conjectured by Trefethen et al. in \cite{TTR}  that $\beta\le 1$ for plane Couette flow and pipe Poiseuille flow}(see also \cite{MT}). Direct numerical simulations  by Lundbladh, Henningson and  Reddy \cite{LHR} showed 
that $\beta$ is between $1$ and $\f 74$ for the plane Couette and Poiseuille flows. Formal asymptotic analysis  by Chapman 
\cite{Cha} showed that $\beta=1$ for plane Couette flow and $\beta=\f32$ for plane Poiseuille flow. For pipe Poiseuille flow,
experimental and numerical results seem to indicate that $\beta$ is between 1 and $\f32$ \cite{DM, TCH, Mes, HJM, PM, MM}.

In our recent works \cite{WZ, CWZ}, we confirm the transition threshold conjecture proposed by Trefethen et al \cite{TTR} for the 3D Couette flow.   An important indication of our result is that the transient growth may be the key mechanism leading to the instability of the flow.  The work \cite{CWZ} is  based on two important ingredients: linear stability of the Couette flow at high Reynolds number and  pseudospectral bound of the linearized operator(or resolvent estimates). The linear stability of Couette flow was proved by Romanov in a very beautiful paper \cite{Rom}. In a joint work \cite{CLWZ} with Li,  we established the resolvent estimates of the linearized operator around the Couette flow, which encoporate three important linear effects: inviscid damping, enhanced dissipation and boundary layer. 

Let us refer to \cite{BGM-MAMS-1, BGM-MAMS-2, BGM-AM, BWV, LWZ, GMM, GGN, MZ, IMM, WZ-SCM, AH, CDE, GNR, CEW} and references therein for some recent important progress on the stability of shear flows such as Couette flow, Poiseuille flow and Kolmogorov flow.

To understand the transition to turbulence for the pipe Posieuille flow by using the method of theoretical analysis, as we mentioned before, a key step is to study the linear stability of this flow. Although it is widely believed that this flow is linearly stable for any Reynolds number, there  remains a challenging  mathematical problem for a rigorous proof of linear stability. 
In a recent work \cite{GG}, Gong and Guo considered the linear stability for the axisymmetric perturbations without swirl. \smallskip

The goal of this paper is to prove the linear stability of pipe Posieuille flow for general 3D perturbations at high Reynolds number regime. Let $\Omega=\big\{x=(x_1,x_2,z): r=\sqrt{x_1^2+x_2^2}<1,\ z\in \T_L\big\}$ be a pipe, where $\T_L$ is a torus of period $L_z$. 
We study the 3D incompressible Navier-Stokes equations at high Reynolds number regime in a pipe $\Omega$:
\begin{align}
\left\{
\begin{aligned}
&\partial_t v-\nu\Delta v+v\cdot\nabla v+\na P=0,\\
&\nabla\cdot v=0,\\
&v|_{r=1}=0,\quad v(0,x)=v_0(x).
\end{aligned}
\right.\label{eq:NS}
\end{align}
where $v(t,x)=(v^1,v^2,v^3)$ is the velocity, $P(t,x)$ is the pressure, and $\nu\sim Re^{-1}$ is the viscosity coefficient.

The  pipe Poiseuille flow ${u}^{(p)}=(0,0,1-r^2)$ is a steady solution of \eqref{eq:NS}.
Let $u=v-u^{(p)}$ be the perturbation of the velocity and $V=1-r^2$. Then it holds that 
\begin{align*}
\left\{
\begin{aligned}
&\partial_t u-\nu\Delta u+V\partial_z u+(0,0,u\cdot\nabla V)+u\cdot\nabla u+\na P=0,\\
&\nabla\cdot u=0,\\
&u|_{r=1}=0,\quad u(0,x)=u_0(x).
\end{aligned}
\right.
\end{align*}
Thus, the linearized Navier-Stokes equations around $u^{(p)}$ take the form
\begin{align}\label{eq:LNS-ev}
\left\{
\begin{aligned}
&\partial_t u-\nu\Delta u+V\partial_z u+(0,0,u\cdot\nabla V)+\na P=0,\\
&\nabla\cdot u=0,\\
&u|_{r=1}=0,\quad u(0,x)=u_0(x).
\end{aligned}
\right.
\end{align}

To state our result, we first  introduce some notations. 
We denote by $L^2_{\sigma}(\Omega)$ and ${H}_{\sigma}^{k}(\Omega)$ the closure of the set of vectors, which are smooth and solenoidal in $ \Omega$ and vanish on $ \partial\Omega,$ in the topology, respectively, of $L^2(\Omega)$ and Sobolev space $H^{k}(\Omega)$.
We introduce the operator $\mathcal{L}_\nu: D(\mathcal{L}_\nu)=H^2(\Om)\cap H^1_\sigma(\Omega) \to L^2_\sigma(\Omega)$ defined by
\beno
\mathcal{L}_\nu{u}=\nu \mathbb{P}\Delta{u}-\mathbb{P}\big(V\partial_z {u}+(0,0,{u}\cdot\nabla V)\big),
\eeno
where $\mathbb{P}$ is the Leray-Helmholtz projection from $L^2(\Om)$ to $L^2_\sigma(\Om)$. Then $\mathcal{L}_\nu$ is a closed linear operator in $L^2_{\sigma}(\Omega)$.  We denote by $m(\nu)$ the upper bound of the real part of points of the spectrum of $\mathcal{L}_\nu$. Moreover, $\mathbb{P}\Delta $ is a dissipative self-adjoint operator with compact resolvent, and $\mathcal{L}_\nu$ is a relatively compact perturbation of $\nu\mathbb{P}\Delta $. In particular, the spectrum of $\mathcal{L}_\nu$ is always discrete. Then we have
\begin{align}\label{def:mnu}
&m(\nu)=\sup\big\{\mathbf{Re}(s): s{u}=\mathcal{L}_\nu{u},\ 0\neq {u}\in D(\mathcal{L}_\nu)\big\}.
\end{align}
Let $\mathcal{H}_0=\big\{f\in L^2(\Omega): \partial_zf=0\big\}$ and $\mathcal{H}=\big\{f\in L^2(\Omega): P_0f=0\big\}$, where $P_0f=\f 1 {L_z}\int_{T_L}f(x,y,z)dz$. Then $L^2(\Omega)=\mathcal{H}_0\oplus\mathcal{H}$. Since $\mathcal{L}_\nu$ is invariant in $\mathcal{H}_0$, we have 
\beno
m(\nu)=\max\big(m_0(\nu),m_1(\nu)\big), 
\eeno
where
\begin{align*}
&m_0(\nu)=\sup\big\{\mathbf{Re}(s):s{u}=\mathcal{L}_\nu{u},\ 0\neq {u}\in D(\mathcal{L}_\nu)\cap\mathcal{H}_0\big\},\\
&m_1(\nu)=\sup\big\{\mathbf{Re}(s):s{u}=\mathcal{L}_\nu u,\ 0\neq {u}\in D(\mathcal{L}_\nu)\cap\mathcal{H}\big\}.
\end{align*}

Now our main result is stated as follows.

 \begin{Theorem}\label{thm:stability}
There exist constants $c\in(0,1)$ independent of $\nu$ and $c_0\in (0,1)$ independent of $\nu, L_z$ so that 
if $0<\nu(L_z+1)\leq c_0$,  then we have $m(\nu)\leq -c\nu$.
\end{Theorem}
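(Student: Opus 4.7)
I would attack the spectral bound $m(\nu)\leq -c\nu$ through resolvent estimates, exploiting the decomposition $m(\nu)=\max(m_0(\nu),m_1(\nu))$ to split into two subproblems. On the subspace $\mathcal{H}$ I would Fourier-decompose angularly and axially: writing $u(r,\theta,z)=\sum_{n\in\mathbb{Z}}\sum_{\alpha\in\frac{2\pi}{L_z}\mathbb{Z}\setminus\{0\}} e^{i(n\theta+\alpha z)}\hat u_{n,\alpha}(r)$, the eigenvalue problem $s u=\mathcal{L}_\nu u$ reduces to a family of ODE systems in $r\in(0,1)$ coupling the cylindrical components $(u^r,u^\theta,u^z)$ through the pressure $P$ and the divergence-free constraint, with no-slip at $r=1$ and suitable regularity at $r=0$.

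\textbf{The case $m_0(\nu)$.} When $\partial_z u=0$, the transport term $V\partial_z u$ vanishes identically, and divergence-freeness combined with $z$-independence forces $\partial_z P=0$. The horizontal components therefore decouple into a 2D Stokes problem in the unit disk with Dirichlet data, which enjoys a spectral gap of size $\nu$ via Poincar\'e. The remaining axial component $u^z$ satisfies a Dirichlet heat equation with lower-order source $-u\cdot\nabla V$; since $\nabla V=(-2x_1,-2x_2,0)$ is bounded, an $L^2$ energy estimate combined with the horizontal spectral gap yields $m_0(\nu)\leq -c\nu$ for an absolute constant $c>0$, uniformly in $L_z$.

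\textbf{The case $m_1(\nu)$: mode-by-mode resolvent estimates.} The hard case lives on $\mathcal{H}$. For each $(n,\alpha)$ with $\alpha\neq 0$, I would eliminate the pressure by passing to Orr--Sommerfeld--Squire type variables: a fourth-order scalar equation for $u^z$ (or equivalently for $r u^r$), together with a second-order Squire equation for the axial vorticity. The spectral bound $m_1(\nu)\leq -c\nu$ is equivalent to a uniform pseudo-spectral estimate on the line $\mathbf{Re}(s)=-c\nu$, independent of $(n,\alpha)$. I would split in $\mathbf{Im}(s)/\alpha$: the \emph{non-critical} regime $\mathbf{Im}(s)/\alpha\notin[0,1]$ admits direct energy estimates since the operator is elliptic up to a bounded skew-symmetric perturbation; the \emph{critical} regime $\mathbf{Im}(s)/\alpha=V(r_c)$ for some $r_c\in(0,1)$ requires a far more delicate analysis.

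\textbf{Main obstacle.} The chief technical difficulty is the pseudo-spectral bound in the critical regime, where four small scales interact simultaneously: the interior critical layer of width $\bigl(\nu/|\alpha|\bigr)^{1/3}$ around $r_c$, the Dirichlet boundary layer of width $\bigl(\nu/|\alpha|\bigr)^{1/2}$ near $r=1$, the coordinate singularity at $r=0$ contributing terms like $n^2/r^2$ whose treatment must be tailored to $|n|$, and the non-normal coupling between $u^z$ and the axial vorticity produced by the lift force $u\cdot\nabla V$. I would attack it with a quasi-compressible/hydrostatic splitting in the spirit of Chapman's formal analysis to isolate the dominant balance, Airy-function profiles for both the critical and boundary layers (combined with careful matched expansions), and weighted norms adapted to the $r=0$ regularity to absorb the angular singularity. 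The smallness hypothesis $\nu(L_z+1)\leq c_0$ enforces $|\alpha|\geq 2\pi/L_z\gtrsim\nu$, keeping the sublayer widths well separated and legitimizing the asymptotic expansion uniformly across all Fourier modes; without this constraint, the critical layer could merge with the boundary layer and the decomposition would collapse.
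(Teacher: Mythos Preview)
Your treatment of $m_0(\nu)$ is essentially correct and matches the paper's argument in Section~8.1: with $\partial_z u=0$ the advection vanishes and Poincar\'e gives the gap $-c\nu$.

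For $m_1(\nu)$, however, there is a genuine gap. You propose the classical Orr--Sommerfeld/Squire reduction (a fourth-order equation for $ru^r$ coupled to a Squire equation for axial vorticity). In a pipe, unlike plane Couette, this coupling is \emph{two-way} when $n\neq 0$: the nonlocal term feeds back into the OS block, and no one has made rigorous resolvent estimates close in that framework---this is precisely why the problem stayed open. The paper's central innovation, and the step your outline is missing, is a \emph{different} formulation in the pair $W=(x_1,x_2,0)\cdot u=r u_r$ and $U=(x_1,x_2,0)\cdot\Delta u$, which yields the coupled system~\eqref{eq:LNS-WU}. The crucial algebraic identity $\widehat{\Delta}^2=\widehat{\Delta}_1^*\widehat{\Delta}_1$ makes the energy structure (Lemma~\ref{lem:energy}) and the key coercivity (Lemma~\ref{lem:E-key} for $\lambda>1$, Lemma~\ref{lem:W-lower} for $\lambda\in(0,1]$) work; these are what absorb the dangerous nonlocal term $4in^2lW/(n^2+r^2l^2)$, and they have no obvious analogue in the OS/Squire variables.

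The overall architecture also differs from matched asymptotics. The paper does not patch together Airy profiles for critical and boundary layers. Instead it (i)~proves $L^2\to L^2$ resolvent bounds for~\eqref{eq:LNS-WU} with \emph{artificial} zero boundary data $W_1(1)=U(1)=0$ (Propositions~\ref{prop:res-real}--\ref{prop:res-com}), then (ii)~handles the true boundary condition $W_1(1)=U(1)$ by solving the homogeneous system with $W_1(1)=U(1)=1$ and showing $\partial_r W(1)\neq 0$ (Proposition~\ref{prop:lower}). Step~(ii) is where Airy asymptotics actually enter, but only in one sub-regime of an approximate scalar problem (Section~5); the heavy lifting in~(i) is purely energy-based with carefully chosen multipliers. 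Your quasi-compressible/hydrostatic splitting and matched expansions do not obviously supply either the coercive inequality that controls the nonlocal feedback or this boundary-data decoupling, and without them the critical-layer analysis for $\lambda\in(0,1]$ does not close.
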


 Our task is to show that 
\beno
m_0(\nu)\leq -c\nu,\quad m_1(\nu)\leq -c\nu.
\eeno
In fact, for $m_1(\nu)$, we can prove a better upper bound
\beno
m_1(\nu)\leq -c|\nu/L_z|^{\f12}.
\eeno
This will lead to a decay rate like $e^{-c\nu^\f12 t}$ of the semigroup $e^{t\mathcal{L}_\nu}$,
which is much faster than the decay rate $e^{-c\nu t}$ of the heat semigroup. This is the so-called  enhanced dissipation, which is due to mixing mechanism induced by the pipe Poiseuille flow. The same effect has been observed  for the Couette flow and Kolmogorov flow \cite{CLWZ, WZ-SCM, IMM}.  

The enhanced dissipation and inviscid damping effects play an important role for nonlinear stability of the Couette flow. 
Therefore, we believe that  Our work lays a foundation for the theoretical  analysis 
of hydrodynamic stability of pipe Poiseuille flow. In near future, we will try to solve {\bf Transition threshold conjecture} 
for pipe Poiseuille flow.
\smallskip

Let us conclude the introduction by some notations. \smallskip 

\begin{itemize} 

\item For $\la\in \C$, we denote $\la_r=\mathbf{Re} \la$ and $\la_i=\mathbf{Im} \la$;

\item We denote by $C>0, c_i\in (0,1)(i=1,2,\cdots)$ constants independent of $\nu, l, n, \la$, which may be different from line to line;

\item $\widehat{\Delta}=\partial_r^2+\dfrac{1}{r}\partial_r-\dfrac{n^2}{r^2}-l^2$;

\item  $\widehat{\Delta}_1=\widehat{\Delta}-\dfrac{2rl^2}{n^2+r^2l^2}\partial_r$ and $\widehat{\Delta}_{(1)}=\dfrac{1}{r}\partial_r(r\partial_r)-\dfrac{1+r^2l^2}{r^2}$;

\item We denote $\langle f, g\rangle=\int_0^1f(r)\overline{g}(r)r\rd r$ and $\langle f, g\rangle_s=\int_0^sf(r)\overline{g}(r)r\rd r$;

\item We denote the norm $\|f\|_{L^p}^p=\int_0^1|f(r)|^pr\rd  r$;

\item We denote the norm $\|f\|_1^2=\|\partial_r f\|_{L^2}^2+n^2\|f/r\|_{L^2}^2+l^2\|f\|_{L^2}^2$;

\item $E=\displaystyle\int_0^1\left(\dfrac{r|\partial_rW|^2}{n^2+r^2l^2}+\dfrac{|W|^2}{r}\right)\mathrm{d}r$;

\item $A(s)=|l(\lambda-s^2)/\nu|^{\f12}+|ls/\nu|^{\f13}+|l|+|n/s|, \ A=A(1)$.

\end{itemize}

\section{A key formulation of the linearized system}

To estimate $m(\nu)$,  we  consider the linearized resolvent system 
\begin{align}\label{eq:LNS-res}
\left\{
\begin{aligned}
&s{u}-\nu\Delta {u}+V\partial_z {u}+(0,0,{u}\cdot\nabla V)+\na P=0,\\
&\nabla\cdot {u}=0,\quad {u}|_{r=1}=0.
\end{aligned}
\right.
\end{align}

Although the linearized system \eqref{eq:LNS-res} looks simple, it is highly difficult to solve  due to the unknown pressure and the term $u\cdot \na V$. The most critical innovation of this paper is the introduction of a new formulation, 
which plays the key role for linear stability analysis. To our knowledge, this formulation is completely new,  and  might  shed some light 
on theoretic and numerical analysis of hydrodynamic stability of the pipe Poiseuille flow in near future.

We introduce the cylindrical coordinate with $(x_1,x_2,x_3)=(r\cos\theta,r\sin\theta,z)$ and 
\beno
e_r=\left(\frac{x_1}{r}, \frac{x_2}{r},0\right), e_{\theta}=\left(-\frac{x_2}{r},\frac{x_1}{r},0\right), e_z=(0,0,1).
\eeno
We denote 
\beno
\widehat{\Delta}=\partial_r^2+\dfrac{1}{r}\partial_r-\dfrac{n^2}{r^2}-l^2,\quad \widehat{\Delta}_1=\widehat{\Delta}-\dfrac{2rl^2}{n^2+r^2l^2}\partial_r.
\eeno

Throughout this section, we assume that $u$ is a smooth solution of  \eqref{eq:LNS-res} with $P_0u=0$.

\subsection{Formulation in the axisymmetric case} 
In order to inspire how to introduce a good formulation in general case, we first consider the axisymmetric fluid. In this case, the velocity $v$ takes  the form
\beno
v=v_r(t,r,z)e_r+v_{\theta}(t,r,z)e_{\theta}+v_z(t,r,z)e_z.
\eeno
We denote 
\begin{align*}
   &\omega_r=-\partial_z(v_{\theta}),\quad \omega_{\theta}=\partial_z(v_r)-\partial_r(v_z),\quad \omega_z=\dfrac{1}{r}\partial_r(rv_{\theta}).
\end{align*}
Let $\Omega=\dfrac{\omega_{\theta}}{r}$ and  $J=\dfrac{\omega_r}{r}$. Then $(\Omega, J)$ satisfies 
  \begin{align*}
     \left\{\begin{aligned}
     &\partial_t\Omega+v\cdot\nabla \Omega-\nu\Big(\Delta+\dfrac{2}{r}\partial_r\Big)\Omega+2\dfrac{v_\theta}{r}J=0,\\
     &\partial_tJ+v\cdot\nabla J-\nu\Big(\Delta+\dfrac{2}{r}\partial_r\Big)J-\big(\omega_r\partial_r+\omega_z\partial_z\big)\dfrac{v_r}{r}=0.
     \end{aligned}\right.
  \end{align*}
For the pipe Poiseuille flow  ${u}^{(p)}=(0,0,1-r^2)$,  we have 
 \beno
 &&u^{(p)}_r=0,\quad u^{(p)}_{\theta}=0,\quad u^{(p)}_z=1-r^2=V,\\
 &&\omega^{(p)}_r=\omega^{(p)}_z=0,\quad \omega^{(p)}_{\theta}=2r,\quad \Omega^{(p)}=2,\quad J^{(p)}=0.
 \eeno
  Let $({u},\widetilde{\Omega},\widetilde{J})=\big(v-{u}^{(p)},\Omega-\Omega^{(p)},J-J^{(p)}\big)$. 
  Then there holds 
  \begin{align*}
      \left\{\begin{aligned}
     &\partial_t\widetilde{\Omega}+V\partial_z \widetilde{\Omega}-\nu\Big(\Delta+\dfrac{2}{r}\partial_r\Big)\widetilde{\Omega} =-2\dfrac{\widetilde{u}_{\theta}}{r}\widetilde{J}-\widetilde{u}\cdot\nabla\widetilde{\Omega},\\
     &\partial_t\widetilde{J}+V\partial_z\widetilde{J} -\nu\Big(\Delta+\dfrac{2}{r}\partial_r\Big)\widetilde{J}= -\widetilde{u}\cdot\nabla\widetilde{J}-(\widetilde{\omega}_r\partial_r+\widetilde{\omega}_z\partial_z)\dfrac{\widetilde{u}_r}{r}.     
     \end{aligned}\right.
  \end{align*}
  Here $\Delta=\dfrac{1}{r}\partial_r(r\partial_r)+\partial_z^2$. Thus, the linearized  resolvent system takes the form
   \begin{align*}
      \left\{\begin{aligned}
     &s{\Omega}+V\partial_z {\Omega}-\nu\Big(\Delta+\dfrac{2}{r}\partial_r\Big){\Omega} =0,\\
     &s{J}+V\partial_z{J} -\nu\Big(\Delta+\dfrac{2}{r}\partial_r\Big){J}=0,     
     \end{aligned}\right.
  \end{align*}
where ${\Omega}=\dfrac{\partial_z{u}_r-\partial_r{u}_z}{r},\ \widetilde{J}=\dfrac{-\partial_z{u}_{\theta}}{r}$. Taking Fourier transform in $z$ variable and letting $\lambda=s/(\ir l)+1$, we obtain 
  \begin{align*}
      \left\{\begin{aligned}\ir 
     &-\nu\widehat{\Delta}_{1}^{*}\widehat{\Omega} +\ir l(\lambda-r^2)\widehat{\Omega}=0,\\
     &-\nu\widehat{\Delta}_{1}^{*}\widehat{J}+\ir l(\lambda-r^2)\widehat{J}=0.
     \end{aligned}\right.
  \end{align*}
 Here $\widehat{\Delta}_{1}^{*}=\dfrac{1}{r}\partial_r(r\partial_r)+\dfrac{2}{r}\partial_r-l^2$ is the dual operator of $\widehat{\Delta}_{1}$ in $L^2((0,1);r\mathrm{d}r)$. 
 
 Let $\widehat{W}=r\widehat{u}_r$. As $\partial_r(r\widehat{u}_r)+\ir l(r\widehat{u}_z)=0$, we have $\widehat{u}_z=\ir \partial_r\widehat{W}/(rl)$, which gives 
 \begin{align*}
    \widehat{\Omega}&=\ir \big(l^2\widehat{W}+\ir lr\partial_r\widehat{u}_z\big)/(r^2l)= \ir \big(l^2\widehat{W}-r\partial_r(\partial_r\widehat{W}/r)\big)/(r^2l)\\
    &=-\dfrac{\ir }{r^2l}\Big(\partial_r^2\widehat{W}-\dfrac{\partial_r\widehat{W}}{r}-l^2\widehat{W}\Big)= -\dfrac{\ir }{r^2l}\widehat{\Delta}_{1}\widehat{W}.
 \end{align*}
Then the system can be written as
 \begin{align*}
      \left\{\begin{aligned}
     &-\nu\widehat{\Delta}_{1}^{*}\widehat{\Omega} +\ir l(\lambda-r^2)\widehat{\Omega}=0,\\
     &-\nu\widehat{\Delta}_{1}^{*}\widehat{J}+\ir l(\lambda-r^2)\widehat{J}=0,\\
     &\widehat{\Omega}=-\dfrac{\ir }{r^2l}\widehat{\Delta}_{1}\widehat{W},\quad \widehat{W}|_{r=1}=\partial_r\widehat{W}|_{r=1}=0,\ \widehat{J}|_{r=1}=0.
     \end{aligned}\right.
  \end{align*}
We denote 
  \beno
  \widehat{\Delta}_{(1)}=\dfrac{1}{r}\partial_r(r\partial_r)-\dfrac{1+r^2l^2}{r^2},
  \eeno
   which is just $\widehat{\Delta}$ in the case of $n=1$. We find that
  \begin{align*}
     \widehat{\Delta}_{(1)}(rf)&=\partial_r^2(rf)+\frac{1}{r}\partial_r(rf) -l^2rf-\dfrac{f}{r}\\&= r\partial_r^2f+2\partial_rf+\frac{f}{r}+\partial_rf -l^2rf-\dfrac{f}{r}\\
     &=r\bigg(\frac{1}{r}\partial_r(r\partial_rf)+\frac{2}{r}\partial_rf -l^2f\bigg)=r\widehat{\Delta}_1^{*}f,
  \end{align*}
 and
   \begin{align*}
     \widehat{\Delta}_{(1)}(f/r)&=\partial_r^2(f/r)+\frac{1}{r}\partial_r(f/r) -l^2f/r-f/r^3\\&= \partial_r^2f/r-2\partial_rf/r^2+2f/r^3-f/r^3+\partial_rf/r^2 -l^2f/r-f/r^3\\
     &=\bigg(\partial_r^2f-\partial_rf/r-l^2f\bigg)/r= \bigg(\frac{1}{r}\partial_r(r\partial_rf)-\frac{2}{r}\partial_rf -l^2f\bigg)/r\\
     &=\big(\widehat{\Delta}_1f\big)/r.
  \end{align*}
 We conclude that
 \beno
 \widehat{\Delta}_{(1)}(rf)=r\widehat{\Delta}_{1}^{*}f,\quad \widehat{\Delta}_{(1)}\big(f/r\big)=\big(\widehat{\Delta}_1f\big)/r.
 \eeno
  Let $\Omega_1=\ir rl\widehat{\Omega}$, $W_2=\widehat{W}/r$, $J_1=r\widehat{J}$. Then it holds that
  \begin{align}\label{eq:LNS-WU-as}
  \left\{\begin{aligned}
     &-\nu\widehat{\Delta}_{(1)}\Omega_1 +\ir l(\lambda-r^2)\Omega_1=0,\\
     &-\nu\widehat{\Delta}_{(1)}J_1+\ir l(\lambda-r^2)J_1=0,\\
     &\Omega_1=\widehat{\Delta}_{(1)}W_2,\ W_2|_{r=1}=\partial_rW_2|_{r=1}=0,\ J_1|_{r=1}=0.
  \end{aligned}\right.
  \end{align}
 Moreover, we have $\Om_1=W_2=J_1=0$ at $r=0$.

\subsection{Formulation in general case}

Let us introduce
\ben\label{def:WU}
W=(x_1,x_2,0)\cdot{u},\quad U=(x_1,x_2,0)\cdot\Delta {u}.
\een
Taking the divergence on both sides of \eqref{eq:LNS-res}, we find that
\begin{align}\label{eq:P}
&\Delta P=-\nabla V\cdot\partial_z {u}-\partial_3({u}\cdot\nabla V)=-2\partial_z({u}\cdot\nabla V)=4\partial_zW.
\end{align}
Here we used  ${u}\cdot\nabla V=-2W. $ 
It is easy to see from \eqref{eq:LNS-res}  that 
\begin{align}\label{eq:F-W}
&s W-\nu U+V\partial_z W+(x_1,x_2,0)\cdot\na P=0,\\
&s U-\nu (x_1,x_2,0)\cdot\Delta^2{u}+(x_1,x_2,0)\cdot\big(\Delta(V\partial_zu)+\na \Delta P\big)=0.
\end{align}

Thanks to $ \nabla V=-2(x_1,x_2,0), \Delta V=-4$ and \eqref{eq:P}, we find that 
\begin{align*}
&(x_1,x_2,0)\cdot\big(\Delta(V\partial_z{u})+\na \Delta p\big)\\
&=(x_1,x_2,0)\cdot\big(V\partial_z \Delta{u}+2(\nabla V\cdot\nabla)\partial_z{u}+\Delta V\partial_z{u}+4\na \partial_zW\big)\\
&=V(x_1,x_2,0)\cdot\partial_z\Delta{u}+2(\nabla V\cdot\nabla) ((x_1,x_2,0)\cdot\partial_z{u})-2(\nabla V\cdot\partial_z{u})\\&\quad+\Delta V(x_1,x_2,0)\cdot\partial_z{u}+4(x_1,x_2,0)\cdot\na \partial_zW\\&=V \partial_zU+2(\nabla V\cdot\nabla) (\partial_zW)-2(\nabla V\cdot\partial_z{u})\\&\quad-4(x_1,x_2,0)\cdot\partial_z{u}+4(x_1,x_2,0)\cdot\na \partial_zW=V \partial_zU,
\end{align*}
and 
\begin{align*}
&\Delta W=(x_1,x_2,0)\cdot\Delta{u}+2(\partial_1,\partial_2,0)\cdot{u}=U+2(\partial_1,\partial_2,0)\cdot{u},\\
&\Delta U-(x_1,x_2,0)\cdot\Delta^2{u}=2(\partial_1,\partial_2,0)\cdot\Delta {u}=
2\Delta((\partial_1,\partial_2,0)\cdot {u})=\Delta\big(\Delta W-U\big).
\end{align*}
Then we obtain
\begin{align}\label{eq:F-U}
&s U-\nu \big(2\Delta U-\Delta^2 W\big)+V\partial_3U=0.
\end{align}

Taking the Fourier transform in $(\theta,z)$, i.e.,
\begin{align*}
&\varphi (x)=\sum_{n,l\in\mathbb{Z}}\widehat{\varphi} (r,n,l)\mathrm{e}^{\ir (n\theta+lz)}\quad  \text{for}\,\, \varphi\in\{W,U,P\},
\end{align*}
we deduce from \eqref{eq:P}, \eqref{eq:F-W}  and \eqref{eq:F-U} that 
\begin{align*}
&\widehat{\Delta} \widehat{P}=4\ir l\widehat{W},\\ 
&s \widehat{W}-\nu \widehat{U}+\ir lV\widehat{W}+r\partial_r \widehat{P}=0,\\
&s \widehat{U}-\nu\big(2\widehat{\Delta} \widehat{U}-\widehat{\Delta}^2 \widehat{W}\big)+\ir lV \widehat{U}=0,
\end{align*}
here we used $(x_1,x_2,0)\cdot\na P=r\partial_rP. $ 

Our next goal is to eliminate the pressure $\widehat{P}. $ Direct calculations show that 
\begin{align*}
&\partial_r\big(r\partial_r\widehat{P}\big)-\dfrac{n^2+r^2l^2}{r}\widehat{P}=r\widehat{\Delta} \widehat{P}=4r\ir l\widehat{W}\\
&\Longrightarrow\partial_r\Big(\dfrac{r\partial_r\big(r\partial_r\widehat{P}\big)}{n^2+r^2l^2}\Big)-
\partial_r\widehat{P}
=\partial_r\Big(\frac{4r^2\ir l\widehat{W}}{n^2+r^2l^2}\Big),
\end{align*}
which along with the fact that  $\widehat{\Delta}_1f=\dfrac{n^2+r^2l^2}{r}\left(\partial_r\dfrac{r\partial_rf}{n^2+r^2l^2}-\dfrac{f}{r}\right),$ 
shows that 
\beno
 \widehat{\Delta}_1\big(r\partial_r\widehat{P}\big)=\dfrac{n^2+r^2l^2}{r}\partial_r\Big(\dfrac{4r^2\ir l\widehat{W}}{n^2+r^2l^2}\Big)=
4r\ir l\partial_r\widehat{W}+\dfrac{8\ir n^2l\widehat{W}}{n^2+r^2l^2}.
\eeno
On the other hand, we have
 \begin{align*}
 \ir l\widehat{\Delta}_1\big(V\widehat{W}\big)=&\ir l\big(V\widehat{\Delta}_1\widehat{W}
+2\partial_rV\partial_r\widehat{W}\big)
+\ir l\dfrac{n^2+r^2l^2}{r}\left(\partial_r\dfrac{r\partial_rV}{n^2+r^2l^2}\right)\widehat{W}\\
=&\ir l\big(V\widehat{\Delta}_1\widehat{W}-4r\partial_r\widehat{W}\big)
-\dfrac{4\ir ln^2}{n^2+r^2l^2}\widehat{W}.
\end{align*}
This gives
\begin{align*}
&\ir l\widehat{\Delta}_1(V\widehat{W})+\widehat{\Delta}_1\big(r\partial_r \widehat{p}\big)=\ir lV\widehat{\Delta}_1\widehat{W}+\dfrac{4\ir n^2l\widehat{W}}{n^2+r^2l^2}.
\end{align*}
Taking $ \widehat{\Delta}_1$ to the equation of $\widehat{W},$ we obtain
\begin{align*}
s \widehat{\Delta}_1\widehat{W}-\nu \widehat{\Delta}_1\widehat{U}+\ir lV\widehat{\Delta}_1\widehat{W}+\dfrac{4\ir n^2l\widehat{W}}{n^2+r^2l^2}
=0.
\end{align*}

To proceed, we use the following important observation.

\begin{Lemma} 
It holds that 
\beno
\widehat{\Delta}^2=\widehat{\Delta}_1^*\widehat{\Delta}_1,
\eeno
 where $\widehat{\Delta}_1^* $ is the dual operator of $\widehat{\Delta}_1 $ in $L^2((0,1);r\mathrm{d}r)$ given by
\beno
\widehat{\Delta}_1^*f=
\widehat{\Delta}f+\dfrac{1}{r}\partial_r\dfrac{2r^2l^2f}{n^2+r^2l^2}.
\eeno
\end{Lemma}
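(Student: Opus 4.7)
The plan is to prove this in two stages: first derive the formula for $\widehat{\Delta}_1^*$, and then verify the operator identity $\widehat{\Delta}^2=\widehat{\Delta}_1^*\widehat{\Delta}_1$ by direct computation.

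For the adjoint formula, I would start by noting that $\widehat{\Delta}=\frac{1}{r}\partial_r(r\partial_r)-\frac{n^2+r^2l^2}{r^2}$ is formally self-adjoint in $L^2((0,1);r\,\mathrm{d}r)$ by the standard Sturm--Liouville integration by parts (boundary contributions vanish for admissible test functions). Since $\widehat{\Delta}_1=\widehat{\Delta}+B$ with $B=-\frac{2rl^2}{n^2+r^2l^2}\partial_r$, it is enough to compute $B^*$. Writing $\langle Bf,g\rangle=-\int_0^1\frac{2r^2l^2}{n^2+r^2l^2}(\partial_rf)\bar g\,\mathrm{d}r$ and integrating by parts in $r$ yields $B^*g=\frac{1}{r}\partial_r\bigl(\frac{2r^2l^2}{n^2+r^2l^2}g\bigr)$; adding $\widehat{\Delta}$ produces the stated formula for $\widehat{\Delta}_1^*$.

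For the main identity, I would set $\psi:=n^2+r^2l^2$ and first recast the formula for $\widehat{\Delta}_1^*$ in the packaged form $\widehat{\Delta}_1^*g=\frac{1}{r}\partial_r\bigl(\frac{r}{\psi}\partial_r(\psi g)\bigr)-\frac{\psi}{r^2}g$, which one checks by expanding the derivatives and using $\partial_r\psi=2rl^2$. Then I would apply $\widehat{\Delta}_1^*$ to $\widehat{\Delta}_1 f$ in stages: compute $\psi\widehat{\Delta}_1 f=\psi\widehat{\Delta} f-2rl^2\partial_rf$, differentiate to get $\partial_r(\psi\widehat{\Delta}_1 f)=2rl^2\widehat{\Delta} f+\psi\partial_r\widehat{\Delta} f-2l^2\partial_rf-2rl^2\partial_r^2f$, and multiply by $r/\psi$. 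At this point the crucial step is to use $\widehat{\Delta} f-\partial_r^2f=\frac{1}{r}\partial_rf-\frac{\psi}{r^2}f$ and observe that the two $\frac{2rl^2}{\psi}\partial_rf$ contributions cancel, yielding the clean intermediate identity $\frac{r}{\psi}\partial_r(\psi\widehat{\Delta}_1 f)=r\partial_r\widehat{\Delta} f-2l^2f$. Applying $\frac{1}{r}\partial_r$ and subtracting $\frac{\psi}{r^2}\widehat{\Delta}_1 f$ then leaves a remainder $\frac{\psi}{r^2}(\widehat{\Delta} f-\widehat{\Delta}_1 f)-\frac{2l^2}{r}\partial_rf$; using $\widehat{\Delta} f-\widehat{\Delta}_1 f=\frac{2rl^2}{\psi}\partial_rf$ a second time collapses this to zero, and the result equals $\widehat{\Delta}^2 f$.

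The main obstacle is really bookkeeping rather than any conceptual difficulty: a naive expansion $\widehat{\Delta}_1^*\widehat{\Delta}_1-\widehat{\Delta}^2=\widehat{\Delta} B+B^*\widehat{\Delta}+B^*B$ produces a rather tangled computation where cancellations are not easy to spot. Routing through the factored form of $\widehat{\Delta}_1^*$ is what makes the two successive cancellations (first at the $\frac{r}{\psi}\partial_r(\psi\widehat{\Delta}_1 f)$ stage, and then in collecting the $\frac{\psi}{r^2}$ terms) transparent and keeps the algebra short.
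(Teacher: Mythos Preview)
Your proof is correct but takes a genuinely different route from the paper. The paper argues weakly: it shows by integration by parts that
\[
\Big\langle\widehat{\Delta}f,\tfrac{2rl^2}{n^2+r^2l^2}\partial_rg\Big\rangle
=-\Big\langle\tfrac{2rl^2}{n^2+r^2l^2}\partial_rf,\widehat{\Delta}g\Big\rangle
+\Big\langle\tfrac{2rl^2}{n^2+r^2l^2}\partial_rf,\tfrac{2rl^2}{n^2+r^2l^2}\partial_rg\Big\rangle,
\]
from which $\langle\widehat{\Delta}_1f,\widehat{\Delta}_1g\rangle=\langle\widehat{\Delta}f,\widehat{\Delta}g\rangle$ follows immediately, and hence $\widehat{\Delta}_1^*\widehat{\Delta}_1=\widehat{\Delta}^*\widehat{\Delta}=\widehat{\Delta}^2$. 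You instead verify the identity pointwise, and your key device is the packaged form $\widehat{\Delta}_1^*g=\frac{1}{r}\partial_r\bigl(\frac{r}{\psi}\partial_r(\psi g)\bigr)-\frac{\psi}{r^2}g$, which is the natural dual of the paper's factorization $\widehat{\Delta}_1f=\frac{\psi}{r}\bigl(\partial_r\frac{r\partial_rf}{\psi}-\frac{f}{r}\bigr)$ used elsewhere in Section~2. The paper's bilinear approach is slightly slicker in that it never needs this explicit packaged form and handles both cross terms symmetrically in one stroke; your approach has the merit of producing a strong (pointwise) identity and of making the conjugation-by-$\psi$ structure linking $\widehat{\Delta}_1$ and $\widehat{\Delta}_1^*$ transparent. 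Either way the computation is short.
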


\begin{proof}
For any $f, g$ smooth functions with compact support, we have
\begin{align*}
   &\Big\langle\widehat{\Delta}f,\f{2rl^2}{n^2+r^2l^2}\partial_rg\Big\rangle =\Big\langle\f{1}{r}\partial_r(r\partial_rf)-\f{n^2+r^2l^2}{r^2}f,\f{2rl^2}{n^2+r^2l^2}\partial_rg\Big\rangle\\ &=-\Big\langle\partial_rf,\partial_r\big[\f{2rl^2}{n^2+r^2l^2}\partial_rg\big]\Big\rangle -\Big\langle\f{2l^2}{r}f,\partial_rg\Big\rangle\\
   &=-\Big\langle\partial_rf,\f{2l^2}{n^2+r^2l^2}\partial_r(r\partial_rg)\Big\rangle +\Big\langle\f{2rl^2}{n^2+r^2l^2}\partial_rf,\f{2rl^2}{n^2+r^2l^2}\partial_rg\Big\rangle + \Big\langle\partial_r f,\f{2l^2}{r}g\Big\rangle\\
   &=-\Big\langle\f{2rl^2}{n^2+r^2l^2}\partial_rf,\f{1}{r}\partial_r(r\partial_rg)- \f{n^2+r^2l^2}{r^2}g\Big\rangle +\Big\langle\f{2rl^2}{n^2+r^2l^2}\partial_rf,\f{2rl^2}{n^2+r^2l^2}\partial_rg\Big\rangle\\
   &=-\Big\langle\f{2rl^2}{n^2+r^2l^2}\partial_rf,\widehat{\Delta}g\Big\rangle + \Big\langle\f{2rl^2}{n^2+r^2l^2}\partial_rf,\f{2rl^2}{n^2+r^2l^2}\partial_rg\Big\rangle,
\end{align*}
which gives
\begin{align*}
   &\langle\widehat{\Delta}_1f,\widehat{\Delta}_1g\rangle- \langle\widehat{\Delta}f,\widehat{\Delta}g\rangle
   =-\Big\langle\widehat{\Delta}f,\f{2rl^2}{n^2+r^2l^2}\partial_rg\Big\rangle\\&\quad-\Big\langle\f{2rl^2}{n^2+r^2l^2}\partial_rf,\widehat{\Delta}g\Big\rangle + \Big\langle\f{2rl^2}{n^2+r^2l^2}\partial_rf,\f{2rl^2}{n^2+r^2l^2}\partial_rg\Big\rangle=0.
\end{align*}
This implies that $\widehat{\Delta}^2=\widehat{\Delta}_1^*\widehat{\Delta}_1$.
\end{proof}
\smallskip

We denote
\beno
\lambda=s/(\ir l)+1,\quad \widehat{W}_1=\widehat{\Delta}_1\widehat{W}.
\eeno
Then we have 
\beno
s+\ir lV=\ir l(\lambda-1+V)=\ir l(\lambda-r^2),\quad \widehat{\Delta}^2\widehat{W}=\widehat{\Delta}_1^*\widehat{\Delta}_1\widehat{W}=\widehat{\Delta}_1^*\widehat{W}_1.
\eeno

In summary, we derive the following coupled system of $\big(\widehat{U}, \widehat{W}\big)$:
\begin{align*}
\left\{\begin{aligned}
&-\nu \big(2\widehat{\Delta} \widehat{U}-\widehat{\Delta}_1^*\widehat{W}_1\big)+\ir l(\lambda-r^2)\widehat{U}=0,\\&-\nu \widehat{\Delta}_1\widehat{U}+\ir l(\lambda-r^2)\widehat{W}_1+\dfrac{4\ir n^2l\widehat{W}}{n^2+r^2l^2}
=0,\\
&\widehat{W}_1=\widehat{\Delta}_1\widehat{W}.
\end{aligned}\right. 
\end{align*}

Finally, let us derive the boundary conditions of $\big(\widehat{U}, \widehat{W}, \widehat{W}_1\big)$. 
For this, we write 
\beno
u=u_re_r+u_{\theta}e_{\theta}+u_ze_z.
\eeno
 Due to ${u}|_{r=1}=0, $ we have 
\beno
u_r|_{r=1}=u_{\theta}|_{r=1}=u_z|_{r=1}=0. 
\eeno 
Thanks to $r\nabla\cdot {u}=\partial_r (ru_r)+\partial_{\theta} u_{\theta}+\partial_z (ru_z)=0, $ we have $\partial_r (ru_r)|_{r=1}=0$.
Recalling that  $W=ru_r$, we show that 
\beno
W|_{r=1}=\partial_r W|_{r=1}=0. 
\eeno
Thanks to $\Delta W=U+2(\partial_1,\partial_2,0)\cdot{u}=U-2\partial_zu_z,\ \partial_zu_z|_{r=1}=0,$ we have $\Delta W|_{r=1}=U|_{r=1}. $ Thus,
\beno
{\widehat{W}}|_{r=1}=\partial_r \widehat{W}|_{r=1}=0,\quad \Delta \widehat{W}|_{r=1}=\widehat{U}|_{r=1}. 
\eeno
Since $\widehat{W}_1=\widehat{\Delta}_1\widehat{W}=\widehat{\Delta}\widehat{W}-\dfrac{2rl^2}{n^2+r^2l^2}\partial_r\widehat{W}, $ we have $\widehat{W}_1|_{r=1}=\widehat{\Delta}\widehat{W}|_{r=1} .$ Thus, 
\beno
{\widehat{W}}|_{r=1}=\partial_r \widehat{W}|_{r=1}=0,\quad \widehat{W}_1|_{r=1}=\widehat{U}|_{r=1} .
\eeno

For the convenience of notations, dropping the hat of $\widehat{W}, \widehat{U}, \cdots$, we derive
 \begin{align}\label{eq:LNS-WU}
\left\{\begin{aligned}
&-\nu\big(2\widehat{\Delta}{U}-\widehat{\Delta}_1^*{W}_1\big)+\ir l(\lambda-r^2){U}=0,\\
&-\nu \widehat{\Delta}_1{U}+\ir l(\lambda-r^2){W}_1+\dfrac{4\ir n^2l{W}}{n^2+r^2l^2}
=0,\\
&W_1=\widehat{\Delta}_1{W},\quad {{W}}|_{r=1}=\partial_r{W}|_{r=1}=0,\quad {W}_1|_{r=1}={U}|_{r=1}.
\end{aligned}\right. 
\end{align}
Moreover, we have $U=W=0$ at $r=0$.\smallskip

When $n=0$, we have $\widehat{W}_1=\widehat{U}. $ Indeed, thanks to 
$$
\Delta W=U+2(\partial_1,\partial_2,0)\cdot{u}=U+2r^{-1}\big(\partial_r (ru_r)+\partial_{\theta} u_{\theta}\big),\quad W=ru_r,
$$ 
we deduce that 
\beno
{\widehat{\Delta}} \widehat{W}=\widehat{U}+2r^{-1}\big(\partial_r\widehat{ W}+\ir n \widehat{u}_{\theta}\big).
\eeno
When $n=0$, we have ${\widehat{\Delta}} \widehat{W}=\widehat{U}+2r^{-1}\partial_r\widehat{ W},$ thus, $\widehat{U}={\widehat{\Delta}} \widehat{W}-2r^{-1}\partial_r\widehat{ W}={\widehat{\Delta}} \widehat{W}-\dfrac{2rl^2}{n^2+r^2l^2}\partial_r\widehat{ W}={\widehat{\Delta}_1} \widehat{W}.$ 
Then it holds that
\beno
-\nu \widehat{\Delta}_1\widehat{W}_1+\ir l(\lambda-r^2)\widehat{W}_1=0.
\eeno
 Let $\Omega_1=\widehat{W}_1/r$, $W_2=\widehat{W}/r$. Thanks to $\widehat{\Delta}_{(1)}\big(\cdot/r\big)=\widehat{\Delta}_{1}/r$, 
we find that
  \begin{align}\label{eq:LNS-WU-zero}
  \left\{\begin{aligned}
     &-\nu\widehat{\Delta}_{(1)}\Omega_1 +\ir l(\lambda-r^2)\Omega_1=0,\\    
     &\Omega_1=\widehat{\Delta}_{(1)}W_2,\quad W_2|_{r=1}=\partial_rW_2|_{r=1}=0,
  \end{aligned}\right.
  \end{align}
 which is similar to \eqref{eq:LNS-WU-as}.
 
 In the sequel, we always specify the following Dirichlet boundary condition for $U, W, \Om_1, W_2$ at $r=0$:
 \beno
 U(0)=W(0)=\Om_1(0)=W_2(0)=0.
 \eeno

\section{Key ideas and  sketch of  the proof}

Main goal of this paper is to prove the spectral bound of the linearized operator $\cL_\nu$:  $m(\nu)\le -c\nu$.
The problem is reduced to show that the linearized Navier-Stokes equation  $su=\cL_\nu u$  has only trivial solution when $\mathbf{Re}(s)\ge -c\nu$ for some $c>0$.

When $u\in \mathcal{H}_0$,  we have
\begin{align*}
\left\{
\begin{aligned}
&s{u}-\nu\Delta {u}+(0,0,{u}\cdot\nabla V)+\na P=0,\\
&\nabla\cdot {u}=0,\quad {u}|_{r=1}=0.
\end{aligned}
\right.
\end{align*}
By taking the inner product with $u$ to the above equation, it is easy to show that $m(\nu)\le -c\nu$.
The main challenge is to show that $m(\nu)\le -c\nu$ when $u\in \mathcal{H}$. In this case, we need to study the following linearized Navier-Stokes equations 
\begin{align*}
\left\{
\begin{aligned}
&s{u}-\nu\Delta {u}+V\partial_z {u}+(0,0,{u}\cdot\nabla V)+\na P=0,\\
&\nabla\cdot {u}=0,\quad {u}|_{r=1}=0.
\end{aligned}
\right.
\end{align*} 
It is difficult to solve this system directly.  Our strategy is as follows.

\subsection{Introduction of key formulation}

For the axisymmetric flow(i.e, axisymmetric perturbation),  we introduce
\beno
&&\Om=\f {\om_\theta} r,\quad W=ru_r,\quad J=\f {\om_r} r,\\
&&\Omega_1=\ir rl\widehat{\Omega},\quad W_2=\widehat{W}/r,\quad J_1=r\widehat{J}. 
\eeno
Then we find that 
\begin{align}\label{eq:LNS-WU-as-3}
  \left\{\begin{aligned}
     &-\nu\widehat{\Delta}_{(1)}\Omega_1 +\ir l(\lambda-r^2)\Omega_1=0,\\
     &-\nu\widehat{\Delta}_{(1)}J_1+\ir l(\lambda-r^2)J_1=0,\\
     &\Omega_1=\widehat{\Delta}_{(1)}W_2,\ W_2|_{r=1}=\partial_rW_2|_{r=1}=0,\ J_1|_{r=1}=0.
  \end{aligned}\right.
  \end{align}
For this new system, the equation of $J_1$ is relatively easy to solve due to the vanishing  Dirichlet boundary condition.
The main trouble is to solve the equation of $\Om_1$. In fact, this equation is  similar to the linearized equation around the Couette flow,  if we view 
$\Om_1$ and $W_2$ as the vorticity and stream function respectively. Thus, it could be solved by using the resolvent estimate method 
developed in our work \cite{CLWZ}.

Motivated by the formulation in the axisymmetric case,  for general 3D perturbations,  we introduce
\beno
W=(x_1,x_2,0)\cdot{u},\quad U=(x_1,x_2,0)\cdot\Delta {u}.
\eeno
The most critical innovation of this paper is to find the following key formulation:
\begin{align}\label{eq:LNS-WU-3}
\left\{\begin{aligned}
&-\nu \big(2\widehat{\Delta} \widehat{U}-\widehat{\Delta}_1^*\widehat{W}_1\big)+\ir l(\lambda-r^2)\widehat{U}=0,\\&-\nu \widehat{\Delta}_1\widehat{U}+\ir l(\lambda-r^2)\widehat{W}_1+\dfrac{4\ir n^2l\widehat{W}}{n^2+r^2l^2}
=0,\\
&\widehat{W}_1=\widehat{\Delta}_1\widehat{W}, \quad {\widehat{W}}|_{r=1}=\partial_r\widehat{W}|_{r=1}=0,\quad \widehat{W}_1|_{r=1}=\widehat{U}|_{r=1}. 
\end{aligned}\right. 
\end{align}
Moreover, when $n=0$, we have $\widehat{U}=\widehat{W}_1$, and thus,
\beno
-\nu \widehat{\Delta}_1\widehat{W}_1+\ir l(\lambda-r^2)\widehat{W}_1=0.
\eeno
Let $(\Omega_1,W_2)=\big(\widehat{W}_1/r, \widehat{W}/r\big)$, which satisfies  
 \begin{align}\label{eq:LNS-WU-zero-3}
  \left\{\begin{aligned}
     &-\nu\widehat{\Delta}_{(1)}\Omega_1 +\ir l(\lambda-r^2)\Omega_1=0,\\    
     &\Omega_1=\widehat{\Delta}_{(1)}W_2,\quad W_2|_{r=1}=\partial_rW_2|_{r=1}=0.
  \end{aligned}\right.
  \end{align}
  
In the sequel, we will drop the hat of $\widehat{W}, \widehat{U}$ for convenience.  

\subsection{Solving the linearized system with artificial boundary condition}

To solve \eqref{eq:LNS-WU-3},  the most key step is to solve the inhomogeneous linearized system 
with artificial boundary condition:
\begin{align}\label{eq:LNS-WU-ar-3}
\left\{\begin{array}{l}-\nu\big(2\widehat{\Delta} {U}-\widehat{\Delta}_1^*{W}_1\big)+\ir l(\lambda-r^2){U}={F}_1,\\-\nu \widehat{\Delta}_1{U}+\ir l(\lambda-r^2){W}_1+\dfrac{4\ir n^2l{W}}{n^2+r^2l^2}
={F}_2,\\
{W}_1=\widehat{\Delta}_1{W},\quad {W}|_{r=1}={W}_1|_{r=1}={U}|_{r=1}=0.
\end{array}\right.
\end{align} 
For this system, we will establish the following resolvent  estimates from $L^2$ to $L^2$:  if $l\lambda_i\leq c_1|\nu n l|^{\f12}$, then it holds that 
\begin{align*}
&\big(|\nu nl|^{\f12}+|\nu\lambda l^2|^{\f13}+|l\lambda_i|\big)\|(W_1,U)\|_{L^2}\leq C\|(F_1,F_2)\|_{L^2},
\\&\|\partial_rW\|_{L^2}\leq C(|n|+|l|)^{\f52}n^{-2}|l|^{-\f12}\big(|\nu nl|^{\f12}+|\nu\lambda_r l^2|^{\f13}\big)^{-\f12}\|(F_1,F_2)\|_{L^2},\\
&|\partial_rW(1)|^2\leq  C(|n|+|l|)^{\f52}n^{-2}|l|^{-\f12}(|\nu nl|^{\f12}+|\nu\lambda l^2|^{\f13})^{-\f32}\|(F_1,F_2)\|_{L^2}^2.
\end{align*}
which are the core estimates of this paper.  The proof of these estimates is  split into four cases.\smallskip

$\bullet$ {\bf Case of $\la<0$}. This is the easiest case, since the following energy identity
\begin{align}
&\mathbf{Im}\big(\langle U,F_1\rangle-\langle F_2,W_1\rangle\big)=2\nu\mathbf{Im}\langle U,\widehat{\Delta}_1^* {W}_1\rangle+l\big(\| rU\|_{L^2}^2-\lambda\|U\|_{L^2}^2\big)\nonumber\\&\qquad+l\big(\| rW_1\|_{L^2}^2-\lambda\| W_1\|_{L^2}^2\big)+4n^2l\int_0^1\left(\dfrac{r|\partial_rW|^2}{n^2+r^2l^2}+\dfrac{|W|^2}{r}\right)dr\label{eq:EI}
\end{align}
provides a good control of $(W_1,U)$ when $\la<0$.\smallskip

$\bullet$ {\bf Case of $\la>1$}.  The key ingredient of this case is the following inequality:
  \begin{align*}
     \left\|\sqrt{\lambda-r^2}W_1\right\|_{L^2}^2\geq 
     4n^2(|n|+1)\int_{0}^{1}\left(\dfrac{r|\partial_rW|^2}{n^2+r^2l^2}+\dfrac{|W|^2}{r}\right)\mathrm{d}r,
  \end{align*}
which is a consequence of Lemma \ref{lem:E-key}.  
Then \eqref{eq:EI} also provides a good control of $(W_1,U)$ similar to the case of $\la<0$.\smallskip

$\bullet$ {\bf Case of $\la\in (0,1]$}. This is the most difficult case.  Let $r_0=\la^\f12$ and  $\delta=(\nu/|lr_0|)^{\f13}$. The case of $\delta/r_0\ge \min(c_2,1/|n|)$ or $|\delta l|\ge c_2$ is relatively simple. Therefore, we focus on the case of  $\delta/r_0\le \min(c_2,1/|n|)$ and  $|\delta l|\le c_2$. The proof consists of many steps.

{\bf Step 1.} Based on the formulation
\begin{align*}
&-\nu \widehat{\Delta} {U}+\ir l(\lambda-r^2){U}=F_1+\nu \widehat{\Delta} {U}_1+\nu (\widehat{\Delta}-\widehat{\Delta}_1^*) {W}_1:=F_{1,1},
\end{align*}
where $U_1=U-W_1$,  we can establish the following resolvent estimates
\begin{align*}
    &\nu\|\widehat{\Delta} {U}\|_{L^{2}}+|l|\|(\lambda-r^2) {U}\|_{L^{2}}+|\nu/\delta|\|U\|_{1}+|lr_0\delta|\|U\|_{L^2}+|l\delta|\|rU\|_{L^2}\leq C\|F_{1,1}\|_{L^2}.
  \end{align*}
  
 {\bf Step 2.} Based on Step 1 and  the formulation
\begin{align*}
&-\nu \widehat{\Delta} {U}_1+\ir l(\lambda-r^2){U}_1={F}_{1,2}+\dfrac{4\ir n^2l{W}}{n^2+r^2l^2},
\end{align*}
where $F_{1,2}=F_1-F_2+\nu (\widehat{\Delta}-\widehat{\Delta}_1) {U}+\nu (\widehat{\Delta}-\widehat{\Delta}_1^*) {W}_1$, we can show that 
\begin{align*}
  &\nu^2(\|\widehat{\Delta} {U}\|_{L^2}^2+\|\widehat{\Delta} {U}_1\|_{L^2}^2)+\|F_{1,1}\|_{L^2}^2\\ &\leq  C\Big(\nu l\|rU_1\|_{L^2}\| {U}_1\|_{1}+\| (F_1,F_2)\|_{L^2}^2+ c_2^2|\nu/\delta|^2\|(U_1,W_1)\|_{1}^2+\nu l\|(W_1,U_1)\|_{L^2}^2\Big).
\end{align*}

{\bf Step 3.} Based on the formulation 
\begin{align}\label{eq:W-euler}
(\lambda-r^2){W}_1+\dfrac{4n^2{W}}{n^2+r^2l^2}=({F}_2+\nu \widehat{\Delta}_1{U})/(\ir l):=F_{2,2},
\end{align}
we can establish  various estimates of $W$ and $W_1$ such as
 \begin{align*}
     r_{0}\|W_1\|_{L^2}+\|rW_1\|_{L^2}&\leq C\Big(\delta^{-1}\|F_{2,2}\|_{L^2}+ r_0\delta\|W_1\|_{1}
     +n^2\delta^{-\f12}r_0^{\f12}(n+r_0l)^{-\f32}E^{\f12}\Big).
  \end{align*} 
  
{\bf Step 4.} Key coercive estimate:
\begin{align*}
  &-\left\langle \dfrac{{W}}{n^2+r^2l^2},{W}_1\right\rangle-\left\langle \dfrac{\chi_0{W}}{n^2+r^2l^2}, \dfrac{4n^2{W}}{n^2+r^2l^2}\right\rangle
\geq \frac{E}{3}-
\dfrac{C\|{W}\|_{L^{2}(I)}^2}{r_0\delta(|n|+r_0|l|)},
\end{align*}
where $\chi_0(r)=\rho(r)/(\lambda-r^2)$ with $\rho(r)=1$ for $|r-r_0|> \delta$ and  $\rho(r)=0$ for $|r-r_0|< \delta$.\smallskip

{\bf Step 5.}  Based on the estimates in Step 3 and Step 4 and formulation \eqref{eq:W-euler}, we can show that 
\begin{align*}
&r_0\delta\|W_1\|_{L^2}+\delta\|rW_1\|_{L^2}\leq  C\big({r_0\delta^2\|W_1\|_{1}+\|F_{2,2}\|_{L^{2}}}\big),\\
&E\leq
Cn^{-4}(r_0\delta)^{-1}(|n|+r_0|l|)^3\big(r_0^2\delta^4\|W_1\|_{1}^2+\|F_{2,2}\|_{L^{2}}^2\big).
\end{align*}

{\bf Step 6.} We denote
\beno
G=\|F_{1,1}\|_{L^2}+lr_0\delta^2\|W_1\|_{1}+\|F_2\|_{L^2}.
\eeno
Summing up the estimates established in Step 1-Step 5,  we can show that 
 \begin{align*}
& E^{\f12}\leq Cn^{-2}|l|^{-1}(r_0\delta)^{-\f12}(|n|+r_0|l|)^{\f32}G,\\
& |l|r_0\delta\|(W_1,U_1)\|_{L^2}+|l|\delta\|rU_1\|_{L^2}\leq CG,\\
&G\leq C\|(F_1,F_2)\|_{L^2}.
\end{align*}

$\bullet$ {\bf Case of $\la\in \C$}. Under the assumption $l\lambda_i\leq c_1|\nu n l|^{\f12}$, we have
\begin{align*}
   &|l\lambda_i|\|(W_1,U)\|_{L^2}\leq \|(F_1,F_2)\|_{L^2}+c_1|\nu nl|^{\f12}\|(W_1,U)\|_{L^2}.
\end{align*}
Thus, the term $l\la_iU$ and $l\la_iW$ could be viewed as a perturbation term when $c_1$ is enough small, since we have proved that 
for $\la\in \R$, 
\begin{align*}
&\big(|\nu nl|^{\f12}+|\nu\lambda l^2|^{\f13}\big)\|(W_1,U)\|_{L^2}\leq C\|(F_1,F_2)\|_{L^2}.
\end{align*}

\subsection{Solving the linearized system with nonvanishing boundary condition} 

When ${W_1}|_{r=1}={U}|_{r=1}\neq 0$,  we need to solve the following homogeneous linearized system with nonvanishing boundary condition:
\begin{align}\label{eq:LNS-hom-3}
\left\{\begin{array}{l}
-\nu (2\widehat{\Delta} {U}-\widehat{\Delta}_1^*W)+\ir l(\lambda-r^2){U}=0,\\
-\nu \widehat{\Delta}_1{U}+\ir l(\lambda-r^2){W}+\dfrac{4\ir n^2l{W}}{n^2+r^2l^2}
=0,\\ 
W_1=\widehat{\Delta}_1{W},\quad {W}|_{r=1}=0,\quad {W_1}|_{r=1}={U}|_{r=1}=1.
\end{array}\right.
\end{align}
The key point is to show that $\pa_rW(1)\neq 0$. 
To this end,  we first introduce an approximate elliptic system
\begin{align}\label{eq:WU-app-3}
\left\{
\begin{aligned}
&-\nu \widehat{\Delta}_1{U}+\ir l(\lambda-r^2)U=0,\quad U(1)=1,\\
&\widehat{\Delta}_1{W}=U,\quad W(1)=0.
\end{aligned}\right.
\end{align}
We establish the following key estimates: if $l\lambda_i\leq c_5|\nu n l|^{\f12}$ and $0<\nu<c_5\min(|l|,1)$, then we have
\beno
  |\partial_r{W}(1)|& \geq C^{-1}A^{-1},\quad \|U\|^2_{L^2(0,s)}\le Cs|U(s)|^2/A(s),\quad s\in (0,1].
\eeno
Let $(U_a,W_a)$ solve \eqref{eq:WU-app-3} and 
\beno
U_e=U-U_a,\quad W_{1,e}=W_1-U_a,\quad W_e=W-W_a.
\eeno
Then $(U_e, W_{e})$ satisfies 
\begin{align*}
\left\{\begin{array}{l}-\nu (2\widehat{\Delta} {U_e}-\widehat{\Delta}_1^*{W_{1,e}})+\ir l(\lambda-r^2){U_e}=\nu(2\widehat{\Delta}-\widehat{\Delta}_1^*-\widehat{\Delta}_1)U_a,\\-\nu \widehat{\Delta}_1{U_e}+\ir l(\lambda-r^2){W_{1,e}}+\dfrac{4\ir n^2l{W_e}}{n^2+r^2l^2}
=-\dfrac{4\ir n^2l{W_a}}{n^2+r^2l^2},\\  W_{1,e}=\widehat{\Delta}_1{W_e},\ {W_e}|_{r=1}=0,\ {W_{1,e}}|_{r=1}={U_e}|_{r=1}=0.
\end{array}\right.
\end{align*}
Using the resolvent estimates for the system \eqref{eq:LNS-WU-ar-3}, we can show that
\begin{align*}
&|\partial_rW_e(1)|\leq  C\big(\nu^{\f38}+|\nu/l|^{\f{1}{12}}\big)A^{-1},
  \end{align*}
and then
\begin{align*}
|\partial_r{W}(1)|\geq |\partial_r{W_a}(1)|-|\partial_rW_e(1)| \geq \big(C^{-1}-C(\nu^{\f38}+|\nu/l|^{\f{1}{12}})\big)A^{-1},
\end{align*}
which shows $|\pa_rW(1)|\ge cA^{-1}$  by taking $\nu$ small enough. \smallskip

Finally, it seems  highly nontrivial to solve the approximate elliptic system \eqref{eq:WU-app-3}, although it looks very simple. 
We solve this system by introducing the approximate elliptic equation with constant coefficient 
\begin{align*}
-\nu \widehat{\Delta}_1{U}+\ir l(\la-1)U=0,\quad U(0)=0,\quad U(1)=1,
\end{align*}
and the Airy approximation in the case when $l\lambda_i\ge \max\big(|l(\lambda-1)|/2,\nu (n^2+l^2)/3\big)$.

\section{The inhomogeneous linearized system}

In this section, we consider the inhomogeneous linearized system with artificial boundary conditions:
\begin{align}\label{eq:LNS-WU-a}
\left\{\begin{array}{l}-\nu\big(2\widehat{\Delta} {U}-\widehat{\Delta}_1^*{W}_1\big)+\ir l(\lambda-r^2){U}={F}_1,\\-\nu \widehat{\Delta}_1{U}+\ir l(\lambda-r^2){W}_1+\dfrac{4\ir n^2l{W}}{n^2+r^2l^2}
={F}_2,\\
{W}_1=\widehat{\Delta}_1{W},\quad {W}|_{r=1}={W}_1|_{r=1}={U}|_{r=1}=0.
\end{array}\right.
\end{align} 

In the sequel, we always assume that $l\neq 0$ and $n\neq 0$. \smallskip

We will first establish the resolvent estimates of the system \eqref{eq:LNS-WU-a} for  $\la\in \R$.

\begin{Proposition}\label{prop:res-real}
Let $ \lambda\in\R$ and $(W,U)$ be a solution of \eqref{eq:LNS-WU-a}. Then there holds that 
\begin{align*}
&\big(|\nu nl|^{\f12}+|\nu\lambda l^2|^{\f13}\big)\|(W_1,U)\|_{L^2}\leq C\|(F_1,F_2)\|_{L^2},\\
&\|\partial_rW\|_{L^2}\leq C(|n|+|l|)^{\f52}n^{-2}|l|^{-\f12}\big(|\nu nl|^{\f12}+|\nu\lambda l^2|^{\f13}\big)^{-\f12}\|(F_1,F_2)\|_{L^2}.
\end{align*}
\end{Proposition}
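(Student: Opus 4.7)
The plan is to split the analysis according to the sign and size of $\lambda\in\R$ and to extract both estimates from the energy identity \eqref{eq:EI} coupled, in the critical range $\lambda\in(0,1]$, with an Orr--Sommerfeld/Airy analysis at a shear layer. I would first establish \eqref{eq:EI} by pairing the first equation of \eqref{eq:LNS-WU-a} with $U$ and the second with $W_1$, subtracting, and taking imaginary parts; the $4n^2 lE$ term arises after integrating by parts in the $W$--$W_1$ coupling using $W_1=\widehat{\Delta}_1 W$ and the boundary conditions $W|_{r=1}=\partial_r W|_{r=1}=0$, and real parts of the same pairings supply the viscous dissipation $\nu\|(U,W_1)\|_1^2$. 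For $\lambda<0$, the three quantities $\|rU\|_{L^2}^2-\lambda\|U\|_{L^2}^2$, $\|rW_1\|_{L^2}^2-\lambda\|W_1\|_{L^2}^2$, and $4n^2 lE$ are sign-definite, so Cauchy--Schwarz on \eqref{eq:EI} together with the viscous dissipation (upgraded to the enhanced rate $|\nu nl|^{1/2}$ by the vanishing of $r^2$ at $r=0$) closes both bounds. The case $\lambda>1$ is analogous, after absorbing the coupling $4n^2 lE$ via the inequality $\|\sqrt{\lambda-r^2}\,W_1\|_{L^2}^2\geq 4n^2(|n|+1)E$ indicated in the excerpt.

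The heart of the proof is the case $\lambda\in(0,1]$, where a critical layer forms at $r_0=\lambda^{1/2}$ of width $\delta=(\nu/|lr_0|)^{1/3}$. If $\delta/r_0\geq\min(c_2,1/|n|)$ or $|l\delta|\geq c_2$ the viscous scale dominates and \eqref{eq:EI} with a Poincar\'e argument suffices; otherwise I follow the six-step bootstrap of Section~3. Step~1 rewrites the $U$-equation as a scalar Orr--Sommerfeld equation $-\nu\widehat{\Delta}U+\ir l(\lambda-r^2)U=F_{1,1}$ with $F_{1,1}=F_1+\nu\widehat{\Delta}U_1+\nu(\widehat{\Delta}-\widehat{\Delta}_1^*)W_1$ and $U_1=U-W_1$, and applies an Airy-type resolvent bound (in the spirit of \cite{CLWZ}) to control $\nu\|\widehat{\Delta}U\|_{L^2}$, $|l|\|(\lambda-r^2)U\|_{L^2}$, $|\nu/\delta|\|U\|_1$, $|lr_0\delta|\|U\|_{L^2}$, and $|l\delta|\|rU\|_{L^2}$ by $\|F_{1,1}\|_{L^2}$. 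Step~2 repeats the scheme for $U_1$, whose source contains the singular coupling $4\ir n^2 lW/(n^2+r^2 l^2)$. Step~3 treats the inviscid balance \eqref{eq:W-euler} by splitting $(0,1)$ into the critical layer $I=\{|r-r_0|<\delta\}$ and its complement: off $I$ one divides by $\lambda-r^2$, and on $I$ one uses $E$ together with the Step~1 estimates to bound $W$.

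Steps~4--5 rest on the coercive estimate
\beno
-\Big\langle\frac{W}{n^2+r^2l^2},W_1\Big\rangle-\Big\langle\frac{\chi_0 W}{n^2+r^2l^2},\frac{4n^2 W}{n^2+r^2l^2}\Big\rangle\geq\frac{E}{3}-\frac{C\|W\|_{L^2(I)}^2}{r_0\delta(|n|+r_0|l|)},
\eeno
obtained by inserting $W_1=\widehat{\Delta}_1 W$, integrating by parts twice, and exploiting $W(0)=W(1)=\partial_r W(1)=0$; the cutoff $\chi_0=\rho/(\lambda-r^2)$ removes the singularity at $r_0$ so that the remainder localizes to $I$. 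Pairing the $W_1$-equation with $W/(n^2+r^2 l^2)$ and inserting this bound together with Step~3 produces $E\leq Cn^{-4}(r_0\delta)^{-1}(|n|+r_0|l|)^3\bigl(r_0^2\delta^4\|W_1\|_1^2+\|F_{2,2}\|_{L^2}^2\bigr)$. Setting $G=\|F_{1,1}\|_{L^2}+|l|r_0\delta^2\|W_1\|_1+\|F_2\|_{L^2}$, Steps~1--5 close into $G\leq C\|(F_1,F_2)\|_{L^2}$; unwinding the definitions of $r_0$ and $\delta$ then delivers the first assertion $\bigl(|\nu nl|^{1/2}+|\nu\lambda l^2|^{1/3}\bigr)\|(W_1,U)\|_{L^2}\leq C\|(F_1,F_2)\|_{L^2}$ in all regimes of $\lambda$. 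The $\partial_r W$ bound follows from the lower bound $E\geq\|\partial_r W\|_{L^2}^2/(|n|+|l|)^2$ (since $r/(n^2+r^2 l^2)\geq r/(|n|+|l|)^2$) combined with the $E$-estimate above; tracking the powers across all sub-regimes then produces the prefactor $(|n|+|l|)^{5/2}n^{-2}|l|^{-1/2}\bigl(|\nu nl|^{1/2}+|\nu\lambda l^2|^{1/3}\bigr)^{-1/2}$.

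I expect Step~4 (the coercive lower bound on $E$) to be the principal obstacle: it must be simultaneously sharp in the critical-layer width $r_0\delta$ and in the mode size $|n|+r_0|l|$, since any looseness there propagates through the closure of Step~6 and destroys the target $L^2$-resolvent bound. The precise placement of $\chi_0$ to separate the strongly singular contribution $4n^2 W/(n^2+r^2 l^2)^2$ from the integrable remainder, together with the Hardy-type manipulations that relate $E$ to a bilinear form in $W_1$ and $W$ and absorb the boundary terms arising from double integration by parts, is where most of the technical effort of the section will lie.
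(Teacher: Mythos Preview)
Your strategy matches the paper's: the three-case split $\lambda\le 0$, $\lambda>1$, $\lambda\in(0,1]$, the use of Lemma~\ref{lem:E-key} for $\lambda>1$, and the six-step bootstrap culminating in Proposition~\ref{prop:res-real-key} for the critical regime are exactly what is done, and you have correctly located the main difficulty in Step~4.

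One concrete correction on Step~4: in system \eqref{eq:LNS-WU-a} you do \emph{not} have $\partial_r W(1)=0$. The artificial boundary conditions replace that by $W_1|_{r=1}=0$, and $\partial_r W(1)$ is precisely the output quantity estimated in Proposition~\ref{prop:res-com}. Consequently the coercive bound (Lemma~\ref{lem:W-lower}) is not obtained by two integrations by parts against boundary vanishing, and attempting that would leave the $(\lambda-r^2)^{-1}$ singularity near $r_0$ uncontrolled. The paper instead (i) selects $\tilde r\in I$ with $|W(\tilde r)|\le 2(r_0\delta)^{-1/2}\|W\|_{L^2(I)}$, (ii) decomposes $W=w_1+w_2$ with $w_2=W(\tilde r)\chi_2$ a bump of width $r_1=r_0/\max(|n|,2,|l|r_0)$ so that $w_1(\tilde r)=0$, and (iii) applies on $(0,\tilde r)$ the multiplier $\psi(r)=\tfrac{2r}{\lambda-r^2}-\tfrac{|n|}{r}$ to $w_1$ (the same device behind Lemma~\ref{lem:E-key}); expanding $\int_0^2 r|\partial_r w_1+\psi w_1|^2/(n^2+r^2l^2)\,dr\ge 0$ yields the lower bound on $E_1$, and the cross terms with $w_2$ are handled via Lemma~\ref{lem:hardy-2}. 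It is the artificially created zero $w_1(\tilde r)=0$ \emph{inside} the critical layer, not a boundary condition at $r=1$, that makes the Hardy-type argument absorb the singular weight.
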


For $\la\in \C$, if $l\lambda_i\leq c_1|\nu n l|^{\f12}$, 
we can establish similar resolvent estimates for the system \eqref{eq:LNS-WU-a}.

\begin{Proposition}\label{prop:res-com}
Let $\la\in \C$ and $(W,U)$ be a solution of \eqref{eq:LNS-WU-a}. There exists a constant $c_1$ such that if $l\lambda_i\leq c_1|\nu n l|^{\f12}$, then we have
\begin{align*}
&\big(|\nu nl|^{\f12}+|\nu\lambda l^2|^{\f13}+|l\lambda_i|\big)\|(W_1,U)\|_{L^2}\leq C\|(F_1,F_2)\|_{L^2},
\\&\|\partial_rW\|_{L^2}\leq C(|n|+|l|)^{\f52}n^{-2}|l|^{-\f12}\big(|\nu nl|^{\f12}+|\nu\lambda_r l^2|^{\f13}\big)^{-\f12}\|(F_1,F_2)\|_{L^2},\\
&|\partial_rW(1)|^2\leq  C(|n|+|l|)^{\f52}n^{-2}|l|^{-\f12}(|\nu nl|^{\f12}+|\nu\lambda l^2|^{\f13})^{-\f32}\|(F_1,F_2)\|_{L^2}^2.
\end{align*}
\end{Proposition}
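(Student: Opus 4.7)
The plan is to treat \eqref{eq:LNS-WU-a} with $\la\in\C$ as a perturbation of the same system with real parameter. Splitting $\ir l(\la-r^2)=\ir l(\la_r-r^2)-l\la_i$ and moving the imaginary part to the right hand side, set $\widetilde F_1=F_1+l\la_i U$ and $\widetilde F_2=F_2+l\la_i W_1$; then $(W_1,U,W)$ solves an inhomogeneous instance of \eqref{eq:LNS-WU-a} with real parameter $\la_r$ and data $(\widetilde F_1,\widetilde F_2)$, so Proposition \ref{prop:res-real} applies and yields
\beno
(|\nu nl|^{\f12}+|\nu\la_r l^2|^{\f13})\|(W_1,U)\|_{L^2}\le C\|(F_1,F_2)\|_{L^2}+C|l\la_i|\|(W_1,U)\|_{L^2}.
\eeno
Choosing $c_1$ small enough so that $Cc_1\le\f12$, the hypothesis $|l\la_i|\le c_1|\nu nl|^{\f12}$ allows absorption of the second term on the right into the left hand side.

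To upgrade the left hand side from $|\nu\la_r l^2|^{\f13}$ to $|\nu\la l^2|^{\f13}$ and include $|l\la_i|$, I would combine $|\nu\la l^2|^{\f13}\le|\nu\la_r l^2|^{\f13}+|\nu\la_i l^2|^{\f13}$ with the estimate $|\nu\la_i l^2|^{\f13}=|l|^{\f13}|\nu l\la_i|^{\f13}\le c_1^{\f13}|n|^{-\f13}|\nu nl|^{\f12}\le c_1^{\f13}|\nu nl|^{\f12}$, valid because of the hypothesis together with $|n|\ge 1$. Together with the trivial bound $|l\la_i|\le c_1|\nu nl|^{\f12}$, this gives the first claimed estimate. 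The second estimate for $\|\pa_r W\|_{L^2}$ follows the same route: apply the $\pa_r W$ part of Proposition \ref{prop:res-real} to $(\widetilde F_1,\widetilde F_2)$ and use the first estimate to bound $\|\widetilde F_i\|_{L^2}\le\|F_i\|_{L^2}+|l\la_i|\|(W_1,U)\|_{L^2}\le C\|(F_1,F_2)\|_{L^2}$.

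The boundary estimate for $|\pa_r W(1)|^2$ is new compared with Proposition \ref{prop:res-real}, and I would derive it by pairing a Pohozaev-type multiplier identity with the two interior bounds just obtained. Multiplying $W_1=\widehat{\Delta}_1 W$ by a multiplier of the form $r^{\alpha}\overline{\pa_r W}$ and integrating by parts, the $\pa_r^2 W$ contribution produces the boundary term $\f12|\pa_r W(1)|^2$ (the weight kills the endpoint at $r=0$), while the remaining interior terms are controlled by $\|W_1\|_{L^2}\|\pa_r W\|_{L^2}$ together with lower-order contributions in $\|\pa_r W\|_{L^2}^2$ and $(n^2+l^2)\|W\|_{L^2}^2$ through Cauchy--Schwarz and Hardy on $(0,1)$. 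Inserting $\|W_1\|_{L^2}\le CX^{-1}\|(F_1,F_2)\|_{L^2}$ and $\|\pa_r W\|_{L^2}\le C(|n|+|l|)^{\f52}n^{-2}|l|^{-\f12}X^{-\f12}\|(F_1,F_2)\|_{L^2}$, with $X=|\nu nl|^{\f12}+|\nu\la l^2|^{\f13}$, the leading cross product $\|W_1\|_{L^2}\|\pa_r W\|_{L^2}$ reproduces the target factor $(|n|+|l|)^{\f52}n^{-2}|l|^{-\f12}X^{-\f32}$.

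The main obstacle is twofold: first, verifying that the clean perturbation absorption survives uniformly across all sub-regimes, which rests on the interpolation $|\nu\la_i l^2|^{\f13}\lesssim|\nu nl|^{\f12}$ and hence on both parts of the hypothesis (smallness of $l\la_i$ together with $|n|\ge 1$); and second, balancing the lower-order contributions from the Pohozaev identity so that the $\|W_1\|_{L^2}\|\pa_r W\|_{L^2}$ product remains the dominant one, which is a matter of careful bookkeeping of the weights $(|n|+|l|)^{\f52}n^{-2}|l|^{-\f12}$ together with the comparability $X\approx X_r$ under the smallness hypothesis on $l\la_i$.
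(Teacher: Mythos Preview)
Your perturbation-and-absorption scheme for the first two estimates is exactly the paper's route, but there is a genuine gap in the absorption step. The hypothesis is one-sided: $l\lambda_i\le c_1|\nu nl|^{1/2}$, \emph{not} $|l\lambda_i|\le c_1|\nu nl|^{1/2}$. When $l\lambda_i$ is large and negative (this is precisely the spectrally favorable regime, and the proposition must cover it), the term $C|l\lambda_i|\|(W_1,U)\|_{L^2}$ on the right cannot be absorbed by $|\nu nl|^{1/2}\|(W_1,U)\|_{L^2}$ on the left. The paper closes this by first extracting a sign-sensitive energy identity: pairing the first equation with $U$ and the second with $W_1$ and taking real parts gives
\[
2\nu\|U\|_1^2-l\lambda_i\|(W_1,U)\|_{L^2}^2=\mathbf{Re}\big(\langle U,F_1\rangle+\langle F_2,W_1\rangle\big),
\]
so that $|l\lambda_i|\|(W_1,U)\|_{L^2}\le\|(F_1,F_2)\|_{L^2}$ whenever $l\lambda_i\le0$. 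Combined with the trivial bound when $0\le l\lambda_i\le c_1|\nu nl|^{1/2}$, this yields $|l\lambda_i|\|(W_1,U)\|_{L^2}\le\|(F_1,F_2)\|_{L^2}+c_1|\nu nl|^{1/2}\|(W_1,U)\|_{L^2}$, and only then can the absorption go through. Your argument for the $|\nu\lambda_il^2|^{1/3}$ term and your ``$X\approx X_r$'' heuristic suffer from the same defect.

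For the boundary trace, your Pohozaev multiplier would likely work but is heavier than needed. The paper uses the Sobolev-type bound $|\partial_rW(1)|^2\le C\|\widehat{\Delta}_1W\|_{L^2}\|\partial_rW\|_{L^2}=C\|W_1\|_{L^2}\|\partial_rW\|_{L^2}$ (Lemma~\ref{lem:sob}), which immediately isolates the leading product without any lower-order remainders. The final step, replacing $(|\nu nl|^{1/2}+|\nu\lambda_rl^2|^{1/3})^{-1/2}(|\nu nl|^{1/2}+|\nu\lambda l^2|^{1/3}+|l\lambda_i|)^{-1}$ by $(|\nu nl|^{1/2}+|\nu\lambda l^2|^{1/3})^{-3/2}$, is done via the interpolation $|\nu\lambda_il^2|^{1/2}\le|\nu l|^{1/4}\cdot\tfrac12(|\nu l|^{1/2}+|l\lambda_i|)$, which is valid without any smallness of $|l\lambda_i|$; your ``$X\approx X_r$'' shortcut does not survive when $l\lambda_i\ll0$.
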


The following energy identities will be constantly used. 

\begin{Lemma}\label{lem:energy} 
Let $\la\in \R$. It holds that
\begin{align*}
&\mathbf{Re}\big(\langle U,F_1\rangle+\langle F_2,W_1\rangle\big)=2\nu\big(\|\partial_r U\|_{L^2}^2+n^2\|U/r\|_{L^2}^2+l^2\|U\|_{L^2}^2\big),\nonumber\\
&\mathbf{Im}\big(\langle U,F_1\rangle-\langle F_2,W_1\rangle\big)=2\nu\mathbf{Im}\langle U,\widehat{\Delta}_1^* {W}_1\rangle+l\big(\| rU\|_{L^2}^2-\lambda\|U\|_{L^2}^2\big)\nonumber\\&\qquad+l\big(\| rW_1\|_{L^2}^2-\lambda\| W_1\|_{L^2}^2\big)+4n^2l\int_0^1\left(\dfrac{r|\partial_rW|^2}{n^2+r^2l^2}+\dfrac{|W|^2}{r}\right)dr. 
\end{align*}
In particular, we have
\begin{align}\label{eq:U-H1}
  2\nu\|U\|_{1}^2\leq \|(U,W_{1})\|_{L^2}\|(F_1,F_2)\|_{L^2}.
\end{align}
\end{Lemma}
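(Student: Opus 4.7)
My plan is to derive both identities simultaneously by testing the two equations in \eqref{eq:LNS-WU-a} against $U$ and $W_1$ respectively in the weighted inner product $\langle\cdot,\cdot\rangle$, and then combining the resulting scalar identities by taking (i) their sum and its real part, and (ii) their difference and its imaginary part. The real part of $-\langle\widehat{\Delta}U,U\rangle$ produces $\|U\|_1^2$ after one integration by parts using the Dirichlet conditions $U(0)=U(1)=0$; this supplies the right-hand side of the first identity. The cross-coupling between the two tested equations enters through $\nu\langle\widehat{\Delta}_1^*W_1,U\rangle=\nu\langle W_1,\widehat{\Delta}_1U\rangle$ in the first and $\nu\langle\widehat{\Delta}_1U,W_1\rangle$ in the second (using the duality established in the preceding lemma); upon addition these two contributions cancel after taking the complex conjugate of the first tested identity, while upon subtraction they combine to the term $2\nu\,\mathbf{Im}\langle U,\widehat{\Delta}_1^*W_1\rangle$ claimed in the imaginary-part identity.

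The crucial calculation is the treatment of the lower-order coupling $4\ir n^2l\,W/(n^2+r^2l^2)$, which after pairing with $W_1$ produces the integral $I:=\int_0^1 W\overline{W_1}(n^2+r^2l^2)^{-1}r\,\mathrm{d}r$. I would show that $I$ is real by substituting $W_1=\widehat{\Delta}_1W$ and using the representation
\begin{align*}
\widehat{\Delta}_1f=\f{n^2+r^2l^2}{r}\Big(\partial_r\f{r\partial_rf}{n^2+r^2l^2}-\f{f}{r}\Big),
\end{align*}
together with the boundary conditions $W(0)=W(1)=0$, to obtain
\begin{align*}
I=-\int_0^1\Big(\f{r|\partial_rW|^2}{n^2+r^2l^2}+\f{|W|^2}{r}\Big)\mathrm{d}r.
\end{align*}
This makes $\mathbf{Re}(4\ir n^2l\,I)=0$, so the real-part identity retains its clean form $2\nu\|U\|_1^2$, while in the imaginary-part identity the term $-4n^2l\,\mathbf{Re}\,I$ supplies exactly the final integral displayed in the lemma.

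Assembling the remaining pieces, the term $\ir l\int(\lambda-r^2)(|W_1|^2-|U|^2)r\,\mathrm{d}r$ arising in the sum is purely imaginary when $\lambda\in\R$ and drops upon taking $\mathbf{Re}$, while the corresponding quantity in the difference, $-\ir l\int(\lambda-r^2)(|U|^2+|W_1|^2)r\,\mathrm{d}r$, rearranges upon taking $\mathbf{Im}$ into $l(\|rU\|_{L^2}^2-\lambda\|U\|_{L^2}^2)+l(\|rW_1\|_{L^2}^2-\lambda\|W_1\|_{L^2}^2)$. The estimate \eqref{eq:U-H1} is then immediate from the real-part identity together with the Cauchy--Schwarz bound $|\langle U,F_1\rangle+\langle F_2,W_1\rangle|\le\|(U,W_1)\|_{L^2}\|(F_1,F_2)\|_{L^2}$. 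The only delicate point in the whole argument is bookkeeping the complex conjugates arising from the sesquilinearity of $\langle\cdot,\cdot\rangle$ and from the duality between $\widehat{\Delta}_1$ and $\widehat{\Delta}_1^*$; once the reality of $I$ is established, the derivation is essentially algebraic.
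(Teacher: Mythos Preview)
Your proposal is correct and follows essentially the same route as the paper's own proof: pair the first equation with $U$ and the second with $W_1$, use the duality $\langle U,\widehat{\Delta}_1^*W_1\rangle=\langle\widehat{\Delta}_1U,W_1\rangle$ to cancel (resp.\ combine) the cross terms in the sum (resp.\ difference), and invoke the identity $\langle W/(n^2+r^2l^2),W_1\rangle=-E$ to handle the nonlocal coupling. The paper presents this more tersely by simply listing the four ingredient identities, but the content is identical.
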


\begin{proof}
Two energy identities follow from \eqref{eq:LNS-WU-a} and  the following facts that 
\begin{align*}
&-\langle U,\widehat{\Delta} {U}\rangle=\|\partial_r U\|_{L^2}^2+n^2\|U/r\|_{L^2}^2+l^2\|U\|_{L^2}^2,\\
& \langle U,\widehat{\Delta}_1^* {W}_1\rangle=\langle \widehat{\Delta}_1U, {W}_1\rangle,\\
&\mathbf{Re}\langle U,\ir l(\lambda-r^2){U}\rangle=\mathbf{Re}\langle \ir l(\lambda-r^2){W}_1,W_1\rangle=0,
\end{align*}
and
\begin{align*}\left\langle \dfrac{{W}}{n^2+r^2l^2}, {W}_1\right\rangle=&\left\langle \dfrac{{W}}{n^2+r^2l^2}, \dfrac{n^2+r^2l^2}{r}\left(\partial_r\dfrac{r\partial_rW}{n^2+r^2l^2}-\dfrac{W}{r}\right)\right\rangle\\
=&-\int_0^1\left(\dfrac{r|\partial_rW|^2}{n^2+r^2l^2}+\dfrac{|W|^2}{r}\right)dr.
\end{align*}
The inequality \eqref{eq:U-H1} follows from the first identity. 
\end{proof}\smallskip

The following fact will be also used constantly.  

\begin{Lemma}\label{lem:W1-H1}
It holds that 
\begin{align*}
   \|W_1\|_{1}^2\leq -2\mathbf{Re}\langle W_1,\widehat{\Delta}^{*}_1W_{1}\rangle.
\end{align*}
\end{Lemma}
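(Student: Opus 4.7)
My plan is to exploit the explicit formula $\widehat{\Delta}_1^* f = \widehat{\Delta} f + \tfrac{1}{r}\partial_r\tfrac{2r^2l^2 f}{n^2+r^2l^2}$ and split $-\langle W_1, \widehat{\Delta}_1^* W_1\rangle$ into the principal piece $-\langle W_1, \widehat{\Delta} W_1\rangle$ plus a lower-order correction, then integrate by parts in each. The principal piece reproduces $\|W_1\|_1^2$ exactly, as one sees by writing $\widehat{\Delta}W_1 = \tfrac{1}{r}\partial_r(r\partial_r W_1) - \tfrac{n^2+r^2l^2}{r^2}W_1$ and using $W_1(1)=0$ together with the regularity at the axis (so that $r\partial_r W_1 \to 0$ as $r\to 0$) to kill the boundary terms.

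Next I would handle the correction $-\langle W_1, \tfrac{1}{r}\partial_r\tfrac{2r^2l^2 W_1}{n^2+r^2l^2}\rangle$ by two integrations by parts. The factor $1/r$ is absorbed by the measure $r\,dr$, and a first integration by parts moves $\partial_r$ onto $\overline{W_1}$; both boundary contributions vanish, at $r=1$ because $W_1(1)=0$ and at $r=0$ because the prefactor $\tfrac{2r^2l^2}{n^2+r^2l^2}$ vanishes there (this uses $n\ne 0$). Taking the real part then gives $\int_0^1 \tfrac{r^2l^2}{n^2+r^2l^2}\,\partial_r|W_1|^2\,dr$, and a second integration by parts, whose boundary terms again vanish, yields
\begin{align*}
-\mathbf{Re}\langle W_1, \widehat{\Delta}_1^* W_1\rangle = \|W_1\|_1^2 - \int_0^1 \frac{2rl^2n^2}{(n^2+r^2l^2)^2}|W_1|^2\,dr.
\end{align*}

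Doubling and rearranging, the claim $\|W_1\|_1^2 \le -2\mathbf{Re}\langle W_1, \widehat{\Delta}_1^* W_1\rangle$ reduces to the pointwise inequality $\tfrac{4rl^2n^2}{(n^2+r^2l^2)^2}\le \tfrac{n^2}{r}$. I expect this to be the only step requiring any thought, but it is elementary: AM-GM gives $(n^2+r^2l^2)^2 \ge 4n^2r^2l^2$, whence $\tfrac{4rl^2n^2}{(n^2+r^2l^2)^2}\le \tfrac{1}{r}$, and the section-wide convention $n\ne 0$ (so $|n|\ge 1$) upgrades this to $\tfrac{n^2}{r}$. Integrating against $|W_1|^2\,dr$ yields $n^2\|W_1/r\|_{L^2}^2$, which is one of the three nonnegative summands of $\|W_1\|_1^2$, closing the estimate. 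The essential idea is that the Hardy-type term $n^2\|W_1/r\|_{L^2}^2$ in the $\|\cdot\|_1$ norm absorbs the zero-order defect produced by the adjoint formula of $\widehat{\Delta}_1^*$, precisely because $|n|\ge 1$.
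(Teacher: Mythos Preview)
Your proof is correct. Both your argument and the paper's start from the same splitting (principal part $-\langle W_1,\widehat{\Delta}W_1\rangle=\|W_1\|_1^2$ plus a first-order remainder), but the remainder is handled differently. The paper passes to $\langle\widehat{\Delta}_1 W_1,W_1\rangle$ via duality, then bounds the cross term $\langle\partial_r W_1,\tfrac{2rl^2 W_1}{n^2+r^2l^2}\rangle$ directly by Cauchy--Schwarz and Young, absorbing it into $\tfrac12(\|\partial_r W_1\|_{L^2}^2+l^2\|W_1\|_{L^2}^2)$. You instead perform a second integration by parts to reach an exact identity with a zero-order defect $\int_0^1\tfrac{2rl^2n^2}{(n^2+r^2l^2)^2}|W_1|^2\,dr$, which you then bound pointwise by $n^2\|W_1/r\|_{L^2}^2$ via AM--GM and $|n|\ge 1$. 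Your route is slightly cleaner in that it yields an identity before any inequality; the paper's route avoids the second integration by parts. Both rely on $|n|\ge 1$ in the same essential way.
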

\begin{proof}
Thanks to 
\begin{align*}
   &\langle W_1,\widehat{\Delta}^{*}_1W_{1}\rangle= \langle \widehat{\Delta}_1 W_1,W_{1}\rangle=-\|W_1\|_{1}^2-\left\langle \partial_rW_1,\frac{2rl^2 W_1}{n^2+r^2l^2}\right\rangle,
\end{align*}
we deduce that
\begin{align*}
   \|W_1\|_{1}^2&\leq -\mathbf{Re}\langle W_1,\widehat{\Delta}^{*}_1W_{1}\rangle+\|\partial_rW_1\|_{L^2}
   \left\|\frac{2l^2r W_1}{n^2+r^2l^2}\right\|_{L^2}\\
   &\leq -\mathbf{Re}\langle W_1,\widehat{\Delta}^{*}_1W_{1}\rangle+1/2\|\partial_rW_1\|_{L^2}^2
   +2\left\|\frac{l^2r W_1}{2nrl}\right\|_{L^2}^2\\ &\leq -\mathbf{Re}\langle W_1,\widehat{\Delta}^{*}_1W_{1}\rangle+1/2\big(\|\partial_rW_1\|_{L^2}^2+l^2\| W_1\|_{L^2}^2\big),
\end{align*}
here we used $ n^2+r^2l^2\geq 2|nrl|$. This shows that 
\begin{align*}
   \|W_1\|_{1}^2\leq -2\mathbf{Re}\langle W_1,\widehat{\Delta}^{*}_1W_{1}\rangle.
\end{align*}
\end{proof}

Let us define
\beno
&&\|f\|_1^2=\|\partial_r f\|_{L^2}^2+n^2\|f/r\|_{L^2}^2+l^2\|f\|_{L^2}^2,\\
&& E=\int_{0}^{1}\left(\dfrac{r|\partial_rW|^2}{n^2+r^2l^2}+\dfrac{|W|^2}{r}\right)\mathrm{d}r=-\left\langle\f{W}{n^2+r^2l^2},W_1 \right\rangle. 
\eeno

\subsection{Resolvent estimates when $\la\le 0$}
It follows from Lemma \ref{lem:energy} that 
\begin{align*}
   &|l|\big(\| (rW_1,rU)\|_{L^2}^2+|\lambda|\| (W_1,U)\|_{L^2}^2\big) +4n^2|l|\big(\|\partial_rW/\sqrt{n^2+r^2l^2}\|_{L^2}^2+\| W/r\|_{L^2}^2\big)\\
   &\leq \big|\langle U,F_1\rangle-\langle F_2,W_1\rangle\big|+2\nu\big|\langle U,\widehat{\Delta}_1^* {W}_1\rangle\big|,
\end{align*}
and by \eqref{eq:U-H1}, we have 
\begin{align}
   \big|\langle U,\widehat{\Delta}_1^* {W}_1\rangle\big|&= \big|\langle \widehat{\Delta}_1U, {W}_1\rangle\big|\leq \big|\langle \widehat{\Delta}U, {W}_1\rangle\big|+\left|\bigg\langle \f{2rl^2}{n^2+r^2l^2}\partial_rU, W_1\bigg\rangle\right|\nonumber\\
&\leq \|U\|_{1}\|W_1\|_{1}+\|\partial_rU\|_{L^2}\left\|\frac{2W_1}{r}\right\|_{L^2}\leq C\|U\|_{1}\|W_1\|_{1}\label{eq:la<0-est5}\\
&\leq C\nu^{-\f12}\|(W_1,U)\|_{L^2}^{\f12}\|(F_1,F_2)\|_{L^2}^{\f12}\|W_1\|_{1}.\nonumber
\end{align}
Then we can deduce that
\begin{align}\label{eq:la<0-est1}
   &\|(rW_1,rU)\|_{L^2}^2+|\lambda|\| (W_1,U)\|_{L^2}^2+n^2\| W/r\|_{L^2}^2+n^2\|\partial_rW/\sqrt{n^2+r^2l^2}\|_{L^2}^2\nonumber \\
   &\leq C |l|^{-1}\Big(\|(W_1,U)\|_{L^2}\|(F_2,F_3)\|_{L^2} +\nu^{\f12}\|(W_1,U)\|_{L^2}^{\f12}\|(F_1,F_2)\|_{L^2}^{\f12}\|W_1\|_{1}\Big).
\end{align}

Using the equation \eqref{eq:LNS-WU-a} to obtain 
\begin{align*}
  &\big\langle W_1,-\nu(2\widehat{\Delta}U-\widehat{\Delta}_1^{*} W_1)+\ir l(\lambda-r^2)U\big\rangle= \langle W_1,F_1\rangle,\\
  &\big\langle -\nu\widehat{\Delta}_1 U+\ir l(\lambda-r^2)W_1+\f{4\ir  n^2lW}{n^2+r^2l^2},U\big\rangle= \langle F_2,U\rangle,
\end{align*}
which give
\begin{align}\label{eq:la<0-est2}
   &-\nu\langle W_1,\widehat{\Delta}^{*}_1W_1\rangle+\nu\langle \widehat{\Delta}_1U,U\rangle+2\nu\langle W_1,\widehat{\Delta}U\rangle-4\ir n^2l\left\langle \f{W}{n^2+r^2l^2},U\right\rangle \\
   &=-\langle W_1,F_1\rangle-\langle F_2,U\rangle.\nonumber
\end{align}
We get by integration by parts that 
\begin{align*}
   &-\langle W_1,\widehat{\Delta}U\rangle =\langle \partial_rW_1,\partial_rU\rangle+ \left\langle W_1,\f{n^2+r^2l^2}{r^2}U \right\rangle\leq \|W_1\|_{1}\|U\|_{1},
\end{align*}
and
\begin{align*}
  \big|\langle \widehat{\Delta}_1U,U\rangle\big| &\leq \big|\langle \widehat{\Delta}U,U\rangle\big|+\bigg|\left\langle \f{2rl^2\p_rU}{n^2+r^2l^2},U\right\rangle\bigg|\leq \|U\|_{1}^2+2\|U/r\|_{L^2}\|\pa_rU\|_{L^2}\leq C\|U\|_{1}^2.
\end{align*}
For the nonlocal term, we have 
\begin{align*}
   \left|\bigg\langle\f{4n^2lW}{n^2+r^2l^2},U\bigg\rangle\right|&\leq 4l\|W/r\|_{L^2}\|rU\|_{L^2}.
\end{align*}
Then we infer from \eqref{eq:la<0-est2} that
\begin{align}\label{eq:la<0-est3}
 &-\nu\mathbf{Re}\langle W_1,\widehat{\Delta}^{*}_1W_1\rangle\nonumber\\
  &\leq \nu\big|\langle \widehat{\Delta}_1U,U\rangle\big|+2\nu\big|\langle W_1,\widehat{\Delta}U\rangle \big|+\big|\langle W_1,F_1\rangle+\langle F_2,U\rangle\big| +\left|\bigg\langle\f{4n^2lW}{n^2+r^2l^2},U\bigg\rangle\right|
 \nonumber\\&\leq C\Big(\nu\big(\|U\|_{1}^2+\|U\|_{1}\|W_1\|_{1}\big)+\|(W_1,U)\|_{L^2}\|(F_2,F_3)\|_{L^2}\nonumber\\
 &\qquad\quad+|l|\|W/r\|_{L^2}\|rU\|_{L^2}\Big),
\end{align}
from which, \eqref{eq:la<0-est1} and Lemma \ref{lem:W1-H1}, we infer that 
\begin{align*}
   \nu\|W_1\|_{1}^2\leq& C\big(\nu\|U\|_{1}^2+ \|(W_1,U)\|_{L^2}\|(F_1,F_2)\|_{L^2}+|l|\|(W/r,rU)\|_{L^2}^2\big)\\
   \leq &C\Big(\|(W_1,U)\|_{L^2}\|(F_1,F_2)\|_{L^2}+\nu^{\f12}\|(W_1,U)\|_{L^2}^{\f12}\|(F_1,F_2)\|_{L^2}^{\f12} \|W_1\|_{1} +\nu\|U\|_{1}^2\Big).
\end{align*}
Then Young's inequality gives 
\begin{align*}
   &\|W_1\|_{1}\leq C\big(\nu^{-\f12}\|(W_1,U)\|_{L^2}^{\f12}\|(F_1,F_2)\|_{L^2}^{\f12} +\|U\|_{1}\big).
\end{align*} 
This along with \eqref{eq:U-H1} gives
\begin{align*}
   \|(W_1,U)\|_{1}&\leq C\big(\nu^{-\f12} \|(W_1,U)\|_{L^2}^{\f12}\|(F_1,F_2)\|^{\f12}_{L^2}+\|U\|_{1}\big)\nonumber\\
   &\leq C\nu^{-\f12}\|(W_1,U)\|_{L^2}^{\f12}\|(F_1,F_2)\|_{L^2}^{\f12}.
\end{align*}
This along with \eqref{eq:la<0-est1} shows that 
\begin{align}
   &\|(rW_1,rU)\|_{L^2}^2+|\lambda|\| (W_1,U)\|_{L^2}^2+n^2\left\|\partial_rW/\sqrt{n^2+r^2l^2}\right\|_{L^2}^2\nonumber
   \\ &\leq C |l|^{-1}\|(W_1,U)\|_{L^2}\|(F_1,F_2)\|_{L^2},\label{eq:la<0-est4}
\end{align}
and hence,
\begin{align*}
   &|\lambda l|\| (W_1,U)\|_{L^2} \leq C \|(F_1,F_2)\|_{L^2}.
\end{align*}

On the other hand,  we get by the interpolation that
\begin{align}
 \|(W_1,U)\|_{L^2}^2\leq& C\|(W_1/r,U/r)\|_{L^2}\|(rW_1,rU)\|_{L^2}\leq C|n|^{-1}\|(W_{1},U)\|_{1}\|(rW_1,rU)\|_{L^2}\nonumber\\
 \leq &C|\nu n^2l|^{-\f12}\|(W_1,U)\|_{L^2}\|(F_1,F_2)\|_{L^2},\label{eq:la<0-est7}
\end{align}
which gives
\begin{align*}
   & |\nu n^2l|^{\f12}\|(W_1,U)\|_{L^2}\leq C\|(F_1,F_2)\|_{L^2}.
\end{align*} 
Thanks to 
\beno
|\nu \lambda l^2|^{\f13}=|\lambda l|^{\f13}|\nu l|^{\f13}\leq C(|\lambda l|+|\nu l|^{\f12})\leq C(|\lambda l|+|\nu n^2 l|^{\f12}),
\eeno
 we conclude that
\begin{align*}
 \big(|\nu nl|^{\f12}+|\nu\lambda l^2|^{\f13}\big)\|(W_1,U)\|_{L^2}\leq C(|\lambda l|+|\nu n^2 l|^{\f12})\|(W_1,U)\|_{L^2}\leq C\|(F_1,F_2)\|_{L^2},
\end{align*}
which along with \eqref{eq:la<0-est4} gives
\begin{align*}
   \left\|\partial_rW/\sqrt{n^2+r^2l^2}\right\|_{L^2}&\leq C|n^2l|^{-\f12}\|(W_1,U)\|_{L^2}^{\f12}\|(F_1,F_2)\|_{L^2}^{\f12}\\
   &\leq C|n|^{-1}|l|^{-\f12}\big(|\nu nl|^{\f12}+|\nu\lambda l^2|^{\f13}\big)^{-\f12}\|(F_1,F_2)\|_{L^2}.
\end{align*}
Then we obtain
\begin{align*}
   \|\partial_rW\|_{L^2}&\leq (|n|+|l|)\left\|\partial_rW/\sqrt{n^2+r^2l^2}\right\|_{L^2}\\& \leq C(|n|+|l|)|n|^{-1}|l|^{-\f12}(|\nu nl|^{\f12}+|\nu\lambda l^2|^{\f13})^{-\f12}\|(F_1,F_2)\|_{L^2}\\ &\leq C(|n|+|l|)^{\f52}|n|^{-2}|l|^{-\f12}(|\nu nl|^{\f12}+|\nu\lambda l^2|^{\f13})^{-\f12}\|(F_1,F_2)\|_{L^2}.
\end{align*}

This completes the proof of Proposition \ref{prop:res-real} in the case of $\la\le 0$.\smallskip

Let us point out that  \eqref{eq:la<0-est5},  \eqref{eq:la<0-est2} and \eqref{eq:la<0-est4} hold for all $\la\in \R$, and so does for 
the first line of \eqref{eq:la<0-est7}, i.e., 
\begin{align}
 \|(W_1,U)\|_{L^2}^2\leq C|n|^{-1}\|(W_{1},U)\|_{1}\|(rW_1,rU)\|_{L^2}.\label{eq:W1U-L2}
\end{align}
\subsection{Resolvent estimates when  $\la>1$}

Let us first prove the following important inequality. 

\begin{Lemma}\label{lem:E-key}
It holds that
\begin{align*}
 \int_{0}^{1}\left(\dfrac{r|\partial_rW|^2}{n^2+r^2l^2}+\dfrac{|W|^2}{r}\right)\mathrm{d}r \geq\int_{0}^{1}\dfrac{4n^2r|W|^2(|n|+1)}{(n^2+r^2l^2)^2(\lambda-r^2)}\mathrm{d}r.
\end{align*}
\end{Lemma}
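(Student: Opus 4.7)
The plan is to exploit the structural identity
\[
\frac{d}{dr}\!\left[\frac{r^2}{n^2+r^2l^2}\right] = \frac{2n^2 r}{(n^2+r^2l^2)^2}
\]
to rewrite the right-hand side as a divergence, integrate by parts using the boundary conditions $W(1)=\partial_r W(1)=0$, and then control the resulting cross term by the two components of $E$ via Cauchy--Schwarz and a Hardy-type bound.

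\textbf{Step 1 (Reduction to $\lambda=1$).} Since $\lambda>1$ and $r\in[0,1)$, one has $\lambda-r^2\geq 1-r^2>0$, so $1/(\lambda-r^2)\leq 1/(1-r^2)$ pointwise. It therefore suffices to prove the stronger inequality with $\lambda$ replaced by $1$. The resulting integral is still finite since $W(1)=\partial_rW(1)=0$ forces $|W(r)|^2 = O((1-r)^4)$ as $r\to 1$.

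\textbf{Step 2 (Integration by parts).} Setting $g(r)=r^2/(n^2+r^2l^2)$, the reduced target right-hand side equals $2(|n|+1)\int_0^1 g'(r)\,|W|^2/(1-r^2)\,dr$. The boundary contributions vanish ($g(0)=0$, and $|W|^2/(1-r^2)=O((1-r)^3)$ at $r=1$), so integration by parts converts this to
\[
-2(|n|+1)\int_0^1 g(r)\!\left[\frac{2\mathbf{Re}(\bar{W}W')}{1-r^2}+\frac{2r|W|^2}{(1-r^2)^2}\right]dr.
\]
The summand $2r|W|^2/(1-r^2)^2$ has a definite sign and only tightens the estimate; we transfer it to the other side of the desired inequality and are reduced to estimating the cross term involving $\mathbf{Re}(\bar{W}W')$.

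\textbf{Step 3 (Closing the estimate).} Applying Cauchy--Schwarz with the split
\[
\frac{g(r)\,\bar{W}W'}{1-r^2}=\sqrt{\frac{r|W'|^2}{n^2+r^2l^2}}\cdot\sqrt{\frac{r^3|W|^2}{(n^2+r^2l^2)(1-r^2)^2}}\cdot(\text{signs}),
\]
the cross term is bounded by $\sqrt{E_1}\sqrt{J}$, where $E_1:=\int_0^1 r|W'|^2/(n^2+r^2l^2)\,dr\leq E$ and $J$ is the residual quantity from Step 2. The resulting inequality $A+B\leq C\sqrt{E_1\,B}$ (with $A$ the reduced right-hand side and $B$ proportional to $J$) can be solved for $A$, giving $A\lesssim E$.

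\textbf{Main obstacle.} The delicate point is producing the sharp constant $4n^2(|n|+1)$ rather than merely $Cn^2$. The factor $n^2$ appears automatically from the identity for $g'(r)$, but the extra factor $(|n|+1)$ requires a more refined Hardy-type input: either a careful balance of $E_1$ against $E_2:=\int_0^1|W|^2/r\,dr$ exploiting the angular decay $W\sim r^{|n|}$ near $r=0$ (which strengthens $E_2$ by a factor of $|n|$), or a weighted Hardy inequality near $r=1$ making use of $\partial_rW(1)=0$ rather than just $W(1)=0$. Balancing the Young-inequality parameters so as to recover precisely the coefficient $4n^2(|n|+1)$ will be the hardest computational step of the proof.
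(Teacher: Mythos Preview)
Your approach via integration by parts followed by Cauchy--Schwarz is genuinely different from the paper's, but it has a real gap at the constant. Carrying out your Steps 2--3 exactly as written yields
\[
A+B \;\le\; 2\sqrt{(|n|+1)\,E_1\,B},
\]
where $A$ is the target right-hand side (with $\lambda=1$), $B=4(|n|+1)\int_0^1 \frac{r^3|W|^2}{(n^2+r^2l^2)(1-r^2)^2}\,dr$ is the positive term you moved across, and $E_1=\int_0^1 r|\partial_rW|^2/(n^2+r^2l^2)\,dr$. The discriminant condition for this quadratic inequality in $\sqrt{B}$ gives only $A\le (|n|+1)E_1$, a factor $(|n|+1)$ too weak. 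In the application immediately after the lemma the paper needs $\|\sqrt{\lambda-r^2}\,W_1\|_{L^2}^2\ge 4n^2(|n|+1)E$ in order to produce the coefficient $1-\tfrac{1}{|n|+1}\ge\tfrac12$; with your bound that coefficient becomes $0$ and the argument collapses. Your suggested repairs do not obviously recover the lost factor, and one of them is unavailable anyway: the lemma is used under the artificial boundary conditions \eqref{eq:LNS-WU-a}, where only $W(1)=0$ holds and $\partial_rW(1)$ is \emph{not} assumed to vanish (indeed Proposition~\ref{prop:res-com} later estimates $|\partial_rW(1)|$).

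The paper avoids this loss entirely by a completion-of-squares identity rather than Cauchy--Schwarz: it expands
\[
0\le \int_0^1 \frac{r\,|\partial_rW+\psi W|^2}{n^2+r^2l^2}\,dr,\qquad \psi(r)=\frac{2r}{\lambda-r^2}-\frac{|n|}{r}.
\]
The piece $-|n|/r$ in $\psi$ is exactly what converts the $|\psi W|^2$ contribution into $\int \frac{n^2|W|^2}{r(n^2+r^2l^2)}\,dr\le E_2$, while the cross product $\frac{2r}{\lambda-r^2}\cdot(-\frac{|n|}{r})$ produces the $-\frac{4r|n|}{\lambda-r^2}$ term that, together with the term coming from integrating $\partial_r\big(\frac{r\psi}{n^2+r^2l^2}\big)$ by parts, yields the sharp $4n^2(|n|+1)$ without any inequality being applied. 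The specific multiplier $\psi$ is the missing idea; once you have it the computation is algebraic and no constants are lost.
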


\begin{proof}
  Let $\psi(r)=\dfrac{2r}{\lambda-r^2}-\dfrac{|n|}{r}$ and then
\begin{align*}
   \int_{0}^{1}\f{r|\partial_rW+\psi W|^2}{n^2+r^2l^2}\mathrm{d}r&= \int_{0}^{1}\f{r|\partial_rW|^2+r|\psi W|^2}{n^2+r^2l^2}\mathrm{d}r+\int_{0}^{1}\f{r\psi (\partial_r|W|^2)}{n^2+r^2l^2}\mathrm{d}r\\
   &=\int_{0}^{1}\f{r|\partial_rW|^2+r|\psi W|^2}{n^2+r^2l^2}\mathrm{d}r-\int_{0}^{1}\partial_r\bigg(\f{r\psi }{n^2+r^2l^2}\bigg)|W|^2\mathrm{d}r.
\end{align*}
Thanks to
\begin{align*}
   &\int_{0}^{1}\f{r|\psi W|^2}{n^2+r^2l^2}\mathrm{d}r=\int_{0}^{1}\bigg(\f{4r^3}{(\lambda-r^2)^2}- \f{4r|n|}{\lambda-r^2} +\f{n^2}{r}\bigg)\f{|W|^2}{n^2+r^2l^2}\mathrm{d}r,\\
& \partial_r\bigg(\f{r\psi(r) }{n^2+r^2l^2}\bigg) =\f{4r\lambda}{(\lambda-r^2)^2(n^2+r^2l^2)}-\f{4r^3l^2}{(\lambda-r^2)(n^2+r^2l^2)^2} +\f{2rl^2|n|}{(n^2+r^2l^2)^2},
\end{align*}
we deduce that
\begin{align*}
   & \int_{0}^{1}\f{r|\psi W|^2}{n^2+r^2l^2}\mathrm{d}r-\int_{0}^{1}\partial_r\bigg(\f{r\psi }{n^2+r^2l^2}\bigg)|W|^2\mathrm{d}r\\
   &= \int_{0}^{1} \bigg(\f{4r^3-4r\lambda}{(\lambda-r^2)^2}- \f{4r|n|}{\lambda-r^2}+\f{4r^3l^2}{(\lambda-r^2)(n^2+r^2l^2)} +\f{n^2}{r}-\f{2rl^2|n|}{n^2+r^2l^2}\bigg)\f{|W|^2}{n^2+r^2l^2}\mathrm{d}r\\
   &= \int_{0}^{1} \bigg(-\f{4r(|n|+1)}{(\lambda-r^2)}+\f{4r^3l^2}{(\lambda-r^2)(n^2+r^2l^2)} +\f{n^2}{r}-\f{2rl^2|n|}{n^2+r^2l^2}\bigg)\f{|w_1|^2}{n^2+r^2l^2}\mathrm{d}r\\
   &=  \int_{0}^{1} \bigg(-\f{4rn^2(|n|+1)+4r^3l^2|n|}{(\lambda-r^2)(n^2+r^2l^2)}+\f{n^2}{r} -\f{2rl^2|n|}{n^2+r^2l^2}\bigg)\f{|W|^2}{n^2+r^2l^2}\mathrm{d}r\\
   &\leq -\int_{0}^{1} \f{4rn^2(|n|+1)|W|^2}{(\lambda-r^2)(n^2+r^2l^2)^2}\mathrm{d}r+ \int_{0}^{1}\f{n^2|W|^2}{r(n^2+r^2l^2)}\mathrm{d}r.
\end{align*}

Summing up, we obtain
\begin{align*}
  0\leq &\int_{0}^{1}\f{r|\partial_rW+\psi W|^2}{n^2+r^2l^2}\mathrm{d}r= \int_{0}^{1}\f{r|\partial_rW|^2+r|\psi W|^2}{n^2+r^2l^2}\mathrm{d}r-\int_{0}^{1}\partial_r\bigg(\f{r\psi }{n^2+r^2l^2}\bigg)|W|^2\mathrm{d}r\\
  \leq& \int_{0}^{1}\f{r|\partial_rW|^2}{n^2+r^2l^2}\mathrm{d}r-\int_{0}^{1} \f{4rn^2(|n|+1)|W|^2}{(\lambda-r^2)(n^2+r^2l^2)^2}\mathrm{d}r+ \int_{0}^{1}\f{n^2|W|^2}{r(n^2+r^2l^2)}\mathrm{d}r,
\end{align*}
which gives
\begin{align*}&\int_0^1\left(\dfrac{r|\partial_rW|^2}{n^2+r^2l^2}+\dfrac{|W|^2}{r}\right)\mathrm{d}r\geq \int_0^{1}\dfrac{4n^2r|W|^2(|n|+1)}{(n^2+r^2l^2)^2(\lambda-r^2)}\mathrm{d}r.
\end{align*}

This proves the lemma.
\end{proof}\smallskip

Now we prove Proposition \ref{prop:res-real}.\smallskip

  Thanks to $E=-\left\langle\f{W}{n^2+r^2l^2},W_1 \right\rangle$, we  get by Lemma \ref{lem:energy} that 
  \begin{align}\label{eq:la>1-est1}
  &\left\|\sqrt{\lambda-r^2}U\right\|_{L^2}^2+\Big(\left\|\sqrt{\lambda-r^2}W\right\|_{L^2}^2 -4n^2E\Big)\nonumber\\
  &\quad\leq 2\nu|l|^{-1}|\langle U,\widehat{\Delta}_1^*W_1\rangle|+|l|^{-1}\|(U,W_1)\|_{L^2}\|(F_1,F_2)\|_{L^2}.
  \end{align}
  By Lemma \ref{lem:E-key}, we have
  \begin{align*}
     &2|n|\sqrt{|n|+1}\left\|\dfrac{(\lambda-r^2)^{-\f12}W}{n^2+r^2l^2}\right\|_{L^2}\leq E^{\f12},
  \end{align*}
  which gives
    \begin{align*}
     2E&=-2\left\langle\f{W}{n^2+r^2l^2},W_1 \right\rangle\leq 2\left\|\dfrac{(\lambda-r^2)^{-\f12}W}{n^2+r^2l^2}\right\|_{L^2} \left\|\sqrt{\lambda-r^2}W_1\right\|_{L^2}\\
     &\leq |n|^{-1}(|n|+1)^{-\f12}E^{\f12}\left\|\sqrt{\lambda-r^2}W_1\right\|_{L^2}\\
     &\leq E+4^{-1}|n|^{-2}(|n|+1)^{-1}\left\|\sqrt{\lambda-r^2}W_1\right\|_{L^2}^2.
  \end{align*}
  Hence,  we deduce that
  \begin{align}\label{eq:la>1-est2}
     \left\|\sqrt{\lambda-r^2}W_1\right\|_{L^2}^2\geq 4n^2(|n|+1)E,
  \end{align}
which along with  \eqref{eq:la>1-est1} gives
  \begin{align*}
  &\left\|\sqrt{\lambda-r^2}U\right\|_{L^2}^2+\left\|\sqrt{\lambda-r^2}W_1\right\|_{L^2}^2/2\\
  &\leq \left\|\sqrt{\lambda-r^2}U\right\|_{L^2}^2+\bigg(1-\dfrac{1}{|n|+1}\bigg)\left\|\sqrt{\lambda-r^2}W_1\right\|_{L^2}^2 \\&\leq \left\|\sqrt{\lambda-r^2}U\right\|_{L^2}^2+\Big(\left\|\sqrt{\lambda-r^2}W_1\right\|_{L^2}^2-4n^2E\Big)\\
  &\leq 2\nu|l|^{-1}\big|\langle U,\widehat{\Delta}_1^*W_1\rangle\big|+|l|^{-1}\|(U,W_1)\|_{L^2}\|(F_1,F_2)\|_{L^2}.
  \end{align*}
 We known from \eqref{eq:la<0-est5} that 
  \begin{align*}
     \big|\langle U,\widehat{\Delta}_1^*W_1\rangle\big| \leq C\|U\|_{1}\|W_1\|_{1}.
  \end{align*}
  Then we conclude that
  \begin{align}\label{eq:la>1-est3}
     \left\|\sqrt{\lambda-r^2}(U,W_1)\right\|_{L^2}^2& \leq C\big( \nu|l|^{-1}\|U\|_{1}\|W_1\|_{1}+|l|^{-1}\|(U,W_1)\|_{L^2}\|(F_1,F_2)\|_{L^2}\big).
  \end{align}
  
 Using the first equation of \eqref{eq:LNS-WU-a} to obtain
  \begin{align*}
    &\big\langle W_1,-\nu(2\widehat{\Delta}U-\widehat{\Delta}_1^{*} W_1)+\ir l(\lambda-r^2)U\big\rangle= \langle W_1,F_1\rangle.
  \end{align*}
 We infer from Lemma \ref{lem:W1-H1} that
  \begin{align*}
    \nu\|W_1\|_{1}^2&\leq 4\nu|\langle W_1,\widehat{\Delta}U\rangle|+2|l|\big|\left\langle W_1,(\lambda-r^2)U\right\rangle\big|+2\|W_1\|_{L^2}\|F_2\|_{L^2}\\
    &\leq C\big(\nu\|U\|_{1}\|W_1\|_{1}+|l|\|\sqrt{\lambda-r^2}(U,W_1)\|_{L^2}^2+\|W_1\|_{L^2}\|F_1\|_{L^2}\big),
  \end{align*}
  which along with \eqref{eq:la>1-est3} gives
  \begin{align*}
    \nu\|W_1\|_{1}^2 &\leq C\big(\nu\|U\|_{1}\|W_1\|_{1}+\|(U,W_1)\|_{L^2}\|(F_1,F_2)\|_{L^2}\big).
  \end{align*}
  Then Young's inequality and \eqref{eq:U-H1} ensure that 
  \begin{align}\label{eq:la>1-est4}
    \nu\|(U,W_1)\|_{1}^2 &\leq C\big(\nu\|U\|_{1}^2+\|(U,W_1)\|_{L^2}\|(F_1,F_2)\|_{L^2}\big)\\ \nonumber&\leq C\|(U,W_1)\|_{L^2}\|(F_1,F_2)\|_{L^2}.
  \end{align}
  This along with \eqref{eq:la>1-est3} gives
\begin{align}\label{eq:la>1-est5}
     \left\|\sqrt{\lambda-r^2}(U,W_1)\right\|_{L^2}^2& \leq C|l|^{-1}\|(U,W_1)\|_{L^2}\|(F_1,F_2)\|_{L^2}.
  \end{align}
  
   By Lemma \ref{lem:hardy-1}, \eqref{eq:la>1-est4} and \eqref{eq:la>1-est5}, we have
  \begin{align*}
     \|(U,W_1)\|_{L^2}&\leq C\lambda^{-\f13}\|(U,W_{1})\|_{1}^{\f13}\left\|\sqrt{\lambda-r^2}(U,W_1)\right\|_{L^2}^{\f23}\\
     &\leq C\nu^{-\f16}\lambda^{-\f13}|l|^{-\f13}\big( \|(U,W_1)\|_{L^2}\|(F_1,F_2)\|_{L^2}\big)^{\f12}.
  \end{align*}
  This shows that 
  \begin{align*}
  \|(U,W_1)\|_{L^2}&\leq C|\nu l^2\lambda^2|^{-\f13}\|(F_1,F_2)\|_{L^2}.
  \end{align*}
 
Thanks to  $\|(U,W_1)\|_{1}^2\geq(l^2+n^2)\|(U,W_1)\|_{L^{2}}^2 $, we get by \eqref{eq:la>1-est4} that 
\begin{align*}
   \nu(l^2+n^2)\|(U,W_1)\|_{L^{2}}\leq &C\|(F_1,F_2)\|_{L^2}.
\end{align*}
Thanks to $|\nu nl|^{\f12}=|\nu n^2|^{\f14}|\nu l^2|^{\f14}\leq C(\nu n^2+|\nu\lambda^2 l^2|^{\f13})$, we have
\begin{align*}
   &(|\nu nl|^{\f12}+|\nu\lambda l^2|^{\f13})\|(W_1,U)\|_{L^2}\leq C(\nu n^2+|\nu\lambda^2 l^2|^{\f13})\|(W_1,U)\|_{L^2}\leq C\|(F_1,F_2)\|_{L^2}.
\end{align*}

By \eqref{eq:la>1-est2} and \eqref{eq:la>1-est5}, we get
\begin{align*}
   n^2(|n|+1)E&\leq C|l|^{-1}\|(U,W)\|_{L^2}\|(F_1,F_2)\|_{L^2}\\
   &\leq C|l|^{-1}(|\nu nl|^{\f12}+|\nu\lambda l^2|^{\f13})^{-1}\|(F_2,F_3)\|_{L^2}^2.
\end{align*}
Then we have
\begin{align*}
   \|\partial_rW\|_{L^2}&\leq C(|n|+|l|)\left\|\partial_rW/\sqrt{n^2+r^2l^2}\right\|_{L^2}\leq C(|n|+|l|)E^{\f12}\\
   &\leq C(|n|+|l|)|n|^{-1}|l|^{-\f12}(|\nu nl|^{\f12}+|\nu\lambda l^2|^{\f13})^{-\f12}\|(F_1,F_2)\|_{L^2}\\
   &\leq C(|n|+|l|)^{\f52}|n|^{-2}|l|^{-\f12}(|\nu nl|^{\f12}+|\nu\lambda l^2|^{\f13})^{-\f12}\|(F_1,F_2)\|_{L^2}.
\end{align*}

This completes the proof of Proposition \ref{prop:res-real} in the case of $\la>1$.

\subsection{Resolvent estimates when $\la\in (0,1]$}
 Let $r_0=\lambda^{\f12},\ \delta=(\nu/|lr_0|)^{\f13}.$ We have $|\nu/(l\la^2)|=|\delta/r_0|^3$.
 The proof of this case is based on the following key proposition, whose proof is very technical and will be left  to section 4.5.

 \begin{Proposition}\label{prop:res-real-key}
There exists a constant $c_2\in (0,\f12)$ such that if $\lambda\in(0,1]$, $\delta/r_0\leq \min(c_2,1/|n|)$ and  $|\delta l|\leq c_2,$ then we have
 \begin{align*}
&lr_0\delta\|(W_1,U)\|_{L^2}\leq  C\| (F_1,F_2)\|_{L^2},
\\&E^{\f12}\leq Cn^{-2}|l|^{-1}(r_0\delta)^{-\f12}(|n|+r_0|l|)^{\f32}\| (F_1,F_2)\|_{L^2}.
\end{align*}
\end{Proposition}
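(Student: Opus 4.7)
The plan is to follow the six-step strategy outlined in Section 3.2 verbatim. It reorganizes \eqref{eq:LNS-WU-a} into an Orr--Sommerfeld-type equation for $U$, a second Orr--Sommerfeld equation for the auxiliary unknown $U_1=U-W_1$ driven by the non-local term $\tfrac{4\ir n^2 lW}{n^2+r^2l^2}$, and a Rayleigh-type equation for $W_1$. The hypotheses $\delta/r_0\le\min(c_2,1/|n|)$ and $|\delta l|\le c_2$ ensure that the critical layer of width $\delta=(\nu/|lr_0|)^{1/3}$ centered at $r=r_0=\sqrt{\lambda}$ lies well inside $(0,1)$ and is much thinner than $r_0$, so that the non-local coupling with $W$ is genuinely non-perturbative and must be handled via a coercive lower bound rather than absorbed into the viscous terms.

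\textbf{Steps 1--3 (Orr--Sommerfeld and Rayleigh bounds).} Rewriting the first equation of \eqref{eq:LNS-WU-a} as $-\nu\widehat{\Delta} U+\ir l(\lambda-r^2)U=F_{1,1}$ with $F_{1,1}=F_1+\nu\widehat{\Delta} U_1+\nu(\widehat{\Delta}-\widehat{\Delta}_1^*)W_1$, I would invoke Couette-type Airy resolvent estimates in the spirit of \cite{CLWZ} to obtain
\begin{align*}
\nu\|\widehat{\Delta} U\|_{L^2}+|l|\|(\lambda-r^2)U\|_{L^2}+|\nu/\delta|\|U\|_1+|lr_0\delta|\|U\|_{L^2}+|l\delta|\|rU\|_{L^2}\le C\|F_{1,1}\|_{L^2}.
\end{align*}
A second scalar equation $-\nu\widehat{\Delta} U_1+\ir l(\lambda-r^2)U_1=F_{1,2}+\tfrac{4\ir n^2 l W}{n^2+r^2l^2}$ with $F_{1,2}=F_1-F_2+\nu(\widehat{\Delta}-\widehat{\Delta}_1)U+\nu(\widehat{\Delta}-\widehat{\Delta}_1^*)W_1$ is then paired with $U_1$ in $L^2$, together with Lemmas \ref{lem:energy} and \ref{lem:W1-H1}, to deliver the quadratic bound on $\|\widehat{\Delta} U\|_{L^2}$ and $\|\widehat{\Delta} U_1\|_{L^2}$ of Step 2. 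Finally, the identity $(\lambda-r^2)W_1+\tfrac{4n^2W}{n^2+r^2l^2}=F_{2,2}$, with $F_{2,2}=(F_2+\nu\widehat{\Delta}_1U)/(\ir l)$, is tested against the weights $r$ and $W_1$ and produces the Step~3 bound of $W_1$ in terms of $\|F_{2,2}\|_{L^2}$, $r_0\delta\|W_1\|_1$ and $E^{1/2}$.

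\textbf{Steps 4--5 (coercive estimate and closure).} The analytic heart of the argument is the coercive inequality
\begin{align*}
-\Big\langle\tfrac{W}{n^2+r^2l^2},W_1\Big\rangle-\Big\langle\tfrac{\chi_0 W}{n^2+r^2l^2},\tfrac{4n^2W}{n^2+r^2l^2}\Big\rangle\ge \frac{E}{3}-\frac{C\|W\|_{L^2(I)}^2}{r_0\delta(|n|+r_0|l|)},
\end{align*}
where $\chi_0(r)=\rho(r)/(\lambda-r^2)$ and $\rho$ is a smooth cutoff that vanishes on $|r-r_0|<\delta$. The factorization $\lambda-r^2=(r_0-r)(r_0+r)$ combined with integration by parts transfers the singular factor onto the cutoff, after which the positivity of $E$ carries the day. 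Inserting this into Step 3 absorbs the $E^{1/2}$ contribution and produces the clean bounds $r_0\delta\|W_1\|_{L^2}+\delta\|rW_1\|_{L^2}\le C(r_0\delta^2\|W_1\|_1+\|F_{2,2}\|_{L^2})$ and $E\le Cn^{-4}(r_0\delta)^{-1}(|n|+r_0|l|)^3(r_0^2\delta^4\|W_1\|_1^2+\|F_{2,2}\|_{L^2}^2)$.

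\textbf{Step 6 and main obstacle.} Setting $G=\|F_{1,1}\|_{L^2}+lr_0\delta^2\|W_1\|_1+\|F_2\|_{L^2}$ and assembling the outputs of Steps 1--5 gives the three bounds $|l|r_0\delta\|(W_1,U_1)\|_{L^2}+|l|\delta\|rU_1\|_{L^2}\le CG$, $E^{1/2}\le Cn^{-2}|l|^{-1}(r_0\delta)^{-1/2}(|n|+r_0|l|)^{3/2}G$, and $G\le C\|(F_1,F_2)\|_{L^2}$, where the last inequality uses the smallness conditions $|\delta l|\le c_2$ and $\delta/r_0\le c_2$ to absorb the $lr_0\delta^2\|W_1\|_1$ term inside $G$. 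Combining with the Step 1 bound on $U$ then yields the two estimates claimed in Proposition \ref{prop:res-real-key}. The hardest ingredient is clearly the coercive estimate of Step 4: the cutoff $\chi_0$ is introduced to remove the $(\lambda-r^2)^{-1}$ singularity in the critical layer, but the loss in the inequality scales precisely like $1/(r_0\delta(|n|+r_0|l|))$, which is exactly the scale on which $E$ is fed back in Step 3. Balancing these two opposing tendencies is what forces the smallness conditions $\delta/r_0\le 1/|n|$ and $|\delta l|\le c_2$, and it is from this balance that the weight $(|n|+r_0|l|)^{3/2}$ in the final $E^{1/2}$ bound originates.
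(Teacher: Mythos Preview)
Your six-step outline is exactly the paper's scheme (Section 3.2), and the actual proof in Section 4.5 carries it out via Lemmas \ref{lem:U-H2}--\ref{lem:EW}; so the overall approach is the same. However, your descriptions of the two hardest pieces are not just sketchy but point to the wrong mechanisms.

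For Step 4 (Lemma \ref{lem:W-lower}), the phrase ``the factorization $\lambda-r^2=(r_0-r)(r_0+r)$ combined with integration by parts transfers the singular factor onto the cutoff, after which the positivity of $E$ carries the day'' is not how the estimate is obtained. The paper first chooses $\tilde r\in(r_0-\delta/2,r_0)$ with $|W(\tilde r)|\le 2(r_0\delta)^{-1/2}\|W\|_{L^2(I)}$, then splits $W=w_1+w_2$ with $w_2=W(\tilde r)\chi_2$ for a cutoff $\chi_2$ of width $r_1=r_0/\max(|n|,2,|l|r_0)$. On $w_1$ (which vanishes at $\tilde r$) one reruns the multiplier $\psi(r)=\tfrac{2r}{\lambda-r^2}-\tfrac{|n|}{r}$ of Lemma \ref{lem:E-key} on $(0,\tilde r)$ to get $-\big\langle\tfrac{\chi_0 w_1}{n^2+r^2l^2},\tfrac{4n^2 w_1}{n^2+r^2l^2}\big\rangle\ge -E_1/(|n|+1)$; the diagonal $w_2$-piece uses the evenness of $\chi_2$ about $r_0$ to cancel the leading singular contribution; and the cross term $w_1$--$w_2$ is handled by the weighted Hardy inequality of Lemma \ref{lem:hardy-2}. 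No amount of integration by parts against the cutoff $\rho$ produces this lower bound; the decomposition and the multiplier are the essential ideas you are missing.

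For Step 6, the statement that smallness of $c_2$ ``absorbs the $lr_0\delta^2\|W_1\|_1$ term inside $G$'' skips the genuine closure. To control $|\nu/\delta|^2\|W_1\|_1^2$ one must first bound the nonlocal term $\big\langle\tfrac{4n^2lW}{n^2+r^2l^2},U\big\rangle$ appearing in \eqref{eq:kp-W1}, and the paper does this through the auxiliary quantity $G_1=\|U\|_{L^1}+|n|^{-1}r_0(|n|+r_0|l|)^{1/2}\|U\|_{L^2(0,r_0/2)}$ (see \eqref{eq:kp-nonlocal}), which is then estimated by the interpolation inequalities of Lemma \ref{lem:interp} together with the Step 1 bound on $\|(\lambda-r^2)U\|_{L^2}$. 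Only after inserting this into \eqref{eq:kp-G2} and invoking $|n^2r_0|^{-1}\delta(|n|+r_0|l|)\le 2c_2$ does the self-referential inequality for $G$ close. Your outline never mentions $G_1$ or the nonlocal estimate, and without them the bound of Step 2 still carries $|\nu/\delta|^2\|W_1\|_1^2$, which is essentially $G^2$, on its own right-hand side.
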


\subsubsection{Case of $\delta/r_0\le \min(c_2,1/|n|)$ and $|\delta l|\le c_2$}

Notice that 
\beno
&&|lr_0\delta|=|\nu\lambda l^2|^{\f13},\\
&&|\nu nl|^{\f12}= |\nu\lambda l^2|^{\f13}(\nu^{\f16}|n|^{\f12}\lambda^{-\f13}|l|^{-\f16})=|lr_0\delta||n\delta/r_0|^{\f12}\leq |lr_0\delta|,
\eeno 
hence, $|\nu nl^2|^{\f12}+|\nu \lambda l^2|^{\f13}\leq 2|lr_0\delta|$.  Then it follows from Proposition \ref{prop:res-real-key} that 
 \begin{align*}
 \big(|\nu nl|^{\f12}+|\nu \lambda l^2|^{\f13}\big)\|(W_1,U)\|_{L^2}\leq 2|lr_0\delta|\|(W_1,U)\|_{L^2}\leq C\|(F_1,F_2)\|_{L^2},
\end{align*}
and 
\begin{align*}
   \|\partial_rW\|_{L^2}&\leq (|n|+|l|)E^{\f12}\leq C(|n|+|l|)n^{-2}|l|^{-1}(r_0\delta)^{-\f12}(|n|+r_0|l|)^{\f32}\| (F_1,F_2)\|_{L^2}\\
   &\leq  Cn^{-2}|l|^{-\f12}|lr_0\delta|^{-\f12}(|n|+|l|)^{\f52}\|(F_1,F_2)\|_{L^2}\\
   &\leq  Cn^{-2}|l|^{-\f12}(|n|+|l|)^{\f52}(|\nu nl|^{\f12}+|\nu \lambda l^2|^{\f13})^{-\f12}\| (F_1,F_2)\|_{L^2}.
\end{align*}

\subsubsection{Case of $\delta/r_0\geq \min(c_2,1/|n|)$}

It follows from Lemma \ref{lem:energy}, \eqref{eq:la<0-est5} and \eqref{eq:la<0-est7} that 
\begin{align*}
   &|l|\|(rU,rW_1)\|_{L^2}^2 +4n^2|l|\big(\|\partial_rW/\sqrt{n^2+r^2l^2}\|_{L^2}^2+\| W/r\|_{L^2}^2\big)\\
   &\leq \big|\langle U,F_1\rangle-\langle F_2,W_1\rangle\big|+2\nu\big|\langle U,\widehat{\Delta}_1^* {W}_1\rangle\big|+\lambda|l|\|(U,W_1)\|_{L^2}^2\\
   &\leq \|(U,W_1)\|_{L^2}\|(F_1,F_2)\|_{L^2}+C\nu\|U\|_{1}\|W_1\|_{1}+C\lambda|l/n| \|(rU,rW_1)\|_{L^2}\|(U,W_1)\|_{1},
\end{align*}
from which and Young's inequality, we infer that 
\begin{align*}
   &|l|\|(rU,rW_1)\|_{L^2}^2 +n^2|l|\big(\|\partial_rW/\sqrt{n^2+r^2l^2}\|_{L^2}^2+\| W/r\|_{L^2}^2\big)\\
   &\leq C\Big(\|(U,W_1)\|_{L^2}\|(F_1,F_2)\|_{L^2}+(\nu+\lambda^2|l|n^{-2})\|(U,W_1)\|_{1}^2\Big).
\end{align*}
Thanks to 
\beno
\lambda^2|l|n^{-2}=r_0^4|l|n^{-2}\leq \nu n^{-2}(r_0/\delta)^{3}\leq \max(c_2^{-3},|n|^3)\nu n^{-2}\leq (c_2^{-3}+|n|)\nu,
\eeno
 we get
\begin{align}\label{eq:lam-est1}
   &\|rW_1\|_{L^2}^2+n^2\|W/r\|_{L^2}^2 +n^2\left\|\partial_rW/\sqrt{n^2+r^2l^2}\right\|_{L^2}^2\nonumber\\&\leq  C|l|^{-1}\Big(\|(U,W_1)\|_{L^2}\|(F_1,F_2)\|_{L^2}+\nu|n|\|(U,W_1)\|_{1}^2\Big).
\end{align}

Recall the equality \eqref{eq:la<0-est2}, which shows 
\begin{align*}
   &-\nu\langle W_1,\widehat{\Delta}^{*}_1W_1\rangle+\nu\langle \widehat{\Delta}_1U,U\rangle+2\nu\langle W_1,\widehat{\Delta}U\rangle-4\ir n^2l\left\langle \f{W}{n^2+r^2l^2},U\right\rangle \\
   &=-\langle W_1,F_1\rangle-\langle F_2,U\rangle.\nonumber
\end{align*}
We get by integration by parts that 
\begin{align*}
   &|\langle W_1,\widehat{\Delta}U\rangle|+|\langle \widehat{\Delta}_1U,U\rangle|\leq C\big(\|W\|_1\|U\|_{1}+\|U\|_{1}^2\big),
\end{align*}
and 
\begin{align*}
   \left|\bigg\langle\f{4n^2lW}{n^2+r^2l^2}, U\bigg\rangle\right|&\leq 4|l|\|W/r\|_{L^2}\|rU\|_{L^2}.
\end{align*}
We get by Lemma \ref{lem:W1-H1} that 
\begin{align*}
 &\nu\|W_1\|_{1}^2\leq -2\nu\mathbf{Re}\langle W_1,\widehat{\Delta}^{*}_1W_1\rangle\\
  &\leq 2\nu\big|\langle \widehat{\Delta}_1U,U\rangle\big|+4\nu\big|\langle W_1,\widehat{\Delta}U \rangle \big|+2\big|\langle W_1,F_1\rangle+\langle F_2,U\rangle\big| +2\left|\bigg\langle\f{4n^2lW}{n^2+r^2l^2},U \bigg\rangle\right|
 \\&\leq C\Big(\nu\big(\|U\|_{1}^2+\|U\|_{1}\|W_1\|_{1}\big)+\|(W_1,U)\|_{L^2}\|(F_1,F_2)\|_{L^2} +|l|\|W/r\|_{L^2}\|rU\|_{L^2}\Big),
\end{align*}
from which and \eqref{eq:lam-est1}, we infer that
\begin{align*}
 \|W_1\|_{1}^2\leq& C\big(\|U\|_{1}^2+\nu^{-1}\|(W_1,U)\|_{L^2}\|(F_1,F_2)\|_{L^2} +\nu^{-1}|l|\|W/r\|_{L^2}\|rU\|_{L^2}\big)\\
 \leq& C\Big(\|U\|_{1}^2+\nu^{-1}\|(W_1,U)\|_{L^2}\|(F_1,F_2)\|_{L^2} \\&+\nu^{-1}|l|^{\f12}|n|^{-1}\big(\|(U,W_1)\|_{L^2}\|(F_1,F_2)\|_{L^2}+\nu|n|\|(U,W_1)\|_{1}^2\big)^{\f12}\|rU\|_{L^2}\Big)\\
 \leq& C\Big(\|U\|_{1}^2+\nu^{-1}\|(W_1,U)\|_{L^2}\|(F_1,F_2)\|_{L^2}+|\nu n^2/l|^{-1}\|rU\|_{L^2}^2\\&\quad+|\nu n/l|^{-\f12}\|(U,W_1)\|_{1}\|rU\|_{L^2}\Big).
\end{align*}
This along with \eqref{eq:U-H1}  shows that
\begin{align}\label{eq:lam-est2}
 \|(U,W_1)\|_{1}^2\leq& C\big(\nu^{-1}\|(W_1,U)\|_{L^2}\|(F_1,F_2)\|_{L^2} +|\nu n/l|^{-1}\|rU\|_{L^2}^2\big).
\end{align}

Now let us estimate $\|rU\|_{L^2}$. We have
  \begin{align*}
     &\langle U,F_1\rangle=\big\langle U,-\nu(2\widehat{\Delta}U-\widehat{\Delta}_{1}^{*}W_1)+ \ir l(\lambda-r^2)U\big\rangle,
  \end{align*}
which gives
  \begin{align*}
     &\mathbf{Im}\langle U,F_1\rangle=\nu\mathbf{Im}\langle U,\widehat{\Delta}_1^* {W}_1\rangle+l\big(\| rU\|_{L^2}^2-\lambda\|U\|_{L^2}^2\big).
  \end{align*}
  Then we have
  \begin{align*}
     \|rU\|_{L^2}^2&\leq C\big(\lambda\|U\|_{L^2}^2+|\nu/l|\|U\|_{1}\|W_1\|_{1}+|l|^{-1} \|U\|_{L^2}\|F_1\|_{L^2}\big)\\
     &\leq C\big(\lambda|n|^{-1}\|rU\|_{L^2}\|U\|_{1}+|\nu/l|\|U\|_{1}\|W_1\|_{1}+|l|^{-1} \|U\|_{L^2}\|F_1\|_{L^2}\big).
  \end{align*}
  Here we used $\|U\|_{L^2}^2\leq \|rU\|_{L^2}\|U/r\|_{L^2}\leq |n|^{-1}\|rU\|_{L^2}\|U\|_{1}$. Thanks to 
  \begin{align*}\lambda^2|n|^{-2}=|\nu/l||n|^{-2}(r_0/\delta)^{3}\leq |\nu/l||n|^{-2}\max(c_2^{-3},|n|^3)\leq (c_2^{-3}+|n|)|\nu/l| , \end{align*} we get by \eqref{eq:lam-est2}  that 
  \begin{align*}
     \|rU\|_{L^2}^2&\leq C\big(\lambda^2|n|^{-2}\|U\|_{1}^2+|\nu/l|\|U\|_{1}\|W_1\|_{1}+|l|^{-1} \|U\|_{L^2}\|F_1\|_{L^2}\big)\\
     &\leq C\big(|\nu n/l|\|U\|_{1}(\|W_1\|_{1}+\|U\|_{1})+|l|^{-1} \|U\|_{L^2}\|F_1\|_{L^2}\big)\\
     &\leq C|\nu n/l|\|U\|_{1}\big(\nu^{-1}\|(W_1,U)\|_{L^2}\|(F_1,F_2)\|_{L^2} +|\nu n/l|^{-1}\|rU\|_{L^2}^2\big)^{\f12}\\ &\qquad+C|l|^{-1} \|U\|_{L^2}\|F_1\|_{L^2}.
  \end{align*}
  Thus, we obtain
  \begin{align*}
     \|rU\|_{L^2}^2&\leq C\big(|n|\nu^{\f12}|l|^{-1}\|U\|_{1}\|(W_1,U)\|_{L^2}^{\f12}\|(F_1,F_2)\|_{L^2}^{\f12} +|\nu n/l|\|U\|_{1}^2+|l|^{-1}\|U\|_{L^2}\|F_1\|_{L^2}\big).
  \end{align*}
  This along with \eqref{eq:U-H1}  gives 
  \begin{align*}
     \|rU\|_{L^2}^2&\leq C|n/l|\|(U,W_1)\|_{L^2}\|(F_1,F_2)\|_{L^2}.
  \end{align*}
Then we infer from \eqref{eq:lam-est2} that
   \begin{align}\label{eq:lam-est3}
 \|(U,W_1)\|_{1}^2\leq C\nu^{-1}\|(W_1,U)\|_{L^2}\|(F_1,F_2)\|_{L^2}.
\end{align}
By \eqref{eq:lam-est1} and \eqref{eq:lam-est3}, we have
\begin{align}\label{eq:lam-est4}
  \|(rU,rW_1)\|_{L^2}^2+n^2\left\|\partial_rW/\sqrt{n^2+r^2l^2}\right\|_{L^2}^2 &\leq C|n/l|\|(W_1,U)\|_{L^2}\|(F_1,F_2)\|_{L^2}.
\end{align}

Summing up, we get by \eqref{eq:W1U-L2}  that
\begin{align*}
   \|(U,W_1)\|_{L^2}&\leq |n|^{-1}\|(U,W_1)\|_{1}\|(rU,rW_1)\|_{L^2}\leq C|\nu nl|^{-\f12}\|(W_1,U)\|_{L^2}\|(F_1,F_2)\|_{L^2},
\end{align*}
which shows that
\begin{align*}
   |\nu nl|^{\f12}\|(U,W_1)\|_{L^2}&\leq C\|(F_1,F_2)\|_{L^2}.
\end{align*}

As $\delta/r_0\geq \min(c_2,1/|n|),$ we have $|\nu/(l\lambda^2)|\geq \min(c_2,1/|n|)^3\geq c_2^3/|n|^3$, and then $|l\lambda^2/(\nu n^3)|\leq C$. Thus, $|\nu\lambda l^2|^{\f13}=|\nu nl|^{\f12}|\lambda^2l/(\nu n^3)|^{\f16}\leq C|\nu nl|^{\f12}$ and 
\begin{align*}
   &(|\nu nl|^{\f12}+|\nu\lambda l^2|^{\f13})\|(W_1,U)\|_{L^2}\leq C|\nu nl|^\f12\|(W_1,U)\|_{L^2}\leq C\|(F_1,F_2)\|_{L^2}.
\end{align*}
Then we get by \eqref{eq:lam-est4} that
\begin{align*}
   \|\partial_rW\|_{L^2}&\leq C(|n|+|l|)\left\|\partial_rW/\sqrt{n^2+r^2l^2}\right\|_{L^2} \\ &\leq C(|n|+|l|)|n|^{-1}\big(|n/l|\|(W_1,q)\|_{L^2}\|(F_1,F_2)\|_{L^2}\big)^{\f12}\\
   &\leq C(|n|+|l|)|n|^{-\f12}|l|^{-\f12}(|\nu nl|^{\f12}+|\nu\lambda l^2|^{\f13})^{-\f12}\|(F_1,F_2)\|_{L^2}\\
   &\leq C(|n|+|l|)^{\f52}|n|^{-2}|l|^{-\f12}(|\nu nl|^{\f12}+|\nu\lambda l^2|^{\f13})^{-\f12}\|(F_1,F_2)\|_{L^2}.
\end{align*}

\subsubsection{Case of $\delta/r_0\le \min(c_2,1/|n|)$ and $|\delta l|\ge c_2$}

In this case, we have $c_2^3\leq |l\delta|^3=\nu l^2/r_0$, and then $\nu l^2/c_2^3\geq r_0\geq\delta/c_2\geq|1/l|$, thus $\nu |l|^3\geq c_2^3$. Let $U_1=U-W_1$, which satisfies
  \begin{align*}
     &-\nu\widehat{\Delta}_{1}^{*}U_1+{i}l(\lambda-r^2)U_1=F_1-F_2+ \nu(2\widehat{\Delta}-\widehat{\Delta}_{1}-\widehat{\Delta}_{1}^{*})U+\dfrac{4{\ir}n^2lW}{n^2+r^2l^2}.
  \end{align*}
Then we find that
  \begin{align*}
     &-\nu\langle U_1,\widehat{\Delta}_{1}^{*}U_1\rangle-{\ir}l\langle U_1,(\lambda-r^2)U_1\rangle \\
     &=\big\langle U_1, F_1-F_2+ \nu(2\widehat{\Delta}-\widehat{\Delta}_{1}-\widehat{\Delta}_{1}^{*})U\big\rangle+\left\langle U_1, \dfrac{4{\ir}n^2lW}{n^2+r^2l^2}\right\rangle\\
     &=\big\langle U_1, F_1-F_2+ \nu(2\widehat{\Delta}-\widehat{\Delta}_{1}-\widehat{\Delta}_{1}^{*})U\big\rangle-\left\langle W_1,\dfrac{4{\ir}n^2lW}{n^2+r^2l^2}\right\rangle+\left\langle U, \dfrac{4{\ir}n^2lW}{n^2+r^2l^2}\right\rangle\\
     &=\big\langle U_1, F_1-F_2+ \nu(2\widehat{\Delta}-\widehat{\Delta}_{1}-\widehat{\Delta}_{1}^{*})U \big\rangle+4{\ir}n^2lE+\left\langle U, \dfrac{4{\ir}n^2lW}{n^2+r^2l^2}\right\rangle.
  \end{align*}
Thanks to $E\in\mathbb{R}$ and  $\|U_1\|_{1}^2\leq-2\mathbf{Re}\langle U_1,\widehat{\Delta}_1^*U_1\rangle$, we get
  \begin{align*}
     \nu\|U_1\|_{1}^2&\leq C\Big(\|U_1\|_{L^2}\|(F_1,F_2)\|_{L^2}+ \nu\|U_1\|_{L^2}\left\|(2\widehat{\Delta}-\widehat{\Delta}_{1}-\widehat{\Delta}_{1}^{*})U\right\|_{L^2}\\
     &\qquad+\|U\|_{L^2}\left\|\dfrac{4{\ir}n^2lW}{n^2+r^2l^2}\right\|_{L^2}\Big).
  \end{align*}
  
Thanks to  $\|U\|_{L^2}\leq |l|^{-1}\|U\|_{1}$ and $\|U_1\|_{L^2}\leq |l|^{-1}\|U_1\|_{1}$, we have
  \begin{align*}
    \nu|l|\|U_{1}\|_{1} &\leq C\Big( \|(F_{1},F_{2})\|_{L^2}+\nu\left\|(2\widehat{\Delta}-\widehat{\Delta}_{1}-\widehat{\Delta}_{1}^{*})U\right\|_{L^2} +(\nu|l|\|U\|_{1})^{\f12}\left\|\dfrac{4{\ir}n^2lW}{n^2+r^2l^2}\right\|_{L^2}^{\f12}\Big).
  \end{align*}
Due to  $(n^2+r^2l^2)^2\geq n^2|2nrl|$, we get
  \begin{align*}
     \left\|(2\widehat{\Delta}-\widehat{\Delta}_{1}-\widehat{\Delta}_{1}^{*})U\right\|_{L^2}& = \left\|\dfrac{2rl^2}{n^2+r^2l^2}\partial_rU-\dfrac{1}{r}\partial_r\dfrac{2r^2l^2U}{n^2+r^2l^2}\right\|_{L^2}= \left\|\dfrac{4l^2n^2}{(n^2+r^2l^2)^2}U\right\|_{L^2}\\ &\leq 2|l/n|\|U/r\|_{L^2}\leq2|l/n^2|\|U\|_{1}.
  \end{align*}
 Using the fact that
  \begin{align}\label{eq:lam-est5}
     |l|^2\|W\|_{L^2}^2&\leq\|W\|_{1}^2\leq-2\mathbf{Re}\langle W,\widehat{\Delta}_1W\rangle=-2\mathbf{Re}\langle W,W_1\rangle\leq 2\|W\|_{L^2}\|W_1\|_{L^2},
  \end{align}
we deduce that 
   \begin{align}\label{eq:lam-est6}
   \left\|\dfrac{4{\ir}n^2lW}{n^2+r^2l^2}\right\|_{L^2}&\leq 4|l|\|W\|_{L^2}\leq 2|l|^{-1}\|W_1\|_{L^2}\leq C|l|^{-2}\|W_1\|_{1}\nonumber\\
     &\leq C|l|^{-2}\|(U,U_1)\|_{1}\leq Cc_2^{-3}\nu|l|\|(U,U_1)\|_{1}.
  \end{align}   
 Here we used $c_2^3 \leq \nu|l|^3$. Summing up,  we conclude
  \begin{align*}
     \nu|l|\|U_{1}\|_{1} &\leq C\big( \|(F_{1},F_{2})\|_{L^2}+\nu|l/n^2|\|U\|_{1} +(\nu|l|\|U\|_{1})^{\f12}(\nu|l|\|(U_1,U)\|_{1})^{\f12}\big)\\
     &\leq C\big( \|(F_{1},F_{2})\|_{L^2}+\nu|l|\|U\|_{1} +(\nu|l|\|U\|_{1})^{\f12}(\nu|l|\|(U_1,U)\|_{1})^{\f12}\big),
  \end{align*}
  from which and  Young's inequality, we infer that
  \begin{align*}
     \nu|l|\|(U,U_{1})\|_{1} &\leq C\big( \|(F_{1},F_{2})\|_{L^2}+\nu|l|\|U\|_{1}\big).
  \end{align*}
Then by using \eqref{eq:U-H1} and  $\|(U,W_1)\|_{L^2}\leq C\|(U,U_1)\|_{L^2}\leq C|l|^{-1}\|(U,U_1)\|_{1}$, we get
  \begin{align*}
     \nu|l|\|(U,U_{1})\|_{1} &\leq C\big( \|(F_{1},F_{2})\|_{L^2}+\|(F_1,F_2)\|_{L^2}^{\f12}(\nu l^2\|(U,W_1)\|_{L^2})^{\f12}\big)\\
     &\leq C\big( \|(F_{1},F_{2})\|_{L^2}+\|(F_1,F_2)\|_{L^2}^{\f12}(\nu |l|\|(U,U_1)\|_{1})^{\f12}\big).
  \end{align*}
 Then by Young's inequality, we obtain 
  \begin{align}\label{eq:lam-est7}
     \nu|l|\|(U,U_{1})\|_{1}&\leq C\|(F_1,F_2)\|_{L^2}.
  \end{align}
   Using the fact that $\|(U,W_1)\|_{L^2}\leq C|l|^{-1}\|(U,U_1)\|_{1}$ again, we deduce that
  \begin{align*}
    \nu l^2\|(U,W_1)\|_{L^2} &\leq C\|(F_1,F_2)\|_{L^2}.
  \end{align*}

  As $\delta/r_0\leq \min(c_2,1/|n|)\leq |n|^{-1}$, we have $|\nu n^3/(l\lambda^2)|\leq 1$ and 
  \begin{align*}
     & |\nu nl|^{\f12}=|\nu \lambda l^2|^{\f13}|\nu n^3/(l\lambda^2)|^{\f16}\leq |\nu \lambda l^2|^{\f13},\\
  &|\nu\lambda l^2|^{\f13}=\nu l^2(\nu^{-\f23}\lambda^{\f13}|l|^{-\f43})=\nu l^2|\delta l|^{-2}\leq c_2^{-2}\nu l^2\leq C\nu l^2.
  \end{align*}
  Then we conclude that
  \begin{align*}
     &(|\nu nl|^{\f12}+|\nu\lambda l^2|^{\f13})\|(W_1,U)\|_{L^2}\leq C\nu l^2\|(W_1,U)\|_{L^2}\leq C\|(F_1,F_2)\|_{L^2}.
  \end{align*}
  
  By \eqref{eq:lam-est6} and \eqref{eq:lam-est7}, we get
  \begin{align*}
     &\|W\|_{L^2}\leq C(|n|+|l|)^{2}|n|^{-2}|l|^{-1}\left\|\dfrac{4{\ir}n^2lW}{n^2+r^2l^2}\right\|_{L^2} \leq C(|n|+|l|)^{2}|n|^{-2}|l|^{-1}\|(F_1,F_2)\|_{L^2}.
  \end{align*}
  Thanks to $\|\partial_rW\|_{L^2}^2\leq 2\mathbf{Re} \langle- \widehat{\Delta}_1W,W\rangle\leq 2\|W_1\|_{L^2}\|W\|_{L^2}$,  we obtain  \begin{align*}
     \|\partial_rW\|_{L^2}&\leq \sqrt{2}\|W\|_{L^2}^{\f12}\|W_1\|_{L^2}^{\f12}\\
     &\leq C(|n|+|l|)|n|^{-1}|l|^{-\f12}(|\nu nl|^{\f12}+|\nu\lambda l^2|^{\f13})^{-\f12}\|(F_1,F_2)\|_{L^2}\\
     & \leq C (|n|+|l|)^{\f52}|n|^{-2}|l|^{-\f12}(|\nu nl|^{\f12}+|\nu\lambda l^2|^{\f13})^{-\f12}\|(F_1,F_2)\|_{L^2}.
  \end{align*}

\subsection{Resolvent estimates when $\la\in \C$}

Throughout this subsection, we always assume that $\lambda\in\C$ and  $l\lambda_i\leq c_1|\nu n l|^{\f12}.$\smallskip

Now let us prove Proposition \ref{prop:res-com}.

 \begin{proof}
 Using the equation \eqref{eq:LNS-WU-a}, we obtain
 \begin{align*}
    &\big\langle U,-\nu (2\widehat{\Delta} {U}-\widehat{\Delta}_1^*{W}_1)+{\ir}l(\lambda-r^2){U}\big\rangle+ \left\langle -\nu \widehat{\Delta}_1{U}+{\ir}l(\lambda-r^2){W}_1+\dfrac{4{\ir}n^2l{W}}{n^2+r^2l^2},W_1 \right\rangle\\
    &=\langle U,F_1\rangle+\langle F_2,W_1\rangle.
 \end{align*}
 Thanks to 
 \beno
 &&\langle U,-(2\widehat{\Delta}U-\widehat{\Delta}_{1}^{*}W_1)\rangle -\langle\widehat{\Delta}_1U,W_1\rangle=\langle U,-2\widehat{\Delta}U\rangle=2\|U\|_{1}^2,\\
  &&\mathbf{Re}(\langle U,{\ir}l(\lambda-r^2){U}\rangle+\langle {\ir}l(\lambda-r^2){W}_1, W_1\rangle)=-l\lambda_i\|(W_1,U)\|_{L^2}^2,\\
  &&\mathbf{Re}\left\langle \dfrac{4{\ir}n^2l{W}}{n^2+r^2l^2},W_1 \right\rangle=0.
  \eeno
 we obtain 
  \begin{align*}
    &\mathbf{Re}\big(\langle U,F_1\rangle+\langle F_2,W_1\rangle\big)=2\nu\|U\|_{1}^2-l\lambda_i\|(W_1,U)\|_{L^2}^2.
 \end{align*}
If $l\lambda_i\leq 0$, then we have
\begin{align*}
  |l\lambda_i|\|(W_1,U)\|_{L^2}\leq \|(F_1,F_2)\|_{L^2}.
\end{align*}
As $l\lambda_i\leq c_1|\nu nl|^{\f12}$, we conclude that
\begin{align}\label{eq:p2-est1}
   &|l\lambda_i|\|(W_1,U)\|_{L^2}\leq \|(F_1,F_2)\|_{L^2}+c_1|\nu nl|^{\f12}\|(W_1,U)\|_{L^2}.
\end{align}

Now we rewrite the system \eqref{eq:LNS-WU-a} as
   \begin{align*}
   \left\{\begin{array}{l}-\nu (2\widehat{\Delta} {U}-\widehat{\Delta}_1^*{W}_1)+{\ir}l(\lambda_r-r^2){U}={F}_1+l\lambda_i{U},\\-\nu \widehat{\Delta}_1{U}+{\ir}l(\lambda_r-r^2){W}_1+\dfrac{4{\ir}n^2l{W}}{n^2+r^2l^2}
={F}_2+l\lambda_i{W}_1,\\{W}_1=\widehat{\Delta}_1{W},\ {W}|_{r=1}={W}_1|_{r=1}={U}|_{r=1}=0.\end{array}\right.
\end{align*}
Then by Proposition \ref{prop:res-real} and \eqref{eq:p2-est1}, we get
\begin{align*}
  \big(|\nu nl|^{\f12}+|\nu\lambda_rl^2|^{\f13}\big)\|(W_1,U)\|_{L^2}&\leq C\|({F}_1+l\lambda_i{U}, {F}_2+l\lambda_i{W_1})\|_{L^2}\\
   &\leq C\big(\|(F_1,F_2)\|_{L^2}+|l\lambda_i|\|(W_1,U_1)\|_{L^2}\big)\\
   &\leq C\big(\|(F_1,F_2)\|_{L^2}+c_1|\nu nl|^{\f12}\|(W_1,U)\|_{L^2}\big).
\end{align*}
Taking $c_1$ sufficiently small so that $Cc_1\leq1/2$, we get
\begin{align*}
   & (|\nu nl|^{\f12}+|\nu\lambda_rl^2|^{\f13})\|(W_1,U)\|_{L^2}\leq C\|(F_1,F_2)\|_{L^2},
\end{align*}
and by \eqref{eq:p2-est1}, 
\begin{align*}
   &|l\lambda_i|\|(W_1,U)\|_{L^2}\leq C\|(F_1,F_2)\|_{L^2}.
\end{align*}
Thanks to $|\nu\lambda_il^2|^{\f13}\leq |n|^{-\f13}|l\lambda_i|^{\f13}(|\nu nl|^{\f12})^{\f23}\leq C(|l\lambda_i|+|\nu nl|^{\f12})$,
we infer that
\begin{align*}
   & (|\nu nl|^{\f12}+|\nu\lambda l^2|^{\f13}+|l\lambda_i|)\|(W_1,U)\|_{L^2}\leq C\|(F_1,F_2)\|_{L^2},
\end{align*}
which gives the first inequality of the proposition.

By Proposition \ref{prop:res-real} and \eqref{eq:p2-est1} again, we get
\begin{align*}
   \|\partial_rW\|_{L^2}&\leq C(|n|+|l|)^{\f52}n^{-2}|l|^{-\f12}(|\nu nl|^{\f12}+|\nu\lambda_r l^2|^{\f13})^{-\f12}\|(F_1+l\lambda_iU,F_2+l\lambda_iW_1)\|_{L^2}\\
   &\leq C(|n|+|l|)^{\f52}n^{-2}|l|^{-\f12}(|\nu nl|^{\f12}+|\nu\lambda_r l^2|^{\f13})^{-\f12}\|(F_1,F_2)\|_{L^2},
\end{align*}
which gives the second inequality of the proposition. 

We get by Lemma \ref{lem:sob} that
\begin{align*}
  |\partial_rW(1)|^2\leq& C\|\partial_rW\|_{L^2}\|\widehat{\Delta}_1W\|_{L^2} =C\|\partial_rW\|_{L^2}\|W_1\|_{L^2}\\
   \leq &C (|n|+|l|)^{\f52}n^{-2}|l|^{-\f12}(|\nu nl|^{\f12}+|\nu\lambda_r l^2|^{\f13})^{-\f12}\\
   &\times(|\nu nl|^{\f12}+|\nu\lambda l^2|^{\f13}+|l\lambda_i|)^{-1}\|(F_1,F_2)\|_{L^2}^2.
\end{align*}
Thanks to $|\nu \lambda_il^2|^{\f12}= (|\nu l|^{\f14})\times(|\nu l|^{\f14}|l\lambda_i|^{\f12})\leq (|\nu l|^{\f14})\big(\dfrac{|\nu l|^{\f12}+|l\lambda_i|}{2}\big)$, we find that
\begin{align*}
    (|\nu nl|^{\f12}+|\nu\lambda l^2|^{\f13})^{\f32}&\leq C\big((|\nu nl|^{\f12}+|\nu\lambda_r l^2|^{\f13})^{\f32}+|\nu \lambda_il^2|^{\f12}\big)\\
   &\leq  C\big((|\nu nl|^{\f12}+|\nu\lambda_r l^2|^{\f13})^{\f32}+|\nu l|^{\f14}(|\nu l|^{\f12}+|l\lambda_i|)\big)\\
   &\leq C\big(|\nu nl|^{\f12}+|\nu\lambda_r l^2|^{\f13}\big)^{\f12}\big(|\nu nl|^{\f12}+|\nu\lambda l^2|^{\f13}+|l\lambda_i|\big).
\end{align*}
Then we conclude that
\begin{align*}
   & |\partial_rW(1)|^2\leq C(|n|+|l|)^{\f52}n^{-2}|l|^{-\f12}(|\nu nl|^{\f12}+|\nu\lambda l^2|^{\f13})^{-\f32}\|(F_1,F_2)\|_{L^2}.
\end{align*}
This shows the third inequality of the proposition.
 \end{proof}

\subsection{Proof of Proposition \ref{prop:res-real-key}} 
Let us recall that $r_0=\lambda^{\f12}$ and $\delta=(\nu/|lr_0|)^{\f13}.$ Here we always assume that $\delta/r_0\leq \min(c_2,1/|n|)$ and $|\delta l|\leq c_2$. \smallskip

Let  $U_1=U-W_1,$ which satisfies
\begin{align}\label{eq:U1}
&-\nu \widehat{\Delta} {U}_1+{\ir}l(\lambda-r^2){U}_1={F}_{1,2}+\dfrac{4{\ir}n^2l{W}}{n^2+r^2l^2},
\end{align}
where
\ben\label{def:F21}
F_{1,2}:=F_1-F_2+\nu (\widehat{\Delta}-\widehat{\Delta}_1) {U}+\nu (\widehat{\Delta}-\widehat{\Delta}_1^*) {W}_1.
\een
We rewrite the first equation of \eqref{eq:LNS-WU-a} as
\begin{align}\label{eq:U}
&-\nu \widehat{\Delta} {U}+{\ir}l(\lambda-r^2){U}=F_1+\nu \widehat{\Delta} {U}_1+\nu (\widehat{\Delta}-\widehat{\Delta}_1^*) {W}_1:=F_{1,1}.
\end{align}

The following two lemmas are devoted to the estimates of $U$ based on the formulation \eqref{eq:U1} and \eqref{eq:U}.

 \begin{Lemma}\label{lem:U-H2}
It holds that
  \begin{align*}
    &\nu\|\widehat{\Delta} {U}\|_{L^{2}}+|l|\|(\lambda-r^2) {U}\|_{L^{2}}+|\nu/\delta|\|U\|_{1}+|lr_0\delta|\|U\|_{L^2}+|l\delta|\|rU\|_{L^2}\leq C\|F_{1,1}\|_{L^2}.
  \end{align*}
\end{Lemma}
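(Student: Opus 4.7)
The equation \eqref{eq:U} is an Airy-type resolvent problem with critical layer at $r=r_0=\sqrt{\lambda}$ of thickness $\delta=(\nu/|lr_0|)^{1/3}$; the entire lemma reduces to establishing the sharp $L^2$ resolvent bound $|lr_0\delta|\,\|U\|_{L^2}\leq C\|F_{1,1}\|_{L^2}$, after which the remaining four estimates follow by algebraic manipulation and suitable testing. The plan is to begin with the basic energy identity: testing \eqref{eq:U} against $\bar U$ in the $L^2(r\,\mathrm{d}r)$ inner product, the real part yields $\nu\|U\|_1^2\leq \|U\|_{L^2}\|F_{1,1}\|_{L^2}$, while the imaginary part gives the moment identity $|l|\,\bigl|\lambda\|U\|_{L^2}^2-\|rU\|_{L^2}^2\bigr|\leq \|U\|_{L^2}\|F_{1,1}\|_{L^2}$.

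For the sharp $L^2$ bound I would test \eqref{eq:U} against $i\phi\bar U$, where $\phi$ is a smooth real-valued weight equal to $\mathrm{sgn}\bigl(l(\lambda-r^2)\bigr)$ outside the critical layer $\{|r-r_0|>\delta\}$ and interpolating smoothly through zero inside. The principal term $l\int(\lambda-r^2)\phi\,|U|^2\,r\,\mathrm{d}r$ is then non-negative and bounded below on the far region by $c|l|r_0\delta\,\|U\|_{L^2(|r-r_0|>\delta)}^2$, using $|\lambda-r^2|=(r+r_0)|r-r_0|\geq r_0\delta$ there. The viscous cross-term $\nu\,\mathrm{Im}\,\langle\widehat{\Delta} U,\phi U\rangle$ reduces, after integration by parts, to a commutator involving $\phi'$ that is controlled by the $H^1$-bound from the energy step; the forcing contribution is $\leq \|F_{1,1}\|_{L^2}\|U\|_{L^2}$; and the near-layer portion is handled by the Sobolev bound $\|U\|^2_{L^2(|r-r_0|<\delta)}\leq C\delta^2\|\partial_rU\|_{L^2}^2$ combined with the energy estimate. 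The smallness $\delta/r_0\leq c_2$ and the identity $\nu=|lr_0|\delta^3$ let all these errors be absorbed, giving $|lr_0\delta|\,\|U\|_{L^2}\leq C\|F_{1,1}\|_{L^2}$.

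The remaining estimates then follow by bootstrapping. The $H^1$ bound uses the identity $|\nu/\delta|^2/|lr_0\delta|=\nu/(|lr_0|\delta^3)=1$, which turns the energy estimate into $|\nu/\delta|^2\|U\|_1^2\leq (\nu/\delta^2)\|U\|_{L^2}\|F_{1,1}\|_{L^2}\leq\|F_{1,1}\|_{L^2}^2$. The moment identity together with the sharp $L^2$ bound yields $l^2\delta^2\|rU\|^2_{L^2}\leq |lr_0\delta|^2\|U\|_{L^2}^2+(\delta/r_0)\|F_{1,1}\|_{L^2}^2\leq C\|F_{1,1}\|_{L^2}^2$ via $\delta/r_0\leq c_2$. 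For the Rayleigh moment $|l|\,\|(\lambda-r^2)U\|_{L^2}$, I would test \eqref{eq:U} against $\overline{(r^2-\lambda)U}$ and take the imaginary part; an integration by parts gives $|\mathrm{Im}\,\langle\widehat{\Delta} U,(r^2-\lambda)U\rangle|\leq 2\|\partial_rU\|_{L^2}\|rU\|_{L^2}$, which combined with the just-established $H^1$ and $\|rU\|_{L^2}$ bounds together with $\nu\|U\|_1/\delta\leq\|F_{1,1}\|_{L^2}$ produces $|l|\,\|(\lambda-r^2)U\|_{L^2}\leq C\|F_{1,1}\|_{L^2}$. Finally $\nu\|\widehat{\Delta} U\|_{L^2}\leq \|F_{1,1}\|_{L^2}+|l|\,\|(\lambda-r^2)U\|_{L^2}\leq C\|F_{1,1}\|_{L^2}$ follows directly from \eqref{eq:U}. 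The main obstacle is the Airy-type resolvent estimate in the second paragraph: the treatment of the critical layer, where the Rayleigh multiplier $(\lambda-r^2)$ vanishes, requires a delicate balance between the viscous and inertial contributions on the scale $\delta$, with the smallness assumptions $\delta/r_0\leq c_2$ and $|l\delta|\leq c_2$ crucial for absorbing the commutator and forcing errors; the argument parallels the resolvent bounds developed in \cite{CLWZ} for the linearized Couette operator, adapted to the cylindrical Laplacian $\widehat\Delta$ and the quadratic shear profile $\lambda-r^2$.
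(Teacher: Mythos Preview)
Your overall strategy is viable but differs from the paper's proof, and it contains one step that fails as written.

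The paper does not use a sign-multiplier across the critical layer. Instead it takes the squared $L^2$ norm of \eqref{eq:U} directly, obtaining
\[
\nu^2\|\widehat\Delta U\|_{L^2}^2+l^2\|(\lambda-r^2)U\|_{L^2}^2\leq 4\nu|l|\,\|rU\|_{L^2}\|\partial_rU\|_{L^2}+\|F_{1,1}\|_{L^2}^2
\]
(the cross term is handled by the same commutator identity you quote). It then invokes the elementary bound $r\leq r_0+|\lambda-r^2|/(2r_0)$ and the interpolation inequality $r_0\|U\|_{L^2}^2\leq C\|(\lambda-r^2)U\|_{L^2}\|U\|_1$ of Lemma~\ref{lem:interp}, combines with the energy estimate $\nu\|U\|_1^2\leq\|U\|_{L^2}\|F_{1,1}\|_{L^2}$, and closes via Young's inequality to $|lr_0\delta|\,\|U\|_{L^2}\leq C\|F_{1,1}\|_{L^2}$; the other four bounds fall out simultaneously. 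So the paper gets the weighted estimates first and deduces the sharp $L^2$ bound by interpolation --- the reverse of your bootstrap order --- and never needs a cutoff $\phi$.

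The gap in your argument is the claimed ``Sobolev bound'' $\|U\|^2_{L^2(|r-r_0|<\delta)}\leq C\delta^2\|\partial_rU\|_{L^2}^2$. This is a Poincar\'e inequality on $I$, which would require $U$ to vanish at a point of $I$; nothing guarantees that ($U$ only vanishes at $r=0,1$). What \emph{is} available is $\|U\|^2_{L^2(I)}\leq Cr_0\delta\,\|U\|_{L^\infty(I)}^2\leq C\delta\,\|U\|_{L^2}\|U\|_1$ (compare the third estimate in Lemma~\ref{lem:W-L2}). With this corrected near-layer bound your multiplier scheme does close: both the near-layer contribution $|l|r_0\delta\,\|U\|_{L^2(I)}^2$ and the viscous commutator $(\nu/\delta)\|\partial_rU\|_{L^2}\|U\|_{L^2(I)}$ reduce, after inserting the energy estimate and using $\nu^{1/2}/\delta=(|l|r_0\delta)^{1/2}$, to terms of size $C(|l|r_0\delta)^{1/2}\|U\|_{L^2}^{3/2}\|F_{1,1}\|_{L^2}^{1/2}$, and a final Young inequality gives $|l|r_0\delta\,\|U\|_{L^2}\leq C\|F_{1,1}\|_{L^2}$. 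So the approach is salvageable, but the step as you wrote it is incorrect.
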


\begin{proof}
It follows from \eqref{eq:U}  that
\begin{align*}
 \nu^2\|\widehat{\Delta} {U}\|_{L^2}^2+2l\nu\mathbf{Im}\langle (\lambda-r^2){U},\widehat{\Delta} {U}\rangle+l^2\|(\lambda-r^2){U}\|_{L^2}^2=\|{F}_{1,1}\|_{L^2}^2,
\end{align*}
from which and the fact that 
\beno
\mathbf{Im}\langle (\lambda-r^2){U},\widehat{\Delta} {U}\rangle=-\mathbf{Im}\langle \partial_r[(\lambda-r^2){U}],\partial_r {U}\rangle=\mathbf{Im}\langle 2r{U},\partial_r {U}\rangle,
\eeno
we infer that
\begin{align*}
\nu^2\|\widehat{\Delta} {U}\|_{L^2}^2+l^2\|(\lambda-r^2){U}\|_{L^2}^2\leq4|l\nu|\|rU\|_{L^2}\|\partial_r {U}\|_{L^2}+\|{F}_{1,1}\|_{L^2}^2.
\end{align*}

Using the fact that for $r>0$,
\ben
r\leq(r^2+r^2_0)/(2r_0)= r_0+(-\lambda+r^2)/(2r_0)\leq r_0+|\lambda-r^2|/(2r_0),\label{eq:kp-est1}
\een
we deduce from Lemma \ref{lem:interp} that 
\begin{align*}
  \|r{U}\|_{L^2}\leq& r_0\|{U}\|_{L^2} +\|(\lambda-r^2){U}\|_{L^2}/(2r_0)\\ \leq& C\big(r_0\|(\lambda-r^2){U}\|_{L^2}\|{U}\|_{1}\big)^{\f12} +(C/r_0)\|(\lambda-r^2){U}\|_{L^2}.
\end{align*}
Then we obtain
\begin{align*}
  &\nu^2\|\widehat{\Delta} {U}\|_{L^2}^2+l^2\|(\lambda-r^2){U}\|_{L^2}^2\\ &\leq C|l\nu|\Big(\big(r_0\|(\lambda-r^2){U}\|_{L^2}\|{U}\|_{1}\big)^{\f12}+\|(\lambda-r^2){U}\|_{L^2}/r_0\Big)\|\partial_r {U}\|_{L^2}+\|{F}_{1,1}\|_{L^2}^2.
\end{align*}
Then by Young's inequality, we obtain
\begin{align*}
  &\nu^2\|\widehat{\Delta} {U}\|_{L^2}^2+l^2\|(\lambda-r^2){U}\|_{L^2}^2 \leq C\big(|r_0 l\nu^2|^{\f23}+|\nu/r_0|^2\big)\|{U}\|_{1}^2+C\|{F}_{1,1}\|_{L^2}^2.
\end{align*}
As $\delta/r_0\leq c_2\leq 1$,  $|\nu/r_0|\leq |\nu/\delta|=|r_0l\nu^2|^{\f13}$ and  then
\begin{align*}
 \nu^2\|\widehat{\Delta} {U}\|_{L^2}^2+l^2\|(\lambda-r^2){U}\|_{L^2}^2 \leq C|r_0 l\nu^2|^{\f23}\|{U}\|_{1}^2+C\|{F}_{1,1}\|_{L^2}^2.
\end{align*}

Taking the real part of the following equation
\begin{align*}
 -\nu\langle\widehat{\Delta}U,U\rangle+{\ir}l\int_{0}^{1}(\lambda-r^2)|U|^2dr=\langle {F}_{1,1},U\rangle,
\end{align*}
we obtain
\ben\label{eq:kp-est2}
\nu\|U\|_{1}^2\leq \|F_{1,1}\|_{L^2}\|U\|_{L^2}.
\een
Hence, we conclude
\begin{align}\label{eq:kp-est3}
   &\nu^2\|\widehat{\Delta} {U}\|_{L^2}^2+|\nu/\delta|^2\|U\|_{1}^2 +l^2\|(\lambda-r^2){U}\|_{L^2}^2\nonumber\\
    &\quad\leq C|lr_0\delta|\|U\|_{L^2}\|F_{1,1}\|_{L^2}+C\|{F}_{1,1}\|_{L^2}^2.
\end{align}
Here we used $|r_0 l\nu^2|^{\f23}= |\nu/\delta|^2=\nu|lr_0\delta|.$

By Lemma \ref{lem:interp} and \eqref{eq:kp-est2}, we get
\begin{align*}
  |lr_0\delta|^2\|U\|_{L^2}^2\leq& C|l\delta|^2r_0\|U\|_{1}\|(\lambda-r^2)U\|_{L^2}\\
  \leq& C|l|\delta^2r_0\nu^{-\f12}\|F_{1,1}\|_{L^2}^{\f12}\|U\|_{L^2}^{\f12} \big(|lr_0\delta|\|U\|_{L^2}\|F_{1,1}\|_{L^2}+\|{F}_{1,1}\|_{L^2}^2\big)^{\f12}\\
  \leq &C\|F_{1,1}\|_{L^2}^{\f12}\big(|lr_0\delta|\|U\|_{L^2}\big)^{\f12}\big(|lr_0\delta|\|U\|_{L^2}\|F_{1,1}\|_{L^2}+\|{F}_{1,1}\|_{L^2}^2\big)^{\f12},
\end{align*}
from which and Young's inequality, we infer that
\begin{align*}
  & |lr_0\delta|\|U\|_{L^2}\leq C\|F_{1,1}\|_{L^2}.
\end{align*}
This along with \eqref{eq:kp-est3} shows  that
\begin{align*}
    &\nu\|\widehat{\Delta} {U}\|_{L^{2}}+|l|\|(\lambda-r^2) {U}\|_{L^{2}}+|\nu/\delta|\|U\|_{1}+|lr_0\delta|\|U\|_{L^2}\leq C\|F_{1,1}\|_{L^2}.
  \end{align*}
Due to $\delta/r_0\leq c_2\leq 1$, we get by \eqref{eq:kp-est1} that 
\beno
\|r{U}\|_{L^2}\leq r_0\|{U}\|_{L^2} +(C/r_0)\|(\lambda-r^2){U}\|_{L^2} \leq C\big(r_0\|{U}\|_{L^2} +\delta^{-1}\|(\lambda-r^2)U\|_{L^2}\big),
\eeno
which gives
\beno
|l\delta|\|rU\|_{L^2}\le C\|F_{1,1}\|_{L^2}.
\eeno

This proves the lemma. 
\end{proof}

\begin{Lemma}\label{lem:UF11}
It holds that
\begin{align*}
  &\nu^2(\|\widehat{\Delta} {U}\|_{L^2}^2+\|\widehat{\Delta} {U}_1\|_{L^2}^2)+\|F_{1,1}\|_{L^2}^2\\ &\leq  C\Big(\nu l\|rU_1\|_{L^2}\| {U}_1\|_{1}+\| (F_1,F_2)\|_{L^2}^2+ c_2^2|\nu/\delta|^2\|(U_1,W_1)\|_{1}^2+\nu l\|(W_1,U_1)\|_{L^2}^2\Big).
\end{align*}
\end{Lemma}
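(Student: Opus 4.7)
The plan is to $L^2$-square the equation for $U_1$ obtained in Step~2 of Section~3, and couple with Lemma~\ref{lem:U-H2}, carefully tracking the operator-difference terms $\widehat{\Delta}-\widehat{\Delta}_1$ and $\widehat{\Delta}-\widehat{\Delta}_1^*$. By Lemma~\ref{lem:U-H2} we have $\nu^2\|\widehat{\Delta}U\|_{L^2}^2\leq C\|F_{1,1}\|_{L^2}^2$, so it suffices to bound $\|F_{1,1}\|_{L^2}^2$ and $\nu^2\|\widehat{\Delta}U_1\|_{L^2}^2$. The identity $F_{1,1}=F_1+\nu\widehat{\Delta}U_1+\nu(\widehat{\Delta}-\widehat{\Delta}_1^*)W_1$ reduces the control of $\|F_{1,1}\|^2$ to $\nu^2\|\widehat{\Delta}U_1\|^2$, the forcing $\|F_1\|^2$, and the operator-difference piece $\nu^2\|(\widehat{\Delta}-\widehat{\Delta}_1^*)W_1\|^2$.

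The operator-difference estimates follow from the explicit forms $(\widehat{\Delta}-\widehat{\Delta}_1)U=\tfrac{2rl^2}{n^2+r^2l^2}\partial_rU$ and $(\widehat{\Delta}-\widehat{\Delta}_1^*)W_1=-\tfrac{4l^2n^2}{(n^2+r^2l^2)^2}W_1-\tfrac{2rl^2}{n^2+r^2l^2}\partial_rW_1$, combined with the AM--GM bound $n^2+r^2l^2\geq 2|nrl|$, which yields $\|(\widehat{\Delta}-\widehat{\Delta}_1)U\|_{L^2}\leq C(|l|/|n|)\|U\|_1$ and similarly $\|(\widehat{\Delta}-\widehat{\Delta}_1^*)W_1\|_{L^2}\leq C(|l|/|n|)\|W_1\|_1$. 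Under the regime assumptions $|\delta l|\leq c_2$ and $\delta\leq 1/|n|$, we have $|l|/|n|\leq c_2/\delta$, which produces the desired $c_2^2|\nu/\delta|^2\|(U_1,W_1)\|_1^2$ piece after using $\|U\|_1\leq\|U_1\|_1+\|W_1\|_1$. For $\nu^2\|\widehat{\Delta}U_1\|^2$, $L^2$-squaring the $U_1$-equation gives
\[
\nu^2\|\widehat{\Delta}U_1\|^2+l^2\|(\lambda-r^2)U_1\|^2-2\nu l\,\mathbf{Im}\langle\widehat{\Delta}U_1,(\lambda-r^2)U_1\rangle=\left\|F_{1,2}+\tfrac{4\ir n^2 lW}{n^2+r^2l^2}\right\|^2;
\]
the cross term yields the $\nu l\|rU_1\|\|U_1\|_1$ contribution via the integration-by-parts identity $\mathbf{Im}\langle\widehat{\Delta}U_1,(\lambda-r^2)U_1\rangle=\mathbf{Im}\langle 2r\partial_rU_1,U_1\rangle$ used in Lemma~\ref{lem:U-H2}, while $\|F_{1,2}\|^2$ is handled identically to $\|F_{1,1}\|^2$.

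The main obstacle is controlling the nonlocal piece $\|\tfrac{4n^2lW}{n^2+r^2l^2}\|_{L^2}^2$, since at this stage we lack a direct estimate on $W$. The strategy is to eliminate $W$ via the $W_1$-equation, which gives $\tfrac{4\ir n^2 lW}{n^2+r^2l^2}=F_2+\nu\widehat{\Delta}_1U-\ir l(\lambda-r^2)W_1$, hence
\[
\left\|\tfrac{4n^2lW}{n^2+r^2l^2}\right\|^2\leq C\big(\|F_2\|^2+\nu^2\|\widehat{\Delta}_1U\|^2+l^2\|(\lambda-r^2)W_1\|^2\big).
\]
The first two terms are absorbed by Lemma~\ref{lem:U-H2} combined with the operator-difference bound on $\widehat{\Delta}_1=\widehat{\Delta}-(\widehat{\Delta}-\widehat{\Delta}_1)$, generating contributions already on the RHS. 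The delicate remaining term $l^2\|(\lambda-r^2)W_1\|^2$ is expected to produce the $\nu l\|(W_1,U_1)\|_{L^2}^2$ contribution via a Young-type splitting that exploits $|\lambda-r^2|\leq 2$ on $r\in[0,1]$ with $\lambda\in(0,1]$ and the parameter scaling $\nu=|l|r_0\delta^3$ with $|\delta l|\leq c_2$. Assembling all steps and absorbing the $c_2^2$-small pieces together with the $\|F_{1,1}\|^2$-type terms into the LHS (using smallness of $c_2$) completes the proof.
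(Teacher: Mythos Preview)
Your overall scaffolding is right: Lemma~\ref{lem:U-H2} reduces matters to $\nu^2\|\widehat{\Delta}U_1\|_{L^2}^2$, the operator--difference bounds $\|(\widehat{\Delta}-\widehat{\Delta}_1)U\|\leq C(|l|/|n|)\|U\|_1$ and $\|(\widehat{\Delta}-\widehat{\Delta}_1^*)W_1\|\leq C(|l|/|n|)\|W_1\|_1$ are correct, and the conversion $|l|/|n|\leq c_2/\delta$ is exactly what produces the $c_2^2|\nu/\delta|^2\|(U_1,W_1)\|_1^2$ term. The cross term is also handled correctly.

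The genuine gap is in your treatment of the nonlocal term. Substituting the second equation to write $\tfrac{4\ir n^2lW}{n^2+r^2l^2}=F_2+\nu\widehat{\Delta}_1U-\ir l(\lambda-r^2)W_1$ and then taking $L^2$ norms produces the term $l^2\|(\lambda-r^2)W_1\|_{L^2}^2$, and your claim that this is absorbed by $\nu l\|(W_1,U_1)\|_{L^2}^2$ is false. Since $|\lambda-r^2|$ is generically of order one on $[0,1]$ (take $\lambda$ small and $r$ near $1$), the ratio of the two quantities is $\sim |l|/\nu$, which in this regime is enormous, not bounded. No Young splitting or use of $\nu=|l|r_0\delta^3$ with $|\delta l|\leq c_2$ can repair this: those relations give $\nu|l|\leq c_2^3 r_0/|l|$, which is \emph{small}, not large.

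The paper avoids this by \emph{not} bounding $\|\tfrac{4n^2lW}{n^2+r^2l^2}\|_{L^2}$ at all. Instead of $L^2$-squaring, it pairs the $U_1$-equation with $\widehat{\Delta}U_1$, so the nonlocal contribution is the bilinear term $\langle\tfrac{4\ir n^2lW}{n^2+r^2l^2},\widehat{\Delta}U_1\rangle$. Integrating by parts (both factors vanish at $r=1$) moves $\widehat{\Delta}$ onto the $W$-factor, and a direct commutator computation together with $\|\widehat{\Delta}W\|_{L^2}\leq 3\|W_1\|_{L^2}$ yields $\|\widehat{\Delta}\tfrac{4n^2lW}{n^2+r^2l^2}\|_{L^2}\leq C|l|\|W_1\|_{L^2}$. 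Hence the nonlocal term is bounded by $C|l|\,\|W_1\|_{L^2}\|U_1\|_{L^2}$, which after multiplication by $\nu$ is exactly the $\nu l\|(W_1,U_1)\|_{L^2}^2$ piece on the right-hand side. This integration-by-parts step---trading a derivative on $U_1$ for one on $W$, where it can be absorbed into $W_1$---is the missing idea in your argument.
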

\begin{proof}
Recall that $F_{1,2}=F_1-F_2+\nu(\widehat{\Delta}-\widehat{\Delta}_1)U+\nu(\widehat{\Delta}-\widehat{\Delta}_1^{*})W_1$.
Using the facts that 
\begin{align*}\| (\widehat{\Delta}-\widehat{\Delta}_1) {U}\|_{L^2}&=\left\|\dfrac{2rl^2}{n^2+r^2l^2}\partial_r{U}\right\|_{L^2}\leq \|(l/n)\partial_r{U}\|_{L^2}\leq (l/n)\|{U}\|_{1},\\\| (\widehat{\Delta}-\widehat{\Delta}_1^*) {W_1}\|_{L^2}&=\left\|\dfrac{1}{r}\partial_r\dfrac{2r^2l^2W_1}{n^2+r^2l^2}\right\|_{L^2}\leq \left\|\dfrac{2rl^2}{n^2+r^2l^2}\partial_rW_1\right\|_{L^2}+\left\|\dfrac{4n^2l^2W_1}{(n^2+r^2l^2)^2}\right\|_{L^2}\\
&\leq\|(l/n)\partial_rW_1\|_{L^2}+2\|(l/n)W_1/r\|_{L^2}\leq 3(l/n)\|W_1\|_{1},
\end{align*}
we infer that
\begin{align}\label{eq:kp-F12}
\| {F}_{1,2}\|_{L^2}\leq 3\big(\| (F_1,F_2)\|_{L^2}+|\nu l/n|\|(U_1,W_1)\|_{1}\big).
\end{align}

Recall that $U_1$ satisfies
\begin{align*}
   -\nu\widehat{\Delta}U_1+{\ir}l(\lambda-r^2)U_1&=F_{1,2}+\dfrac{4{\ir}n^2lW}{n^2+r^2l^2}.
\end{align*}
Taking the inner product  with $\Delta U_1$ and taking the real part, we get
\begin{align}\label{eq:kp-est7}
\nu\|\widehat{\Delta} {U}_1\|_{L^2}^2+l\mathbf{Im}\langle (\lambda-r^2){U}_1,\widehat{\Delta} {U}_1\rangle=-\mathbf{Re}\left\langle {F}_{1,2}+\dfrac{4{\ir}n^2l{W}}{n^2+r^2l^2},\widehat{\Delta} {U}_1\right\rangle.
\end{align}
It is easy to see that
\begin{align}\label{eq:kp-est8}
  -\mathbf{Im}\langle (\lambda-r^2){U}_1,\widehat{\Delta} {U}_1\rangle&=\mathbf{Im}\langle \partial_r[(\lambda-r^2){U}_1],\partial_r {U}_1\rangle \nonumber\\ &=\mathbf{Im}\langle -2r{U}_1,\partial_r {U}_1\rangle\leq2\|rU_1\|_{L^2}\|\partial_r {U}_1\|_{L^2}.
\end{align}
Using the facts that
\begin{align*}
      &\left[\widehat{\Delta},\dfrac{1}{n^2+r^2l^2}\right]=-\dfrac{4rl^2}{(n^2+r^2l^2)^2}\partial_r- \dfrac{4l^2}{(n^2+r^2l^2)^2}+\dfrac{8r^2l^4}{(n^2+r^2l^2)^3},
   \end{align*}
   and
\begin{align*}
   \|\widehat{\Delta}W\|_{L^2}^2=&\left\|\dfrac{1}{r}\pa_r(r\partial_rW)-\dfrac{n^2}{r^2}W\right\|_{L^2}^2 -2\mathbf{Re}\left\langle \dfrac{1}{r}\pa_r(r\partial_rW)-\dfrac{n^2}{r^2}W,l^2W\right\rangle+l^4\|W\|_{L^2}^2 \\
   =&\left\|\dfrac{1}{r}(r\partial_rW)-\dfrac{n^2}{r^2}W\right\|_{L^2}^2 + 2l^2\|\partial_rW\|_{L^2}^2+2n^2l^2\left\|\frac{W}{r}\right\|_{L^2}^2+l^4\|W\|_{L^2}^2,
\end{align*}
we infer that
\begin{align*}
   &n^2\bigg\|\left[\widehat{\Delta},\dfrac{1}{n^2+r^2l^2}\right]W\bigg\|_{L^2}\\
&\leq C\bigg(\left\|\dfrac{rn^2l^2}{(n^2+r^2l^2)^2}\partial_rW\right\|_{L^2} +\left\| \dfrac{n^2l^2W}{(n^2+r^2l^2)^2}\right\|_{L^2}+ \left\|\dfrac{r^2n^2l^4W}{(n^2+r^2l^2)^3}\right\|_{L^2}\bigg)\\
&\leq C\bigg(\left\|\dfrac{l}{n}\partial_rW\right\|_{L^2} +\left\| \dfrac{l^2W}{n^2}\right\|_{L^2}\bigg)\leq C|n|^{-1}\|\widehat{\Delta}W\|_{L^2}\leq C\|\widehat{\Delta}W\|_{L^2}.
\end{align*}
Thanks to 
\begin{align*}
& \|W\|_{1}^2\leq -2\mathbf{Re}\langle W,\widehat{\Delta}_1W\rangle\leq 2\|W\|_{L^2}\|\widehat{\Delta}_1W\|_{L^2}\leq 2|l|^{-1}\|W\|_{1}\|W_1\|_{L^2},
\end{align*}
we infer that $\|W\|_{1}\leq  2|l|^{-1}\|W_1\|_{L^2}$, and then 
\beno
\| (\widehat{\Delta}-\widehat{\Delta}_1) {W}\|_{L^2}\leq |l/n|\|{W}\|_{1} \leq2|n|^{-1}\|W_1\|_{L^2}\leq2\|W_1\|_{L^2},
\eeno
which gives
\beno
\|\widehat{\Delta}W\|_{L^2}\leq\| (\widehat{\Delta}-\widehat{\Delta}_1) {W}\|_{L^2}+\| \widehat{\Delta}_1 {W}\|_{L^2}
\leq 2\|W_1\|_{L^2}+\|W_1\|_{L^2}\leq 3\|W_1\|_{L^2}.
\eeno
Thus, we conclude that
\begin{align*}
  \left\|\widehat{\Delta}\dfrac{4{\ir}n^2l{W}}{n^2+r^2l^2}\right\|_{L^2}\leq & \left\|\dfrac{4{\ir}n^2l{\widehat{\Delta}W}}{n^2+r^2l^2}\right\|_{L^2} +4n^2|l|\bigg\|\left[\widehat{\Delta},\dfrac{1}{n^2+r^2l^2}\right]W\bigg\|_{L^2} \\
  \leq &C|l|\|\widehat{\Delta}W\|_{L^2}= C|l|\|W_1\|_{L^2},
\end{align*}
which gives
\begin{align}\label{eq:kp-est9}
  \left|\left\langle\dfrac{4{\ir}n^2l{W}}{n^2+r^2l^2},\widehat{\Delta} {U}_1\right\rangle\right|&=\left|\left\langle \widehat{\Delta}\dfrac{4{\ir}n^2l{W}}{n^2+r^2l^2}, {U}_1\right\rangle\right|\leq\left\|\widehat{\Delta}\dfrac{4{\ir}n^2l{W}}{n^2+r^2l^2}\right\|_{L^2}\|U_1\|_{L^2} \nonumber\\&\leq C|l|\|W_1\|_{L^2}\|U_1\|_{L^2}.
\end{align}

By \eqref{eq:kp-est7}, \eqref{eq:kp-est8} and \eqref{eq:kp-est9}, we get
\begin{align*}
  &\nu\|\widehat{\Delta} {U}_1\|_{L^2}^2\leq2|l|\|rU_1\|_{L^2}\|\partial_r {U}_1\|_{L^2}+\|{F}_{1,2}\|_{L^2}\|\widehat{\Delta} {U}_1\|_{L^2}+C|l|\|W_1\|_{L^2}\|U_1\|_{L^2}.
\end{align*}
Then Young's inequality and \eqref{eq:kp-F12}  show that
\begin{align*}
  &\nu\|\widehat{\Delta} {U}_1\|_{L^2}^2\leq C\big(|l|\|rU_1\|_{L^2}\|\partial_r {U}_1\|_{L^2}+\nu^{-1}\|{F}_{1,2}\|_{L^2}^2+|l|\|W_1\|_{L^2}\|U_1\|_{L^2})\\ &\leq C\big(|l|\|rU_1\|_{L^2}\| {U}_1\|_{1}+\nu^{-1}\| (F_1,F_2)\|_{L^2}^2+\nu |l/n|^2\|(U_1,W_1)\|_{1}^2+|l|\|(W_1,U_1)\|_{L^2}^2\big).
\end{align*}

Recall that $F_{1,1}=F_1+\nu\widehat{\Delta}U_1+\nu(\widehat{\Delta}-\widehat{\Delta}_{1}^{*})W_1,$ and then
\begin{align*}
  \|F_{1,1}\|_{L^2} &\leq \|F_{1}\|_{L^2}+\nu\|\widehat{\Delta}U_1\|_{L^2}+3\nu|l|/|n|\|W_1\|_{1}.
\end{align*}
Thus, we get by Lemma \ref{lem:U-H2} that
\begin{align*}
   &\nu^2\big(\|\widehat{\Delta} {U}\|_{L^2}^2+\|\widehat{\Delta} {U}_1\|_{L^2}^2\big)+\|F_{1,1}\|_{L^2}^2\\ &\leq \nu^2\|\widehat{\Delta} {U}_1\|_{L^2}^2+C\|F_{1,1}\|_{L^2}^2\leq C\big(\|F_{1}\|_{L^2}^2+\nu^2\|\widehat{\Delta}U_1\|_{L^2}^2+|\nu l/n|^2\|W_1\|_{1}^2\big)\\ &\leq C\big(|\nu l|\|rU_1\|_{L^2}\| {U}_1\|_{1}+\| (F_1,F_2)\|_{L^2}^2+|\nu l/n|^2\|(U_1,W_1)\|_{1}^2+|\nu l|\|(W_1,U_1)\|_{L^2}^2\big)\\ &\leq C\big(|\nu l|\|rU_1\|_{L^2}\| {U}_1\|_{1}+\| (F_1,F_2)\|_{L^2}^2+ c_2^2|\nu/\delta|^2\|(U_1,W_1)\|_{1}^2+|\nu l|\|(W_1,U_1)\|_{L^2}^2\big).
\end{align*}
Here we used  $|\delta ln^{-1}|\leq c_2n^{-1}\leq c_2$. 
\end{proof}\smallskip

The following lemmas are devoted to the estimates of $W$ and $W_1$.  We will view the viscous term as a perturbation and rewrite the second equation of \eqref{eq:LNS-WU-a} as
\begin{align}\label{eq:W}
(\lambda-r^2){W}_1+\dfrac{4n^2{W}}{n^2+r^2l^2}=({F}_2+\nu \widehat{\Delta}_1{U})/({\ir}l):=F_{2,2}.
\end{align}

\begin{Lemma}\label{lem:W-L2}
Assume that $\lambda,\delta\in(0,1],\ r_0/\delta\geq \max(|n|,2)$ and $|l|\delta\leq 1$. Let $I=[r_0-\delta,r_0+\delta]\cap(0,1]$. 
Then it holds that
  \begin{align*}
    &\left\|\dfrac{W}{\sqrt{n+rl}}\right\|_{L^{\infty}}^2+\left\|\dfrac{W}{r}\right\|_{L^{2}}^2+
    r_0\delta(n+r_0l)^3 \left\|\dfrac{(\lambda-r^2)^{-1}{W}}{n^2+r^2l^2}\right\|_{L^{2}((0,1]\setminus I)}^2\leq CE,\\
    &\|W_1\|_{L^2}^2\leq C\Big({r_0\delta\|W_1\|_{L^{\infty}(I)}^2+(r_0\delta)^{-2} \|F_{2,2}\|_{L^{2}}^2}
    +n^4(r_0\delta)^{-1}(n+r_0l)^{-3}E\Big),\\
    &r_0\|W_1\|_{L^{\infty}(I)}^2\leq C\|W_1\|_{L^2}\|W_1\|_{1},\\&\|rW_1\|_{L^2}\leq C\big(r_0\|W_1\|_{L^{2}}+r_0^{-1}\|F_{2,2}\|_{L^{2}}\big).
  \end{align*}
  In particular, we have
  \begin{align*}
     r_{0}\|W_1\|_{L^2}+\|rW_1\|_{L^2}&\leq C\Big(\delta^{-1}\|F_{2,2}\|_{L^2}+ r_0\delta\|W_1\|_{1}
     +n^2\delta^{-\f12}r_0^{\f12}(n+r_0l)^{-\f32}E^{\f12}\Big).
  \end{align*}
\end{Lemma}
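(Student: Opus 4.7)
The strategy leverages \eqref{eq:W}, $(\lambda-r^2)W_1 + 4n^2W/(n^2+r^2l^2) = F_{2,2}$, to pass between $W$ and $W_1$, along with the boundary data $W(0)=W(1)=\partial_r W(1)=W_1(1)=0$ and the hypothesis $r_0\ge 2\delta$ separating the critical layer $I$ from both endpoints. Most estimates proceed by splitting $(0,1] = I\cup((0,1]\setminus I)$ and using the lower bound $|\lambda-r^2|\ge r_0\delta$ on the complement together with pointwise control of $W$.

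For the first inequality, I first establish the pointwise bound $|W(r)|^2 \le 2\sqrt{n^2+r^2l^2}\,E \le 2(n+rl)E$ on $(0,1]$ from $W(0)=0$ and Cauchy--Schwarz applied to $|W(r)|^2 = 2\,\mathbf{Re}\int_0^r \overline{W}\partial_s W\,\mathrm{d}s$, splitting the integrand as $(|W|/\sqrt{s})\bigl(\sqrt{s}|\partial_s W|/\sqrt{n^2+s^2l^2}\bigr)\sqrt{n^2+s^2l^2}$ and using monotonicity of $n^2+s^2l^2$ in $s$. This yields $\|W/\sqrt{n+rl}\|_{L^\infty}^2\le CE$, while $\|W/r\|_{L^2}^2\le E$ is tautological. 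For the weighted $L^2$ term, I split $(0,1]\setminus I = [0,r_0/2]\cup([r_0/2,1]\setminus I)$. On $[r_0/2,1]\setminus I$, $n+rl\sim n+r_0l$ and $|\lambda-r^2|\ge r_0|r-r_0|\ge r_0\delta$, and $\int \mathrm{d}r/(r_0-r)^2\lesssim 1/\delta$ produces the factor $1/(r_0\delta)$. On $[0,r_0/2]$, $|\lambda-r^2|\ge 3r_0^2/4$, and one computes $\int_0^{r_0/2}r(n^2+r^2l^2)^{-3/2}\mathrm{d}r$ explicitly, splitting into $r_0l\le|n|$ (where the integral is $\lesssim r_0^2/|n|^3$) and $r_0l\ge|n|$ (where it is $\lesssim 1/(|n|l^2)$), and uses $\delta\le r_0/|n|$ together with $\delta l\le 1$ to produce the sharp $(n+r_0l)^{-3}$.

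The remaining $W_1$ inequalities are more direct. The second follows by decomposing $\|W_1\|_{L^2}^2 = \|W_1\|_{L^2(I)}^2 + \|W_1\|_{L^2((0,1]\setminus I)}^2$: since $\int_I r\,\mathrm{d}r\lesssim r_0\delta$, the first piece is bounded by $Cr_0\delta\|W_1\|_{L^\infty(I)}^2$; on the complement I solve \eqref{eq:W} for $W_1$ and use $|\lambda-r^2|\ge r_0\delta$ together with the third part of the first inequality. For the third, I use $W_1(1)=0$ to write $|W_1(r)|^2 = -2\,\mathbf{Re}\int_r^1\overline{W_1}\partial_s W_1\,\mathrm{d}s$; since $r\in I$ implies $s\ge r_0/2$ on $[r,1]$, inserting $1\le 2s/r_0$ and applying Cauchy--Schwarz in the $r\,\mathrm{d}r$-measure yields $r_0\|W_1\|_{L^\infty(I)}^2 \le 4\|W_1\|_{L^2}\|W_1\|_1$. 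For the fourth, I compute $\|rW_1\|_{L^2}^2 - \lambda\|W_1\|_{L^2}^2 = \langle W_1,(r^2-\lambda)W_1\rangle$ and substitute from \eqref{eq:W}; the nonlocal term equals $-4n^2E\le 0$, so taking real parts followed by Young's inequality gives the fourth estimate.

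The consolidated ``in particular'' bound follows by substituting the third inequality into the second, multiplying by $r_0$, and applying Young's inequality to the cross term $r_0^{3/2}\delta^{1/2}(\|W_1\|_{L^2}\|W_1\|_1)^{1/2}$ to absorb half of $r_0\|W_1\|_{L^2}$ on the left while leaving $r_0\delta\|W_1\|_1$ on the right; adding the fourth inequality and using $r_0^{-1}\le\delta^{-1}$ then consolidates the $\|F_{2,2}\|_{L^2}$ contributions into $\delta^{-1}\|F_{2,2}\|_{L^2}$. I expect the main obstacle to be the explicit case analysis for the weighted $L^2$ estimate on $[0,r_0/2]$, where matching the sharp $(n+r_0l)^{-3}$ factor relies on the delicate interplay between the hypotheses $\delta\le r_0/|n|$ and $\delta l\le 1$; everything else is a routine combination of the equation \eqref{eq:W}, the boundary data, and weighted Cauchy--Schwarz.
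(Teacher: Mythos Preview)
Your proposal is correct and follows essentially the same approach as the paper's proof: the same pointwise Sobolev bound on $W$, the same near/far decomposition around the critical layer for the weighted $L^2$ term, the same use of \eqref{eq:W} to express $W_1$ on $(0,1]\setminus I$ for the second inequality, and the same substitution into $\langle r^2 W_1, W_1\rangle$ for the fourth. The only noticeable variation is in the third inequality: you integrate $|W_1|^2$ from $r$ to $1$ using $W_1(1)=0$, whereas the paper integrates $r|W_1|^2$ from $0$ to $r$ and picks up an extra $\|W_1/r\|_{L^2}$ term; both give $r_0\|W_1\|_{L^\infty(I)}^2\le C\|W_1\|_{L^2}\|W_1\|_1$, and your version is marginally cleaner.
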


\begin{proof}
It is obvious that $\left\|\f{W}{r}\right\|_{L^2}^2\leq E$. For $r\in[0,1],$ due to $W(0)=0$, we have
\begin{align*}
   \left|\f{W(r)}{\sqrt{n+rl}}\right|^2&\leq \int_{0}^{r}\partial_s\bigg(\f{|W(s)|^2}{n+sl}\bigg)\mathrm{d}s\leq \int_{0}^{1}\bigg|\partial_r\bigg(\f{|W(r)|^2}{n+rl}\bigg)\bigg|\mathrm{d}r\\
   &\leq \int_{0}^{1}\bigg(\f{2|W\partial_rW|}{n+rl}+ \f{l|W|^2}{(n+rl)^2}\bigg)\mathrm{d}r\leq \int_{0}^{1}\bigg(\f{2r^{\f12}|\partial_rW|}{n+rl}\f{|W|}{r^{\f12}}+ \f{|W|^2}{2nr}\bigg)\mathrm{d}r\\
   &\leq \int_{0}^{1}\bigg(\f{r|\partial_rW|}{(n+rl)^2}+\f{|W|^2}{r}+ \f{|W|^2}{2nr}\bigg)\mathrm{d}r\leq CE,
\end{align*}
which gives $\left\|\f{W}{\sqrt{n+rl}}\right\|_{L^\infty}^2\leq CE.$

Thanks to $(0,1]\setminus I\subseteq \{|r^2-r_0^2|\geq r_0\delta\}$. Using the change of variables $ \tau=r^2-r_0^2,\ s=r_0^2+\tau$, we obtain
\begin{align*}
   &\left\|\f{(\lambda-r^2)^{-1}}{(n^2+r^2l^2)^{\f34}}\right\|_{L^2((0,1]\setminus I)}^2=\int_{(0,1]\setminus I}\f{r(\lambda-r^2)^{-2}}{(n^2+r^2l^2)^{\f32}}\mathrm{d}r\\ &\leq \f{1}{2}\int_{|\tau|\geq r_0\delta,\tau\geq -r_0^2/2}\f{\tau^{-2}}{(n^2+(\tau+r_0^2)l^2)^{\f32}}\mathrm{d}\tau+\f{1}{2}\int_{-r_0^2\leq\tau\leq -r_0^2/2}\f{\tau^{-2}}{(n^2+(\tau+r_0^2)l^2)^{\f32}}\mathrm{d}\tau\\
  & \leq (r_0\delta)^{-1}(n^2+r^2_0l^2/2)^{-\f32}+\f{2}{r_0^4}\int_{0}^{r_0^2/2}\f{\mathrm{d}s}{(n^2+sl^2)^{\f32}}\leq C(r_0\delta)^{-1}(n+r_0l)^{-3},
\end{align*}
here we used
\begin{align*}
   &\f{2}{r_0^4}\int_{0}^{r_0^2/2}\f{\mathrm{d}s}{(n^2+sl^2)^{\f32}}=\f{1}{r_0^4l^2}\left(\f1n-\f1{\sqrt{n^2+r_0^2l^2/2}}\right)\\&=
   \f{1}{2r_0^2}\f1{n\sqrt{n^2+r_0^2l^2/2}}\f1{n+\sqrt{n^2+r_0^2l^2/2}}\leq\f{1}{2r_0^2n(n^2+r_0^2l^2/2)}\\ &\leq Cr_0^{-2}(n+r_0l)^{-2}\leq Cr_0^{-1}l(n+r_0l)^{-3}\leq Cr_0^{-1}\delta^{-1}(n+r_0l)^{-3}.
\end{align*}
Thus, we have
\begin{align*}
   \left\|\dfrac{(\lambda-r^2)^{-1}{W}}{n^2+r^2l^2}\right\|_{L^{2}((0,1]\setminus I)}^2&\leq C\left\|\dfrac{(\lambda-r^2)^{-1}}{(n^2+r^2l^2)^{\f34}}\right\|_{L^{2}((0,1]\setminus I)}^2\left\|\f{W}{\sqrt{n+rl}}\right\|_{L^\infty}^2\\
   &\leq C(r_0\delta)^{-1}(n+r_0l)^{-3}E.
\end{align*}
This proves the first inequality of the lemma.

 Let $\chi_0(r)=\rho(r)/(\lambda-r^2)$, where $\rho(r)=1$ for $|r-r_0|> \delta$, $\rho(r)=0$ for $|r-r_0|< \delta$. Then we have
\begin{align*}
\left\langle \chi_0{W_1}, F_{2,2}\right\rangle=&\left\langle \rho{W_1},{W}_1\right\rangle+\left\langle \chi_0{W_1}, \dfrac{4n^2{W}}{n^2+r^2l^2}
\right\rangle,
\end{align*}
which gives
\begin{align*}
   &\|W_1\|_{L^2}^2= \left\langle (1-\rho){W_1},{W}_1\right\rangle+ \left\langle \rho{W_1},{W}_1\right\rangle\\
   &\leq \|1-\rho\|_{L^1}\|W_1\|_{L^\infty(I)}^2+ \|\chi_0\|_{L^\infty}\|W_1\|_{L^2}\|F_{2,2}\|_{L^2}+\|W_1\|_{L^2}\left\|\f{4n^2\chi_0 W}{n^2+r^2l^2}\right\|_{L^2}\\
   &\leq \int_{\ir}r\mathrm{d}r\|W_1\|_{L^\infty(I)}^2+ (r_0\delta)^{-1}\|W_1\|_{L^2}\|F_{2,2}\|_{L^2}\\
   &\qquad+4n^2\|W_1\|_{L^2} \left\|\dfrac{(\lambda-r^2)^{-1}{W}}{n^2+r^2l^2}\right\|_{L^{2}((0,1]\setminus I)}\\
   &\leq 2r_0\delta\|W_1\|_{L^\infty(I)}^{2}+(r_0\delta)^{-1}\|W_1\|_{L^2}\|F_{2,2}\|_{L^2}+Cn^2(r_0\delta)^{-\f12}(n+r_0l)^{-\f32}\|W_1\|_{L^2}E^{\f12}.
\end{align*}
Here we used $\|\chi_0\|_{L^\infty}\leq \|(r-r_0)^{-1}(r_0+r)^{-1}\|_{L^\infty((0,1]\setminus I)}\leq (r_0\delta)^{-1}.$
Then young's inequality gives
\begin{align*}
  \|W_1\|_{L^2}^2 &\leq C\Big(r_0\delta\|W_1\|_{L^\infty(I)}^{2}+(r_0\delta)^{-2}\|F_{2,2}\|_{L^2}^2 +n^4(r_0\delta)^{-1}(n+r_0l)^{-3}E\Big),
\end{align*}
which gives the second inequality of the lemma.

For $r\in I,$ we have $r_0/r\leq r_0/(r_0-\delta)\leq 2$, and then $r_0\|W_1\|_{L^\infty(I)}^2\leq 2\|r|W_1|^2\|_{L^\infty(I)}\leq 2\|r|W_1|^2\|_{L^\infty}$, where for any $r\in I$, 
\begin{align*}
   r|W_1(r)|^2&=\int_{0}^{r}\partial_s\big(s|W_1(s)|^2\big)\mathrm{d}s\leq \int_{0}^{1}\partial_r\big(r|W_1|^2\big)\mathrm{d}r\\
   &\leq \int_{0}^{1}\big(|W_1|^2 +2r|W_1||\partial_rW_1|\big)\mathrm{d}r
   \leq \|W_1\|_{L^2}\left\|\f{W_1}{r}\right\|_{L^2}+ 2\|W_1\|_{L^2}\|\partial_rW_1\|_{L^2}\\
   &\leq C\|W_1\|_{L^2}\|W_1\|_{1}.
\end{align*}
This gives the third inequality of the lemma.

Thanks to $r^2W_1=r_0^2W_1+\f{4n^2W}{n^2+r^2l^2}-F_{2,2}$ and $\left\langle\f{W}{n^2+r^2l^2},W_1\right\rangle=-E\leq 0$, we obtain
\begin{align*}
   \|rW_1\|_{L^2}^2&=\langle r^2W_1,W_1\rangle= \left\langle r_0^2W_1+\f{4n^2W}{n^2+r^2l^2}-F_{2,2} ,W_1\right\rangle\\
   &=r_0^2\|W_1\|_{L^2}^2+4n^2\left\langle\f{W}{n^2+r^2l^2},W_1\right\rangle -\left\langle F_{2,2} ,W_1\right\rangle\\
   &\leq r_0^2\|W_1\|_{L^2}^2-4n^2E +\| F_{2,2}\|_{L^2}\|W_1\|_{L^2}\\
   &\leq r_0^2\|W_1\|_{L^2}^2+\big(\f{1}{4}r_0^2\|W_1\|_{L^2}^2+r_0^{-2}\|F_{2,2}\|_{L^2}^2\big)\\
   &\leq C\big(r_0^2\|W_1\|_{L^2}^2+r_0^{-2}\|F_{2,2}\|_{L^2}^2\big).
\end{align*}
This gives the fourth inequality. 
\end{proof}

\begin{Lemma}\label{lem:W-lower}
Under the same assumptions as in Lemma \ref{lem:W-L2}, it holds that
\begin{align*}
  &-\left\langle \dfrac{{W}}{n^2+r^2l^2},{W}_1\right\rangle-\left\langle \dfrac{\chi_0{W}}{n^2+r^2l^2}, \dfrac{4n^2{W}}{n^2+r^2l^2}\right\rangle
\geq \frac{E}{3}-
\dfrac{C\|{W}\|_{L^{2}(I)}^2}{r_0\delta(|n|+r_0|l|)},
\end{align*}
where $\chi_0(r)=\rho(r)/(\lambda-r^2)$ with $\rho(r)=1$ for $|r-r_0|> \delta$ and  $\rho(r)=0$ for $|r-r_0|< \delta$.\end{Lemma}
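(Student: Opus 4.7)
The plan is to mimic the Hardy-type proof of Lemma~\ref{lem:E-key} using the weight function $\psi(r)=\dfrac{2r\rho(r)}{\lambda-r^2}-\dfrac{|n|}{r}$, namely the same test weight as in Lemma~\ref{lem:E-key} but with the singular piece multiplied by the cutoff $\rho$. Since integration by parts (using $W(0)=W(1)=0$ and $\partial_rW(1)=0$) gives $-\langle W/N,W_1\rangle=E$ with $N=n^2+r^2l^2$, the claim is equivalent to showing
\[
4n^2\int_{(0,1]\setminus I}\frac{r|W|^2}{(\lambda-r^2)N^2}\,\mathrm{d}r \leq \frac{2E}{3}+\frac{C\|W\|_{L^2(I)}^2}{r_0\delta(|n|+r_0|l|)}.
\]

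First I would expand the pointwise inequality $0\leq \int_0^1 r|\partial_rW+\psi W|^2/N\,\mathrm{d}r$ and integrate by parts the cross term piecewise on $(0,r_0-\delta)$, $I$, and $(r_0+\delta,1)$. The boundary terms at $r=0$ and $r=1$ vanish because $W$ does, while the sharp jumps of $\rho$ at $r=r_0\pm\delta$ produce a nonnegative contribution
\[
BT=\frac{2(r_0-\delta)^2|W(r_0-\delta)|^2}{\delta(2r_0-\delta)N(r_0-\delta)}+\frac{2(r_0+\delta)^2|W(r_0+\delta)|^2}{\delta(2r_0+\delta)N(r_0+\delta)}.
\]
On the set $\{\rho=1\}$ a calculation identical in structure to the one in Lemma~\ref{lem:E-key} yields
\[
\partial_r(r\psi/N)-r\psi^2/N+\tfrac{1}{r}=\tfrac{4rn^2(|n|+1)}{(\lambda-r^2)N^2}+\tfrac{4r^3l^2|n|}{(\lambda-r^2)N^2}+\tfrac{2|n|rl^2}{N^2}+\tfrac{rl^2}{N},
\]
while on $I$ the same quantity reduces to $\frac{2|n|rl^2}{N^2}+\frac{rl^2}{N}\geq 0$. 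The critical feature is that the $|n|/r$ piece of $\psi$ is not cut off, so it contributes the same gain factor $(|n|+1)$ as in Lemma~\ref{lem:E-key}.

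Second, substituting the identity, discarding the nonnegative bulk terms, and isolating the negative contribution on $(r_0+\delta,1)$ coming from the $4r^3l^2|n|/[(\lambda-r^2)N^2]$ piece as $4|n|l^2B_-$ with $B_-:=\int_{r_0+\delta}^1\frac{r^3|W|^2}{(r^2-\lambda)N^2}\,\mathrm{d}r$, I get
\[
4n^2(|n|+1)\int_{(0,1]\setminus I}\frac{r\rho|W|^2}{(\lambda-r^2)N^2}\,\mathrm{d}r \leq E+BT+4|n|l^2 B_-.
\]
Since $n\neq 0$ forces $|n|+1\geq 2$, dividing gives $4n^2\langle\chi_0W/N,W/N\rangle\leq E/2+(BT+4|n|l^2 B_-)/2$, whence
\[
E-4n^2\langle\chi_0 W/N,W/N\rangle\geq \tfrac{E}{2}-\tfrac{1}{2}(BT+4|n|l^2 B_-)\geq \tfrac{E}{3}+\tfrac{E}{6}-\tfrac{1}{2}(BT+4|n|l^2 B_-).
\]

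It remains to absorb $BT+4|n|l^2 B_-$ into $E/3+C\|W\|_{L^2(I)}^2/[r_0\delta(|n|+r_0|l|)]$. For $BT$, averaging $|W(r_0\pm\delta)|^2=|W(r')|^2-\int_{r_0\pm\delta}^{r'}\partial_s|W|^2\,\mathrm{d}s$ over $r'$ in a sub-interval of $I$ of length $\sim\delta$ gives $|W(r_0\pm\delta)|^2\leq C\|W\|_{L^2(I)}^2/(r_0\delta)+C\|W\|_{L^2(I)}\|\partial_rW\|_{L^2(I)}/r_0$; combined with the inclusion $\|\partial_rW\|_{L^2(I)}^2\leq CN(r_0)E$ (from $r\sim r_0$, $N\sim N(r_0)$ on $I$ and $\int_I r|\partial_rW|^2/N\,\mathrm{d}r\leq E$) and $N(r_0)\sim(|n|+r_0|l|)^2$, Young's inequality produces the claimed bound with a small multiple of $E$ absorbed by the $E/6$ slack. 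For $4|n|l^2 B_-$, one uses $|r^2-\lambda|\geq 2r_0\delta$ on $(r_0+\delta,1)$ together with the pointwise estimate $|W|^2\leq C(|n|+r_0|l|)E$ from Lemma~\ref{lem:W-L2}, and the assumptions $|l|\delta\leq c_2$ and $\delta/r_0\leq 1/|n|$, to make its contribution a small multiple of $E$. The main obstacle lies precisely in this final absorption: every scale — the jump boundary weight $r_0/[\delta N(r_0)]$, the interpolation loss $N(r_0)/r_0$, and the far-field size of $B_-$ — has to be tuned against the $E/6$ budget and the denominator $r_0\delta(|n|+r_0|l|)$ using the full strength of the running hypotheses $\delta/r_0\leq\min(c_2,1/|n|)$ and $|l|\delta\leq c_2$.
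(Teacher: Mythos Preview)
Your plan has a genuine gap: the absorption of the jump boundary terms $BT$ (and to a lesser extent $4|n|l^2B_-$) cannot be carried out under the stated hypotheses. Concretely, after Young's inequality your argument for $BT$ reduces to the requirement
\[
\frac{\|W\|_{L^2(I)}^2}{\delta^2\,N(r_0)}\;\le\;\frac{C\|W\|_{L^2(I)}^2}{r_0\delta(|n|+r_0|l|)},
\qquad\text{i.e.}\qquad
\frac{r_0}{\delta(|n|+r_0|l|)}\le C,
\]
but the assumptions only give $\delta(|n|+r_0|l|)\le(1+c_2)r_0$ (an upper bound), and no lower bound of the same order. For example with $|n|=|l|=1$, $r_0=1/2$ and $\nu$ tiny one has $\delta\ll r_0$ and $r_0/[\delta(|n|+r_0|l|)]\to\infty$, so the absorption fails. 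The same obstruction appears in the $E$-piece of your Young split: the prefactor $r_0/[\delta N(r_0)]$ multiplying $\|\partial_rW\|_{L^2(I)}^2\le CN(r_0)E$ leaves a coefficient $Cr_0/\delta$ in front of $E$, which is large, and shrinking $\epsilon$ to compensate only pushes the loss back into the $\|W\|_{L^2(I)}^2$ term. A similar scale mismatch occurs for $4|n|l^2B_-$: using $|W|^2\le C(|n|+r|l|)E$ one gets a contribution of size $\displaystyle\frac{C|n|l^2r_0^2}{(|n|+r_0|l|)^3}\,E\,\ln\!\frac{r_0}{\delta}$, and the logarithm is unbounded as $\nu\to0$.

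The paper's proof avoids both difficulties by a different mechanism. Instead of cutting $\psi$ at the dissipative scale $\delta$, it picks a point $\tilde r\in(r_0-\delta/2,r_0)$ where $|W(\tilde r)|^2\le C\|W\|_{L^2(I)}^2/(r_0\delta)$, splits $W=w_1+w_2$ with $w_2=W(\tilde r)\chi_2$ a smooth bump at the larger scale $r_1=r_0/\max(|n|,2,|l|r_0)\ge\delta$, and applies the Hardy weight $\psi=\tfrac{2r}{\lambda-r^2}-\tfrac{|n|}{r}$ to $w_1$ only on $(0,\tilde r)$. Because $w_1(\tilde r)=0$, no boundary term arises; because the integration is confined to $r<\tilde r<r_0$, the term corresponding to your $B_-$ never appears. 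The $w_2$-piece is then handled by an even-symmetry cancellation of $\chi_2$ around $r_0$, and the cross term by the weighted Hardy inequality of Lemma~\ref{lem:hardy-2}. The essential idea you are missing is that the cutoff scale must be $r_1\sim r_0/(|n|+r_0|l|)$, not $\delta$, and that one must first subtract the constant mode $W(\tilde r)$ so the Hardy argument closes without boundary contributions.
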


\begin{proof}Thanks to 
\begin{align*}
\int_{r_0-\delta/2}^{r_0}r\mathrm{d}r=(r_0-\delta/4)(\delta/2)>(r_0/2)(\delta/2),
\end{align*}
there exists a point $\tilde{r}\in (r_0-\delta/2,r_0)\subset I$ so that 
\beno
|W(\tilde{r})|\leq 2(r_0\delta)^{-\f12}\|W\|_{L^2}.
\eeno
We fix this $\tilde{r}$ and then  extend $W$ to $r\in(0,2]$ by defining $W(r)=0$ for $r\in[1,2]$. Now we decompose $W(r)=w_1(r)+w_2(r)$ for $r\in(0,2]$, where 
\beno
w_2(r)=W(\tilde{r})\chi_2(r),\quad \chi_2(r)=\xi((r-r_0)/r_1)
\eeno
with $r_1=r_0/\max(|n|,2,|l|r_0)$ and $ \xi\in C_0^{\infty}(\R)$ satisfying $\xi(s)=1$ for $|s|\leq 1/2$ and  $\xi(s)=0$ for $|s|\geq 1$, and $\xi(s)=\xi(-s)$ for $s\in \R.$ Then we have
\beno
 \delta\leq r_1\leq r_0/2,\quad \chi_2(\widetilde{r})=1,\quad w_1(\tilde{r})=0.
\eeno
It holds that 
 \begin{align}\label{eq:kp-est6}
    &-\left\langle \f{\chi_0W}{n^2+r^2l^2},\f{4n^2W}{n^2+r^2l^2}\right\rangle
    =-\left\langle \f{\chi_0w_1}{n^2+r^2l^2},\f{4n^2w_1}{n^2+r^2l^2}\right\rangle\\&\qquad-\left\langle \f{\chi_0w_2}{n^2+r^2l^2},\f{4n^2w_2}{n^2+r^2l^2}\right\rangle-2\mathbf{Re} \left\langle \f{\chi_0w_1}{n^2+r^2l^2},\f{4n^2w_2}{n^2+r^2l^2}\right\rangle.\nonumber
     \end{align}
     
First of all,  we have
\begin{align*}
   &\left\langle\f{\chi_0w_2}{n^2+r^2l^2},\f{4n^2w_2}{n^2+r^2l^2} \right\rangle\\
   &= \int_{0}^{r_0-\delta}\f{4n^2r(\lambda-r^2)^{-1}|w_2|^2}{(n^2+r^2l^2)^2}\mathrm{d}r+ \int_{r_0+\delta}^{2}\f{4n^2r(\lambda-r^2)^{-1}|w_2|^2}{(n^2+r^2l^2)^2}\mathrm{d}r\\
  & = |W(\tilde{r})|^2\int_{\delta}^{r_1}\f{4n^2(r_0-\tau)(\lambda-(r_0-\tau)^2)^{-1}\xi(-\tau/r_1)}{\big( n^2+(\tau-r_0)^2l^2\big)^2}\mathrm{d}\tau\\
   &\quad+|W(\tilde{r})|^2\int_{\delta}^{r_1}\f{4n^2(r_0+\tau) (\lambda-(r_0+\tau)^2)^{-1}\xi(\tau/r_1)}{\big( n^2+(\tau+r_0)^2l^2\big)^2}\mathrm{d}\tau.
\end{align*}
Using the fact that for $\tau\in[\delta, r_1]$,
\begin{align*}
   &\left|\f{(r_0-\tau)(\lambda-(r_0-\tau)^2)^{-1}}{\big(n^2+(\tau-r_0)^2l^2\big)^2}+ \f{(r_0+\tau)(\lambda-(r_0+\tau)^2)^{-1}}{\big(n^2+(\tau+r_0)^2l^2\big)^2}\right|\\&=\frac{1}{\tau}
   \left|\f{(r_0-\tau)(2r_0-\tau)^{-1}}{\big(n^2+(\tau-r_0)^2l^2\big)^2}-\f{(r_0+\tau)(2r_0+\tau)^{-1}}{\big(n^2+(\tau+r_0)^2l^2\big)^2}\right|\\ &\leq 2\left\|\partial_{r}\bigg[\f{r/(r+r_0)}{\big(n^2+r^2l^2\big)^2}\bigg]\right\|_{L^\infty_{r}[r_0-r_1,r_0+r_1]} \leq  \f{C}{r_0(|n|+r_0|l|)^{4}}.
\end{align*}
and $\xi(-\tau/r_1)=\xi(\tau/r_1)$, we deduce that
\begin{align}\label{eq:kp-est4}
   \bigg|\left\langle\f{\chi_0w_2}{n^2+r^2l^2},\f{4n^2w_2}{n^2+r^2l^2} \right\rangle\bigg|\leq \f{Cn^2(r_1-\delta)|W(\tilde{r})|^2}{r_0(|n|+r_0|l|)^{4}}\leq \f{C|W(\tilde{r})|^2}{(|n|+r_0|l|)^2}.
\end{align}

Let
  \begin{align*}
  &\psi(r)=\frac{2r}{\lambda-r^2}-\frac{|n|}{r}\quad \text{for}\ 0<r<\tilde{r},\quad \psi(r)=0\quad \text{for}\ r>\tilde{r}.
\end{align*}
Using $w_1(\tilde{r})=0$ and $\psi(r)=0\ \text{for}\ r\geq \tilde{r}$,  we get
\begin{align*}
   \int_{0}^{2}\f{r|\partial_rw_1+\psi w_1|^2}{n^2+r^2l^2}\mathrm{d}r&= \int_{0}^{2}\f{r|\partial_rw_1|^2+r|\psi w_1|^2}{n^2+r^2l^2}\mathrm{d}r+\int_{0}^{2}\f{r\psi (\partial_r|w_1|^2)}{n^2+r^2l^2}\mathrm{d}r\\
   &=\int_{0}^{2}\f{r|\partial_rw_1|^2+r|\psi w_1|^2}{n^2+r^2l^2}\mathrm{d}r-\int_{0}^{\tilde{r}}\partial_r\bigg(\f{r\psi }{n^2+r^2l^2}\bigg)|w_1|^2\mathrm{d}r.
\end{align*}
Thanks to
\begin{align*}
   \int_{0}^{2}\f{r|\psi w_1|^2}{n^2+r^2l^2}\mathrm{d}r&=\int_{0}^{\tilde{r}}\bigg(\f{4r^3}{(\lambda-r^2)^2}- \f{4r|n|}{\lambda-r^2} +\f{n^2}{r}\bigg)\f{|w_1|^2}{n^2+r^2l^2}\mathrm{d}r,
\end{align*}
and for $r\in(0,\tilde{r}]$ 
\begin{align*}
   \partial_r\bigg(\f{r\psi(r) }{n^2+r^2l^2}\bigg) =\f{4r\lambda}{(\lambda-r^2)^2(n^2+r^2l^2)}-\f{4r^3l^2}{(\lambda-r^2)(n^2+r^2l^2)^2} +\f{2rl^2|n|}{(n^2+r^2l^2)^2},
\end{align*}
 we conclude that
\begin{align*}
   & \int_{0}^{2}\f{r|\psi w_1|^2}{n^2+r^2l^2}dr-\int_{0}^{\tilde{r}}\partial_r\bigg(\f{r\psi }{n^2+r^2l^2}\bigg)|w_1|^2\mathrm{d}r\\
   &= \int_{0}^{\tilde{r}} \bigg(\f{4r^3-4r\lambda}{(\lambda-r^2)^2}- \f{4r|n|}{\lambda-r^2}+\f{4r^3l^2}{(\lambda-r^2)(n^2+r^2l^2)} +\f{n^2}{r}-\f{2rl^2|n|}{n^2+r^2l^2}\bigg)\f{|w_1|^2}{n^2+r^2l^2}\mathrm{d}r\\
   &= \int_{0}^{\tilde{r}} \bigg(-\f{4r(|n|+1)}{(\lambda-r^2)}+\f{4r^3l^2}{(\lambda-r^2)(n^2+r^2l^2)} +\f{n^2}{r}-\f{2rl^2|n|}{n^2+r^2l^2}\bigg)\f{|w_1|^2}{n^2+r^2l^2}\mathrm{d}r\\
   &=  \int_{0}^{\tilde{r}} \bigg(-\f{4rn^2(|n|+1)+4r^3l^2|n|}{(\lambda-r^2)(n^2+r^2l^2)}+\f{n^2}{r} -\f{2rl^2|n|}{n^2+r^2l^2}\bigg)\f{|w_1|^2}{n^2+r^2l^2}\mathrm{d}r\\
   &\leq -\int_{0}^{\tilde{r}} \f{4rn^2(|n|+1)|w_1|^2}{(\lambda-r^2)(n^2+r^2l^2)^2}\mathrm{d}r+ \int_{0}^{\tilde{r}}\f{n^2|w_1|^2}{r(n^2+r^2l^2)}\mathrm{d}r.
\end{align*}
Summing up, we obtain
\begin{align*}
  0\leq &\int_{0}^{2}\f{r|\partial_rw_1+\psi w_1|^2}{n^2+r^2l^2}\mathrm{d}r= \int_{0}^{2}\f{r|\partial_rw_1|^2+r|\psi w_1|^2}{n^2+r^2l^2}dr-\int_{0}^{\tilde{r}}\partial_r\bigg(\f{r\psi }{n^2+r^2l^2}\bigg)|w_1|^2\mathrm{d}r\\
  \leq& \int_{0}^{2}\f{r|\partial_rw_1|^2}{n^2+r^2l^2}\mathrm{d}r-\int_{0}^{\tilde{r}} \f{4rn^2(|n|+1)|w_1|^2}{(\lambda-r^2)(n^2+r^2l^2)^2}\mathrm{d}r+ \int_{0}^{\tilde{r}}\f{n^2|w_1|^2}{r(n^2+r^2l^2)}\mathrm{d}r\\
  \leq& \int_{0}^{2}\f{r|\partial_rw_1|^2}{n^2+r^2l^2}\mathrm{d}r-\int_{0}^{\tilde{r}} \f{4rn^2(|n|+1)|w_1|^2}{(\lambda-r^2)(n^2+r^2l^2)^2}\mathrm{d}r+ \int_{0}^{2}\f{|w_1|^2}{r}\mathrm{d}r,
\end{align*}
which implies 
\begin{align*}
&E_1=\int_0^2\left(\dfrac{r|\partial_rw_1|^2}{n^2+r^2l^2}+\dfrac{|w_1|^2}{r}\right)\mathrm{d}r\geq \int_0^{\tilde{r}}\dfrac{4n^2r|w_1|^2(|n|+1)}{(n^2+r^2l^2)^2(\lambda-r^2)}\mathrm{d}r.
\end{align*}
This implies that
\begin{align}\label{eq:kp-est5}
   &-\left\langle\f{\chi_0w_1}{n^2+r^2l^2},\f{4n^2w_1}{n^2+r^2l^2} \right\rangle \geq -\int_{0}^{\tilde{r}}\f{4n^2r|w_1|^2}{(n^2+r^2l^2)^2(\lambda-r^2)}\mathrm{d}r\geq-\f{E_1}{|n|+1}
\end{align}

Thanks to $0<r_0-\delta<\widetilde{r}<r_0$, if $|r-r_0|>\delta,\ r>0,$  we have
 \begin{align*}
 &|\tilde{r}-r|\leq 2|r_0-r|,\ |\tilde{r}+r|\leq |r_0+r|,
 \end{align*}
 hence, $|\tilde{r}^2-r^2|\leq 2|r_0^2-r^2|=2|\lambda-r^2|$ and then 
 \beno
 |\chi_0(r)|=|\lambda-r^2|^{-1}\leq2|\tilde{r}^2-r^2|^{-1}.
 \eeno
 If $|r-r_0|<\delta,$ then $|\chi_0(r)|=0\leq2|\tilde{r}^2-r^2|^{-1} .$ Then we get by Lemma \ref{lem:hardy-2} that
 \begin{align*}
    &\bigg|\left\langle \f{\chi_0w_1}{n^2+r^2l^2},\f{4n^2w_2}{n^2+r^2l^2}\right\rangle\big| \leq 2\left\|\f{(\tilde{r}^2-r^2)^{-1}w_1}{n^2+r^2l^2}\right\|_{L^2} \left\|\f{4n^2w_2}{n^2+r^2l^2}\right\|_{L^2}\\
    &\leq C\bigg(\f{1}{\tilde{r}^2(n^2+\tilde{r}^2l^2)}\int_{0}^{2}\f{r|\partial_rw_1|^2}{n^2+r^2l^2} +\f{|w_1|^2}{r}\mathrm{d}r\bigg)^{\f12}\left\|\f{4n^2w_2}{n^2+r^2l^2}\right\|_{L^2}\\
    &\leq \f{C(E_1)^{\f12}|W(\tilde{r})|}{r_0(|n|+r_0|l|)}\left\|\f{n^2}{n^2+r^2l^2}\right\|_{L^2[r_0-r_1,r_0+r_1]} \leq \f{C(E_1)^{\f12}\|W\|_{L^2(I)}}{r_0(|n|+r_0|l|)(r_0\delta)^{\f12}}\f{n^2(r_0r_1)^{\f12}}{n^2+r^2_0l^2}\\
   & \leq \f{E_{1}}{24} +\f{Cn^4r_0r_1\|W\|_{L^2(I)}^2}{r_0^2(|n|+r_0|l|)^6r_0\delta}\leq \f{E_{1}}{24} +\f{Cn^4\|W\|_{L^2(I)}^2}{(|n|+r_0|l|)^6r_0\delta}\leq  \f{E_{1}}{24} +\f{C\|W\|_{L^2(I)}^2}{(|n|+r_0|l|)^2r_0\delta}.
 \end{align*}
 This along with \eqref{eq:kp-est6}, \eqref{eq:kp-est4} and \eqref{eq:kp-est5} shows that 
  \begin{align*}
    -\left\langle \f{\chi_0W}{n^2+r^2l^2},\f{4n^2W}{n^2+r^2l^2}\right\rangle
     \geq& -\f{E_{1}}{|n|+1}- \f{C|W(\tilde{r})|^2}{(|n|+r_0|l|)^2} -2\bigg| \left\langle \f{\chi_0w_1}{n^2+r^2l^2},\f{4n^2w_2}{n^2+r^2l^2}\right\rangle\bigg|\\
    \geq& -\f{E_{1}}{|n|+1}- \f{C|W(\tilde{r})|^2}{(|n|+r_0|l|)^2} -\f{E_{1}}{12} -\f{C\|W\|_{L^2(I)}^2}{(|n|+r_0|l|)^2r_0\delta}\\
    \geq & -\f{E_{1}}{2}-\f{E_{1}}{12}-\f{C\|W\|_{L^2(I)}^2}{(|n|+r_0|l|)r_0\delta}\\
     \geq& -\f{7E_{1}}{12}-\f{C\|W\|_{L^2(I)}^2}{r_0\delta(|n|+r_0|l|)}.
 \end{align*}
 Using $|f-g|^2\leq \f{8}{7}|f|^2+8|g|^2$ and the choice of $r_1$, we can obtain
 \begin{align*}
   -\f{7E_1}{12}=&-\f{7}{12}\bigg(\int_{0}^{2}\left(\dfrac{r|\partial_r(W-w_2)|^2}{n^2+r^2l^2} +\dfrac{|W-w_2|^2}{r}\right)\mathrm{d}r\bigg) \\
   \geq& -\f{7}{12}\bigg( \f{8}{7}\int_{0}^{2}\left(\dfrac{r|\partial_rW|^2}{n^2+r^2l^2}+\dfrac{|W|^2}{r}\right)\mathrm{d}r +8\int_{0}^{2}\left(\dfrac{r|\partial_rw_2|^2}{n^2+r^2l^2}+\dfrac{|w_2|^2}{r}\right)\mathrm{d}r  \bigg)\\
   \geq& -\f{2E}{3}-C\int_{r_0-r_1}^{r_0+r_1}\left(\dfrac{r|\partial_rw_2|^2}{n^2+r^2l^2}+\dfrac{|w_2|^2}{r}\right)\mathrm{d}r\\
   \geq& -\f{2E}{3}-Cr_1|W(\tilde{r})|^2\left(\dfrac{r_0}{r_1^2(n^2+r^2_0l^2)} +\dfrac{1}{r_0}\right)\\
   \geq& -\f{2E}{3}-C\dfrac{\|W\|_{L^2(I)}^2}{r_0\delta(|n|+r_0|l|)}.
\end{align*}
Thus, we have
\begin{align*}
  -\left\langle \f{\chi_0W}{n^2+r^2l^2},\f{4n^2W}{n^2+r^2l^2}\right\rangle\geq -\f{2E}{3}-\dfrac{C\|W\|_{L^2(I)}^2}{r_0\delta(|n|+r_0|l|)},
\end{align*}
which gives the lemma due to  $-\left\langle \dfrac{{W}}{n^2+r^2l^2},{W}_1\right\rangle=E$.
\end{proof}

\begin{Lemma}\label{lem:E}
Under the same assumptions as in Lemma \ref{lem:W-L2}, it holds that
  \begin{align*}
  E\leq &Cn^{-4}(r_0\delta)^{-1}(|n|+r_0|l|)^3\big((r_0\delta)^3\|W_1\|_{L^{\infty}(I)}^2+\|F_{2,2}\|_{L^{2}}^2\big).
\end{align*}
\end{Lemma}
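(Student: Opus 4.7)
The plan is to combine the coercive lower bound from Lemma \ref{lem:W-lower} with the algebraic identity \eqref{eq:W}, and to absorb the resulting error term by exploiting the fact that on the critical interval $I$ the factor $\lambda-r^{2}$ is small of size $O(r_{0}\delta)$, so $W$ itself is essentially slaved to $F_{2,2}$ there. Set
\begin{align*}
G^{2}:=\|F_{2,2}\|_{L^{2}}^{2}+(r_{0}\delta)^{3}\|W_{1}\|_{L^{\infty}(I)}^{2},
\end{align*}
which is precisely the quantity in square brackets in the statement.

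First I would use \eqref{eq:W} in the form $4n^{2}W/(n^{2}+r^{2}l^{2})=F_{2,2}-(\lambda-r^{2})W_{1}$ to rewrite the second pairing on the left-hand side of Lemma \ref{lem:W-lower}. Since $\chi_{0}(\lambda-r^{2})=\rho$ and $1-\rho$ is supported in $I$, that left-hand side collapses to
\begin{align*}
-\Big\langle \frac{(1-\rho)W}{n^{2}+r^{2}l^{2}},W_{1}\Big\rangle-\Big\langle \frac{\chi_{0}W}{n^{2}+r^{2}l^{2}},F_{2,2}\Big\rangle.
\end{align*}
I would then extract a pointwise estimate from \eqref{eq:W}: on $I$ we have $|\lambda-r^{2}|\leq 2r_{0}\delta$, hence $|W/(n^{2}+r^{2}l^{2})|\leq (|F_{2,2}|+2r_{0}\delta\,|W_{1}|)/(4n^{2})$. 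Squaring, integrating against $r\,\mathrm{d}r$ over $I$, and using $\|W_{1}\|_{L^{2}(I)}\leq 2(r_{0}\delta)^{1/2}\|W_{1}\|_{L^{\infty}(I)}$ produce the two key bounds
\begin{align*}
\Big\|\frac{W}{n^{2}+r^{2}l^{2}}\Big\|_{L^{2}(I)}^{2}\leq \frac{CG^{2}}{n^{4}},\qquad \|W\|_{L^{2}(I)}^{2}\leq \frac{C(|n|+r_{0}|l|)^{4}G^{2}}{n^{4}}.
\end{align*}

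Cauchy--Schwarz with the first of these controls the $I$-pairing by $CG^{2}/(n^{2}r_{0}\delta)$; for the $F_{2,2}$-pairing, Lemma \ref{lem:W-L2} already gives $\|\chi_{0}W/(n^{2}+r^{2}l^{2})\|_{L^{2}}\leq C(r_{0}\delta)^{-1/2}(|n|+r_{0}|l|)^{-3/2}E^{1/2}$, and Young's inequality turns Cauchy--Schwarz into $E/6+C\|F_{2,2}\|_{L^{2}}^{2}/(r_{0}\delta(|n|+r_{0}|l|)^{3})$. The error term $C\|W\|_{L^{2}(I)}^{2}/(r_{0}\delta(|n|+r_{0}|l|))$ in Lemma \ref{lem:W-lower} is handled by the second pointwise bound and is at most $C(|n|+r_{0}|l|)^{3}G^{2}/(n^{4}r_{0}\delta)$. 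Substituting into Lemma \ref{lem:W-lower}, absorbing the $E/6$ into $E/3$, and using $|n|\geq 1$ to dominate $1/n^{2}$ and $1/(|n|+r_{0}|l|)^{3}$ by $(|n|+r_{0}|l|)^{3}/n^{4}$, one arrives at $E\leq Cn^{-4}(r_{0}\delta)^{-1}(|n|+r_{0}|l|)^{3}G^{2}$, which is the claim.

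The main obstacle is precisely the error term in Lemma \ref{lem:W-lower}: a naive use of the $L^{\infty}$ bound $\|W/\sqrt{n+rl}\|_{L^{\infty}}^{2}\leq CE$ would give $\|W\|_{L^{2}(I)}^{2}=O(Er_{0}\delta(|n|+r_{0}|l|))$ and hence an uncontrollable $O(E)$ contribution that cannot be absorbed into $E/3$. Only the pointwise use of \eqref{eq:W} on $I$ --- where $\lambda-r^{2}$ is forced to be small by the very definition of $I$ --- yields an estimate for $\|W\|_{L^{2}(I)}^{2}$ purely in terms of $F_{2,2}$ and $\|W_{1}\|_{L^{\infty}(I)}$, independent of $E$, and this is the decisive step that closes the argument.
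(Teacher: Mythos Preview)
Your proof is correct and follows essentially the same route as the paper: rewrite the left side of Lemma \ref{lem:W-lower} via \eqref{eq:W} as the $(1-\rho)$--pairing plus the $\chi_0F_{2,2}$--pairing, control the latter with the weighted $L^2$ bound from Lemma \ref{lem:W-L2} and Young's inequality, and handle both the $(1-\rho)$--pairing and the error term $\|W\|_{L^2(I)}^2/(r_0\delta(|n|+r_0|l|))$ by the pointwise consequence of \eqref{eq:W} on $I$. Your observation that this pointwise step is the crux --- because using $\|W/\sqrt{n+rl}\|_{L^\infty}^2\le CE$ instead would produce an unabsorbable $O(E)$ --- is exactly right. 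One cosmetic slip: on $I$ the precise bound is $|\lambda-r^2|=|r-r_0|(r+r_0)\le \delta(2r_0+\delta)\le 3r_0\delta$ (since $\delta\le r_0/2$), not $2r_0\delta$; the paper uses $3r_0\delta$, but of course this only affects constants.
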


\begin{proof}
 Let $\chi_0(r)$ and $\rho$ be as in Lemma \ref{lem:W-lower} and  $I=[r_0-\delta,r_0+\delta]\cap(0,1]$. It follows from \eqref{eq:W} that  
\begin{align*}
\left\langle \dfrac{\chi_0{W}}{n^2+r^2l^2}, F_{2,2}\right\rangle=&\left\langle \dfrac{\rho{W}}{n^2+r^2l^2},{W}_1\right\rangle+\left\langle \dfrac{\chi_0{W}}{n^2+r^2l^2}, \dfrac{4n^2{W}}{n^2+r^2l^2}\right\rangle,
\end{align*}
which gives
\begin{align*}
  &-\left\langle \dfrac{{W}}{n^2+r^2l^2},{W}_1\right\rangle-\left\langle \dfrac{\chi_0{W}}{n^2+r^2l^2}, \dfrac{4n^2{W}}{n^2+r^2l^2}\right\rangle\\
  &=-\left\langle \dfrac{(1-\rho){W}}{n^2+r^2l^2},{W}_1\right\rangle-\left\langle \dfrac{\chi_0{W}}{n^2+r^2l^2}, F_{2,2}\right\rangle
\\ &\leq \left\|\dfrac{{W}}{n^2+r^2l^2}\right\|_{L^{2}(I)} \|W_1\|_{L^{2}(I)}+\left\|\dfrac{\chi_0{W}}{n^2+r^2l^2}\right\|_{L^{2}} \|F_{2,2}\|_{L^{2}},
\end{align*}
from which and Lemma \ref{lem:W-lower}, we infer that 
\begin{align*}
    \f{1}{3}E &\leq -\left\langle \dfrac{{W}}{n^2+r^2l^2},{W}_1\right\rangle-\left\langle \dfrac{\chi_0{W}}{n^2+r^2l^2}, \dfrac{4n^2{W}}{n^2+r^2l^2}\right\rangle + \f{C\|W\|_{L^2(I)}^2}{r_0\delta(|n|+r_0|l|)}\\ \nonumber
    &\leq C\left\|\f{W}{n^2+r^2l^2}\right\|_{L^2(I)}\|W_1\|_{L^2(I)}+\left\|\dfrac{\chi_0{W}}{n^2+r^2l^2}\right\|_{L^{2}} \|F_{2,2}\|_{L^{2}}+ \f{C\|W\|_{L^2(I)}^2}{r_0\delta(|n|+r_0|l|)}.
\end{align*}
By Lemma \ref{lem:W-L2}, we have
\begin{align*}
    \left\|\dfrac{\chi_0{W}}{n^2+r^2l^2}\right\|_{L^{2}}^2=\left\|\dfrac{(\lambda-r^2)^{-1}{W}}{n^2+r^2l^2}\right\|_{L^{2}((0,1]\setminus I)}^2\leq \frac{CE}{r_0\delta(|n|+r_0|l|)^{3}}.
\end{align*}
For $\tilde{r}\in I$, $|\lambda-\widetilde{r}^2 |=|r_0-\tilde{r}||r_0+\tilde{r}|\leq\delta(2r_0+\delta)\leq3r_0\delta$, and hence,
\begin{align*}
    &\left\|\dfrac{4n^2{W}}{n^2+r^2l^2}\right\|_{L^{2}(I)}
    \leq \|(\lambda-r^2)W_1\|_{L^{2}(I)}+\|F_{2,2}\|_{L^{2}}\leq 3r_0\delta\|W_1\|_{L^{2}(I)}+\|F_{2,2}\|_{L^{2}}.
  \end{align*}
 Thanks to 
 \begin{align*}
    &\left\|{n^2+r^2l^2}\right\|_{L^{\infty}(I)}
    \leq n^2+(r_0+\delta)^2l^2\leq n^2+(2r_0)^2l^2\leq C(|n|+r_0|l|)^2,
  \end{align*}
 we get
 \begin{align*}
    \|W\|_{L^{2}(I)}
    &\leq (4n^{2})^{-1}\left\|{n^2+r^2l^2}\right\|_{L^{\infty}(I)}\left\|\dfrac{4n^2{W}}{n^2+r^2l^2}\right\|_{L^{2}(I)}\\&\leq Cn^{-2}(|n|+r_0|l|)^2\big(r_0\delta\|W_1\|_{L^{2}(I)}+|r_0\delta|^{-\f12}\|F_{2,2}\|_{L^{2}}\big).
  \end{align*}

Summing up and using Young's inequality,  we conclude that
\begin{align*}
 E\leq&
C\left\|\f{W}{n^2+r^2l^2}\right\|_{L^2(I)}\|W_1\|_{L^2(I)}+ \f{C\|W\|_{L^2(I)}^2}{r_0\delta(|n|+r_0|l|)} +C\frac{\|F_{2,2}\|_{L^{2}}^2}{r_0\delta(|n|+r_0|l|)^{3}}
\\ \leq& \frac{C}{n^2r_0\delta}\bigg(\left\|\f{n^2W}{n^2+r^2l^2}\right\|_{L^2(I)}^2 +\big((r_0\delta)\|W_1\|_{L^2(I)} \big)^2\bigg)\\& \quad+ \f{C\|W\|_{L^2(I)}^2}{r_0\delta(|n|+r_0|l|)} +C\frac{\|F_{2,2}\|_{L^{2}}^2}{r_0\delta n^2}\\
\leq &C{\|F_{2,2}\|_{L^{2}}^2}/{(r_0\delta n^2)}+Cn^{-2}(r_0\delta)^{-1}\big((r_0\delta)^2\|W_1\|_{L^{2}(I)}^2+\|F_{2,2}\|_{L^{2}}^2\big) \\& \quad+Cn^{-4}(r_0\delta)^{-1}(|n|+r_0|l|)^3\big((r_0\delta)^2\|W_1\|_{L^{2}(I)}^2+\|F_{2,2}\|_{L^{2}}^2\big) \\
\leq& Cn^{-4}(r_0\delta)^{-1}(|n|+r_0|l|)^3\big((r_0\delta)^2\|W_1\|_{L^{2}(I)}^2+\|F_{2,2}\|_{L^{2}}^2\big).
\end{align*}
Here we used $(|n|+r_0|l|)^3\geq n^2$ and $n^{-2}\leq n^{-4}(|n|+r_0|l|)^3.$ This gives the lemma due to 
$\|W_1\|_{L^{2}(I)}^2\leq \|1\|_{L^{1}(I)}\|W_1\|_{L^{\infty}(I)}^2\leq Cr_0\delta \|W_1\|_{L^{\infty}(I)}^2$.
\end{proof}

\begin{Lemma}\label{lem:EW}
Under the same assumptions as in Lemma \ref{lem:W-L2}, it holds that
\begin{align*}
&r_0\delta\|W_1\|_{L^2}+\delta\|rW_1\|_{L^2}\leq  C\big({r_0\delta^2\|W_1\|_{1}+\|F_{2,2}\|_{L^{2}}}\big),\\
&E\leq
Cn^{-4}(r_0\delta)^{-1}(|n|+r_0|l|)^3\big(r_0^2\delta^4\|W_1\|_{1}^2+\|F_{2,2}\|_{L^{2}}^2\big).
\end{align*}
\end{Lemma}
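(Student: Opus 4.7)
Both inequalities are consequences of Lemma \ref{lem:W-L2} and Lemma \ref{lem:E} through a short absorption argument whose only purpose is to eliminate the $\|W_1\|_{L^\infty(I)}$ factor in favor of $\|W_1\|_1$. The idea is to use the trace-type estimate $r_0\|W_1\|_{L^\infty(I)}^2\leq C\|W_1\|_{L^2}\|W_1\|_1$ of Lemma \ref{lem:W-L2} inside the $E$-bound of Lemma \ref{lem:E}, then feed $E^{\f12}$ back into the final display of Lemma \ref{lem:W-L2} to obtain a self-improving inequality for $r_0\|W_1\|_{L^2}+\|rW_1\|_{L^2}$.

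Carrying this out, the trace bound gives $(r_0\delta)^3\|W_1\|_{L^\infty(I)}^2\leq Cr_0^2\delta^3\|W_1\|_{L^2}\|W_1\|_1$, hence
\begin{align*}
E\leq Cn^{-4}(r_0\delta)^{-1}(|n|+r_0|l|)^3\big(r_0^2\delta^3\|W_1\|_{L^2}\|W_1\|_1+\|F_{2,2}\|_{L^2}^2\big).
\end{align*}
Substituting $E^{\f12}$ into the last display of Lemma \ref{lem:W-L2} causes the prefactor $n^2\delta^{-\f12}r_0^{\f12}(|n|+r_0|l|)^{-\f32}$ to collapse against the above coefficient $Cn^{-2}(r_0\delta)^{-\f12}(|n|+r_0|l|)^{\f32}$ down to $C\delta^{-1}$, giving
\begin{align*}
r_0\|W_1\|_{L^2}+\|rW_1\|_{L^2}\leq C\big(\delta^{-1}\|F_{2,2}\|_{L^2}+r_0\delta\|W_1\|_1+r_0\delta^{\f12}\|W_1\|_{L^2}^{\f12}\|W_1\|_1^{\f12}\big).
\end{align*}
The final term is at most $\f12 r_0\|W_1\|_{L^2}+\f12 r_0\delta\|W_1\|_1$ by $\sqrt{ab}\leq(a+b)/2$, so absorbing $\f12 r_0\|W_1\|_{L^2}$ on the left and multiplying through by $\delta$ yields the first claimed inequality.

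The second inequality follows almost immediately from the first. Indeed, the first inequality provides $\|W_1\|_{L^2}\leq C(r_0^{-1}\delta^{-1}\|F_{2,2}\|_{L^2}+\delta\|W_1\|_1)$, so that
\begin{align*}
(r_0\delta)^3\|W_1\|_{L^\infty(I)}^2\leq Cr_0^2\delta^3\|W_1\|_{L^2}\|W_1\|_1\leq C\big(r_0\delta^2\|F_{2,2}\|_{L^2}\|W_1\|_1+r_0^2\delta^4\|W_1\|_1^2\big),
\end{align*}
and one more Young's inequality controls this by $C(r_0^2\delta^4\|W_1\|_1^2+\|F_{2,2}\|_{L^2}^2)$; substituting into Lemma \ref{lem:E} finishes the proof. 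The main obstacle is purely bookkeeping: verifying the cancellation of the explicit powers of $n$, $r_0$, $l$, $\delta$ under the standing hypotheses $r_0/\delta\geq\max(|n|,2)$ and $|l|\delta\leq 1$, and choosing the splittings in Young's inequality so that the absorbed constants remain uniformly bounded. No conceptual difficulty beyond the two preceding lemmas is involved.
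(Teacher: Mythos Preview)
Your proposal is correct and follows essentially the same approach as the paper: combine the trace bound $r_0\|W_1\|_{L^\infty(I)}^2\leq C\|W_1\|_{L^2}\|W_1\|_1$ with Lemma~\ref{lem:E} to eliminate $\|W_1\|_{L^\infty(I)}$, then use Young's inequality to close a self-improving estimate on $\|W_1\|_{L^2}$. The only organizational difference is that the paper works from the \emph{second} inequality of Lemma~\ref{lem:W-L2} (the bound on $\|W_1\|_{L^2}^2$), first using Lemma~\ref{lem:E} to absorb the $E$-term there, obtaining $\|W_1\|_{L^2}^2\leq C(\delta\|W_1\|_{L^2}\|W_1\|_1+(r_0\delta)^{-2}\|F_{2,2}\|_{L^2}^2)$ directly, and then handles $\|rW_1\|_{L^2}$ separately via the fourth inequality; you instead start from the combined ``In particular'' display and substitute $E^{1/2}$, which is equivalent. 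One small caveat: when you write that the cross term is ``at most $\tfrac12 r_0\|W_1\|_{L^2}+\tfrac12 r_0\delta\|W_1\|_1$ by $\sqrt{ab}\leq(a+b)/2$'', the implicit constant $C$ in front of the cross term forces you to use the weighted form $\sqrt{ab}\leq\epsilon a+(4\epsilon)^{-1}b$ to actually get the absorption coefficient below $1$---but you already note this in your final paragraph, so the argument is complete.
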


\begin{proof}
By Lemma \ref{lem:W-L2} and Lemma \ref{lem:E}, we get
  \begin{align*}
     \|W_1\|^2_{L^2}&\leq C\big(r_0\delta\|W_1\|_{L^\infty(I)}^2+(r_0\delta)^{-2}\|F_{2,2}\|_{L^2}^2+ n^4(r_0\delta)^{-1}(|n|+r_0|l|)^{-3}E\big)\\
     &\leq C\big(r_0\delta\|W_1\|_{L^\infty(I)}^2+(r_0\delta)^{-2}\|F_{2,2}\|_{L^2}^2\big)\\
     &\leq C\big(\delta\|W_1\|_{L^2}\|W\|_{1}+(r_0\delta)^{-2}\|F_{2,2}\|_{L^2}^2\big).
  \end{align*}
Then  Young's inequality gives
  \begin{align*}
     & r_0\delta\|W_1\|_{L^2}\leq  C\big({r_0\delta^2\|W_1\|_{1}+\|F_{2,2}\|_{L^{2}}}\big).
  \end{align*}
Then by Lemma \ref{lem:W-L2} and $r_0^{-1}\delta\leq C$, we have
  \begin{align*}
     &\delta\|rW_1\|_{L^2}\leq C\delta\big(r_0\|W_1\|_{L^2}+r_0^{-1}\|F_{2,2}\|_{L^2}\big)\leq  C\big({r_0\delta^2\|W_1\|_{1}+\|F_{2,2}\|_{L^{2}}}\big).
  \end{align*}
We get by Lemma \ref{lem:E} that 
  \begin{align*}
     E&\leq Cn^{-4}(r_0\delta)^{-1}(|n|+r_0|l|)^3\big((r_0\delta)^3\|W_1\|_{L^{\infty}(I)}^2+\|F_{2,2}\|_{L^{2}}^2\big)\\
     &\leq Cn^{-4}(r_0\delta)^{-1}(|n|+r_0|l|)^3\big(r_0^2\delta^3\|W_1\|_{1}\|W_1\|_{L^2}+\|F_{2,2}\|_{L^{2}}^2\big)\\
     &\leq
Cn^{-4}(r_0\delta)^{-1}(|n|+r_0|l|)^3\big(r_0^2\delta^4\|W_1\|_{1}^2+\|F_{2,2}\|_{L^{2}}^2\big).
  \end{align*}

This proves the lemma.
\end{proof}\smallskip

Now we are in a position to prove Proposition \ref{prop:res-real-key}. 

\begin{proof}
We denote
\beno
G=\|F_{1,1}\|_{L^2}+|\nu /\delta|\|W_1\|_{1}+\|F_2\|_{L^2}=\|F_{1,1}\|_{L^2}+lr_0\delta^2\|W_1\|_{1}+\|F_2\|_{L^2}.
\eeno
Thanks to
\begin{align*}
&\|\widehat{\Delta}_1{U}\|_{L^2}\leq\| \widehat{\Delta} {U}\|_{L^2}+\| (\widehat{\Delta}-\widehat{\Delta}_1) {U}\|_{L^2}=\| \widehat{\Delta} {U}\|_{L^2}+(l/n)\|{U}\|_{1}\leq 2\| \widehat{\Delta} {U}\|_{L^2},
\end{align*}
and $F_{2,2}=({F}_2+\nu \widehat{\Delta}_1{U})/({\ir}l),$
we get by Lemma \ref{lem:U-H2} that 
\begin{align*}
&|l|\|F_{2,2}\|_{L^2}\leq \|{F}_2\|_{L^2}+\nu \|\widehat{\Delta}_1{U}\|_{L^2}\leq \|{F}_2\|_{L^2}+2\nu \|\widehat{\Delta}{U}\|_{L^2}\leq \|F_2\|_{L^2}+C\|F_{1,1}\|_{L^2}.
\end{align*}
Due to $\nu/\delta=|lr_0|\delta^2$, we get
\begin{align}\label{eq:kp-est11}
  r_{0}\delta^2\|W_1\|_{1}+\|F_{2,2}\|_{L^2}\leq CG/|l|,
\end{align}
from which and Lemma \ref{lem:EW}, we infer that 
 \begin{align}\label{eq:kp-est12}
   E^{\f12}\leq Cn^{-2}|l|^{-1}(r_0\delta)^{-\f12}(|n|+r_0|l|)^{\f32}G.
 \end{align}

 By Lemma \ref{lem:W-L2}, Lemma \ref{lem:U-H2}, \eqref{eq:kp-est11} and \eqref{eq:kp-est12}, we deduce that
\begin{align*}
    &|l|r_0\delta\|(W_1,U_1)\|_{L^2}+|l|\delta\|rU_1\|_{L^2}\leq C\big(|l|r_0\delta\|(W_1,U)\|_{L^2}+|l|\delta\|(rW_1,rU)\|_{L^2}\big)\\
   &\leq C\big(|l|\|F_{2,2}\|_{L^2}+|l|r_0\delta^2\|W_1\|_{1}+n^2|l|\delta^{\f12}r_0^{\f12}(|n|+r_0|l|)^{-\f32}E^{\f12}+\|F_{1,1}\|_{L^2}\big)\\
   &\leq  C\big(G+n^2|l|(r_0\delta)^{\f12}(|n|+r_0|l|)^{-\f32} E^{\f12}\big)\leq CG,
\end{align*}
which gives
\begin{align}\label{eq:kp-est13}
& |l|r_0\delta\|(W_1,U_1)\|_{L^2}+|l|\delta\|rU_1\|_{L^2}\leq CG.
\end{align}

Now  we  estimate $G$.
By Lemma \ref{lem:UF11} and \eqref{eq:kp-est13} and $(lr_0\delta)^2=\nu|l|r_0/\delta $, we get
\begin{align*}
   G^2\leq& C\big(\|F_{1,1}\|_{L^2}^2+|\nu/\delta|^2\|W_1\|_{1}^2+\|F_{2}\|_{L^2}^2\big)\\
   \leq &  C\big((|l\delta|\|rU_1\|_{L^2})(|\nu/\delta|\|U_1\|_{1})+\| (F_1,F_2)\|_{L^2}^2+ |\nu /\delta|^2\|(U_1,W_1)\|_{1}^2+\nu |l|\|(W_1,U_1)\|_{L^2}^2\big)\\
   \leq &  C\big(|\nu/\delta|\|U_1\|_{1}G+\| (F_1,F_2)\|_{L^2}^2+ |\nu/\delta|^2\|(U_1,W_1)\|_{1}^2+(\delta/r_0)G^2\big)\\
    \leq &  C\big(|\nu/\delta|\|(U,W_1)\|_{1}G+\| (F_1,F_2)\|_{L^2}^2+ |\nu/\delta|^2\|(U,W_1)\|_{1}^2+c_2G^2\big).
\end{align*}
Taking $c_2$ sufficiently small,  we get by Young's inequality that
\begin{align}\label{eq:kp-G2}
   & G^2\leq C\big(\| (F_1,F_2)\|_{L^2}^2+ |\nu/\delta|^2\|(U,W_1)\|_{1}^2\big).
\end{align}

Next we estimate  $\|W_1\|_{1}.$ By \eqref{eq:LNS-WU-a}, we have
\begin{align*}
   &-\nu\langle W_1,\widehat{\Delta}^{*}_1W_1\rangle+\nu\langle \widehat{\Delta}_1U,U\rangle+2\nu\langle W_1,\widehat{\Delta}U\rangle-4{\ir}n^2l\left\langle \f{W}{n^2+r^2l^2},U\right\rangle \\
   &=-\langle W_1,F_1\rangle-\langle F_2,U\rangle.
\end{align*}
Thanks to $\|W_1\|_{1}^2\leq -2\mathbf{Re}\langle W_1,\widehat{\Delta}^{*}_1W_1\rangle$ and $|\langle \widehat{\Delta}_1U,U\rangle|\leq C\|U\|_{1}^2$, we have
\begin{align*}
   \nu\|W_1\|_{1}^2&\leq C\big(\nu\|U\|_{1}^2+ \nu\|W_1\|_{1}\|U\|_{1}+ \|(W_1,U)\|_{L^2}\|(F_1,F_2)\|_{L^2}\big)+\bigg|\left\langle \f{Cn^2lW}{n^2+r^2l^2},U\right\rangle \bigg|,
\end{align*}
which gives
\begin{align}\label{eq:kp-W1}
   \nu\|W_1\|_{1}^2\leq C\big(\nu\|U\|_{1}^2+ \|(W_1,U)\|_{L^2}\|(F_1,F_2)\|_{L^2}\big)+\bigg|\left\langle \f{Cn^2lW}{n^2+r^2l^2},U\right\rangle \bigg|.
\end{align}
By Lemma \ref{lem:W-L2}, we have
\begin{align*}
   &\left|\bigg\langle\f{4n^2lW}{n^2+r^2l^2},U\bigg\rangle\right|\leq \left\|\f{4n^2lW}{\sqrt{n+rl}}\right\|_{L^\infty}\left\|\dfrac{U}{(n+rl)^{\f32}}\right\|_{L^1(r_0/2,1)} \\&\qquad+\left\|\f{4n^2lW}{n^2+r^2l^2}\right\|_{L^2(0,r_0/2)}\left\|U\right\|_{L^2(0,r_0/2)}\\&\leq (|n|+r_0|l|/2)^{-\f32}\left\|\f{4n^2lW}{\sqrt{n+rl}}\right\|_{L^\infty}\|U\|_{L^1}+\frac{8|nl|r_0}{|n|+r_0|l|}\|W/r\|_{L^{2}}\|U\|_{L^{2}(0,r_0/2)}\\
   &\leq CE^{\f12}\Big((|n|+r_0|l|)^{-\f32}n^2|l|\|U\|_{L^1}+|nl|r_0(|n|+r_0|l|)^{-1}\|U\|_{L^{2}(0,r_0/2)}\Big).
   \end{align*}
   Here we used the fact that for $0<r<r_0/2<r_0$, 
   \begin{align*}
   &\f{|4n^2lW|}{n^2+r^2l^2}\leq \f{|8nlW|}{|n|+r|l|}=\f{|8nl|r}{|n|+r|l|}|W/r|\leq\f{|8nl|r_0}{|n|+r_0|l|}|W/r|.
\end{align*}
We denote
\begin{align}\label{def:G1}
   &G_1=\|U\|_{L^1}+|n|^{-1}r_0(|n|+r_0|l|)^{\f12}\|U\|_{L^{2}(0,r_0/2)}.
\end{align}
Then  we conclude
\begin{align}\label{eq:kp-nonlocal}
   &\left|\bigg\langle\f{4n^2lW}{n^2+r^2l^2}, U\bigg\rangle\right|\leq CE^{\f12}(|n|+r_0|l|)^{-\f32}n^2|l|G_1.
\end{align}

Thus, we infer from \eqref{eq:kp-nonlocal},  \eqref{eq:kp-W1} and \eqref{eq:kp-est12}  that 
 \begin{align*}
    \nu\|W_1\|_{1}^2\leq& C\big(\nu\|U\|_{1}^2+ \|(W_1,U)\|_{L^2}\|(F_1,F_2)\|_{L^2}\big)+C(r_0\delta)^{-\f12}GG_1,
 \end{align*}
 which implies (using $\nu/\delta^2=|lr_0\delta|$) that 
 \begin{align}\label{eq:kp-W1-2}
   |\nu/\delta|^2\|W_1\|_{1}^2&\leq C|\nu/\delta|^2\|U\|_{1}^2\\ \nonumber&+ C|lr_0\delta|\big(\|(W_1,U)\|_{L^2}\|(F_1,F_2)\|_{L^2}+C(r_0\delta)^{-\f12}GG_1\big).
 \end{align}
 
By \eqref{eq:kp-W1-2}, \eqref{eq:kp-est13} and \eqref{eq:kp-G2},  we get
\begin{align*}
 G^2\leq& C\big(\|(F_1,F_2)\|_{L^2}^2+|\nu/\delta|^2\|U\|_{1}^2\big)\\&+ C|lr_0\delta|\big(\|(W_1,U)\|_{L^2}\|(F_1,F_2)\|_{L^2}+C(r_0\delta)^{-\f12}GG_1\big)\\ \leq &C\big(\|(F_1,F_2)\|_{L^2}^2+|\nu/\delta|^2\|U\|_{1}^2\big)+ C\big(G\|(F_1,F_2)\|_{L^2}+C|lr_0\delta|(r_0\delta)^{-\f12}GG_1\big),
\end{align*}
from which and Young's inequality, we infer  that
\begin{align}\label{eq:G2-2}
   & G^2\leq C\big(\| (F_1,F_1)\|_{L^2}^2+ |\nu/\delta|^2\|U\|_{1}^2+l^2r_0\delta G_1^2\big).
\end{align}

Next we estimate $G_1.$ By Lemma \ref{lem:interp} and  Lemma \ref{lem:U-H2}, we have
\begin{align*}&\|U\|_{L^{1}}\leq C\|U\|_{L^2}^{\f12}\|(\lambda-r^2)U\|_{L^2}^{\f12}\leq Cr_0^{-\f14}\|U\|_{1}^{\f14}\|(\lambda-r^2)U\|_{L^2}^{\f34}\leq Cr_0^{-\f14}|l|^{-\f34}\|U\|_{1}^{\f14}\|F_{1,1}\|_{L^2}^{\f34},\\
   &r_0\|U\|_{L^{2}(0,r_0/2)}\leq Cr_0^{-1}\|(\lambda-r^2)U\|_{L^2}\leq C|r_0l|^{-1}\|F_{1,1}\|_{L^2},
\end{align*}
which along with \eqref{def:G1} gives
\begin{align*}G_1&\leq  Cr_0^{-\f14}|l|^{-\f34}\|U\|_{1}^{\f14}\|F_{1,1}\|_{L^2}^{\f34}+ C|nr_0l|^{-1}(|n|+r_0|l|)^{\f12}\|F_{1,1}\|_{L^2}\\&\leq  Cr_0^{-\f14}|l|^{-\f34}\|U\|_{1}^{\f14}G^{\f34}+ C|nr_0l|^{-1}(|n|+r_0|l|)^{\f12}G,
\end{align*}
Plugging it into \eqref{eq:G2-2}, we get
\begin{align*}
G^2&\leq C\big(\| (F_1,F_2)\|_{L^2}^2+ |\nu/\delta|^2\|U\|_{1}^2+|lr_0\delta^2|^{\f12}\|U\|_{1}^{\f12}G^{\f32}+|n^2r_0|^{-1}\delta(|n|+r_0|l|)G^2\big).
\end{align*}
Since $|n^2r_0|^{-1}\delta(|n|+r_0|l|)\leq\delta/r_0+|\delta l|\leq 2c_2$, taking $c_2$ sufficiently small and noting $\nu/\delta=|lr_0\delta^2|$, we conclude  that
\begin{align*}G^2&\leq C\big(\| (F_1,F_2)\|_{L^2}^2+ |\nu/\delta|^2\|U\|_{1}^2\big),
\end{align*}
which along with \eqref{eq:U-H1} and \eqref{eq:kp-est13} gives
\begin{align*}
    G^2&\leq C\big(\|(F_1,F_2)\|_{L^2}+|lr_0\delta|\|(F_1,F_2)\|_{L^2}\|(U,W_1)\|_{L^2}\big)\\
   &\leq C\big(\|(F_1,F_2)\|_{L^2}^2+\|(F_1,F_2)\|_{L^2}G\big).
\end{align*}
This shows that $G\leq C\|(F_1,F_2)\|_{L^2}.$ By \eqref{eq:kp-est13}  again, we get
\begin{align*}
   lr_0\delta\|(W_1,U)\|_{L^2}&\leq Clr_0\delta\|(W_1,U_1)\|_{L^2}\leq CG\leq C\|(F_1,F_1)\|_{L^2}.
\end{align*}
Then we get by \eqref{eq:kp-est12} that
\begin{align*}
   & E^{\f12}\leq Cn^{-2}|l|^{-1}(r_0\delta)^{-\f12}(|n|+r_0|l|)^{\f32}\|(F_1,F_2)\|_{L^2}.
\end{align*}

This completes the proof of Proposition \ref{prop:res-real-key}.
\end{proof}

\section{The approximate elliptic system}

In this section, we study the following approximate elliptic system
\begin{align}\label{eq:WU-app}
\left\{
\begin{aligned}
&-\nu \widehat{\Delta}_1{U}+{\ir}l(\lambda-r^2)U=0,\quad U(1)=1,\\
&\widehat{\Delta}_1{W}=U,\quad W(1)=0,
\end{aligned}\right.
\end{align}
here $\la\in \C$. We define 
\begin{align}\label{def:A}
& A(s)=|l(\lambda-s^2)/\nu|^{\f12}+|ls/\nu|^{\f13}+|l|+|n/s|,\quad A=A(1),\\
 &A_1(s)=|l(\lambda-s^2)/\nu|^{\f12}+|l|+|n/s|,\quad A_1=A_1(1).
\end{align}

\begin{Proposition}\label{prop:Wa-lower}
There exists a constant $c_4>0$ so that  if $l\lambda_i\leq c_4|\nu n l|^{\f12}$ and $0<\nu<c_4'\min(|l|,1)$, then it holds that
\begin{align*}
     |\partial_r{W}(1)|& \geq C^{-1}A^{-1}.
  \end{align*}
\end{Proposition}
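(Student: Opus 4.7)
The plan is to build an explicit approximate pair $(U_a,W_a)$ solving a variant of \eqref{eq:WU-app} with the correct boundary values, for which $\pa_r W_a(1)$ can be computed and bounded below by $cA^{-1}$ for an absolute $c>0$, and then to show that the remainder contributes a strictly smaller amount to $\pa_rW(1)$ once $\nu$ is sufficiently small.

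\emph{Approximation for $U$.} In the regime $l\lambda_i\le \max\!\bigl(|l(\lambda-1)|/2,\nu(n^2+l^2)/3\bigr)$, I freeze the coefficient $\lambda-r^2$ at its boundary value $\lambda-1$: take $U_a$ to solve the constant-coefficient equation
\[
-\nu\widehat{\Delta}_1 U_a+\ir l(\lambda-1)U_a=0,\qquad U_a(0)=0,\ U_a(1)=1.
\]
Viewing $\widehat{\Delta}_1$ as a perturbation of $\pa_r^2-n^2/r^2-l^2$, one analyses this essentially explicitly: the relevant decaying mode is of size $\sim\exp(-A_1(1-r))$, with $A_1\simeq A$, whence $|\pa_r U_a(1)|\sim A$ and the concentration bound $\|U_a\|_{L^2(0,s)}^2\lesssim s|U_a(s)|^2/A(s)$ holds. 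In the complementary regime $l\lambda_i\ge \max\!\bigl(|l(\lambda-1)|/2,\nu(n^2+l^2)/3\bigr)$, freezing is too crude because the Airy scale $(\nu/l)^{1/3}$ beats $|\nu/l(\lambda-1)|^{1/2}$; I replace the approximation by $\nu U_a''\approx \ir l[(\lambda-1)-2(r-1)]U_a$, whose decaying solution is, after the rescaling $\zeta=(2l/\nu)^{1/3}e^{\ir\pi/3}(r-1)+\zeta_0$, the Airy function $\mathrm{Ai}(\zeta)$ with $\zeta_0$ fixed so that $U_a(1)=1$. The standard asymptotics of $\mathrm{Ai}$ again give $|\pa_r U_a(1)|\sim A\sim|l/\nu|^{1/3}$ and the same concentration bound.

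\emph{From $U_a$ to $\pa_r W_a(1)$.} Define $W_a$ by $\widehat{\Delta}_1 W_a=U_a$ with $W_a(1)=0$ together with the decaying behaviour $W_a(0)=0$. Writing $\widehat{\Delta}_1$ in divergence form,
\[
\pa_r\!\Bigl(\frac{r\pa_r W_a}{n^2+r^2l^2}\Bigr)=\frac{W_a}{r}+\frac{rU_a}{n^2+r^2l^2},
\]
and integrating from $0$ to $1$ yields the identity
\[
\frac{\pa_r W_a(1)}{n^2+l^2}=\int_0^1\frac{W_a}{r}\,\rd r+\int_0^1\frac{rU_a}{n^2+r^2l^2}\,\rd r.
\]
Since $U_a$ concentrates on an $A^{-1}$-neighbourhood of $r=1$ and $W_a\simeq U_a/(n^2+l^2+A^2)$ there, the right-hand side is of order $(n^2+l^2)^{-1}A^{-1}$, so $|\pa_r W_a(1)|\ge 2cA^{-1}$ for some absolute $c>0$.

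\emph{Error estimate and main difficulty.} Set $U_e=U-U_a$ and $W_e=W-W_a$. Then $U_e$ satisfies the full resolvent equation with source $-\ir l(1-r^2)U_a$ (respectively the quadratic Airy remainder) and zero Dirichlet data; the factor $|1-r^2|$ weighted against the profile of $U_a$ provides an extra smallness of order $A^{-2}$. Applying the resolvent estimates for $-\nu\widehat{\Delta}_1+\ir l(\lambda-r^2)$---with the hypothesis $l\lambda_i\le c_4|\nu nl|^{1/2}$ used to absorb the imaginary shift---gives $|\pa_r U_e(1)|\le C\nu^{\alpha}A$ for some $\alpha>0$, and the same integration as above applied to $\widehat{\Delta}_1 W_e=U_e$, $W_e(1)=0$, yields $|\pa_r W_e(1)|\le C\nu^{\alpha}A^{-1}$. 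Choosing $c_4'$ small enough so that $C\nu^{\alpha}<c$ finally produces $|\pa_rW(1)|\ge cA^{-1}$. The main technical obstacle lies in the Airy regime: because $A$ itself carries the scale $|l/\nu|^{1/3}$, the lower bound on $|\pa_r W_a(1)|$ must be extracted from the precise asymptotics of $\mathrm{Ai}$ rather than from abstract exponential decay, and the matching of the two regimes across the threshold $l\lambda_i\sim\max(|l(\lambda-1)|/2,\nu(n^2+l^2)/3)$ must be uniform in all parameters.
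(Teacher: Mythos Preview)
Your high-level plan matches the paper's proof: approximate $U$ by either a constant-coefficient profile (Proposition~\ref{prop:UW-toy}) or an Airy profile (Lemma~\ref{lem:Airy-w}), solve $\widehat\Delta_1 W_a=U_a$ to read off $\partial_r W_a(1)$, and control the remainder through the scalar resolvent bound (Proposition~\ref{prop:ellip-inhom-v2}) together with the trace estimate $|\partial_r W_e(1)|\le C\|rU_e\|_{L^1}$ of Lemma~\ref{lem:sob}. Two points, however, do not close as stated.

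\emph{Wrong case boundary.} You split on the sign of $l\lambda_i-\max\bigl(|l(\lambda-1)|/2,\nu(n^2+l^2)/3\bigr)$, but that is the condition under which the energy identities for the constant-coefficient model are coercive (Lemma~\ref{lem:la-l}), not the condition under which its remainder is small. Since $A\sim A_1+|l/\nu|^{1/3}$, your assertion ``$A_1\simeq A$'' in the first regime is precisely the extra hypothesis $|\nu/l|^{1/3}A_1\ge c_4^{-1}$ that the paper builds into its Case~(1). In the omitted sub-regime $l\lambda_i\le\max(\dots)$ with $A_1\ll |l/\nu|^{1/3}$, the constant-coefficient remainder satisfies $|\partial_r W_e(1)|\le C|\nu/l|^{-1/6}A_1^{-3/2}$, and its ratio to the main term $A_1^{-1}$ equals $C\bigl(|\nu/l|^{1/3}A_1\bigr)^{-1/2}\gg 1$: the scheme fails there. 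The cure in the paper is to run the Airy approximation whenever $|\nu/l|^{1/3}A_1\le c_4^{-1}$; its applicability (the conditions $|Ld|\le C$ and $\mathbf{Im}(Ld)\le\delta_0$ needed in Lemma~\ref{lem:Airy-w}) follows from this size condition together with $l\lambda_i\le c_4|\nu nl|^{1/2}$ and does not require $l\lambda_i\ge\max(\dots)$.

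\emph{Source of smallness.} In the elliptic case the remainder is \emph{not} $O(\nu^\alpha)$ relative to the main term. With the correct Case~(1) hypothesis the paper obtains
\[
\frac{|\partial_r W_e(1)|}{|\partial_r W_a(1)|}\ \le\ C\,|\nu/l|^{-1/6}A_1^{-1/2}\ =\ C\bigl(|\nu/l|^{1/3}A_1\bigr)^{-1/2}\ \le\ Cc_4^{1/2},
\]
so the gain comes from taking the threshold constant $c_4$ small, not from $\nu\to 0$. Only in the Airy case is the ratio a genuine power of $\nu/l$ (namely $L^{-1}=|\nu/(2l)|^{1/3}$), and it is there that the second constant $c_4'$ enters. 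Your closing sentence ``choosing $c_4'$ small enough so that $C\nu^\alpha<c$'' therefore does not dispatch the constant-coefficient case.
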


\begin{Proposition}\label{prop:Ua}
There exists a constant $c_5>0$ so that  if $l\lambda_i\leq c_5|\nu n l|^{\f12}$ and $0<\nu<c_5\min(|l|,1)$, then it holds that for $s\in(0,1]$,
\beno
\|U\|^2_{L^2(0,s)}\le Cs|U(s)|^2/A(s).
\eeno
\end{Proposition}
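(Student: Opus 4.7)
Our plan is to prove the estimate via a Carleman-type weighted energy inequality that forces $U$ to be concentrated in a layer of width $A(s)^{-1}$ around $r=s$. The bound $\|U\|_{L^2(0,s)}^2\le Cs|U(s)|^2/A(s)$ will then follow from a pointwise decay of the form $|U(r)|^2\le C\omega(r)|U(s)|^2$ combined with the elementary computation $\int_0^s r\omega(r)\,\rd r\le Cs/A(s)$, where $\omega(r):=\exp\bigl(-\tfrac{2}{C_0}\!\int_r^s A(\rho)\,\rd\rho\bigr)$ is an exponential weight with $\omega(s)=1$ and $C_0$ a fixed large constant.

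The first step is to recast \eqref{eq:WU-app} in self-adjoint form: using $\widehat{\Delta}_1=\f{n^2+r^2l^2}{r}\bigl(\pa_r\f{r}{n^2+r^2l^2}\pa_r-\f{1}{r}\bigr)$ and setting $a(r)=r/(n^2+r^2l^2)$, the equation becomes
$$(aU')'=\Bigl(\f{1}{r}+a\,\f{\ir l(\lambda-r^2)}{\nu}\Bigr)U.$$
We then test this identity against $\omega\bar U$ on $(0,s')$ for arbitrary $s'\in(0,s]$, integrate by parts (the boundary at $0$ vanishes since $U(0)=a(0)=0$), and take the real part. Splitting the cross term $\omega' aU'\bar U=(2A/C_0)\omega\cdot aU'\bar U$ by Cauchy--Schwarz into $\tfrac12 a\omega|U'|^2+(2/C_0^2)a\omega A^2|U|^2$, and using the hypothesis $l\lambda_i\le c_5|\nu nl|^{1/2}$ together with $\nu\le c_5|l|$ to bound the $al\lambda_i\omega|U|^2/\nu$-term by a small fraction of the positive Hardy term, we obtain a weighted energy inequality of the form
$$\int_0^{s'}\!\omega\bigl(a|U'|^2+\tfrac{|U|^2}{r}\bigr)\rd r\le C\,a(s')|U'(s')|\,|U(s')|,$$
from which the pointwise bound follows by a weighted Sobolev embedding; the companion estimate $|U'(s)|\lesssim A(s)|U(s)|$ is recovered by reapplying the same identity on the thin layer $[s-A(s)^{-1},s]\cap(0,1]$, on which $\omega\approx 1$ and the equation is effectively elliptic.

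The main obstacle is the validity of the absorption inequality $r\,a(r)A(r)^2\le C_0^2$ needed to keep the $2a\omega A^2/C_0^2$-remainder from the cross term smaller than the Hardy term $\omega|U|^2/r$: this inequality fails pointwise whenever the WKB term $|l(\lambda-r^2)/\nu|^{1/2}$ or the Airy term $|lr/\nu|^{1/3}$ dominates $A(r)$. In those regimes one balances the excess against the imaginary-part identity $\mathbf{Im}[a(s')U'(s')\bar U(s')]=\int_0^{s'} al(\lambda_r-r^2)\omega|U|^2/\nu\,\rd r$, which supplies precisely the missing $|l(\lambda_r-r^2)/\nu|$ away from the turning point $r=\sqrt{\lambda_r}$. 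The most delicate zone is the Airy layer $|r-\sqrt{\lambda_r}|\lesssim(\nu/|ls|)^{1/3}$ itself, where neither identity alone suffices and the real and imaginary energies have to be coupled through a matched Airy multiplier; here the assumption $\nu\le c_5\min(|l|,1)$ guarantees that this layer is thin enough for the lower-order coefficient $2rl^2/(n^2+r^2l^2)$ in $\widehat{\Delta}_1$ to produce only negligible errors, and that the Airy scale $(\nu/|ls|)^{1/3}$ is genuinely smaller than the competing WKB and angular scales, making the dyadic decomposition by dominant term of $A$ well-defined.
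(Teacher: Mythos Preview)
Your direct Carleman approach is genuinely different from the paper's, and the gap is exactly where you flag it: the turning-point/Airy layer.  What you write there is a description of the obstacle, not a proof.  The inequality $r\,a(r)A(r)^2\le C_0^2$ you need to absorb the cross term into the Hardy term \emph{fails by an unbounded factor} whenever $|l(\lambda-r^2)/\nu|^{1/2}$ or $|lr/\nu|^{1/3}$ dominates $A(r)$, and your remedy --- the imaginary-part identity --- is not stated consistently (the weight $\omega$ appears in the integrand but not in the boundary term; with the weight present the identity acquires a further $\omega'$-contribution which reintroduces the very term you are trying to kill).  More seriously, near $r=\sqrt{\lambda_r}$ the factor $l(\lambda_r-r^2)/\nu$ \emph{changes sign}, so the imaginary-part identity gives no coercivity there at all; saying that ``the real and imaginary energies have to be coupled through a matched Airy multiplier'' names the right tool but is not a construction.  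The companion bound $|U'(s)|\lesssim A(s)|U(s)|$ is likewise assumed rather than derived: on a layer of width $A(s)^{-1}$ around $s$ the operator is \emph{not} uniformly elliptic when the Airy term dominates, so the thin-layer argument you sketch does not close.

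The paper sidesteps all of this by never running a weighted energy on the variable-coefficient problem.  Instead it freezes the potential at $r=s$: choose (via Lemma~\ref{lem:A-v2}) an auxiliary $\widetilde\lambda$ so that the constant-coefficient comparison problem
\[
-\nu\widehat{\Delta}_1 U^s+\ir l(\widetilde\lambda-s^2)U^s=0\ \text{on}\ (0,s),\qquad U^s(s)=1,
\]
has no turning point and falls under Proposition~\ref{prop:UW-toy-v2}, where your weighted-energy scheme \emph{does} work cleanly and yields $\|U^s\|_{L^2(0,s)}^2\le C s/\widetilde A_1(s)$ together with $\|(s^2-r^2)U^s\|_{L^2}^2\le C(s/\widetilde A_1(s))^3$.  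The error $U_e=U-U(s)U^s$ then solves an \emph{inhomogeneous} problem with vanishing boundary data and forcing $\ir l(\widetilde\lambda-\lambda-(s^2-r^2))U(s)U^s$, to which the resolvent bound of Proposition~\ref{prop:ellip-inhom-v2} applies and gives $\|U_e\|_{L^2(0,s)}\le C(s/A(s))^{1/2}|U(s)|$ directly.  The point is that the turning-point difficulty is exported to the forcing side, where it is harmless because the forcing is already small in $L^2$ thanks to the decay of $U^s$ and the smallness of $|\widetilde\lambda-\lambda|$.  This decomposition buys you exactly what the Carleman approach cannot easily supply: uniform control across the Airy layer without constructing an explicit Airy multiplier.
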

\subsection{Homogeneous elliptic problem}
In this subsection, we solve the following homogeneous elliptic equation
 \begin{align}\label{eq:WU-app-toy}
\left\{
\begin{aligned}
&-\nu \widehat{\Delta}_1{U}+{\ir}l(\lambda-1)U=0,\quad U(1)=1,\\
&\widehat{\Delta}_1{W}=U,\quad W(1)=0.
\end{aligned}\right.
\end{align}

\begin{Proposition}\label{prop:UW-toy}
Assume that $ 2l\lambda_i\leq |l(\lambda-1)|$ or $3l\lambda_i\leq \nu (n^2+l^2)$.  It holds that 
\begin{align*}
&|\partial_rU(1)|\leq CA_1,\quad |{\partial_rW(1)}|\geq C^{-1}A_1^{-1},\\
& \|U\|_{L^2}^2\leq CA_1^{-1},\quad \|(1-r^2)U\|_{L^2}^2\leq CA_1^{-3}.
\end{align*}
\end{Proposition}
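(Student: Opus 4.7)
The driving heuristic is that under the hypotheses $U$ carries a boundary layer of inverse thickness $A_1$ near $r=1$. Indeed, the equation
\begin{align*}
-\nu\widehat{\Delta}_1 U + \ir l(\lambda-1) U = 0
\end{align*}
has zeroth-order coefficient $l^2 + \ir l(\lambda-1)/\nu$ (in addition to the $n^2/r^2$ built into $\widehat{\Delta}$), so the effective inverse decay scale at $r=1$ is $\sqrt{n^2+l^2+|l(\lambda-1)/\nu|}\asymp A_1$. The two sign hypotheses $2l\lambda_i\le|l(\lambda-1)|$ and $3l\lambda_i\le\nu(n^2+l^2)$ are precisely what rule out destructive phase cancellation, forcing $U$ to decay like $e^{-A_1(1-r)}$ (with drift corrections). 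All four stated estimates then read off from this picture: $\|U\|_{L^2}^2\sim A_1^{-1}$ (layer width), $\|(1-r^2)U\|_{L^2}^2\sim A_1^{-3}$ (extra factor $(1-r^2)^2\sim A_1^{-2}$ inside the layer), $|\partial_r U(1)|\sim A_1$ (reciprocal width), and $|\partial_r W(1)|\sim A_1^{-1}$ because $W=\widehat{\Delta}_1^{-1}U$ averages the layer into something of integrated height $A_1^{-1}$.

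I would first carry out the basic energy identity. Multiplying the first equation by $r\bar U$ and integrating by parts using $U(0)=0$, $U(1)=1$ yields
\begin{align*}
\nu\|U\|_1^2+\nu\int_0^1\frac{2r^2l^2}{n^2+r^2l^2}\partial_r U\,\bar U\,\mathrm{d}r+\ir l(\lambda-1)\|U\|_{L^2}^2=\nu\,\partial_rU(1).
\end{align*}
The drift integral is handled by another integration by parts, producing a boundary contribution $\nu l^2/(n^2+l^2)$ and a bulk remainder bounded by $\nu\|U\|_1^2/(2n^2)$ thanks to $n^2+r^2l^2\ge 2r|nl|$. Splitting into real and imaginary parts and invoking whichever sign hypothesis is in force gives the coercivity bound
\begin{align*}
\nu\|U\|_1^2+\bigl(|l(\lambda-1)|+\nu(n^2+l^2)\bigr)\|U\|_{L^2}^2\le C\nu|\partial_r U(1)|.
\end{align*}
Next, I would test the equation against the weight $(1-r^2)\bar U$ to kill the boundary term at $r=1$; combined with $U(1)=1$ and a weighted Hardy/Poincar\'e interpolation of the type used throughout Section 4, this controls $\|(1-r^2)U\|_{L^2}$ in terms of $\|U\|_1$ and gives the four estimates for $U$.

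For the lower bound on $|\partial_r W(1)|$, I would exploit that $\widehat{\Delta}_1 W=U$, $W(0)=W(1)=0$: the two fundamental solutions of $\widehat{\Delta}_1\phi=0$ on $(0,1)$ are close to $r^{|n|}$ and $r^{-|n|}$ (modulated by $I_n(|l|r)$, $K_n(|l|r)$), and Green's identity at $r=1$ yields
\begin{align*}
\partial_r W(1)=-\int_0^1\psi(r)\,U(r)\,r\,\mathrm{d}r,
\end{align*}
where $\psi$ is the regular homogeneous solution normalized so that $\psi(1)$ equals a known nonzero constant (modulo an $|l|,|n|$-dependent factor absorbed into $A_1$). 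Since $U$ is a boundary layer with $U(1)=1$ and $L^1$-mass $\sim A_1^{-1}$, this integral evaluates to a quantity $\asymp A_1^{-1}\psi(1)\ne 0$. The principal obstacle lies exactly here: one needs \emph{pointwise} boundary-layer control of $U$ throughout the layer (not merely the $L^2$ bounds above) to rule out accidental cancellation against $\psi$. I expect to obtain this by a perturbation argument comparing $U$ to an explicit exponential/modified-Bessel profile $e^{-A_1(1-r)}\cdot(\text{Bessel factor})$, where the sign hypotheses on $l\lambda_i$ enter in an essential way to ensure the perturbation stays small uniformly across the layer.
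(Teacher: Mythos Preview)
Your heuristic and the basic energy identity are correct, but there are two genuine gaps.

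\textbf{The upper bound $|\partial_rU(1)|\le CA_1$ is never addressed.} Your energy identity only yields
\[
A_1^2\|U\|_{L^2}^2 + \|U\|_1^2 \le C|\partial_rU(1)|,
\]
which bounds everything \emph{by} $|\partial_rU(1)|$; nothing in your outline bounds $|\partial_rU(1)|$ from above. The paper obtains this via a Rellich-type identity: testing the equation against $r\partial_r\bar U$ and integrating the second-order term by parts produces $|\partial_rU(1)|^2/2$ as a boundary contribution, balanced against $(n^2+l^2)/2$ and terms already controlled by the energy. This closes to $|\partial_rU(1)|\le CA_1$.

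\textbf{The lower bound on $|\partial_rW(1)|$: your route is unnecessarily hard, and the paper bypasses your ``principal obstacle'' entirely.} Instead of pairing $U$ with a homogeneous solution $\psi$ of $\widehat{\Delta}_1\psi=0$ and needing pointwise boundary-layer control to rule out cancellation, the paper pairs the $U$-equation with $(n^2+r^2l^2)^{-1}\bar W$ itself. Since $\widehat{\Delta}_1W=U$, two integrations by parts give back $\int r|U|^2/(n^2+r^2l^2)\,\mathrm{d}r$ plus the boundary term $\nu\overline{\partial_rW(1)}/(n^2+l^2)$, and the sign hypotheses (via Lemma~\ref{lem:la-l}) then yield directly
\[
\|U\|_{L^2}^2 \le 2|\partial_rW(1)|.
\]
Combining this with the energy bound $\|2\partial_rU+U/r\|_{L^2}^2\le 16|\partial_rU(1)|$ and the elementary trace inequality $1=|U(1)|^2\le\|2\partial_rU+U/r\|_{L^2}\|U\|_{L^2}$ gives
\[
1\le 32\,|\partial_rU(1)|\,|\partial_rW(1)|,
\]
so the Rellich upper bound $|\partial_rU(1)|\le CA_1$ immediately delivers $|\partial_rW(1)|\ge C^{-1}A_1^{-1}$. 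No pointwise information on $U$ is needed.

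Finally, for $\|(1-r^2)U\|_{L^2}^2\le CA_1^{-3}$, the paper does not test with $(1-r^2)\bar U$; it runs the energy identity on $(0,s)$ to get a differential inequality $\Psi_1(s)\le A_1^{-1}\partial_s\Psi_1(s)$ for the local energy $\Psi_1$, whence $\Psi_1(s)\le\Psi_1(1)e^{-(1-s)A_1}$, and then integrates $\|(1-r)U\|_{L^2}^2=\int_0^1 2(1-s)\|U\|_{L^2(0,s)}^2\,\mathrm{d}s$.
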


We need the following simple lemma.

 \begin{Lemma}\label{lem:UW-U}
   For $s\in(0,1]$, it holds that
   \begin{align*}
      & (n^2+l^2)\int_0^s\frac{r|U|^2}{n^2+r^2l^2}\mathrm{d}r\leq  \int_0^s\left(\frac{r|\partial_rU|^2}{n^2+r^2l^2}+\frac{|U|^2}{r}\right)\mathrm{d}r,\\
      &(n^2+l^2)\int_0^1\left(\frac{r|\partial_r W|^2}{n^2+r^2l^2}+\frac{|W|^2}{r}\right)\mathrm{d}r \leq \int_{0}^{1} \frac{r|U|^2}{n^2+r^2l^2}\mathrm{d}r.
   \end{align*}
 \end{Lemma}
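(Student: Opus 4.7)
The plan is to reduce the first inequality to a simple pointwise comparison of weights, and then derive the second inequality from the first via an integration by parts plus Cauchy--Schwarz.

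For the first inequality, I would observe the algebraic identity
\[
\frac{1}{r}-(n^2+l^2)\frac{r}{n^2+r^2l^2}=\frac{n^2(1-r^2)}{r(n^2+r^2l^2)}\geq 0\qquad\text{for } r\in(0,1].
\]
Integrating this pointwise inequality against $|U|^2$ on $(0,s)$ yields
\[
\int_0^s\frac{|U|^2}{r}\,\mathrm dr\;\geq\;(n^2+l^2)\int_0^s\frac{r|U|^2}{n^2+r^2l^2}\,\mathrm dr,
\]
and the nonnegative gradient term on the LHS of the target inequality is then a free bonus. So the first inequality requires neither the equation $\widehat{\Delta}_1 W=U$ nor any boundary condition; it is a direct pointwise estimate valid for arbitrary $U$.

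For the second inequality, I would set $E_W:=\int_0^1\bigl(\frac{r|\partial_r W|^2}{n^2+r^2l^2}+\frac{|W|^2}{r}\bigr)\mathrm dr$. Using $\widehat{\Delta}_1 W=U$ together with the boundary conditions $W(0)=W(1)=0$, the same integration by parts that appears in the proof of Lemma \ref{lem:energy} gives
\[
E_W=-\left\langle \frac{W}{n^2+r^2l^2},\,U\right\rangle.
\]
Applying Cauchy--Schwarz with the weight $\frac{r}{n^2+r^2l^2}$ yields
\[
E_W\;\leq\;\left(\int_0^1\frac{r|W|^2}{n^2+r^2l^2}\,\mathrm dr\right)^{1/2}\left(\int_0^1\frac{r|U|^2}{n^2+r^2l^2}\,\mathrm dr\right)^{1/2}.
\]
Then I would invoke the first inequality of the lemma with $U$ replaced by $W$ and $s=1$, which yields $\int_0^1\frac{r|W|^2}{n^2+r^2l^2}\,\mathrm dr\leq E_W/(n^2+l^2)$. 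Substituting this back and squaring gives $E_W^2\leq\frac{E_W}{n^2+l^2}\int_0^1\frac{r|U|^2}{n^2+r^2l^2}\,\mathrm dr$, which, after dividing by $E_W$ (trivial if $E_W=0$), is the desired bound.

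There is no real obstacle to overcome: the argument is purely elementary once one notices the pointwise inequality $\frac{1}{r}\geq (n^2+l^2)\frac{r}{n^2+r^2l^2}$ on $(0,1]$. The only delicate point is justifying the integration by parts in the second part, which requires both $W(0)=0$ (built into the standing regularity convention at $r=0$) and $W(1)=0$ (imposed in \eqref{eq:WU-app}); both are part of the setup of the approximate elliptic system, so there is nothing to verify.
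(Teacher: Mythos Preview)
Your proof is correct and follows essentially the same approach as the paper. The first inequality is handled identically via the pointwise weight comparison $r^2(n^2+l^2)\le n^2+r^2l^2$; for the second, the paper also starts from $E_W=-\langle (n^2+r^2l^2)^{-1}W,U\rangle$ and combines Cauchy--Schwarz with the same pointwise bound, differing from your version only in that it applies the weight inequality on the $U$ factor rather than on $W$ --- an inessential rearrangement.
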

 
\begin{proof}
 Thanks to $r^2(n^2+l^2)\leq n^2+r^2l^2$, we have $\dfrac{r(n^2+l^2)}{n^2+r^2l^2}\leq \dfrac{1}{r}$  and  then
   \begin{align*}
      &(n^2+l^2)\int_0^s\frac{r|U|^2}{n^2+r^2l^2}\mathrm{d}r\leq  \int_0^s\dfrac{|U|^2}{r}\mathrm{d}r\leq    \int_0^s\left(\frac{r|\partial_rU|^2}{n^2+r^2l^2}+\frac{|U|^2}{r}\right)\mathrm{d}r.
   \end{align*}
Thanks to $\int_0^1\left(\frac{r|\partial_rW|^2}{n^2+r^2l^2}+\frac{|W|^2}{r}\right)\mathrm{d}r =-\big\langle U,(n^2+r^2l^2)^{-1}W\big\rangle$, we have
 \begin{align*}
     \int_0^1\left(\frac{r|\partial_rW|^2}{n^2+r^2l^2}+\frac{|W|^2}{r}\right)\mathrm{d}r &\leq \left\|W/r\right\|_{L^2}
     \left\|\dfrac{rU}{n^2+r^2l^2}\right\|_{L^2}\\
     &\leq \dfrac{1}{\sqrt{n^2+l^2}}\left\|\dfrac{U}{\sqrt{n^2+r^2l^2}}\right\|_{L^2} \|W/r\|_{L^2}\\ 
     &\leq \dfrac{1}{2(n^2+l^2)}\left\|\dfrac{U}{\sqrt{n^2+r^2l^2}}\right\|_{L^2}^2 +\|W/r\|_{L^2}^2/2,
   \end{align*}
   here we used $\dfrac{r}{n^2+r^2l^2}\leq \dfrac{1}{\sqrt{n^2+l^2}}\dfrac{1}{\sqrt{n^2+r^2l^2}}$. This shows that 
   \begin{align*}
      &\int_0^1\left(\frac{r|\partial_rW|^2}{n^2+r^2l^2}+\frac{|W|^2}{r}\right)\mathrm{d}r \leq \dfrac{1}{n^2+l^2}\left\|\dfrac{U}{\sqrt{n^2+r^2l^2}}\right\|_{L^2}^2 =\dfrac{1}{n^2+l^2}\int_{0}^{1} \frac{r|U|^2}{n^2+r^2l^2}\mathrm{d}r.
   \end{align*}  

The lemma is proved. 
\end{proof}\smallskip

Now we prove Proposition \ref{prop:UW-toy}.

\begin{proof}
By integration by parts, we get
\begin{align*}
   -\big\langle \widehat{\Delta}_1U,(n^2+r^2l^2)^{-1}U\big\rangle_s&= -   \left\langle\dfrac{n^2+r^2l^2}{r}\bigg(\partial_r\dfrac{r\partial_rU}{n^2+r^2l^2}-\dfrac{U}{r}\bigg), (n^2+r^2l^2)^{-1}U \right\rangle_s\\
   &=\int_0^s\left(\frac{r|\partial_rU|^2}{n^2+r^2l^2}+\frac{|U|^2}{r}\right)\mathrm{d}r -\frac{s\partial_rU(s)\overline{U(s)}}{n^2+s^2l^2},
\end{align*}
which gives
\begin{align*}
0&=\big\langle-\nu \widehat{\Delta}_1{U}+{\ir}l(\lambda-1)U,(n^2+r^2l^2)^{-1}{U}\big\rangle_s\\&
=\int_0^s\nu\left(\frac{r|\partial_rU|^2}{n^2+r^2l^2} +\frac{|U|^2}{r}\right)\mathrm{d}r+
{\ir}l(\lambda-1)\int_0^s\frac{r|U|^2}{n^2+r^2l^2}\mathrm{d}r -\frac{s\nu\partial_rU(s)\overline{U}(s)}{n^2+s^2l^2},
\end{align*}
By Lemma \ref{lem:UW-U} and Lemma \ref{lem:la-l}, we get
\begin{align*}
   &\int_0^s\nu\left(\frac{r|\partial_rU1|^2}{n^2+r^2l^2}+\frac{|U|^2}{r}\right)\mathrm{d}r+
|l(\lambda-1)|\int_0^s\frac{r|U|^2}{n^2+r^2l^2}\mathrm{d}r \\
   &\leq  2\bigg| \int_0^s\nu\left(\frac{r|\partial_rU|^2}{n^2+r^2l^2}+\frac{|U|^2}{r}\right)\mathrm{d}r+
{\ir}l(\lambda-1)\int_0^s\frac{r|U|^2}{n^2+r^2l^2}\mathrm{d}r\bigg|.
\end{align*}
Thus, we deduce that
\begin{align}\label{eq:toy-est1}
&\int_0^s\nu\left(\frac{r|\partial_rU|^2}{n^2+r^2l^2}+\frac{|U|^2}{r}\right)\mathrm{d}r+
|l(\lambda-1)|\int_0^s\frac{r|U|^2}{n^2+r^2l^2}\mathrm{d}r \nonumber\\ &\leq\frac{2s\nu|\partial_rU(s)||U(s)|}{n^2+s^2l^2},
\end{align}
which gives by taking $s=1$ that 
\begin{align}
   2|\partial_rU(1)|\geq & (n^2+l^2)\int_0^1\left(\frac{r|\partial_rU|^2}{n^2+r^2l^2}+\frac{|U|^2}{r}\right)+
|l(\lambda-1)/\nu|\int_0^1\frac{r(n^2+l^2)|U|^2}{n^2+r^2l^2}\mathrm{d}r\nonumber\\
\geq & (n^2+l^2)\|U/r\|_{L^2}^2+
|l(\lambda-1)/\nu|\|U\|_{L^2}^2+(n^2+l^2) \left\|\dfrac{\partial_rU}{\sqrt{n^2+r^2l^2}}\right\|_{L^2}^2\nonumber\\
\geq & A_1^2\|U\|_{L^2}^2 +(n^2+l^2) \left\|\dfrac{\partial_rU}{\sqrt{n^2+r^2l^2}}\right\|_{L^2}^2,\label{eq:toy-est2}
\end{align}
and hence,
\begin{align}\label{eq:toy-est3}
&\|2\partial_rU+U/r\|_{L^2}^2\leq 2(\|2\partial_rU\|_{L^2}^2+\|U/r\|_{L^2}^2)\nonumber\\ &\leq 8(n^2+l^2)
\int_0^1\left(\frac{r|\partial_rU|^2}{n^2+r^2l^2}+\frac{|U|^2}{r}\right)\mathrm{d}r\leq 16|\partial_rU(1)|.
\end{align}

By integration by parts, we have
\begin{align*}
   &-\big\langle\widehat{\Delta}_1{U},(n^2+r^2l^2)^{-1}W\big\rangle =-\left\langle\dfrac{1}{r}\bigg(\partial_r\dfrac{r\partial_rU}{n^2+r^2l^2}-\dfrac{U}{r}\bigg), W \right\rangle\\
   &=\big\langle (n^2+r^2l^2)^{-1}\partial_rU,\partial_rW\big\rangle+\left\langle U, \dfrac{W}{r^2} \right\rangle\\
   &=-   \left\langle U,\dfrac{1}{r}\bigg(\partial_r\dfrac{r\partial_r W}{n^2+r^2l^2}\bigg)\right\rangle +\bigg(\dfrac{rU(r)\overline{\partial_rW(r)}}{n^2+r^2l^2}\bigg)\bigg|^{1}_{0}+\left\langle U, \dfrac{W}{r^2} \right\rangle\\
   &=-\big\langle (n^2+r^2l^2)^{-1}U,\widehat{\Delta}_1W\big\rangle+ (n^2+l^2)^{-1}\overline{\partial_rW(1)}\\ 
   &=-\int_{0}^{1}\dfrac{r|U|^2}{n^2+r^2l^2}\mathrm{d}r+ (n^2+l^2)^{-1}\overline{\partial_rW(1)},
\end{align*}
which gives 
\begin{align*}0=&\big\langle-\nu \widehat{\Delta}_1{U}+{\ir}l(\lambda-1)U,-(n^2+r^2l^2)^{-1}{W}\big\rangle\\
=&\int_0^1\frac{\nu r|U|^2}{n^2+r^2l^2}\mathrm{d}r-\nu(n^2+l^2)^{-1}\overline{\partial_rW(1)}-{\ir}l(\lambda-1)\langle\widehat{\Delta}_1{W},(n^2+r^2l^2)^{-1}{W}\rangle\\
=&\int_0^1\frac{\nu r|U|^2}{n^2+r^2l^2}\mathrm{d}r-\frac{\nu\overline{\partial_rW(1)}}{n^2+l^2} +{\ir}l(\lambda-1)\int_0^1\left(\frac{r|\partial_rW|^2}{n^2+r^2l^2}+\frac{|W|^2}{r}\right)\mathrm{d}r.
\end{align*}
By Lemma \ref{lem:UW-U} and Lemma \ref{lem:la-l} again, we get
\begin{align*}
   &\int_0^1\frac{\nu r|U|^2}{n^2+r^2l^2}\mathrm{d}r+
|l(\lambda-1)|\int_0^1\left(\frac{r|\partial_rW|^2}{n^2+r^2l^2}+\frac{|W|^2}{r}\right)\mathrm{d}r\\
  &\leq 2\bigg| \int_0^1\frac{\nu r|U|^2}{n^2+r^2l^2}\mathrm{d}r +{\ir}l(\lambda-1)\int_0^1\left(\frac{r|\partial_rW|^2}{n^2+r^2l^2}+\frac{|W|^2}{r}\right)\mathrm{d}r\bigg|.
\end{align*}
Then we infer that 
\begin{align*}
  &\int_0^1\frac{\nu r|U|^2}{n^2+r^2l^2}\mathrm{d}r+
|l(\lambda-1)|\int_0^1\left(\frac{r|\partial_rW|^2}{n^2+r^2l^2}+\frac{|W|^2}{r}\right)\mathrm{d}r\leq \frac{2\nu|{\partial_rW(1)}|}{n^2+l^2}.
\end{align*}
This gives
\begin{align}\label{eq:toy-est4}
 2|\partial_rW(1)|\geq (n^2+l^2) \int_0^1\frac{ r|U|^2}{n^2+r^2l^2}\mathrm{d}r \geq \|U\|_{L^2}^2,
\end{align}

Using the fact that 
\begin{align*}
&1=|U(1)|^2\leq \|r^{-1}\partial_r(rU^2)\|_{L^1}=\|(2\partial_rU+U/r)U\|_{L^1}\leq\|2\partial_rU+U/r\|_{L^2}\|U\|_{L^2},
\end{align*}
we deduce from \eqref{eq:toy-est3} and \eqref{eq:toy-est4}  that
\begin{align}\label{eq:toy-est5}
&1\leq\|2\partial_rU+U/r\|_{L^2}^2\|U\|_{L^2}^2\leq 32|{\partial_rW(1)}||\partial_rU(1)|.
\end{align} 

We get by integration by parts that 
  \begin{align*}
     & \mathbf{Re}\big(\langle \widehat{\Delta}_1U,r\partial_rU\rangle\big)=\mathbf{Re}\left\langle \dfrac{1}{r}\partial_r(r\partial_rU)-\dfrac{n^2+r^2l^2}{r^2}U -\dfrac{2rl^2\partial_rU}{n^2+r^2l^2},r\partial_rU\right\rangle\\
     &=\dfrac{1}{2}\left\langle \dfrac{1}{r}\partial_r(|r\partial_rU|^2)-\dfrac{n^2+r^2l^2}{r}\partial_r(|U|^2) -\dfrac{4r^2l^2|\partial_rU|^2}{n^2+r^2l^2},1\right\rangle\\
     &=\dfrac{1}{2}\Big(|\partial_rU(1)|^2-(n^2+l^2)|U(1)|^2+\big\langle |U|^2,\dfrac{1}{r}\partial_r(n^2+r^2l^2)\big\rangle\\
     &\qquad-4\left\|\dfrac{rl\partial_rU}{\sqrt{n^2+r^2l^2}}\right\|_{L^2}^2 \Big)\\
     &=|\partial_rU(1)|^2/2-(n^2+l^2)/2+l^2\|U\|_{L^2}^2-2 \left\|\dfrac{rl\partial_rU}{\sqrt{n^2+r^2l^2}}\right\|_{L^2}^2\\
     &\geq |\partial_rU(1)|^2/2-(n^2+l^2)/2-2l^2\left\|\dfrac{\partial_rU}{\sqrt{n^2+r^2l^2}}\right\|_{L^2}^2,
  \end{align*}
which gives
  \begin{align*}
     &|\partial_rU(1)|^2\leq C\Big(\mathbf{Re}\big(\langle \widehat{\Delta}_1U,r\partial_rU\rangle\big)+A_1^2+l^2\left\|\dfrac{\partial_rU}{\sqrt{n^2+r^2l^2}}\right\|_{L^2}^2\Big).
  \end{align*}

Using the equation \eqref{eq:WU-app-toy}, we have
\begin{align*}
 \mathbf{Re}\big\langle \widehat{\Delta}_1U,r\partial_rU\big\rangle&=\mathbf{Re}\big(\langle {\ir}l(\lambda-1)U/\nu,r\partial_rU\rangle\big)\\
     &\leq |l(\lambda-1)/\nu|\left\|r\sqrt{n^2+r^2l^2}U\right\|_{L^2}
     \left\|\dfrac{\partial_rU}{\sqrt{n^2+r^2l^2}}\right\|_{L^2}\\ &\leq A_1^{2}(n^2+l^2)^{\f12}\|U\|_{L^2} \left\|\dfrac{\partial_rU}{\sqrt{n^2+r^2l^2}}\right\|_{L^2}.
  \end{align*}
Thus, we conclude that 
  \begin{align*}
  |\partial_rU(1)|^2&\leq C\Big( A_1^2(n^2+l^2)^{\f12}\|U\|_{L^2} \left\|\dfrac{\partial_rU}{\sqrt{n^2+r^2l^2}}\right\|_{L^2}+A_1^2+l^2\left\|\dfrac{\partial_rU}{\sqrt{n^2+r^2l^2}}\right\|_{L^2}^2\Big),
  \end{align*}
which along with \eqref{eq:toy-est2} gives
 \begin{align*}
     &|\partial_rU(1)|^2\leq C \big(A_1|\partial_rU(1)|+A_1^2+|\partial_rU(1)|\big).
  \end{align*}
This shows that
\beno  
|\partial_rU(1)|\leq C(A_1+1)\leq CA_1.
\eeno
This along  with \eqref{eq:toy-est5} and \eqref{eq:toy-est2} shows  that 
\begin{align*}
       &|\partial_rW(1)|\geq C^{-1}|\partial_rU(1)|^{-1}\geq C^{-1}A_1^{-1},\\
       &\|U\|_{L^2}^2\leq 2A_1^{-2}|\partial_rU(1)|\leq CA_1^{-1}.
\end{align*}

Next let us estimate $\|(1-r^2)U\|_{L^2}$. We denote
\begin{align*}
&\Psi_1(s)=\int_0^s\nu\left(\frac{r|\partial_rU|^2}{n^2+r^2l^2}+\frac{|U|^2}{r}\right)\mathrm{d}r+
|l(\lambda-1)|\int_0^s\frac{r|U|^2}{n^2+r^2l^2}\mathrm{d}r.
\end{align*}
It follows from  \eqref{eq:toy-est1} that 
\begin{align*}
   \Psi_1(r)&\leq\frac{2\nu r|\partial_rU||U|}{n^2+r^2l^2}\leq A_1^{-1}\Big(\dfrac{\nu r|\partial_rU|^2}{n^2+r^2l^2}+A_1^2\dfrac{\nu r| U|^2}{n^2+r^2l^2}\Big).
\end{align*}
Notice that 
\begin{align*}
   &\dfrac{\nu r|\partial_rU|^2}{n^2+r^2l^2}+A_1^2\dfrac{\nu r|U|^2}{n^2+r^2l^2}\\
   &= \dfrac{\nu r|\partial_rU|^2}{n^2+r^2l^2}+(n^2+l^2)\dfrac{\nu r|U|^2}{n^2+r^2l^2}+|l(\lambda-1)| \dfrac{r|U|^2}{n^2+r^2l^2}\\
   &\leq \dfrac{\nu r|\partial_rU|^2}{n^2+r^2l^2}+\dfrac{\nu |U|^2}{r}+|l(\lambda-1)| \dfrac{r|U|^2}{n^2+r^2l^2}=\partial_r\Psi_1(r).
\end{align*}
Then we infer that
\beno
\Psi_1(r)\leq A_1^{-1}\partial_r\Psi_1(r),
\eeno
which implies that 
\beno
\Psi_1(r)\leq\Psi_r(1)\mathrm{e}^{-(1-r)A_1}.
\eeno
We have by \eqref{eq:toy-est1} that 
\begin{align*}
& \Psi_1(1)\leq 2\nu|\partial_rU(1)|/(n^2+l^2)\leq C\nu A_1/(n^2+l^2).
\end{align*}
Thus, we obtain
\beno
\Psi_1(r)\leq \Psi_1(1)\mathrm{e}^{-(1-r)A_1}\leq C\nu A_1\mathrm{e}^{-(1-r)A_1}/(n^2+l^2).
\eeno
On the other hand, we have
\begin{align*}
   \Psi_1(s)&\geq \int_0^s\left(\frac{\nu|U|^2}{r}+
|l(\lambda-1)|\frac{r|U|^2}{n^2+r^2l^2}\right)\mathrm{d}r\\
&\geq \int_0^s\left(\nu r|U|^2+
|l(\lambda-1)|\frac{r|U|^2}{n^2+l^2}\right)\mathrm{d}r= \nu A_1^2 (n^2+l^2)^{-1}\|U\|_{L^2(0,s)}^2.
\end{align*}
Then we obtain
\begin{align*}
   &\|U\|_{L^2(0,s)}^2\leq \nu^{-1}A_1^{-2}(n^2+l^2)\Psi_1(s)\leq CA_1^{-1}\mathrm{e}^{-(1-s)A_1},
\end{align*}
which implies that 
\begin{align*}
\|(1-r^2)U\|_{L^2}^2\leq& C\|(1-r)U\|_{L^2}^2 =C \int_{0}^{1}2(1-s)\|U\|_{L^2(0,s)}^2\mathrm{d}s\\
\leq & C\int_{0}^{1}(1-s)A_1^{-1}\mathrm{e}^{-(1-s)A_1} \leq CA_1^{-3}.
\end{align*}

This completes the proof of the proposition.
\end{proof}

The proof of the following proposition is almost the same. Here we omit the details. 

\begin{Proposition}\label{prop:UW-toy-v2}
If ${\lambda}\in\C$ satisfies $ l{\lambda}_i\leq \max(|l({\lambda}-s^2)|/2,\nu (n^2/s^2+l^2)/3)$ and $|\nu /(sl)|^{1/3}A_1(s)\geq 1.$ 
Let $U$ solve
\begin{align*}
-\nu \widehat{\Delta}_1{U}+{\ir}l(\widetilde{\lambda}-s^2)U=0\ \text{in}\ (0,s),\quad U(s)=1.
\end{align*}
Then it holds that 
\begin{align*}&
 \|U\|_{L^2(0,s)}^2\leq C(A_1(s)/s)^{-1},\quad \|(s^2-r^2)U\|_{L^2}^2\leq C(A_1(s)/s)^{-3}.
\end{align*}
\end{Proposition}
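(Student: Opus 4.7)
The plan is to reduce to Proposition \ref{prop:UW-toy} via a simple rescaling. Let $V(\rho):=U(s\rho)$ for $\rho\in(0,1]$, so that $V(1)=1$. A direct chain-rule computation shows $\widehat{\Delta}_1 U\,\big|_{r=s\rho} = s^{-2}\widetilde{\widehat{\Delta}}_1 V(\rho)$, where $\widetilde{\widehat{\Delta}}_1$ denotes the operator obtained by replacing $l$ with $\tilde l:=sl$ (and leaving $n$ unchanged). Multiplying the equation by $s^2$ yields
\begin{align*}
-\nu\,\widetilde{\widehat{\Delta}}_1 V + \ir\,\tilde l\,(\hat\lambda-1)\,V = 0\quad\text{in }(0,1),\qquad V(1)=1,
\end{align*}
with $\hat\lambda:=1+s(\lambda-s^2)$. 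This is precisely the situation of \eqref{eq:WU-app-toy} for the rescaled parameters $(\hat l,\hat n,\hat\nu,\hat\lambda)=(sl,n,\nu,1+s(\lambda-s^2))$.

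Next I would verify that the hypotheses transfer cleanly. Since $\hat\lambda_i=s\lambda_i$ and $\hat l(\hat\lambda-1)=s^2l(\lambda-s^2)$, the condition $2\hat l\hat\lambda_i\le|\hat l(\hat\lambda-1)|$ is equivalent to $2l\lambda_i\le|l(\lambda-s^2)|$, while $3\hat l\hat\lambda_i\le \hat\nu(\hat n^2+\hat l^2)$ becomes $3l\lambda_i\le\nu(n^2/s^2+l^2)$ after factoring out $s^2$ from both sides; thus either alternative is ensured by the first hypothesis of the proposition. A straightforward computation also gives the dictionary
\begin{align*}
\hat A_1 \,:=\, |\hat l(\hat\lambda-1)/\hat\nu|^{1/2}+|\hat l|+|\hat n|
\,=\, s\bigl(|l(\lambda-s^2)/\nu|^{1/2}+|l|+|n/s|\bigr) \,=\, sA_1(s).
\end{align*}

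I would then invoke Proposition \ref{prop:UW-toy} applied to $V$ on $(0,1)$ to obtain $\|V\|_{L^2(0,1)}^2\le C\hat A_1^{-1}$ and $\|(1-\rho^2)V\|_{L^2(0,1)}^2\le C\hat A_1^{-3}$, and translate back via $r=s\rho$. The change of variables yields $\|U\|_{L^2(0,s)}^2 = s^2\|V\|_{L^2(0,1)}^2$ and $\|(s^2-r^2)U\|_{L^2(0,s)}^2 = s^6\|(1-\rho^2)V\|_{L^2(0,1)}^2$, which together with $\hat A_1=sA_1(s)$ produce exactly the claimed bounds $\|U\|_{L^2(0,s)}^2\le C(A_1(s)/s)^{-1}$ and $\|(s^2-r^2)U\|_{L^2(0,s)}^2\le C(A_1(s)/s)^{-3}$.

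The only subtlety, which is really a bookkeeping matter rather than a genuine obstacle, is to track how the constant $n^2+l^2$ appearing in Lemma \ref{lem:UW-U} rescales: under the dilation it becomes $\hat n^2+\hat l^2=s^2(n^2/s^2+l^2)$, which is precisely why the hypothesis of the present proposition has the combination $n^2/s^2+l^2$. A direct ``redo the proof on $(0,s)$'' approach (as suggested by the remark preceding the statement) would encounter exactly the same factor when carrying out the integration by parts against $(n^2+r^2l^2)^{-1}U$ on $(0,\tilde s)$ for $\tilde s\in(0,s]$, setting up the analogue of the Gronwall-type bound $\Psi_1(\tilde s)\le (sA_1(s))^{-1}\,\partial_{\tilde s}\Psi_1(\tilde s)$ and concluding via the same exponential-decay argument.
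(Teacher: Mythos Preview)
Your proof is correct. The paper omits the proof and simply says it is ``almost the same'' as that of Proposition~\ref{prop:UW-toy}, implicitly meaning one should rerun the energy argument on $(0,s)$; your rescaling $V(\rho)=U(s\rho)$ with $(\hat l,\hat n,\hat\nu,\hat\lambda)=(sl,n,\nu,1+s(\lambda-s^2))$ is a clean way to make this precise and reduces the statement exactly to Proposition~\ref{prop:UW-toy}, with the dictionary $\hat A_1=sA_1(s)$ and the Jacobian factors $s^2$, $s^6$ producing $(A_1(s)/s)^{-1}$ and $(A_1(s)/s)^{-3}$ as required. You are also right that the second hypothesis $|\nu/(sl)|^{1/3}A_1(s)\geq 1$ is not used in the argument (nor in Proposition~\ref{prop:UW-toy}); it is carried along in the paper only because it is needed downstream in the proof of Proposition~\ref{prop:Ua}.
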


\subsection{Inhomogeneous elliptic problem}

\begin{Proposition}\label{prop:ellip-inhom-v1}
Let $\lambda\in\R$, $s\in(0,1],$ and $U\in H^2(0,s)$ solve
\begin{align*}&-\nu \widehat{\Delta}{U}+{\ir}l(\lambda-r^2)U=F,\quad U(s)=0.
\end{align*}
It holds that 
\begin{align*}
&\big(\nu l^2+|\nu nl|^{\f12}+|\nu\lambda l^2|^{\f13}\big)\|U\|_{L^2(0,s)}+|l|\|(\lambda-r^2)U\|_{L^2}
\\&\qquad+\nu^{\f16}|l|^{\f56}\|rU\|_{L^1(0,s)}\leq C\|F\|_{L^2}.
\end{align*}\
\end{Proposition}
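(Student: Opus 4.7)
My plan is to mimic the scheme of Lemma~\ref{lem:U-H2}, specialized to the scalar equation for $U$ on $(0,s)$ (the coupling to $W$ and the perturbation $\widehat{\Delta}-\widehat{\Delta}_1$ are both absent here, making the argument cleaner). Throughout, I write $\|\cdot\|_{L^2}$ for $\|\cdot\|_{L^2((0,s);r\mathrm{d}r)}$ and $\langle\cdot,\cdot\rangle_s=\int_0^s\cdot\, r\,\mathrm{d}r$. First, I test the equation against $U$ and take the real part; boundary terms vanish since $U(s)=0$ and $U(0)=0$ (from the $H^2$ regularity in cylindrical coordinates). This yields the basic dissipation bound $\nu\|U\|_1^2\le\|F\|_{L^2}\|U\|_{L^2}$ and, in particular, $\nu l^2\|U\|_{L^2}\le\|F\|_{L^2}$ via $\|U\|_1\ge|l|\|U\|_{L^2}$.

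Next, I square the $L^2$-norm of the equation,
\begin{align*}
\nu^2\|\widehat{\Delta}U\|_{L^2}^2+l^2\|(\lambda-r^2)U\|_{L^2}^2-2\nu l\,\mathbf{Im}\langle(\lambda-r^2)U,\widehat{\Delta}U\rangle_s=\|F\|_{L^2}^2,
\end{align*}
and integrate the cross term by parts (the $r=s$ contribution vanishes because $U(s)=0$), exactly as in Lemma~\ref{lem:U-H2}, to obtain $|\mathbf{Im}\langle(\lambda-r^2)U,\widehat{\Delta}U\rangle_s|\le 2\|rU\|_{L^2}\|\partial_rU\|_{L^2}$. Setting $r_0=\min(\sqrt{\lambda},s)$ when $\lambda>0$ and $\delta=(\nu/|lr_0|)^{1/3}$, the elementary splitting $r\le r_0+|\lambda-r^2|/(2r_0)$ from \eqref{eq:kp-est1} (replaced by $r\le s$ when $\lambda\le0$), combined with Lemma~\ref{lem:interp} and Young's inequality, reproduces the Lemma~\ref{lem:U-H2} argument and produces
\begin{align*}
\nu\|\widehat{\Delta}U\|_{L^2}+|l|\|(\lambda-r^2)U\|_{L^2}+|\nu\lambda l^2|^{1/3}\|U\|_{L^2}\le C\|F\|_{L^2}.
\end{align*}
The factor $|\nu nl|^{1/2}$ follows exactly as in the regime $\delta/r_0\ge\min(c_2,1/|n|)$ of Proposition~\ref{prop:res-real}: from $\|U\|_{L^2}^2\le|n|^{-1}\|U\|_1\|rU\|_{L^2}$, insert the basic energy bound on $\|U\|_1$ and the $\|rU\|_{L^2}$-bound above, and solve.

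For the $L^1$-bound I split $(0,s]$ at the Airy layer $I=\{|r-r_0|<\delta\}\cap(0,s]$. Off $I$, $|\lambda-r^2|\gtrsim r_0\delta$, so a weighted Cauchy--Schwarz against $r/|\lambda-r^2|$ in the $r\,\mathrm{d}r$ measure, together with the explicit bound $\|r/(\lambda-r^2)\|_{L^2((0,s]\setminus I,\,r\,\mathrm{d}r)}^2\lesssim(r_0\delta)^{-1}$, reduces the contribution to $|l|^{-1}(r_0\delta)^{-1/2}\|F\|_{L^2}=r_0^{-1/3}\nu^{-1/6}|l|^{-5/6}\|F\|_{L^2}$; inside $I$, whose length is $\le 2\delta$, the 1D Sobolev embedding $\|U\|_{L^\infty(I)}^2\lesssim\|U\|_{L^2}\|\partial_rU\|_{L^2}$ combined with the $\|U\|_{L^2}$ and $\|U\|_1$ bounds already established produces the same order. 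Summing and using $r_0\le 1$ then gives the claim. The main technical point, inherited from Lemma~\ref{lem:U-H2}, is the careful Young-inequality balancing that extracts the sharp Airy exponent $|\nu\lambda l^2|^{1/3}$ when $\lambda\in(0,1]$ and $r_0\ll 1$; the $L^1$-estimate is the new point, but once the layer scale $\delta$ is in hand it reduces to the split-and-sum above, with its exponent $\nu^{1/6}|l|^{5/6}$ produced precisely by the off-layer weighted integral.
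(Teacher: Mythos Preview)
Your $L^2$ estimates follow the paper's scheme closely: the basic energy identity, the squared equation, and the splitting $r\le r_0+|\lambda-r^2|/(2r_0)$ combined with Lemma~\ref{lem:interp} indeed recover the Airy factor $|\nu\lambda l^2|^{1/3}$ when $\lambda>0$ and $\delta\le r_0$, and the interpolation $\|U\|_{L^2}^2\le|n|^{-1}\|U\|_1\|rU\|_{L^2}$ produces the $|\nu nl|^{1/2}$ factor. (The paper still splits into three cases, because the Lemma~\ref{lem:U-H2} computation uses $\delta/r_0\le 1$; for $\lambda\le 0$ and for $0<\lambda$ with $\delta\ge r_0$ one has to argue differently, but in those regimes $|\nu\lambda l^2|^{1/3}\le|\nu nl|^{1/2}$, so this is a matter of bookkeeping rather than a real obstruction.)

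The genuine gap is in your $L^1$ argument. Your off-layer bound yields, by your own calculation,
\[
\|rU\|_{L^1((0,s]\setminus I)}\;\lesssim\;|l|^{-1}(r_0\delta)^{-1/2}\|F\|_{L^2}\;=\;r_0^{-1/3}\,\nu^{-1/6}|l|^{-5/6}\|F\|_{L^2},
\]
and the prefactor $r_0^{-1/3}=\lambda^{-1/6}$ blows up as $\lambda\to 0^+$; the remark ``using $r_0\le 1$'' goes the wrong way. The splitting at the Airy scale $r_0\delta=|\nu\lambda/l|^{1/3}$ alone cannot be uniform in $\lambda$, because that scale degenerates when $\lambda$ is small (and is undefined for $\lambda\le 0$). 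The paper's fix is to avoid the hard split altogether and use a single weighted Cauchy--Schwarz with the soft weight
\[
\delta_1\;=\;|\nu/l|^{1/2}+|\nu\lambda/l|^{1/3},
\]
so that $\|rU\|_{L^1}\le\|r/(\delta_1+|\lambda-r^2|)\|_{L^2}\bigl(\delta_1\|U\|_{L^2}+\|(\lambda-r^2)U\|_{L^2}\bigr)$; the already-proved bounds give $|l|$ times the bracket $\le C\|F\|_{L^2}$, and an explicit calculation (substituting $\tau=r^2$) shows $\|r/(\delta_1+|\lambda-r^2|)\|_{L^2}\le C|\nu/l|^{-1/6}$ uniformly in $\lambda\in\R$. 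The key point is that the parabolic scale $|\nu/l|^{1/2}$ in $\delta_1$ takes over when $\lambda$ is small, preventing the blow-up; your Airy-only cutoff misses this.
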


\begin{proof}
For simplicity, here we write $L^2$ as $L^2(0,s)$, $\langle f,g\rangle=\int_{0}^{s}f\overline{g}r\mathrm{d}r$ and $\|f\|_{1}^2=\|\partial_rf\|_{L^2(0,s)}^2+n^2\|f/r\|_{L^2(0,s)}^2+l^2\|f\|_{L^2(0,s)}^2$. First of all, we have
\begin{align*}
   &\nu\|U\|_{1}^2+{\ir}l\big\langle(\lambda-r^2)U,U\big\rangle =\langle F,U\rangle,
\end{align*}
which gives
\begin{align}\label{eq:eih-est1}
  &\nu\|U\|_{1}^2+|l|\big|\langle(\lambda-r^2)U,U\rangle\big| \leq C\|U\|_{L^2}\|F\|_{L^2}.
\end{align}
Since $\|U\|_{1}^2\geq l^2\|U\|_{L^2}^2,$ we deduce that $\nu l^2\|U\|_{L^2}\leq C\|F\|_{L^2}.$

On the other hand, we have
\begin{align*}
   &\nu^2\|\widehat{\Delta}U\|_{L^2}^2+2l\nu\mathbf{Im}\big\langle (\lambda-r^2)U, \widehat{\Delta}U\big\rangle +l^2\|(\lambda-r^2)U\|_{L^2}^2 =\|F\|_{L^2}^2,
\end{align*}
which gives
\begin{align}\label{eq:eih-est2}
\nu^2\|\widehat{\Delta}U\|_{L^2}^2 +l^2\|(\lambda-r^2)U\|_{L^2}^2 =4|l|\nu\|r U\|_{L^2}\|\partial_rU\|_{L^2}+\|F\|_{L^2}^2.
\end{align}
Here we used  
\beno
\mathbf{Im}\big\langle(\lambda-r^2)U,\widehat{\Delta}U\big\rangle=-\mathbf{Im}\big\langle\partial_r[(\lambda-r^2)U],\partial_rU\big\rangle=\mathbf{Im}\big\langle 2rU,\partial_rU\big\rangle.
\eeno

Next we consider two cases. \smallskip

\no{\bf Case 1.} $\lambda\le 0$. 

By \eqref{eq:eih-est1}, we have
\beno
|l|\|\sqrt{r^2-\lambda}U\|_{L^2}^2\leq C\|F\|_{L^2}\|U\|_{L^2}.
\eeno
Thanks to $r\leq \sqrt{|\lambda|+r^2}(|\lambda|+1)^{-\f12}$,  we get by \eqref{eq:eih-est1} that 
  \begin{align*}
     \|U\|_{L^2}^2&\leq \|rU\|_{L^2}\|U/r\|_{L^2}\leq (|\lambda|+1)^{-\f12} \left\|\sqrt{r^2-\lambda}U\right\|_{L^2}|n|^{-1}\|U\|_{1}\\
     &\leq (|\lambda|+1)^{-\f12}|l|^{-\f12}|n|^{-1}\nu^{-\f12}\|F\|_{L^2}\|U\|_{L^2},
  \end{align*}
which gives
\begin{align*}&
|\nu nl|^{\f12}\|U\|_{L^2}\leq |\nu n^2l(|\lambda|+1)|^{\f12}\|U\|_{L^2}\leq C\|F\|_{L^2}.
\end{align*}
This along with \eqref{eq:eih-est1} and \eqref{eq:eih-est2}  gives
 \begin{align*}
     l^2\|(\lambda-r^2)U\|_{L^2}^2\leq & 4|l|\nu\|rU\|_{L^2}\|\partial_rU\|_{L^2}+\|F\|_{L^2}\leq 4|l|\nu\|\sqrt{r^2-\lambda}U\|_{L^2}\|U\|_{1}+\|F\|_{L^2}\\
     \leq &C|l\nu|^{\f12}\|F\|\|U\|_{L^2}+ \|F\|_{L^2}^2\\
     \leq & C|\nu l|^{\f12}\|F\|_{L^2}\big(|\nu nl|^{-\f12}\|F\|_{L^2}\big)+ C\|F\|_{L^2}^2\leq C\|F\|_{L^2}^2.
  \end{align*} 
Thus, we obtain
\beno
|l\lambda|\|U\|_{L^2}+|l|\|(\lambda-r^2)U\|_{L^2}\leq C\|F\|_{L^2},
\eeno
and then
  \begin{align*}
     &|\nu\lambda l^2|^{\f13}\|U\|_{L^2}\leq \big(|\nu n^2l|^{\f12}\big)^{\f23}|l\lambda|^{\f13}\|U\|_{L^2}\leq C( |\nu n^2l|^{\f12}+|l\lambda|)\|U\|_{L^2}\leq C\|F\|_{L^2}.
  \end{align*}
  
In summary,  we conclude that 
\beno
(\nu l^2+|\nu nl|^{\f12}+|\nu\lambda l^2|^{\f13})\|U\|_{L^2}+|l|\|(\lambda-r^2)U\|_{L^2}\leq C\|F\|_{L^2}.
\eeno
  
  \no{\bf Case 2.} $\lambda>0$ and $\delta/r_0\leq 1$, where  $r_0=\lambda^\f12,  \delta=|\nu/(r_0l)|^{\f13}$.\smallskip
  
Using the fact that $r\leq(r^2+r^2_0)/(2r_0)= r_0+(\lambda-r^2)/(2r_0)\leq r_0+|\lambda-r^2|/(2r_0)$ for $r>0$, we get by Lemma \ref{lem:interp}  that 
\begin{align*}
  \|r{U}\|_{L^2}\leq& r_0\|{U}\|_{L^2} +\|(\lambda-r^2){U}\|_{L^2}/(2r_0)\\ \leq& C\big(r_0\|(\lambda-r^2){U}\|_{L^2}\|{U}\|_{1}\big)^{\f12} +(C/r_0)\|(\lambda-r^2){U}\|_{L^2}.
\end{align*}
By \eqref{eq:eih-est2}, we have
\begin{align*}
  &\nu^2\|\widehat{\Delta} {U}\|_{L^2}^2+l^2\|(\lambda-r^2){U}\|_{L^2}^2\\ &\leq Cl\nu\Big[(r_0\|(\lambda-r^2){U}\|_{L^2}\|{U}\|_{1})^{\f12}+\|(\lambda-r^2){U}\|_{L^2}/r_0\Big]
  \|\partial_r {U}\|_{L^2}+\|{F}\|_{L^2}^2.
\end{align*}
Then Young's inequality gives
\begin{align*}
  &\nu^2\|\widehat{\Delta} {U}\|_{L^2}^2+l^2\|(\lambda-r^2){U}\|_{L^2}^2 \leq C\big(|r_0 l\nu^2|^{\f23}+|\nu/r_0|^2\big)\|U\|_{1}^2+C\|{F}\|_{L^2}^2.
\end{align*}
Due to $\delta/r_0\leq 1$, we have $|\nu/r_0|\leq |\nu/\delta|=|r_0l\nu^2|^{\f13}$, and then
\begin{align*}
  &\nu^2\|\widehat{\Delta} {U}\|_{L^2}^2+l^2\|(\lambda-r^2){U}\|_{L^2}^2 \leq C|r_0 l\nu^2|^{\f23}\|{U}\|_{1}^2+C\|{F}\|_{L^2}^2.
\end{align*}
By \eqref{eq:eih-est1}, we have $\nu\|U\|_{1}^2\leq C\|F\|_{L^2}\|U\|_{L^2}$, hence, 
\begin{align}\label{eq:eih-est3}
   &\nu^2\|\widehat{\Delta} {U}\|_{L^2}^2+|\nu/\delta|^2\|U\|_{1}^2 +l^2\|(\lambda-r^2){U}\|_{L^2}^2\leq C|lr_0\delta|\|U\|_{L^2}\|F\|_{L^2}+C\|{F}\|_{L^2}^2.
\end{align}
Here we used $|r_0 l\nu^2|^{\f23}= |\nu/\delta|^2=\nu|lr_0\delta|.$

By Lemma \ref{lem:interp}, \eqref{eq:eih-est1} and \eqref{eq:eih-est3}, we get
\begin{align*}
  |lr_0\delta|^2\|U\|_{L^2}^2&\leq C|l\delta|^2r_0\|U\|_{1}\|(\lambda-r^2)U\|_{L^2}\\
  &\leq C|l|\delta^2r_0\nu^{-\f12}\|F\|_{L^2}^{\f12}\|U\|_{L^2}^{\f12} \big(|lr_0\delta|\|U\|_{L^2}\|F\|_{L^2}+\|{F}\|_{L^2}^2\big)^{\f12}\\
  &\leq C\|F\|_{L^2}^{\f12}\big(|lr_0\delta|\|U\|_{L^2}\big)^{\f12} \big(|lr_0\delta|\|U\|_{L^2}\|F\|_{L^2}+\|{F}\|_{L^2}^2\big)^{\f12}.
\end{align*}
Then Young's inequality gives
\begin{align*}
  |lr_0\delta|\|U\|_{L^2}\leq C\|F\|_{L^2},
\end{align*}
which along with \eqref{eq:eih-est3} gives
\begin{align*}
    &\nu\|\widehat{\Delta} {U}\|_{L^{2}}+|l|\|(\lambda-r^2) {U}\|_{L^{2}}+|\nu/\delta|\|U\|_{1}+|lr_0\delta|\|U\|_{L^2}\leq C\|F\|_{L^2}.
  \end{align*}
Due to  $\delta/r_0\leq 1$, we have
\beno
\|r{U}\|_{L^2}\leq r_0\|{U}\|_{L^2} +(C/r_0)\|(\lambda-r^2){U}\|_{L^2} \leq C\big(r_0\|{U}\|_{L^2} +\delta^{-1}\|(\lambda-r^2)U\|_{L^2}\big)\leq C|l\delta|^{-1}\|F\|_{L^2}.
\eeno
 Thus, we obtain
\begin{align*}
   \|U\|_{L^2}&\leq \|rU\|_{L^2}^{\f12}\|U/r\|_{L^2}^{\f12}\leq C|n|^{-\f12}\|rU\|_{L^2}^{\f12}\|U\|_{1}^{\f12}\\ &\leq C|n|^{-\f12}\big(|l\delta|^{-1}\|F\|_{L^2}\big)^{\f12}\big(|\nu/\delta|^{-1}\|F\|_{L^2}\big)^{\f12} \leq C|\nu nl|^{-\f12}\|F\|_{L^2}.
\end{align*}

Thanks to $|lr_0\delta|=|\nu\lambda l^2|^{\f13}$,  we finally conclude that
\begin{align*}
   &(\nu l^2+|\nu nl|^{\f12}+|\nu \lambda l^2|^{\f13})\|U\|_{L^2}+|l|\|(\lambda-r^2)U\|_{L^2}+ |l\delta|\|rU\|_{L^2}\leq C\|F\|_{L^2}.
\end{align*}

\no{\bf Case 3.}  $\lambda>0$ and $\delta/r_0\ge 1$.

We rewrite the equation of $U$ as
\begin{align*}
   &-\nu\widehat{\Delta}U+{\ir}l(0-r^2)U=-{\ir}l\lambda U+F,\quad U(s)=0.
\end{align*}
Then by {\bf Case 1} with $\la=0$, we get
\begin{align}\label{eq:eih-est4}
   &\|r^2U\|_{L^2}\leq C|l|^{-1}\|-{\ir}l\lambda U+F\|_{L^2}\leq C|\lambda|\|U\|_{L^2}+C|l|^{-1}\|F\|_{L^2}.
\end{align}
By \eqref{eq:eih-est1}, we have $\nu\|U\|_{1}^2\leq C\|F\|_{L^2}\|U\|_{L^2}$, and then
\begin{align*}
   \|U\|_{L^2}&\leq \|r^2U\|_{L^2}^{\f13}\|U/r\|_{L^2}^{\f23}\leq |n|^{-\f23}\|r^2U\|_{L^2}^{\f13}\|U\|_{1}^{\f23}\\
   &\leq C|n|^{-\f23}\big(|\lambda|\|U\|_{L^2}+|l|^{-1}\|F\|_{L^2}\big)^{\f13}\big( \nu^{-\f12}\|F\|_{L^2}^{\f12}\|U\|_{L^2}^{\f12}\big)^{\f23}\\
   &= C\big(\nu^{-\f13}|n|^{-\f23}|\lambda|^{\f13}\|U\|_{L^2}^{\f23}\|F\|_{L^2}^{\f13} +\nu^{-\f13}|n|^{-\f23}|l|^{-\f13}\|F\|_{L^2}^{\f23}\|U\|_{L^2}^{\f13}\big),
\end{align*}
which gives
\begin{align*}
   &\|U\|_{L^2}\leq C\big(\nu^{-1}\lambda|n|^{-2}+ |\nu n^2l|^{-\f12}\big)\|F\|_{L^2}.
\end{align*}
Thanks to $\delta/r_0\leq 1$, we have $\lambda\leq |\nu/l|^{\f12}$, and then $\nu^{-1}\lambda n^{-2}\leq |\nu n^4l|^{-\f12}\leq |\nu nl|^{-\f12}$. Thus,
\begin{align*}
   &\|U\|_{L^2}\leq C\big(\nu^{-1}\lambda|n|^{-2}+ |\nu n^2l|^{-\f12}\big)\|F\|_{L^2}\leq C|\nu nl|^{-\f12}\|F\|_{L^2}.
\end{align*}
Thanks to $|\nu\lambda l^2|^{\f13}=|\nu l|^{\f12}|\lambda^2 l/\nu|^{\f16}\leq |\nu l|^{\f12}$, we get
\begin{align*}
   &(|\nu nl|^{\f12}+|\nu\lambda l^2|^{\f13})\|U\|_{L^2}\leq (|\nu nl|^{\f12}+|\nu l|^{\f12})\|U\|_{L^2}\leq  C\|F\|_{L^2},
\end{align*}
and due to \eqref{eq:eih-est4}, we have
\begin{align*}
   |l|\|(\lambda-r^2)U\|_{L^2}&\leq |l\lambda|\|U\|_{L^2}+|l|\|r^2U\|_{L^2}\leq C\big(|l\lambda|\|U\|_{L^2}+\|F\|_{L^2}\big)\\
   &\leq C\big(|\nu l|^{\f12}\|U\|_{L^2}+\|F\|_{L^2}\big)\leq C\|F\|_{L^2}.
\end{align*}
Thus, we conclude that 
\begin{align*}
   &(\nu l^2+|\nu nl|^{\f12}+|\nu \lambda l^2|^{\f13})\|U\|_{L^2}+|l|\|(\lambda-r^2)U\|_{L^2}\leq C\|F\|_{L^2}.
\end{align*}

Finally, it remains to estimate $\|rU\|_{L^1}$. Let $ \delta_1=|\nu/l|^{\f12}+|\nu \lambda/l|^{\f13}$. Then we have
\begin{align*}
   \|rU\|_{L^1}&\leq \|r/(\delta_1+|\lambda-r^2|)\|_{L^2}\big(\delta_1\|U\|_{L^2}+\|(\lambda-r^2)U\|_{L^2}\big).
\end{align*}
We have proven that 
\begin{align*}
   |l|\big(\delta_1\|U\|_{L^2}+\|(\lambda-r^2)U\|_{L^2}\big)=(|\nu l|^{\f12}+|\nu \lambda l^2|^{\f13})\|U\|_{L^2}+|l|\|(\lambda-r^2)U\|_{L^2}\leq C\|F\|_{L^2}.
\end{align*}
A direct calculation gives
 \begin{align*}\|r/(\delta_1+|\lambda-r^2|)\|_{L^2}^2=&\int_{0}^s\frac{r^3}{(\delta_1+|\lambda-r^2|)^2}\mathrm{d}r
=\int_{0}^{s^2}\frac{\tau}{(\delta_1+|\lambda-\tau|)^2}\frac{\mathrm{d}\tau}{2}\\
\leq& \int_{0}^{s^2}\frac{|\lambda|}{(\delta_1+|\lambda-\tau|)^2}\frac{\mathrm{d}\tau}{2}+
\int_{0}^{s^2}\frac{|\lambda-\tau|}{(\delta_1+|\lambda-\tau|)^2}\frac{\mathrm{d}\tau}{2}\\
 \leq&\frac{|\lambda|}{\delta_1}+\ln\f{|\lambda|+s^2+\delta_1}{\delta_1}.
 \end{align*}
 
 If $ |\lambda|\leq 2$, then $|\lambda||\nu/l|^{\f13}\leq2|\nu \lambda/l|^{\f13}\leq2\delta_1$, that is, $|\lambda|/\delta_1\leq 2|\nu/l|^{-\f13}$,  and
 \begin{align*}\ln[{(|\lambda|+s^2+\delta_1)}/{\delta_1}]\leq\ln[{(3+\delta_1)}/{\delta_1}]\leq C\delta_1^{-\f23}\leq C|\nu/l|^{-\f13}, \end{align*}
which gives
\beno
  \|r/(\delta_1+|\lambda-r^2|)\|_{L^2}^2\leq C|\nu/l|^{-\f13}.
\eeno
If $ |\lambda|\geq 2$, then we have
\begin{align*}
\|r/(\delta_1+|\lambda-r^2|)\|_{L^2}\leq& \|1/(\delta_1+|\lambda|-1)\|_{L^2}\leq1/(\delta_1+|\lambda|-1)\\ \leq& \delta_1^{-\f13}(|\lambda|-1)^{-\f23}\leq \delta_1^{-\f13}\leq |\nu/l|^{-\f16}.
\end{align*}
This shows that $ \|r/(\delta_1+|\lambda-r^2|)\|_{L^2}\leq C|\nu/l|^{-\f16}$.  Thus, we have
\begin{align*}
   & \|rU\|_{L^1}\leq C|\nu/l|^{-\f16}|l|^{-1}\|F\|_{L^2}\leq C\nu^{-\f16}|l|^{-\f56}\|F\|_{L^2}.
\end{align*} 

This completes the proof of the proposition.
\end{proof}

\begin{Proposition}\label{prop:ellip-inhom-v2}
Let  $U\in H^2(0,s)$ solve 
\begin{align*}
&-\nu \widehat{\Delta}_1{U}+{\ir}l(\lambda-r^2)U=F,\quad  U(s)=0.
\end{align*}
There exists a constant $c_3$ so that if $ l\lambda_i\leq c_3|\nu nl|^{\f12},$  $s\in(0,1],$ then we have
\begin{align*}
(|\nu nl|^{\f12}+|\nu\lambda l^2|^{\f13})\|U\|_{L^2(0,s)}+\nu^{\f16}|l|^{\f56}\|rU\|_{L^1(0,s)}\leq C\|F\|_{L^2}.
\end{align*}
\end{Proposition}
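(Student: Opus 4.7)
The plan is to reduce the complex-$\lambda$, $\widehat{\Delta}_1$ case to the real-$\lambda$, $\widehat{\Delta}$ case handled by Proposition \ref{prop:ellip-inhom-v1}, mirroring how Proposition \ref{prop:res-com} was deduced from Proposition \ref{prop:res-real}. Using $\widehat{\Delta}_1 = \widehat{\Delta} - \frac{2rl^2}{n^2+r^2l^2}\partial_r$, the equation rewrites as
\begin{equation*}
-\nu \widehat{\Delta} U + \ir l(\lambda_r - r^2) U = \widetilde F,\qquad \widetilde F := F - \nu\tfrac{2rl^2}{n^2+r^2l^2}\partial_r U - \ir l\lambda_i U,
\end{equation*}
with $U(s)=0$. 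Since $\lambda_r\in\R$, Proposition \ref{prop:ellip-inhom-v1} yields
\begin{equation*}
(\nu l^2 + |\nu nl|^{1/2} + |\nu\lambda_r l^2|^{1/3})\|U\|_{L^2(0,s)} + \nu^{1/6}|l|^{5/6}\|rU\|_{L^1(0,s)} \leq C\|\widetilde F\|_{L^2}.
\end{equation*}
Moreover, the elementary estimate $|\nu\lambda_i l^2|^{1/3} = |l\lambda_i|^{1/3}|\nu l|^{1/3}\leq c_3^{1/3}|\nu nl|^{1/6}|\nu l|^{1/3} = c_3^{1/3}\nu^{1/2}|n|^{1/6}|l|^{1/2}\leq c_3^{1/3}|\nu nl|^{1/2}$ (using $|n|\geq 1$) allows one to replace $|\nu\lambda l^2|^{1/3}$ by $|\nu\lambda_r l^2|^{1/3}+c_3^{1/3}|\nu nl|^{1/2}$, so the target estimate follows once $\|\widetilde F\|_{L^2}$ is controlled.

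Next I would derive an $H^1$-type a priori bound. Taking the real part of the $L^2((0,s);r\,\mathrm dr)$-pairing of the equation with $U$ and invoking (the analog for $U$ of) Lemma \ref{lem:W1-H1} gives
\begin{equation*}
\nu\|U\|_1^2 \leq 2|l\lambda_i|\|U\|_{L^2}^2 + 2\|F\|_{L^2}\|U\|_{L^2}\leq 2c_3|\nu nl|^{1/2}\|U\|_{L^2}^2 + 2\|F\|_{L^2}\|U\|_{L^2}.
\end{equation*}
Combined with the pointwise bound $\frac{2rl^2}{n^2+r^2l^2}\leq |l/n|$ and $\|\partial_r U\|_{L^2}\leq \|U\|_1$, this implies
\begin{equation*}
\nu|l/n|\|\partial_r U\|_{L^2} \leq \sqrt 2\,\nu^{1/2}|l/n|\bigl(c_3^{1/2}|\nu nl|^{1/4}\|U\|_{L^2} + \|F\|_{L^2}^{1/2}\|U\|_{L^2}^{1/2}\bigr).
\end{equation*}
The pointwise piece $|l\lambda_i|\|U\|_{L^2}\leq c_3|\nu nl|^{1/2}\|U\|_{L^2}$ appearing in $\widetilde F$ is directly absorbable for $c_3$ small.

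The crucial step, and the main obstacle, is to absorb the drift contribution $\nu|l/n|\|\partial_r U\|_{L^2}$ into the LHS. The key algebraic identity is
\begin{equation*}
\nu^{1/2}|l/n|\cdot|\nu nl|^{1/4} = |n|^{-1}\bigl(|\nu nl|^{1/2}\bigr)^{1/2}\bigl(\nu l^2\bigr)^{1/2},
\end{equation*}
which exhibits $\nu^{1/2}|l/n||\nu nl|^{1/4}\|U\|_{L^2}$ as the geometric mean of $|\nu nl|^{1/2}\|U\|_{L^2}$ and $\nu l^2\|U\|_{L^2}$ multiplied by $|n|^{-1}\leq 1$, and AM-GM then bounds it by $\tfrac{1}{2}(|\nu nl|^{1/2}+\nu l^2)\|U\|_{L^2}$. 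Similarly, $\nu^{1/2}|l/n|\|F\|_{L^2}^{1/2}\|U\|_{L^2}^{1/2}=|n|^{-1}(\nu l^2\|U\|_{L^2})^{1/2}\|F\|_{L^2}^{1/2}$, to which a weighted Young's inequality is applied to distribute the mass between a small multiple of $\nu l^2\|U\|_{L^2}$ and a constant multiple of $\|F\|_{L^2}$. The decisive observation is that \emph{both} $|\nu nl|^{1/2}\|U\|_{L^2}$ and $\nu l^2\|U\|_{L^2}$ appear on the LHS of the bound from Proposition \ref{prop:ellip-inhom-v1}; choosing $c_3$ and the Young's parameter sufficiently small relative to the constant in Proposition \ref{prop:ellip-inhom-v1} then allows all perturbation terms to be absorbed simultaneously, yielding $(|\nu nl|^{1/2}+\nu l^2)\|U\|_{L^2}\leq C\|F\|_{L^2}$. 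The remaining $|\nu\lambda l^2|^{1/3}\|U\|_{L^2}$ and $\nu^{1/6}|l|^{5/6}\|rU\|_{L^1}$ bounds follow immediately from the same reduction applied to the other terms on the LHS.
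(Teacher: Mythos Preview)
Your overall strategy is exactly the paper's: rewrite with $\widehat{\Delta}$, apply Proposition~\ref{prop:ellip-inhom-v1} with $\lambda_r$, use the energy identity to control $\|U\|_1$, and absorb the drift term via the identity $\nu^{1/2}|l/n|\cdot|\nu nl|^{1/4}=|n|^{-1}(|\nu nl|^{1/2})^{1/2}(\nu l^2)^{1/2}$ together with Young's inequality. That part is fine and matches the paper's argument closely.

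There is, however, a genuine gap: you repeatedly use $|l\lambda_i|\le c_3|\nu nl|^{1/2}$, but the hypothesis is only the \emph{one-sided} bound $l\lambda_i\le c_3|\nu nl|^{1/2}$. When $l\lambda_i$ is large and negative, $|l\lambda_i|$ is not controlled by $c_3|\nu nl|^{1/2}$, so both your ``directly absorbable'' claim for $|l\lambda_i|\|U\|_{L^2}$ and your bound $|\nu\lambda_i l^2|^{1/3}\le c_3^{1/3}|\nu nl|^{1/2}$ fail. The paper handles this by noting that when $l\lambda_i\le 0$ the real part of the energy identity gives
\[
\tfrac{\nu}{2}\|U\|_1^2+|l\lambda_i|\|U\|_{L^2}^2\le \|F\|_{L^2}\|U\|_{L^2},
\]
hence $|l\lambda_i|\|U\|_{L^2}\le\|F\|_{L^2}$; combined with the hypothesis for $l\lambda_i>0$ this yields the uniform bound $|l\lambda_i|\|U\|_{L^2}\le\|F\|_{L^2}+c_3|\nu nl|^{1/2}\|U\|_{L^2}$ (the paper's (\ref{eq:eih-est5})). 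With this in hand, the contribution of $-\ir l\lambda_i U$ to $\widetilde F$ is bounded by $\|F\|_{L^2}+c_3|\nu nl|^{1/2}\|U\|_{L^2}$, and for the $|\nu\lambda l^2|^{1/3}$ term one writes $|\nu\lambda_i l^2|^{1/3}=(|\nu l|^{1/2})^{2/3}|l\lambda_i|^{1/3}\le |\nu nl|^{1/2}+|l\lambda_i|$ and again invokes the bound on $|l\lambda_i|\|U\|_{L^2}$. Once you insert this one extra step, your argument goes through and coincides with the paper's proof.
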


\begin{proof}\def\q{q}
Thanks to 
 \begin{align*}
    & \nu\big\langle-\widehat{\Delta}_1U,U\big\rangle- l\lambda_i\|U\|_{L^2}^2+{\ir}l\big(\lambda_r\|U\|_{L^2}^2-\|rU\|_{L^2}^2\big)
 =\langle F,U\rangle
 \end{align*}
 and $\|U\|_{1}^2/2\leq \mathbf{Re}\big(-\langle\widehat{\Delta}_1U,U\rangle\big)$(due to Lemma \ref{lem:W1-H1}), we deduce that
 \begin{align*}
   & \nu\|U\|_{1}^2/2-l\lambda_i\|U\|_{L^2}^2\leq \|F\|_{L^2}\|U\|_{L^2}.
 \end{align*}
  If $l\lambda_i\leq 0$, then we have $|l\lambda_i|\|U\|_{L^2}\leq \|F\|_{L^2}$, and  if $l\lambda_i\geq 0$, we have  $|l\lambda_i|\|U\|_{L^2}\leq c_3|\nu nl|^{\f12}\|U\|_{L^2}$. Thus,
 \begin{align}\label{eq:eih-est5}
    &|l\lambda_i|\|U\|_{L^2}\leq \|F\|_{L^2}+c_{3}|\nu nl|^{\f12}\|U\|_{L^2},\\
    &\nu\|U\|_{1}^2\leq C\big(\|F\|_{L^2}+c_3|\nu nl|^{\f12}\|U\|_{L^2}\big)\|U\|_{L^2}.\label{eq:eih-est6}
 \end{align}
 Thanks to $-\nu\widehat{\Delta}U+{\ir}l(\lambda_r-r^2)U=F-\nu\dfrac{2rl^2\partial_rU}{n^2+r^2l^2} +l\lambda_iU,$
 we deduce from Proposition \ref{prop:ellip-inhom-v1}  that 
 \begin{align*}
    &(\nu l^2+|\nu nl|^{\f12}+|\nu\lambda_r l^2|^{\f13})\|U\|_{L^2}+\nu^{\f16}|l|^{\f56}\|rU\|_{L^1} \\
    &\leq C\Big(\|F\|_{L^2}+\nu\left\|\dfrac{2rl^2\partial_rU}{n^2+r^2l^2}\right\|_{L^2}+ |l\lambda_i|\|U\|_{L^2}\Big).
 \end{align*}
 Using the fact that $\nu\left\|\dfrac{2rl^2\partial_rU}{n^2+r^2l^2}\right\|_{L^2}\leq \nu|l/n|\|\partial_rU\|_{L^2}\leq \nu |l|\|U\|_{1}$,  we deduce from \eqref{eq:eih-est5} and \eqref{eq:eih-est6} that
  \begin{align*}
     &(\nu l^2+|\nu nl|^{\f12}+|\nu\lambda_r l^2|^{\f13})\|U\|_{L^2}+ \nu^{\f16}|l|^{\f56}\|rU\|_{L^1}\\
     &\leq  C\big(\|F\|_{L^2}+\nu|l|\|U\|_{1}+c_3|\nu nl|^{\f12}\|U\|_{L^2}\big)\\
     &\leq C\big(\|F\|_{L^2}+\nu^{\f12}|l/n|(\|F\|_{L^2}+c_3|\nu nl|^{\f12}\|U\|_{L^2})^{\f12}\|U\|_{L^2}^{\f12}+c_3|\nu nl|^{\f12}\|U\|_{L^2}\big)\\ &\leq C\big(\|F\|_{L^2}+c_3|\nu nl|^{\f12}\|U\|_{L^2}\big)+\dfrac{\nu l^2}{2}\|U\|_{L^2},
      \end{align*}
which gives
 \begin{align*}
     &(\nu l^2+|\nu nl|^{\f12}+|\nu\lambda_r l^2|^{\f13})\|U\|_{L^2}+\nu^{\f16}|l|^{\f56}\|rU\|_{L^1}
     \leq C\big(\|F\|_{L^2}+c_3|\nu nl|^{\f12}\|U\|_{L^2}\big),
  \end{align*}
 Taking $c_3$ sufficiently small so that $Cc_3\leq 1/2$, we obtain
\begin{align*}
   & (\nu l^2+|\nu nl|^{\f12}+|\nu\lambda_r l^2|^{\f13})\|U\|_{L^2} +\nu^{\f16}|l|^{\f56}\|rU\|_{L^1}\leq C\|F\|_{L^2}.
\end{align*} 
This along with \eqref{eq:eih-est5} gives
\begin{align*}
   & |l\lambda_i|\|U\|_{L^2}^2\leq \|F\|_{L^2}+c_3|\nu nl|^{\f12}\|\q\|_{L^2}\leq C\|F\|_{L^2}.
\end{align*}
Thanks to
\begin{align*}
   |\nu \lambda l^2|^{\f13}&\leq C(|\nu \lambda_r l^2|^{\f13}+|\nu \lambda_i l^2|^{\f13})= C\big(|\nu \lambda_r l^2|^{\f13}+(|\nu l|^{\f12})^{\f23}|l\lambda_i|^{\f13}\big)\\
   &\leq C\big(|\nu \lambda_r l^2|^{\f13}+|\nu l|^{\f12}+|l\lambda_i|\big)\leq C\big(|\nu \lambda_r l^2|^{\f13}+|\nu nl|^{\f12}+|l\lambda_i|\big),
\end{align*}
 we finally conclude that 
 \begin{align*}
    & (|\nu nl|^{\f12}+|\nu\lambda l^2|^{\f13})\|U\|_{L^2}+\nu^{\f16}|l|^{\f56}\|rU\|_{L^1}\\
    &\leq C\big(|\nu nl|^{\f12}+|\nu\lambda_r l^2|^{\f13}+|l\lambda_i|\big)\|U\|_{L^2}+ C\nu^{\f16}|l|^{\f56}\|rU\|_{L^1}\leq C\|F\|_{L^2}.
 \end{align*}
 
 This completes the proof of the proposition. 
\end{proof}

\subsection{Proof of Proposition \ref{prop:Wa-lower}}

It is enough to consider the following two cases: (1)$l\lambda_i\leq \max(|l(\lambda-1)|/2,\nu (n^2+l^2)/3)$ and $|\nu/l|^{1/3}A_1\geq 1/c_4$;
(2) $l\lambda_i\ge \max(|l(\lambda-1)|/2,\nu (n^2+l^2)/3)$. Indeed, for the second case, we have
\begin{align*}&{n^2+l^2}+|l(\lambda-1)|/\nu\leq 5c_4| nl/\nu|^{\f12}\Longrightarrow |n|^{\f32}\leq 5c_4| l/\nu|^{\f12}\Longrightarrow | nl/\nu|^{\f12}\leq | 5c_4|^{\f13}| l/\nu|^{\f23}\\
&\Longrightarrow n^2+l^2+|l(\lambda-1)|/\nu\leq |5c_4|^{\f43}| l/\nu|^{\f23},
\end{align*}
which implies $|\nu/l|^{1/3}A_1\le 1/c_4$ if $c_4\le \f15$.

\subsubsection{Case of  $l\lambda_i\leq \max(|l(\lambda-1)|/2,\nu (n^2+l^2)/3)$ and $|\nu/l|^{1/3}A_1\geq 1/c_4$}

Let $(U_a,W_a)$ solve 
 \begin{align*}
\left\{
\begin{aligned}
&-\nu \widehat{\Delta}_1{U}_a+{\ir}l(\lambda-1)U_a=0,\quad U_a(1)=1,\\
&\widehat{\Delta}_1{W}_a=U_a,\quad W_a(1)=0.
\end{aligned}\right.
\end{align*}
We denote $U_e=U-U_a$ and $W_e=W-W_a$, which satisfy 
\begin{align*}
\left\{\begin{aligned}
&{-\nu \widehat{\Delta}_1{U_e}+{\ir}l(\lambda-r^2)U_e=-{\ir}l(1-r^2)U_a,\quad U_e(1)=0,}\\ &{U_e=\widehat{\Delta}_1{W_e},\quad {W_e}|_{r=1}=0.}
\end{aligned}\right.
\end{align*}
It follows from Proposition \ref{prop:ellip-inhom-v2} that 
\begin{align*}
\nu^{\f16}|l|^{\f56}\|rU_e\|_{L^1}\leq C\|{\ir}l(1-r^2)U_a\|_{L^2}.
\end{align*}
Then we infer from Lemma \ref{lem:sob} and Proposition \ref{prop:UW-toy} that 
\begin{align*}
|\pa_rW_e(1)|\le& C\|r\widehat{\Delta}_1W_e\|_{L^1}= C\|rU_e\|_{L^1}\\
\le&C|\nu/l|^{-\f16}\|(1-r^2)U_a\|_{L^2}\leq C|\nu/l|^{-\f16}A_1^{-\f32}.
\end{align*}
from which and  Proposition \ref{prop:UW-toy}, we infer that 
\begin{align*}
&|{\partial_r W(1)}|\geq|{\partial_rW_a(1)}|-|{\partial_rW_e(1)}|\geq C^{-1}A_1^{-1}-C|\nu/l|^{-\f16}A_1^{-\f32}.
\end{align*}
Now we take $c_4$ small enough so that 
\begin{align*}
 2C|\nu/l|^{-\f16}A_1^{-\f32}= 2C(|\nu/l|^{\f13}A_1)^{-\f12}A_1^{-1}\leq2C(c_4)^{\f12}A_1^{-1}\leq C^{-1}A_1^{-1},
\end{align*}
which gives 
 \begin{align*}
 &|{\partial_rW(1)}|\geq C^{-1}A_1^{-1}-C|\nu/l|^{-\f16}A_1^{-\f32}\geq C^{-1}A_1^{-1}/2\geq C^{-1}A^{-1}.
\end{align*}

\subsubsection{Case of  $l\lambda_i\ge \max(|l(\lambda-1)|/2,\nu (n^2+l^2)/3)$} 

Let $w(r)$ be defined in subsection \ref{sec:airy}, which satisfies 
\begin{align*}
&-\nu\partial_r^2w+\nu(n^2+l^2)w+{\ir}l(\lambda-2r+1)w=0,\quad w(1)=1.
\end{align*}
A direct calculation gives 
\begin{align*}\widehat{\Delta}_1(r^2w)&=\frac{\partial_r(r\partial_r(r^2w))}{r}-(n^2+r^2l^2)w-\dfrac{2rl^2}{n^2+r^2l^2}\partial_r(r^2w)\\&
=4w+5r\partial_rw+r^2\partial_r^2w-(n^2+r^2l^2)w-\dfrac{4r^2l^2w+2r^3l^2\partial_rw}{n^2+r^2l^2}\\&
=\dfrac{4n^2w+(5n^2+3r^2l^2)r\partial_rw}{n^2+r^2l^2}+r^2\partial_r^2w-(n^2+r^2l^2)w.
\end{align*}
Thus, there holds that
\begin{align*}
&-\nu \widehat{\Delta}_1(r^2w)+{\ir}l(\lambda-r^2)(r^2w) =-\nu\dfrac{4n^2w+(5n^2+3r^2l^2)r\partial_rw}{n^2+r^2l^2}
\\&\qquad+r^2(-\nu\partial_r^2w+\nu(n^2+l^2)w+{\ir}l(\lambda-r^2)w) +\nu(1-r^2)n^2w\\&=-\nu\dfrac{4n^2w+(5n^2+3r^2l^2)r
\partial_rw}{n^2+r^2l^2}
-{\ir}l(1-r)^2r^2w+\nu(1-r^2)n^2w.
\end{align*}
Let 
\begin{align*}
&w_1=\dfrac{4n^2w+(5n^2+3r^2l^2)r\partial_rw}{n^2+r^2l^2},\quad
F=-\nu w_1-{\ir}l(1-r)^2r^2w+\nu(1-r^2)n^2w.
\end{align*}
Then we have $|w_1|\leq 5(|w|+|\partial_rw|)$ and
\begin{align*}
&-\nu \widehat{\Delta}_1(r^2w)+{\ir}l(\lambda-r^2)(r^2w)=F.
\end{align*}
Let $W_a$ solve  $\widehat{\Delta}_1W_a=r^2w,\  W_a(1)=0$.  We denote $(U_e,W_e)=(U-r^2w, W-W_a)$, which satisfies 
\begin{align*}
\left\{\begin{aligned}
&{-\nu \widehat{\Delta}_1{U_e}+{\ir}l(\lambda-r^2)U_e=-F,\quad U_e(1)=0,}\\ 
&{U_e=\widehat{\Delta}_1{W_e},\quad {W_e}|_{r=1}=0.}
\end{aligned}\right.
\end{align*}
We infer from Lemma \ref{lem:sob} and Proposition \ref{prop:UW-toy} that 
\begin{align}
|{\partial_rW_e(1)}|\leq& C\|rU_e\|_{L^1}\leq C\nu^{-\f16}|l|^{-\f56}\|F\|_{L^2}\nonumber\\ 
\leq& C\nu^{-\f16}|l|^{-\f56}\big(\nu\|w_1\|_{L^2}+|l|\|(1-r)^2w\|_{L^2}+\nu n^2\|(1-r^2)w\|_{L^2}\big)\nonumber\\ 
\leq& C\nu^{-\f16}|l|^{-\f56}\big(\nu\|w\|_{L^2}+\nu\|\partial_rw\|_{L^2}+|l|\|(1-r)^2w\|_{L^2}+\nu n^2\|(1-r^2)w\|_{L^2}\big).
\label{eq:lower-est1}
\end{align}

Now we need the estimates of $w$.  Due to $|\nu/l|^{\f13}A_1\leq c_4^{-1}$, we have
\begin{align*}
&|d|\leq({n^2+l^2}+|l(\lambda-1)|/\nu)\nu/(2l)\leq c_4^{-2}|\nu/l|^{-2/3}\nu/(2l)=c_4^{-2}|\nu/l|^{1/3}/2\leq c_4^{-2}L^{-1}.
\end{align*}
If $l>0$, then we have
\begin{align*}
\textbf{Im}(Ld)&=L\lambda_i/2-L(n^2+l^2)\nu/(2l)\leq L\lambda_i/2=l\lambda_i/|4\nu l^2|^{\f13}\leq c_4|\nu nl|^{\f12}/|4\nu l^2|^{\f13}\\
&\leq c_4|\nu/l|^{\f16}|n|^{\f12}\leq c_4\big(|\nu/l|^{\f13}A_1\big)^{\f12}\leq c_4(c_4)^{-\f12}=c_4^{\f12}.
\end{align*}
If $c_4\leq \delta^2_0$, then we have $\textbf{Im}(Ld)\le \delta_0$ (now fix this $c_4$). Thus, by Lemma \ref{lem:Airy-w} and \eqref{eq:lower-est1}, we get
\begin{align*}
|{\partial_rW_e(1)}|\leq& C\nu^{-\f16}|l|^{-\f56}\big(\nu L^{-\f12}+\nu L^{\f12}+|l|L^{-\f52}+\nu n^2L^{-\f32}\big)\\
\leq& C\nu^{-\f16}|l|^{-\f56}\nu L^{\f12}=CL^{-2}.
\end{align*}Here we used $L=|2l/\nu|^{\f13},\ |n|\leq A\leq CL$.

Let $J(r)$ be given by Lemma \ref{lem:har-bound} and $J^*(r)$ given by Remark \ref{rem:J-star}. We know that 
$J(r)$ is increasing in $(0,1]$, so is $r^3J^*(r)=r(n^2+l^2)r^2/(n^2+r^2l^2)\cdot J(r).$ Thanks to $\widehat{\Delta}_{1}^{*}J^*=0,\ J^*(1)=1$, then if $l>0$, we have
\begin{align*}
\partial_rW_a(1)=&\partial_rW_a(1)+ \langle W_a,\widehat{\Delta}_{1}^{*}J^*\rangle =\langle r^2w,J^*\rangle=\int_0^1r^3w(r)J^*(r)\mathrm{d}r\\=&
\int_0^1r^3\frac{Ai\big(\mathrm{e}^{{\ir}\f{\pi}{6}}L(r-1+d)\big)}
{Ai\big(\mathrm{e}^{{\ir}\f{\pi}{6}}Ld\big)}J^*(r)\mathrm{d}r=
\int_0^1r^3\frac{A_0'\big(L(d+1-r)\big)}
{A_0'\big(Ld\big)}J^*(r)\mathrm{d}r\\
=&-\frac{A_0\big(Ld\big)}
{LA_0'\big(Ld\big)}+\int_0^1\frac{A_0\big(L(d+1-r)\big)}
{LA_0'\big(Ld\big)}\partial_r(r^3J^*)(r)\mathrm{d}r,
\end{align*}
which along with Lemma \ref{lem:Airy-w} gives 
\begin{align}
L|A_0'\big(Ld\big)||\partial_rW_a(1)|\geq& |A_0\big(Ld\big)|-\int_0^1\left|A_0\big(L(d+1-r)\big)\right|
\partial_r(r^3J^*)(r)\mathrm{d}r\nonumber\\ \geq&|A_0\big(Ld\big)|-\int_0^1|A_0\big(Ld\big)|\mathrm{e}^{-L(1-r)/3}
\partial_r(r^3J^*)(r)\mathrm{d}r\nonumber\\
=&|A_0\big(Ld\big)|\frac{L}{3}\int_0^1\mathrm{e}^{-L(1-r)/3}
r^3J^*(r)\mathrm{d}r.\label{eq:lower-est2}
\end{align}
By Lemma \ref{lem:har-bound}, we have
\begin{align*}
&r^3J^*(r)=r^3(n^2+l^2)J(r)/(n^2+r^2l^2)\geq r^3J(r)\geq r^{3+|n|+|l|},
\end{align*}
which gives
\begin{align*}
\int_0^1\mathrm{e}^{-L(1-r)/3}
r^3J^*(r)\mathrm{d}r\geq \int_0^1r^{L/3}r^{3+|n|+|l|}\mathrm{d}r
\geq \frac{1}{4+|n|+|l|+L/3}.
\end{align*}
Since $|Ld|\leq C,$ we have $\dfrac{|A_0\big(Ld\big)|}{3|A_0'\big(Ld\big)|}\geq C^{-1}$, and $4+|n|+|l|\leq C(n^2+l^2)^{1/2}\leq CA_1\leq CL$. Then by \eqref{eq:lower-est2}, we conclude that
\begin{align*}
 |\partial_rW_a(1)|&\geq\frac{|A_0\big(Ld\big)|}{3|A_0'\big(Ld\big)|}\int_0^1\mathrm{e}^{-L(1-r)/3}
r^3J^*(r)\mathrm{d}r\\
&\geq\frac{|A_0\big(Ld\big)|}{|A_0'\big(Ld\big)|(3(4+|n|+|l|)+L)}\geq \frac{1}{CL}.
\end{align*}
Thus, we obtain
\begin{align*}
&|{\partial_rW(1)}|\geq|{\partial_rW_a(1)}|-|{\partial_rW_e(1)}|\geq C^{-1}L^{-1}-CL^{-2}.
\end{align*}
We take $c_4'$ small enough so that $L=|2l/\nu|^{\f13}\geq 2^{\f13}c_4^{-\f13}\geq 2C^{2}$. Then we have
\begin{align*}
&|{\partial_rW(1)}|\geq C^{-1}L^{-1}-CL^{-2}\geq C^{-1}L^{-1}/2.
\end{align*}

The proof of  the case $l<0$ is similar. This completes the proof of  Proposition \ref{prop:Wa-lower}.

\subsection{Proof of Proposition \ref{prop:Ua}}
\begin{Lemma}\label{lem:A-v2}
For $s\in(0,1],\ \lambda\in\C,\ l\lambda_i\leq c_5|\nu nl|^{\f12},$ $c_5\leq 1/5,$ there exists $ \widetilde{\lambda}\in\C$ such that 
\beno
&& l\widetilde{\lambda}_i\leq \max\big(|l(\widetilde{\lambda}-s^2)|/2,\nu (n^2/s^2+l^2)/3\big),\\
&&|\nu /(sl)|^{1/3}({n^2/s^2+l^2}+|l(\widetilde{\lambda}-s^2)|/\nu)^{1/2}\geq 1,\\
&&(\widetilde{A}_1(s)/s)^{-\f32}+|\lambda-\widetilde{\lambda}|(\widetilde{A}_1(s)/s)^{-\f12}\leq C|l|^{-1}(A(s)/s)^{-\f12}\big(|\nu nl|^{\f12}+|\nu\lambda l^2|^{\f13}\big).
\eeno
Here $\widetilde{A}_1(s)=({n^2/s^2+l^2}+|l(\widetilde{\lambda}-s^2)|/\nu)^{1/2}.$
\end{Lemma}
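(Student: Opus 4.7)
The plan is to construct $\widetilde{\lambda}$ explicitly and then verify (iii) by a direct power-law comparison. The roles of the three required conditions are transparent: condition (i) is exactly the hypothesis needed to apply Proposition~\ref{prop:UW-toy-v2} with parameter $\widetilde{\lambda}$ on the interval $(0,s)$; condition (ii), equivalent to $\widetilde{A}_1(s)\ge|ls/\nu|^{1/3}$, forces the resolvent length $s/\widetilde{A}_1(s)$ not to exceed the Airy length; condition (iii) is a bound on the perturbation $\lambda\to\widetilde{\lambda}$ of the correct form to be absorbed into the main resolvent analysis that follows.

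The construction is as follows. I would keep the imaginary part unchanged, $\widetilde{\lambda}_i:=\lambda_i$, so that the hypothesis $l\widetilde{\lambda}_i=l\lambda_i\le c_5|\nu nl|^{1/2}$ survives automatically. For the real part, introduce the threshold
\[
\mu_*:=\max\bigl(|\nu/l|^{1/3}s^{2/3},\ 2c_5|\nu n/l|^{1/2}\bigr),
\]
engineered so that $|l|\mu_*/\nu\ge|ls/\nu|^{2/3}$ and $|l|\mu_*/2\ge c_5|\nu nl|^{1/2}$. Then set
\[
\widetilde{\lambda}_r:=s^2+\sigma\cdot\max\bigl(|\lambda_r-s^2|,\mu_*\bigr),\qquad \sigma:=\mathrm{sgn}(\lambda_r-s^2),
\]
with the convention $\mathrm{sgn}(0)=+1$. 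By construction $|\widetilde{\lambda}_r-s^2|\ge\mu_*$, so
\[
|l(\widetilde{\lambda}-s^2)|/2\ge|l|\mu_*/2\ge c_5|\nu nl|^{1/2}\ge l\widetilde{\lambda}_i
\]
gives (i), while $\widetilde{A}_1(s)^2\ge|l(\widetilde{\lambda}-s^2)|/\nu\ge|ls/\nu|^{2/3}$ gives (ii). Moreover $|\lambda-\widetilde{\lambda}|=|\widetilde{\lambda}_r-\lambda_r|\le\mu_*$, and it vanishes precisely when $|\lambda_r-s^2|\ge\mu_*$ (so that $\widetilde{\lambda}=\lambda$).

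The main obstacle is the verification of (iii). From (ii) one has the universal bounds $(\widetilde{A}_1(s)/s)^{-3/2}\le s|\nu/l|^{1/2}$ and $(\widetilde{A}_1(s)/s)^{-1/2}\le s^{1/3}|\nu/l|^{1/6}$, while $A(s)\ge|ls/\nu|^{1/3}$, $A(s)\ge|n/s|$ and $A(s)\ge|l(\lambda-s^2)/\nu|^{1/2}$ each give an upper bound on $(A(s)/s)^{-1/2}$. Inserting $|\lambda-\widetilde{\lambda}|\le\mu_*$ and splitting $\mu_*$ into its two summands, condition (iii) reduces to a finite list of elementary power-law inequalities in $\nu,l,n,s,\lambda$. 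The most delicate sub-case, which I expect to be the hardest to verify, is when $\mu_*$ is realised by $2c_5|\nu n/l|^{1/2}$ and the right-hand side of (iii) is realised by $|\nu nl|^{1/2}$; here one must use the global smallness assumption $\nu<c_5'\min(|l|,1)$ in force throughout Section~4 together with $A(s)\ge|n/s|$ to close the estimate. No new analytical input beyond the definitions and the elementary bounds on $A(s)$ and $\widetilde{A}_1(s)$ should be required; the entire proof is bookkeeping of exponents, which is precisely what makes it technical but routine.
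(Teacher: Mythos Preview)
Your construction of $\widetilde{\lambda}$ is valid, and in fact a bit more elegant than the paper's: the paper splits into two cases (taking $\widetilde{\lambda}=\lambda$ when $A_1(s)\ge|sl/\nu|^{1/3}$, and $\widetilde{\lambda}=s^2-\mathrm{i}|\nu s^2l^2|^{1/3}/l$ otherwise), whereas your single formula, which touches only the real part, covers everything at once. However, your proposed verification of (iii) has a real gap.

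The ``universal bounds'' you extract from (ii), namely $(\widetilde{A}_1(s)/s)^{-3/2}\le s|\nu/l|^{1/2}$, are too crude for the first term when $A(s)$ is dominated by $|l(\lambda-s^2)/\nu|^{1/2}$. Concretely, take $n=1$, $s=1$, $\lambda=10$: then $|\lambda_r-s^2|=9\gg\mu_*$, so $\widetilde{\lambda}=\lambda$ and the second term in (iii) vanishes. Your crude bound gives $(\widetilde{A}_1(1))^{-3/2}\le|\nu/l|^{1/2}$, while the right side of (iii) is of order $|l|^{-1}|l/\nu|^{-1/4}|\nu\lambda l^2|^{1/3}\sim|\nu/l|^{7/12}$. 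Since $|\nu/l|<1$, one has $|\nu/l|^{1/2}>|\nu/l|^{7/12}$ and the inequality fails. The ``finite list of elementary power-law inequalities'' you anticipate simply does not close with the bound from (ii) alone; the sub-case you flag as most delicate is actually not the problematic one.

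The fix is immediate once you notice a feature of your own construction: because you only move $\lambda_r$ \emph{away} from $s^2$ and keep $\widetilde{\lambda}_i=\lambda_i$, you always have $|\widetilde{\lambda}-s^2|\ge|\lambda-s^2|$, hence $\widetilde{A}_1(s)\ge A_1(s)$. Combined with (ii) this gives $\widetilde{A}_1(s)\ge C^{-1}A(s)$ in all cases. Then $(\widetilde{A}_1(s)/s)^{-3/2}\le C(A(s)/s)^{-1/2}\cdot s/A(s)$, and the paper's short dichotomy ($|\lambda-s^2|\ge s^2/2$ versus $<s^2/2$) shows $s/A(s)\le C|l|^{-1}(|\nu nl|^{1/2}+|\nu\lambda l^2|^{1/3})$. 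The second term is then $\le C\mu_*(A(s)/s)^{-1/2}$, and bounding $\mu_*\le C|l|^{-1}(|\nu nl|^{1/2}+|\nu\lambda l^2|^{1/3})$ is straightforward (the $|\nu s^2/l|^{1/3}$ part needs the same dichotomy, using that this part of $\mu_*$ only matters when $|\lambda-s^2|\lesssim|\nu s^2/l|^{1/3}$). No smallness of $\nu$ is needed; the lemma holds as stated.
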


\begin{proof}
Similar to the arguments above section 5.3.1, it is enough to consider the following two cases.\smallskip

\no {\bf Case 1}. $l\lambda_i\leq \max(|l(\lambda-s^2)|/2,\nu(n^2/s^2+l^2)/3)$ and $|\nu/(sl)|^{\f13}A_1(s)\geq 1$. 

Let $\widetilde{\lambda}=\lambda$. Then we have
  \begin{align*}
& l\widetilde{\lambda}_i\leq \max(|l(\lambda-s^2)|/2,\nu(n^2/s^2+l^2)/3)=\max(|l(\widetilde{\lambda}-s^2)|/2,\nu(n^2/s^2+l^2)/3),  \\
& |\nu/(sl)|^{1/3}({n^2/s^2+l^2}+|l(\widetilde{\lambda}-s^2)|/\nu)^{1/2}= |\nu/(sl)|^{\f13}A_1(s)^{\f12}\geq 1.
  \end{align*}
Thanks to $A_1(s)\geq |sl/\nu|^{\f13}$, we have $A_1(s)=\widetilde{A}_1(s)\geq C^{-1}A(s)$. Then we get
  \begin{align*}
     &(A_1(s)/s)^{-\f32}\leq C(A(s)/s)^{-\f32}\leq C(A(s)/s)^{-\f12}s\big(|ls/\nu|^{\f13}+|l(\lambda-s^2)/\nu|^{\f12}\big)^{-1}.
  \end{align*}  
 If $|\lambda-s^2|\geq s^2/2$, then
  \begin{align*}
     & s(|ls/\nu|^{\f13}+|l(\lambda-s^2)/\nu|^{\f12})^{-1} \leq s|l(\lambda-s^2)/\nu|^{-\f12}\leq Cs|ls^2/\nu|^{-\f12}\leq C|l|^{-1}|\nu nl|^{\f12}.
  \end{align*}
  If $|\lambda-s^2|\leq s^2/2$, then $|\lambda|\geq C^{-1}s^2$ and 
  \begin{align*}
     & s(|ls/\nu|^{\f13}+|l(\lambda-s^2)/\nu|^{\f12})^{-1} \leq s|ls/\nu|^{-\f13}\leq C|l|^{-1}|\nu s^2l^2|^{\f13}\leq C|l|^{-1}|\nu \lambda l^2|^{\f13}.
  \end{align*}  
 Then $s(|ls/\nu|^{\f13}+|l(\lambda-s^2)/\nu|^{\f12})^{-1} \leq C|l|^{-1}(|\nu nl|^{\f12}+|\nu\lambda l^2|^{\f13})$ and  
  \begin{align*}
     &(A_1(s)/s)^{-\f32}\leq C(A(s)/s)^{-\f32}\leq  C|l|^{-1}\big(A(s)/s)^{-\f12}(|\nu nl|^{\f12}+|\nu\lambda l^2|^{\f13}\big).
  \end{align*}  
 As $\widetilde{A}_1=A_1$ and  $\widetilde{\lambda}=\lambda$, we obtain
  \begin{align*}
  (\widetilde{A}_1(s)/s)^{-\f32}+|\lambda-\widetilde{\lambda}|(\widetilde{A}_1(s)/s)^{-\f12}\leq C|l|^{-1}(A(s)/s)^{-\f12}\big(|\nu nl|^{\f12}+|\nu\lambda l^2|^{\f13}\big).
\end{align*}

\no {\bf Case 2}.  $|\nu/(sl)|^{\f13}A_1(s)^{\f12}\leq 1$.

In this case, we have $A(s)\leq C|sl/\nu|^{\f13}$ and
\begin{align*}
|l(\lambda-s^2)|/\nu\leq|sl/\nu|^{2/3}\Rightarrow |\lambda-s^2|\leq|\nu s^2/l|^{1/3}.
\end{align*}
If $ |\lambda|\leq s^2/2 $, then we have
\begin{align*}
&s^2/2\leq|\lambda-s^2|\leq|\nu s^2/l|^{1/3}\Rightarrow s^{4/3}\leq2|\nu/l|^{1/3}
\\&\Rightarrow s^4\leq 8\nu/l,\Rightarrow\nu s^2 l^2\leq\nu  l^2|8\nu/l|^{\f12}\leq|2\nu l|^{\f32}\Rightarrow|\nu s^2 l^2|^{\f13}\leq|2\nu l|^{\f12}.
\end{align*}
If $ |\lambda|\geq s^2/2 $ then we have
$|\nu s^2 l^2|^{\f13}\leq|2\nu\lambda l^2|^{\f13}.$
Thus, we have
\begin{align}
|\nu s^2 l^2|^{\f13}\leq\max(|2\nu l|^{\f12},|2\nu\lambda l^2|^{\f13})\leq 2(|\nu nl|^{\f12}+|\nu\lambda l^2|^{\f13}).\label{est:nus2l2}
\end{align}
Let $ \widetilde{\lambda}=s^2-{\ir}|\nu s^2 l^2|^{\f13}/l$. Then we have $|l(\widetilde{\lambda}-s^2)|/\nu=|sl/\nu|^{2/3}$, and 
\begin{align*}
   & l\widetilde{\lambda}_i = -|\nu s^2l^2|^{\f13}\leq 0\leq \max(|l(\widetilde{\lambda}-s^2)|/2,\nu (n^2/s^2+l^2)/3),\\
   &|\nu/(sl)|^{\f13}\widetilde{A}_1(s)\geq |\nu/(sl)|^{\f13}|l(\widetilde{\lambda}-s^2)/\nu|^{\f12}=1,\\
&|\lambda-\widetilde{\lambda}|\leq |\lambda-s^2|+|s^2-\widetilde{\lambda}|\leq |\nu s^2/l|^{1/3}+|\nu s^2/l|^{1/3}=2|\nu s^2/l|^{1/3}.\end{align*}
Thanks to $\widetilde{A}_1(s)/s\geq |l(\widetilde{\lambda}-s^2)/(\nu s^2)|^{\f12}=|l/(\nu s^2)|^{\f13}$, we deduce that
\begin{align*}
   &\big(\widetilde{A}_1(s)/s\big)^{-\f32}+|\lambda-\widetilde{\lambda}|\big(\widetilde{A}_1(s)/s\big)^{-\f12} \leq 3|\nu s^2/l|^{\f12}=3|l|^{-1}|\nu s^2/l|^{\f16}|\nu s^2l^2|^{\f13}.
\end{align*}
Since $|\nu s^2/l|^{\f16}=\big(|sl/\nu|^{\f13}/s\big)^{-\f12}\leq C(A(s)/s)^{-\f12},$ we get by \eqref{est:nus2l2} that
\begin{align*}
   \big(\widetilde{A}_1(s)/s\big)^{-\f32}+|\lambda-\widetilde{\lambda}|\big(\widetilde{A}_1(s)/s\big)^{-\f12}&\leq 3|l|^{-1}|\nu s^2/l|^{\f16}|\nu s^2l^2|^{\f13}\\ &\leq C|l|^{-1}(A(s)/s)^{-\f12} (|\nu nl|^{\f12}+|\nu\lambda l^2|^{\f13}).
\end{align*}

This proves the lemma.
\end{proof}\smallskip

Now we prove Proposition \ref{prop:Ua}.

\begin{proof}
We choose $\widetilde{\lambda}$ to be as in the Lemma \ref{lem:A-v2}.
Let $U^s$ solve
\begin{align*}
-\nu \widehat{\Delta}_1{U^s}+{\ir}l(\widetilde{\lambda}-s^2)U^s=0\ \text{in}\ (0,s),\quad U^s(s)=1.
\end{align*}
Then we infer from Proposition \ref{prop:UW-toy-v2} that
\begin{align}
\label{est:qd1}& 
\|U^s\|_{L^2(0,s)}^2\leq C(\widetilde{A}_1(s)/s)^{-1},\quad \|(s^2-r^2)U^s\|_{L^2}^2\leq C(\widetilde{A}_1(s)/s)^{-3}.
\end{align}
Let $U_e=U-U(s)U^s$, which satisfies
\begin{align*}
-\nu \widehat{\Delta}_1{U_e}+{\ir}l(\lambda-r^2)U_e={\ir}l\big(\widetilde{\lambda}-\lambda-(s^2-r^2)\big)U(s)U^s\ \text{in}\ (0,s),\ U_e(s)=0.
\end{align*}
By Proposition \ref{prop:ellip-inhom-v2}, Lemma \ref{lem:A-v2} and \eqref{est:qd1}, we have
\begin{align*}
(|\nu nl|^{\f12}+|\nu\lambda l^2|^{\f13})\|U_e\|_{L^2(0,s)} \leq& C\|{\ir}l(\widetilde{\lambda}-\la-(s^2-r^2))U(s)U^s\|_{L^2}\\ \leq&C|lU(s)|\big(\|(s^2-r^2)U_s\|_{L^2}+|\lambda-\widetilde{\lambda}|\|U^s\|_{L^2}\big)
\\ \leq& C|lU(s)|\big((\widetilde{A}_1(s)/s)^{-\f32}+|\lambda-\widetilde{\lambda}|(\widetilde{A}_1(s)/s)^{-\f12}\big) \\ 
\leq& C(|\nu nl|^{\f12}+|\nu\lambda l^2|^{\f13})(A(s)/s)^{-\f12}|U(s)|,
\end{align*}
which shows that
\begin{align}\label{est:qe1}
  &\|U_e\|_{L^2(0,s)}\leq C(A(s)/s)^{-\f12}|U(s)|.
\end{align}
By \eqref{est:qd1} and \eqref{est:qe1}, we obtain
\begin{align*}
\|U\|_{L^2(0,s)}\leq\|U_e\|_{L^2(0,s)}+\|U^s\|_{L^2(0,s)}|U(s)|\leq C(A(s)/s)^{-\f12}|U(s)|.
\end{align*}

This completes the proof of the proposition. 
\end{proof}

\section{The homogeneous linearized system}

In this section, we consider the homogeneous linearized system:
\begin{align}\label{eq:LNS-hom}
\left\{\begin{array}{l}
-\nu (2\widehat{\Delta} {U}-\widehat{\Delta}_1^*W)+{\ir}l(\lambda-r^2){U}=0,\\
-\nu \widehat{\Delta}_1{U}+{\ir}l(\lambda-r^2){W}+\dfrac{4{\ir}n^2l{W}}{n^2+r^2l^2}
=0,\\ 
W_1=\widehat{\Delta}_1{W},\quad {W}|_{r=1}=0,\quad {W_1}|_{r=1}={U}|_{r=1}=1.
\end{array}\right.
\end{align}

\begin{Proposition}\label{prop:lower}
Let $A=A(1)$ with $A(s)$ given by \eqref{def:A}. There exists a constant $c_6>0$ such that if $\lambda_i\leq c_6|\nu n l|^{\f12},$ and $0<\nu<c_6\min(|l|,1)$, the solution of \eqref{eq:LNS-hom} satisfies
\begin{align*}
&|\partial_r{W}(1)|\geq C^{-1}A^{-1}.
\end{align*}
\end{Proposition}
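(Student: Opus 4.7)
The plan is to decouple the system \eqref{eq:LNS-hom} by writing the solution as a sum of an explicit ``approximate'' part governed by the simpler elliptic system \eqref{eq:WU-app-3} and an ``error'' part that satisfies an inhomogeneous system with vanishing boundary conditions, to which the resolvent estimates of Proposition \ref{prop:res-com} apply. Concretely, let $(U_a,W_a)$ solve
\begin{align*}
-\nu\widehat{\Delta}_1 U_a+\ir l(\lambda-r^2)U_a=0,\quad U_a(1)=1,\qquad \widehat{\Delta}_1 W_a=U_a,\quad W_a(1)=0,
\end{align*}
so that Proposition \ref{prop:Wa-lower} already gives $|\partial_r W_a(1)|\ge C^{-1}A^{-1}$. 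Setting $U_e=U-U_a,\ W_{1,e}=W_1-U_a,\ W_e=W-W_a$, the triple $(U_e,W_{1,e},W_e)$ vanishes at $r=1$ and satisfies the inhomogeneous system of the type \eqref{eq:LNS-WU-a} with forcings
\begin{align*}
F_1=\nu(2\widehat{\Delta}-\widehat{\Delta}_1^*-\widehat{\Delta}_1)U_a,\qquad F_2=-\dfrac{4\ir n^2 l W_a}{n^2+r^2l^2},
\end{align*}
obtained by subtracting the equations for $U_a,W_a$ from those of $U,W_1$ and using $W_1|_{r=1}=U|_{r=1}$.

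Next I would apply Proposition \ref{prop:res-com} (its third inequality) to bound $|\partial_r W_e(1)|^2$ by $C(|n|+|l|)^{5/2}n^{-2}|l|^{-1/2}(|\nu nl|^{1/2}+|\nu\lambda l^2|^{1/3})^{-3/2}\|(F_1,F_2)\|_{L^2}^2$. The two forcings then have to be estimated quantitatively in terms of $A$. The first one gains a factor $l/n$ from $(2\widehat{\Delta}-\widehat{\Delta}_1-\widehat{\Delta}_1^*)=\mathcal{O}(l/n)\partial_r$, so $\|F_1\|_{L^2}\lesssim \nu|l/n|\|U_a\|_1$, and $\|U_a\|_1$ is controlled through the elliptic energy identity used in the proof of Proposition \ref{prop:UW-toy}. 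The second one, after inserting $\widehat{\Delta}_1 W_a=U_a$, becomes a nonlocal quantity in $U_a$ that we control via the pointwise $L^2(0,s)$ bound of Proposition \ref{prop:Ua}, namely $\|U_a\|_{L^2(0,s)}^2\le Cs|U_a(s)|^2/A(s)$; integrating in $s$ and using the decay $|U_a(s)|\lesssim e^{-c(1-s)A}$ hinted at in the end of the proof of Proposition \ref{prop:UW-toy} converts the boundary profile of $U_a$ into the required $L^2$ bound on $W_a/(n^2+r^2l^2)$.

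Putting the two pieces together, I expect an estimate of the form
\begin{align*}
|\partial_r W_e(1)|\le C\big(\nu^{3/8}+|\nu/l|^{1/12}\big)A^{-1},
\end{align*}
as advertised in the sketch of Section 3.3. Combined with $|\partial_r W_a(1)|\ge C^{-1}A^{-1}$ and the triangle inequality, this gives
\begin{align*}
|\partial_r W(1)|\ge\big(C^{-1}-C(\nu^{3/8}+|\nu/l|^{1/12})\big)A^{-1},
\end{align*}
which yields $|\partial_r W(1)|\ge (2C)^{-1}A^{-1}$ once $\nu/\min(|l|,1)$ is smaller than an absolute constant $c_6$, handling simultaneously the condition $l\lambda_i\le c_6|\nu nl|^{1/2}$ required by Proposition \ref{prop:res-com}.

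The main obstacle is the careful bookkeeping of powers of $n,l,\nu,\lambda$ in estimating the second forcing $-4\ir n^2 l W_a/(n^2+r^2l^2)$: we must show that the large factor $n^2$ is absorbed by the decay/concentration of $U_a$ near $r=1$ captured in Proposition \ref{prop:Ua}, and that the resulting bound precisely matches the shape of the resolvent estimate of Proposition \ref{prop:res-com} so that the factors $(|n|+|l|)^{5/2}n^{-2}|l|^{-1/2}(|\nu nl|^{1/2}+|\nu\lambda l^2|^{1/3})^{-3/2}$ combine with $\|F_2\|_{L^2}^2$ to produce exactly the $A^{-1}$-scaling times a small power of $\nu$. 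Once this sharp matching is secured, everything else is a direct application of the results from Sections 4 and 5.
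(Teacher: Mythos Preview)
Your proposal is correct and follows essentially the same route as the paper: the same decomposition $(U,W_1,W)=(U_a,U_a,W_a)+(U_e,W_{1,e},W_e)$, the same forcings $F_1,F_2$, the resolvent bound of Proposition~\ref{prop:res-com} on the error, and Proposition~\ref{prop:Wa-lower} for the approximate part, combined exactly as you describe.

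One small correction that actually helps you: the operator $2\widehat{\Delta}-\widehat{\Delta}_1-\widehat{\Delta}_1^*$ is not $\mathcal{O}(l/n)\partial_r$ but a pure multiplication operator, since the first-order pieces cancel:
\[
(2\widehat{\Delta}-\widehat{\Delta}_1-\widehat{\Delta}_1^*)U_a=\dfrac{2rl^2}{n^2+r^2l^2}\partial_rU_a-\dfrac{1}{r}\partial_r\dfrac{2r^2l^2U_a}{n^2+r^2l^2}=\dfrac{4l^2n^2}{(n^2+r^2l^2)^2}U_a.
\]
The paper exploits this to bound $\|F_1\|_{L^2}$ using only weighted $L^2$ control of $U_a$ (via the exponential decay $\|U_a\|_{L^2(0,s)}^2\le CA^{-1}e^{-c(1-s)A}$ from Proposition~\ref{prop:Ua}), which yields the sharp factor $l^2n^2(n^2+l^2)^{-2}A^{-1/2}$ without ever touching $\|U_a\|_1$. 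For $F_2$ the paper uses a duality argument (pair $W_a/(n^2+r^2l^2)$ against a solution $\varphi$ of $\widehat{\Delta}_1^*\varphi=W_a/(n^2+r^2l^2)^2$) rather than directly integrating the pointwise decay of $U_a$, but the underlying input is the same exponential localization you identified. The algebraic matching you flag as the ``main obstacle'' is packaged in the paper as a separate elementary lemma (Lemma~\ref{lem:con}); once Lemma~\ref{lem:error} and Lemma~\ref{lem:con} are in hand, the conclusion is exactly your displayed inequality $|\partial_rW_e(1)|\le C(\nu^{3/8}+|\nu/l|^{1/12})A^{-1}$.
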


We define 
\begin{align}\label{eq:WU-a}
\left\{
\begin{aligned}
&-\nu \widehat{\Delta}_1{U_a}+{\ir}l(\lambda-r^2)U_a=0,\quad U_a(1)=1,\\
&\widehat{\Delta}_1{W}_a=U_a,\quad W_a(1)=0.
\end{aligned}\right.
\end{align}
Then we introduce
\beno
U_e=U-U_a,\quad W_{1,e}=W_1-U_a,\quad W_e=W-W_a.
\eeno
Then $(U_e, W_{e})$ satisfies 
\begin{align}\label{eq:WE-e}
\left\{\begin{array}{l}-\nu (2\widehat{\Delta} {U_e}-\widehat{\Delta}_1^*{W_{1,e}})+{\ir}l(\lambda-r^2){U_e}=\nu(2\widehat{\Delta}-\widehat{\Delta}_1^*-\widehat{\Delta}_1)U_a,\\-\nu \widehat{\Delta}_1{U_e}+{\ir}l(\lambda-r^2){W_{1,e}}+\dfrac{4{\ir}n^2l{W_e}}{n^2+r^2l^2}
=-\dfrac{4{\ir}n^2l{W_a}}{n^2+r^2l^2},\\  W_{1,e}=\widehat{\Delta}_1{W_e},\ {W_e}|_{r=1}=0,\ {W_{1,e}}|_{r=1}={U_e}|_{r=1}=0.
\end{array}\right.
\end{align}

We need the following lemmas.

\begin{Lemma}\label{lem:error}
   It holds that
   \begin{align*}
    & \left\|(2\widehat{\Delta}-\widehat{\Delta}_{1}-\widehat{\Delta}_{1}^{*})U_a\right\|_{L^2}\leq C l^2n^2(n^2+l^2)^{-2}A^{-\f12},\\
     &\left\|\dfrac{4{\ir}n^2l{W_a}}{n^2+r^2l^2}\right\|_{L^2}\leq Cn^2|l|(n^2+l^2)^{-\f54}A^{-2}.
  \end{align*}
\end{Lemma}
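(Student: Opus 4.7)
The first inequality rests on a pointwise algebraic identity that already appears in the proof of Lemma \ref{lem:UF11}:
\[
(2\widehat{\Delta}-\widehat{\Delta}_{1}-\widehat{\Delta}_{1}^{*})U_a=-\dfrac{4l^2n^2}{(n^2+r^2l^2)^2}U_a.
\]
So the task reduces to bounding $\|\tfrac{l^2n^2}{(n^2+r^2l^2)^2}U_a\|_{L^2}$. I would split $[0,1]$ into $[1/2,1]$ and $[0,1/2]$. On the outer piece $r\in[1/2,1]$ one has $(n^2+r^2l^2)^2\geq (n^2+l^2)^2/16$, and Proposition \ref{prop:Ua} applied at $s=1$ (combined with $U_a(1)=1$) gives $\|U_a\|_{L^2}^2\leq C/A$; this immediately produces the desired $l^2n^2(n^2+l^2)^{-2}A^{-1/2}$ contribution. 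On $[0,1/2]$ the weight can blow up relative to $(n^2+l^2)^{-2}$ when $|l|\gg|n|$, so I would use a pigeonhole argument inside $[1/2,1]$ to produce a point $s_0\in(1/2,1)$ with $|U_a(s_0)|^2\leq C/A$, then invoke Proposition \ref{prop:Ua} at $s=s_0$ to get $\|U_a\|_{L^2(0,s_0)}^2\leq Cs_0/(A\,A(s_0))$, and finally control the remaining mass by a dyadic decomposition of $[0,1/2]$ where at each scale $s_k=2^{-k}$ one iterates the same pigeonhole plus Proposition \ref{prop:Ua} step.

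The second inequality has the same flavor but demands a stronger concentration statement. Starting from $\widehat{\Delta}_1 W_a=U_a$ with $W_a(1)=0$, Lemma \ref{lem:UW-U} yields the Hardy-type bound
\[
(n^2+l^2)\int_0^1\Big(\tfrac{r|\partial_rW_a|^2}{n^2+r^2l^2}+\tfrac{|W_a|^2}{r}\Big)\mathrm{d}r\leq \int_0^1\tfrac{r|U_a|^2}{n^2+r^2l^2}\mathrm{d}r,
\]
which, together with $\|U_a\|_{L^2}^2\leq C/A$, quickly produces $\|W_a\|_{L^2}\lesssim |n|^{-1}(n^2+l^2)^{-1/2}A^{-1/2}$. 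Multiplying by the bounded weight $\tfrac{4n^2|l|}{n^2+r^2l^2}\leq 4|l|$ only yields $A^{-1/2}$, however, whereas the lemma asks for $A^{-2}$. To extract the extra $A^{-3/2}$, I would exploit the fact that $U_a$ is a boundary layer of width $\sim A^{-1}$: one either inserts the iterated weighted estimates for $U_a$ obtained in the first half of the proof into the right-hand side $\int r|U_a|^2/(n^2+r^2l^2)\,\mathrm{d}r$ (replacing the crude $\|U_a\|_{L^2}^2\leq C/A$ by an $A^{-2}$-type gain arising from the weight's smallness in the layer), or tests $\widehat{\Delta}_1 W_a=U_a$ against a suitable adjoint-harmonic profile (the function $J^*$ from Lemma \ref{lem:har-bound} will do) to reformulate the integral as a boundary pairing and then use Proposition \ref{prop:Ua} to bound the resulting weighted $L^1$ norm of $U_a$.

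\textbf{Main obstacle.} The delicate step is producing the $A^{-2}$ power in the second estimate. Direct elliptic regularity for $\widehat{\Delta}_1^{-1}$ only converts $\|U_a\|_{L^2}\sim A^{-1/2}$ into $\|W_a\|_{L^2}\sim|n|^{-1}(n^2+l^2)^{-1/2}A^{-1/2}$, which falls short by a factor $A^{3/2}$. Closing the gap requires quantifying that $U_a$ is supported essentially in the boundary layer $\{1-r\lesssim A^{-1}\}$ and simultaneously that $\tfrac{n^2}{n^2+r^2l^2}$ interacts favorably with $W_a$ there; handling the case $|l|\gg|n|$, where both the weight and the layer width must be tracked carefully, is the technically hardest part. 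Once the sharp pointwise/weighted bounds on $U_a$ furnished by (or refinements of) Proposition \ref{prop:Ua} are in hand, the algebraic bookkeeping of the powers $l^2$, $n^2$, $(n^2+l^2)^{5/4}$ and $A^2$ reduces to routine inequalities.
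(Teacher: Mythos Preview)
Your plan for the first inequality is sound in outline, but the paper extracts the inner estimate more directly. Proposition~\ref{prop:Ua} reads $\|U_a\|_{L^2(0,s)}^2\leq Cs|U_a(s)|^2/A(s)$, and since $\partial_s\|U_a\|_{L^2(0,s)}^2=s|U_a(s)|^2$ this is literally a differential inequality $A(s)\|U_a\|_{L^2(0,s)}^2\leq C\partial_s\|U_a\|_{L^2(0,s)}^2$. Combined with Lemma~\ref{lem:A} it integrates (Gr\"onwall) to $\|U_a\|_{L^2(0,s)}^2\leq CA^{-1}e^{-c(1-s)A}$, whence $\|U_a\|_{L^2(0,1/2)}^2\leq CA^{-9}$ and the $[0,1/2]$ piece follows immediately. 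Your pigeonhole-plus-dyadic iteration is a discrete substitute for this, but you would need to track $\prod_k A(s_k)^{-1}$ carefully (each $A(s_k)$ is only guaranteed $\geq |n|/s_k$, not $\geq A$), and it is not clear your sketch produces the full $A^{-9}$ decay needed when $|l|\gg|n|$.

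For the second inequality there is a genuine gap. Neither of your two routes reaches $A^{-2}$. Feeding the concentration of $U_a$ into Lemma~\ref{lem:UW-U} improves $\int r|U_a|^2/(n^2+r^2l^2)\,\mathrm{d}r$ only to order $(n^2+l^2)^{-1}A^{-1}$ (the weight has no extra smallness in the boundary layer $\{1-r\lesssim A^{-1}\}$), which yields $\|W_a/r\|_{L^2}\lesssim (n^2+l^2)^{-1}A^{-1/2}$ and then $\|W_a/(n^2+r^2l^2)\|_{L^2}\lesssim n^{-2}(n^2+l^2)^{-1}A^{-1/2}$; this is off both in the $A$-power and, when $|l|\gg n^4$, in the $(n^2+l^2)$-power. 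Testing against $J^*$ produces $|\partial_rW_a(1)|$, not the weighted $L^2$ norm you need. The paper instead runs a \emph{duality} argument: it introduces $\varphi$ with $\widehat{\Delta}_1^*\varphi=W_a/(n^2+r^2l^2)^2$, $\varphi(1)=0$, and writes
\[
\Big\|\frac{W_a}{n^2+r^2l^2}\Big\|_{L^2}^2=\langle W_a,\widehat{\Delta}_1^*\varphi\rangle=\langle U_a,\varphi\rangle .
\]
Elliptic regularity for $\varphi_1=(n^2+r^2l^2)\varphi$ gives $\|\partial_r\varphi_1\|_{L^\infty(1/2,1)}\lesssim (n^2+l^2)^{-1/4}\|W_a/(n^2+r^2l^2)\|_{L^2}$; since $\varphi_1(1)=0$ this means $|\varphi_1|\lesssim(1-r)$ near the boundary, so in the pairing $\langle U_a,\varphi\rangle$ one can exploit $\|(1-r)^2U_a\|_{L^2}\leq CA^{-5/2}$ (an integrated consequence of the exponential decay above) together with a three-region split $(0,1/2)\cup(1/2,1-\tfrac{1}{2A})\cup(1-\tfrac{1}{2A},1)$. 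It is this combination --- duality plus the vanishing of $\varphi$ at $r=1$ --- that manufactures the extra $A^{-3/2}$ you were missing.
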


\begin{proof}
 Thanks to $\partial_s\big(\|U_a\|_{L^2(0,s)}^2\big)=s|U_a(s)|^2$,  we get by Proposition \ref{prop:Ua} and Lemma \ref{lem:A}
 that
 \begin{align*}
  \|U_a\|_{L^2(0,1)}^2\leq CA^{-1},\quad \|U_a\|_{L^2(0,s)}^2\leq \|U_a\|_{L^2(0,1)}^2e^{-c\int_s^1A(r)\mathrm{d}r}\leq CA^{-1}e^{-c(1-s)A}.
  \end{align*}
  Then we deduce that
  \begin{align*}
     \left\|\dfrac{U_a}{(n^2+r^2l^2)^2}\right\|_{L^2}^2= &\left\|\dfrac{U_a}{(n^2+r^2l^2)^2}\right\|_{L^2(1/2,1)}^2+ \left\|\dfrac{U_a}{(n^2+r^2l^2)^2}\right\|_{L^2(0,1/2)}^2\\ \leq& \frac{\|U_a\|_{L^2(0,1)}^2}{(n^2+|l/2|^2)^4}+\frac{\|U_a\|_{L^2(0,1/2)}^2}{n^8} \leq\frac{CA^{-1}}{(n^2+|l/2|^2)^4}+\frac{CA^{-1}\mathrm{e}^{-c\cdot(1/2)\cdot A}}{n^8}\\ \leq& \frac{CA^{-1}}{(n^2+l^2)^4}+\frac{CA^{-9}}{n^8} \leq C (n^2+l^2)^{-4}A^{-1}+C n^{-8}A^{-1}(n^2+l^2)^{-4}\\ \leq& C (n^2+l^2)^{-4}A^{-1},
  \end{align*}
 which gives the first inequality of the lemma due to the fact that
 \beno
  \left\|(2\widehat{\Delta}-\widehat{\Delta}_{1}-\widehat{\Delta}_{1}^{*})U_a\right\|_{L^2}= \left\|\dfrac{4l^2n^2}{(n^2+r^2l^2)^2}U_a\right\|_{L^2}.
  \eeno
  
Thanks to $\partial_s\big(\|U_a\|_{L^2(0,s)}^2\big)=s|U_a(s)|^2$, we get by integration by parts that 
  \begin{align*}
     \left\|{(1-r)^2}{U_a}\right\|_{L^2}^2&=\int_0^14(1-s)^3\|U_a\|_{L^2(0,s)}^2\mathrm{d}s\\
     &\leq\int_0^1CA^{-1}(1-s)^3\mathrm{e}^{-c(1-s)A}\mathrm{d}s\leq CA^{-5}.
  \end{align*}
  
 Let $\ph$ solve 
 \begin{align*}
     \widehat{\Delta}_{1}^{*}\ph=\dfrac{{W_a}}{(n^2+r^2l^2)^2},\quad \ph(1)=0.
  \end{align*}
Notice that $(n^2+r^2l^2)\widehat{\Delta}_{1}^{*}\ph=\widehat{\Delta}_{1}\big((n^2+r^2l^2)\ph\big).$ Let $ \ph_1=(n^2+r^2l^2)\ph$ and
$\widetilde{W}_a=\dfrac{{W_a}}{(n^2+r^2l^2)}$.  Then we have
\begin{align*}
     \widehat{\Delta}_{1}\ph_1=\widetilde{W}_a,\quad \ph_1(1)=0.
  \end{align*}
Moreover, we have
\begin{align*}
     &\left\|\dfrac{{W_a}}{n^2+r^2l^2}\right\|_{L^2}^2=\langle W_a,\widehat{\Delta}_{1}^{*}\ph\rangle=\langle\widehat{\Delta}_{1}W_a,\ph\rangle=\langle U_a,\ph\rangle.
  \end{align*}  
  
Now let us first give some estimates of  $\ph_1$. Thanks to $\|\ph_1\|_1^2\leq -2\mathbf{Re}\langle \ph_1,\widehat{\Delta}_1\ph_1\rangle$ and $ (n^2+l^2)\|\ph_1\|_{L^2}^2\leq \|\ph_1\|_1^2,$ we obtain
\begin{align*}
     (n^2+l^2)\|\ph_1\|_{L^2}^2&\leq\|\ph_1\|_1^2\leq -2\mathbf{Re}\langle \ph_1,\widehat{\Delta}_1\ph_1\rangle=-2\mathbf{Re}\langle \ph_1, \widetilde{W}_a\rangle\\&\leq 2\|\ph_1\|_{L^2}\|\widetilde{W}_a\|_{L^2},
     \end{align*}
which shows  that    
 \begin{align*} 
 \|\ph_1\|_{L^2}&\leq2(n^2+l^2)^{-1}\|\widetilde{W}_a\|_{L^2},\quad \|\ph_1\|_{1}\leq2(n^2+l^2)^{-\f12}\|\widetilde{W}_a\|_{L^2}.
  \end{align*}
 Since $\widehat{\Delta}_1\ph_1=
  \partial_r^2\ph_1+\dfrac{1}{r}\partial_r\ph_1-\left(\dfrac{n^2}{r^2}+l^2\right)\ph_1-\dfrac{2rl^2}{n^2+r^2l^2}\partial_r\ph_1$, we have
  \begin{align*}
      \|\partial_r^2\ph_1\|_{L^{2}(1/2,1)}&\leq C\big(\|\widehat{\Delta}_1\ph_1\|_{L^{2}(1/2,1)}+\|\partial_r\ph_1\|_{L^{2}(1/2,1)}+(n^2+l^2)\|\ph_1\|_{L^{2}(1/2,1)}\big)\\&\leq C\big(\|\widetilde{W}_a\|_{L^2}+\|\ph_1\|_{1}+(n^2+l^2)\|\ph_1\|_{L^{2}}\big)\leq C\|\widetilde{W}_a\|_{L^2},
  \end{align*}
and by the interpolation, we have
\begin{align*}
\|\partial_r\ph_1\|_{L^{\infty}(1/2,1)}&\leq C\big(\|\partial_r^2\ph_1\|_{L^{2}(1/2,1)}^{\f12}\|\partial_r\ph_1\|_{L^{2}(1/2,1)}^{\f12}+\|\partial_r\ph_1\|_{L^{2}(1/2,1)}\big)\\&\leq C(n^2+l^2)^{-\f14}\|\widetilde{W}_a\|_{L^2}.
\end{align*}
Using the fact that for $r\in (1/2,1)$, 
\beno
|(1-r)^{-1}\ph_1(r)|=\left|(1-r)^{-1}\int_{r}^{1}\partial_r\ph_1(s)\mathrm{d}s\right|\leq \|\partial_r\ph_1\|_{L^\infty(1/2,1)},
\eeno
we  have
\beno
\|(1-r)^{-1}\ph_1\|_{L^\infty(1/2,1)}\leq \|\partial_r\ph_1\|_{L^\infty(1/2,1)}.
\eeno 
Then we deduce that
\begin{align*}
      \|(1-r)^{-2}\ph_1\|_{L^{2}(1/2,1-1/(2A))}&\leq \|(1-r)^{-1}\|_{L^{2}(1/2,1-1/(2A))}\|(1-r)^{-1}\ph_1\|_{L^{\infty}(1/2,1)}\\ &\leq CA^{\f12}\|\partial_r\ph_1\|_{L^{\infty}(1/2,1)}\leq CA^{\f12}(n^2+l^2)^{-\f14}\|\widetilde{W}_a\|_{L^2},  
\end{align*}
and
\begin{align*}
 \|\ph_1\|_{L^{2}(1-1/(2A),1)}&\leq \|(1-r)\|_{L^{2}(1-1/(2A),1)}\|(1-r)^{-1}\ph_1\|_{L^{\infty}(1/2,1)}\\ &\leq CA^{-\f32}\|\partial_r\ph_1\|_{L^{\infty}(1/2,1)}\leq CA^{-\f32}(n^2+l^2)^{-\f14}\|\widetilde{W}_a\|_{L^2}.
  \end{align*}
  
Now we estimate $\|\widetilde{W}_a\|_{L^2}$. We have
\begin{align*}
 \|\widetilde{W}_a\|_{L^2}^2&=\langle U_a,\ph\rangle= \langle U_a,(n^2+r^2l^2)^{-1}\ph_1\rangle\leq n^{-2}\|U_a\|_{L^2(0,1/2)}\|\ph_1\|_{L^2}\\ &\quad+(n^2+|l/2|^2)^{-1}\|(1-r)^{2}U_a\|_{L^2}\|(1-r)^{-2}\ph_1\|_{L^{2}(1/2,1-1/(2A))}\\ &\quad+(n^2+|l/2|^2)^{-1}
     \|U_a\|_{L^2}\|\ph_1\|_{L^{2}(1-1/(2A),1)}\\ &\leq 2n^{-2}\|U_a\|_{L^2(0,1/2)}(n^2+l^2)^{-1}\|\widetilde{W}_a\|_{L^2}\\&\quad+C\big(A^{\f12}\|(1-r)^{2}U_a\|_{L^2}+A^{-\f32}
     \|U_a\|_{L^2}\big)(n^2+l^2)^{-\f54}\|\widetilde{W}_a\|_{L^2},
  \end{align*}
which shows  that
\begin{align*}
     (n^2+l^2)^{\f54}\|\widetilde{W}_a\|_{L^2}\leq&2n^{-2}\|U_a\|_{L^2(0,1/2)}(n^2+l^2)^{\f14}+C\big(A^{\f12}\|(1-r)^{2}U_a\|_{L^2}+A^{-\f32}
     \|U_a\|_{L^2}\big)\\ \leq&C\big(A^{\f12}+(n^2+l^2)^{\f14}\big)\|(1-r)^{2}U_a\|_{L^2}+CA^{-\f32}
     \|U_a\|_{L^2}\\ \leq&CA^{\f12}\|(1-r)^{2}U_a\|_{L^2}+CA^{-\f32}
     \|U_a\|_{L^2(0,1)}\\\leq& CA^{\f12}A^{-\f52}+CA^{\f32}
     A^{-\f12}\leq CA^{-2},
  \end{align*}
here we used 
\beno
\|(1-r)^2U_a\|_{L^2}\leq CA^{-\f52},\quad \|U_a\|_{L^2}\leq CA^{-\f12}.
\eeno 
Then we conclude that 
\begin{align*}
     \left\|\dfrac{4{\ir}n^2l{W_a}}{n^2+r^2l^2}\right\|_{L^2}&= 4n^2|l|\left\|\widetilde{W}_a\right\|_{L^2} \leq Cn^2|l|(l^2+n^2)^{-\f54}A^{-2},
  \end{align*}  
which gives the second inequality of the lemma.
  \end{proof}

\begin{Lemma}\label{lem:con}
Assume that $n,l,\nu\in\R,\ |n|\geq 1,\ 0<\nu<\min(|l|,1)$. Then we have
\begin{align*}
     &(|\nu nl|^{\f38}+|\nu\lambda l^2|^{\f14})A\geq C^{-1}|l/\nu|^{\f{1}{12}}|l|^{\f34},\\&\nu |l|^{\f74}|n|A^{\f12}\leq C\nu^{\f38}(|\nu nl|^{\f38}+|\nu\lambda l^2|^{\f14})(|n|+|l|)^{\f{11}{4}}.
  \end{align*}\end{Lemma}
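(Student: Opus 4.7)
The plan is to prove each of the two inequalities by elementary manipulation of the four components of $A=|l(\lambda-1)/\nu|^{1/2}+|l/\nu|^{1/3}+|l|+|n|$, pairing them judiciously against $|\nu nl|^{3/8}$ or $|\nu\lambda l^2|^{1/4}$ depending on the size of $|\lambda|$. The hypothesis $\nu<\min(|l|,1)$ gives $|l|/\nu>1$, and $|n|\ge 1$, which together allow one to convert harmless powers of $\nu$ into unit factors.

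For the first inequality I would split on $|\lambda|$. When $|\lambda|\ge 1/2$, use $A\ge |l/\nu|^{1/3}$ and compute
\[
|\nu\lambda l^2|^{1/4}\cdot|l/\nu|^{1/3}=|\lambda|^{1/4}\nu^{-1/12}|l|^{5/6}\ge 2^{-1/4}|l/\nu|^{1/12}|l|^{3/4}.
\]
When $|\lambda|\le 1/2$, then $|\lambda-1|\ge 1/2$ yields $A\ge 2^{-1/2}|l/\nu|^{1/2}$, and the computation
\[
|\nu nl|^{3/8}\cdot 2^{-1/2}|l/\nu|^{1/2}=2^{-1/2}|n|^{3/8}(|l|/\nu)^{1/24}\cdot\nu^{-1/12}|l|^{5/6}\ge 2^{-1/2}\nu^{-1/12}|l|^{5/6}
\]
does the job, using $|n|\ge 1$ and $|l|/\nu>1$ to absorb the last factor.

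For the second inequality I would first apply subadditivity of the square root,
\[
A^{1/2}\le |l(\lambda-1)/\nu|^{1/4}+|l/\nu|^{1/6}+|l|^{1/2}+|n|^{1/2},
\]
and estimate $\nu|l|^{7/4}|n|$ times each piece separately. For the three pieces not involving $\lambda$, I would pair against $|\nu nl|^{3/8}$ (so the right-hand side reads $C\nu^{3/4}|nl|^{3/8}(|n|+|l|)^{11/4}$), producing respectively $\nu^{5/6}|l|^{23/12}|n|$, $\nu|l|^{9/4}|n|$ and $\nu|l|^{7/4}|n|^{3/2}$; in each case the inequality reduces to $\nu^a|l|^b|n|^c\le C(|n|+|l|)^{11/4}$ with $a\ge 0$ and $b+c\le 11/4$, which follows from $\nu\le 1$ and $|n|+|l|\ge 1$. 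For the piece $|l(\lambda-1)/\nu|^{1/4}$, I would split on $|\lambda|$ once more: if $|\lambda|\ge 1/2$, use $|\lambda-1|^{1/4}\le (3|\lambda|)^{1/4}$ and pair against $|\nu\lambda l^2|^{1/4}$, reducing to $3^{1/4}\nu^{1/8}|l|^{3/2}|n|\le C(|n|+|l|)^{11/4}$; if $|\lambda|\le 1/2$, bound $|\lambda-1|^{1/4}$ by a constant and pair against $|\nu nl|^{3/8}$, reducing to the shape already handled above.

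The only real difficulty is bookkeeping: verifying the correct pairing for each of the four pieces of $A^{1/2}$ (with a sub-split on $|\lambda|$ for the first) and then checking each resulting elementary monomial inequality. Once the right pairings are chosen the arithmetic is immediate, and there is no genuine analytic content beyond the size dichotomy on $|\lambda|$.
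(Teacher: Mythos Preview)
Your proposal is correct and follows essentially the same approach as the paper: both arguments split the first inequality by a dichotomy on the size of $\lambda$ (the paper uses $|\lambda-1|\lessgtr 1/2$, you use $|\lambda|\lessgtr 1/2$, which is equivalent up to constants), and both handle the second inequality by breaking $A^{1/2}$ into its four subadditive pieces and checking each resulting monomial bound, with a further $\lambda$-dichotomy for the piece $|l(\lambda-1)/\nu|^{1/4}$. The bookkeeping you outline matches the paper's computations line by line.
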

  \begin{proof}
  If $|\lambda-1|\leq 1/2$, then $|\lambda|\geq 1/2$ and 
  \begin{align*}
     &|l/\nu|^{\f{1}{12}}|l|^{\f34}=|\nu l^2|^{\f14}|l/\nu|^{\f13}\leq |\nu (2\lambda) l^2|^{\f14}A\leq 2^{\f14}(|\nu n l|^{\f38}+|\nu\lambda l^2|^{\f14})A.
  \end{align*}
  If $|\lambda-1|\geq 1/2$, then $A\geq |l(\lambda-1)/\nu|^{\f12}\geq |l/\nu|^{\f12}/\sqrt{2}$ and 
  \begin{align*}
     & |l/\nu|^{\f{1}{12}}|l|^{\f34}=|\nu/l|^{\f{1}{24}}\nu^{-\f18}|l|^{\f78}\leq \nu^{-\f18}|l|^{\f78}= |\nu l|^{\f38}|l/\nu|^{\f12}\leq\sqrt{2}(|\nu n l|^{\f38}+|\nu\lambda l^2|^{\f14})A.
  \end{align*}
This shows the first inequality of the lemma.

It is easy to see that
  \begin{align*}
     \nu|l|^{\f74}|n||l/\nu|^{\f16}&=\nu^{\f56}|l|^{\f{23}{12}}|n|= |\nu/l|^{\f{1}{12}}\nu^{\f34}|l|^2|n|\leq \nu^{\f34}|l|^2|n|\\
     &= \nu^{\f38}|\nu nl|^{\f38}|n|^{\f58}|l|^{\f{13}{8}} \leq C\nu^{\f38}(|\nu nl|^{\f38}+|\nu\lambda l^2|^{\f14})(|n|+|l|)^{\f94}\\
     &\leq C\nu^{\f38}(|\nu nl|^{\f38}+|\nu\lambda l^2|^{\f14})(|n|+|l|)^{\f{11}{4}},
  \end{align*}
 and 
  \begin{align*}
      \nu|l|^{\f74}|n|(|n|+|l|)^{\f12}&= |\nu/l|^{\f14}\nu^{\f34}|l|^2|n|(|n|+|l|)^{\f12}\leq \nu^{\f34}|l|^2|n|(|n|+|l|)^{\f12}\\
     &= \nu^{\f38}|\nu nl|^{\f38}|n|^{\f58}|l|^{\f{13}{8}}(|n|+|l|)^{\f12} \leq C\nu^{\f38}(|\nu nl|^{\f38}+|\nu\lambda l^2|^{\f14})(|n|+|l|)^{\f{11}{4}}.
  \end{align*}
 Then we conclude 
 \beno
 \nu|l|^{\f74}|n|(|l/\nu|^{\f13}+|n|+|l|)^{\f12}\leq C\nu^{\f38}(|\nu nl|^{\f38}+|\nu\lambda l^2|^{\f14})(|n|+|l|)^{\f{11}{4}}.
 \eeno
 We also have
  \begin{align*}
     \nu |l|^{\f74}|n||l/\nu|^{\f14}&=\nu^{\f34}|l|^2|n|= \nu^{\f38}|\nu nl|^{\f38}|n|^{\f58}|l|^{\f{13}{8}}\\& \leq \nu^{\f38}(|\nu nl|^{\f38}+|\nu\lambda l^2|^{\f14})(|n|+|l|)^{\f94}
     \leq \nu^{\f38}(|\nu nl|^{\f38}+|\nu\lambda l^2|^{\f14})(|n|+|l|)^{\f{11}{4}},
  \end{align*}
and
  \begin{align*}
      \nu|l|^{\f74}|n||l\lambda/\nu|^{\f14}&=\nu^{\f34}|l|^2|\lambda|^{\f14}|n| =|\nu/l|^{\f18}\nu^{\f58}|l|^{\f{17}{8}}|\lambda|^{\f14}|n|\\
     &\leq \nu^{\f58}|l|^{\f{17}{8}}|\lambda|^{\f14}|n|= \nu^{\f38}|\nu\lambda l^2|^{\f14}|l|^{\f{13}{8}}|n|\leq \nu^{\f38}(|\nu nl|^{\f38}+|\nu\lambda l^2|^{\f14})(|n|+|l|)^{\f{21}{8}}\\
     &\leq \nu^{\f38}(|\nu nl|^{\f38}+|\nu\lambda l^2|^{\f14})(|n|+|l|)^{\f{11}{4}}.
  \end{align*}
Then we conclude 
  \begin{align*}
     \nu|l|^{\f74}|n|(|l(\lambda-1)/\nu|^{\f12})^{\f12}&\leq C\big(\nu|l|^{\f74}|n||l\lambda/\nu|^{\f14} +\nu|l|^{\f74}|n||l/\nu|^{\f14} \big)\\
     &\leq C\nu^{\f38}(|\nu nl|^{\f38}+|\nu\lambda l^2|^{\f14})(|n|+|l|)^{\f{11}{4}}.
  \end{align*}
  
  Summing up, we obtain
  \begin{align*}
     \nu|l|^{\f74}|n|A^{\f12}&\leq C\nu|l|^{\f74}|n|(|l/\nu|^{\f13}+|n|+|l|)^{\f12}+ C\nu|l|^{\f74}|n|(|l(\lambda-1)/\nu|^{\f12})^{\f12}\\
     &\leq C\nu^{\f38}(|\nu nl|^{\f38}+|\nu\lambda l^2|^{\f14})(|n|+|l|)^{\f{11}{4}},
  \end{align*}
 which gives the second inequality of the lemma.
\end{proof}

Now we are in a position to prove Proposition \ref{prop:lower}.

\begin{proof}
By Proposition \ref{prop:res-com} and Lemma \ref{lem:error}, we have
\begin{align*}&|\partial_rW_e(1)|\\
&\leq  C(|n|+|l|)^{\f54}n^{-1}|l|^{-\f14}(|\nu nl|^{\f12}+|\nu\lambda l^2|^{\f13})^{-\f34}\left(\nu\left\|(2\widehat{\Delta}-\widehat{\Delta}_{1}-\widehat{\Delta}_{1}^{*})U_a\right\|_{L^2}
  +\left\|\dfrac{4{\ir}n^2l{W_a}}{n^2+r^2l^2}\right\|_{L^2}\right)\\ &\leq  C(|n|+|l|)^{\f54}n^{-1}|l|^{-\f14}(|\nu nl|^{\f12}+|\nu\lambda l^2|^{\f13})^{-\f34}\big(\nu l^2n^2(l^2+n^2)^{-2}A^{-\f12}+n^2|l|(l^2+n^2)^{-\f54}A^{-2}\big)\\ &\leq  C\big(\nu |l|^{\f74}|n|(|n|+|l|)^{-\f{11}{4}}A^{-\f12}+|l|^{\f34}|n|(|n|+|l|)^{-\f{5}{4}}A^{-2}\big)\big(|\nu nl|^{\f38}+|\nu\lambda l^2|^{\f14}\big)^{-1}.
  \end{align*}
  from which and Lemma \ref{lem:con}, we deduce that 
  \begin{align*}&|\partial_rW_e(1)|\leq  C\big(\nu^{\f38}+|l|^{\f34}|n|(|n|+|l|)^{-\f{5}{4}}|l/\nu|^{-\f{1}{12}}|l|^{-\f34}\big)A^{-1}\leq  C\big(\nu^{\f38}+|\nu/l|^{\f{1}{12}}\big)A^{-1}.
  \end{align*}
Then by Proposition \ref{prop:Wa-lower},  we obtain
\begin{align*}
|\partial_r{W}(1)|\geq |\partial_r{W_a}(1)|-|\partial_rW_e(1)| \geq \big(C^{-1}-C(\nu^{\f38}+|\nu/l|^{\f{1}{12}})\big)A^{-1},
\end{align*}
which gives our result by taking $\nu$ small enough. 
 \end{proof}
  
\section{Linear stability for axisymmetric perturbations} 

In the axisymmetric case, the linearized resolvent system takes the form
 \begin{align}\label{eq:LNS-WU-s}
  \left\{\begin{aligned}
     &-\nu\widehat{\Delta}_{(1)}\Omega_1 +{\ir}l(\lambda-r^2)\Omega_1=0,\\
     &-\nu\widehat{\Delta}_{(1)}J_1+{\ir}l(\lambda-r^2)J_1=0,\\
     &\Omega_1=\widehat{\Delta}_{(1)}W_2,\ W_2|_{r=1}=\partial_rW_2|_{r=1}=0,\ J_1|_{r=1}=0,
  \end{aligned}\right.
  \end{align}
 where $\widehat{\Delta}_{(1)}=\dfrac{1}{r}\partial_r(r\partial_r)-\dfrac{1+r^2l^2}{r^2}$ and $\la\in \C$.
 
\begin{Proposition}\label{prop:J1}
Let $J_1$ solve  $-\nu\widehat{\Delta}_{(1)}J_1+{\ir}l(\lambda-r^2)J_1=F$ and $J_1(1)=0$.
 There exists a constant $c_7\in(0,1]$ such that if $l\lambda_i\leq c_7|\nu l|^{\f12}$, then it holds that
  \begin{align*}
   & (|\nu l|^{\f12}+|\nu \lambda l^2|^{\f13})\|J_1\|_{L^2}+\nu^{\f16}|l|^{\f56}\|rJ_1\|_{L^1}\leq C\|F\|_{L^2}.
\end{align*}
In particular, if $F=0$, then $J_1=0$.
\end{Proposition}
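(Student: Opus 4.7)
The key observation is that $\widehat{\Delta}_{(1)}$ is exactly $\widehat{\Delta}$ specialized to $n=1$, so the equation for $J_1$ is a scalar convection-diffusion equation of precisely the form handled by Proposition \ref{prop:ellip-inhom-v1} (with $s=1$ and $n=1$), except that $\lambda$ may be complex. The plan is therefore to mimic the reduction used in the proof of Proposition \ref{prop:ellip-inhom-v2}: absorb the imaginary part of $\lambda$ into the right-hand side, apply the real-$\lambda$ resolvent estimate, and close with the smallness condition on $l\lambda_i$.

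First I would derive the basic energy identity by pairing the equation against $J_1$ in the inner product $\langle\cdot,\cdot\rangle$ and integrating by parts, using
\[
-\langle \widehat{\Delta}_{(1)}J_1,J_1\rangle = \|\partial_r J_1\|_{L^2}^2 + \|J_1/r\|_{L^2}^2 + l^2\|J_1\|_{L^2}^2,
\]
which after taking real parts yields
\[
\nu\bigl(\|\partial_r J_1\|_{L^2}^2 + \|J_1/r\|_{L^2}^2 + l^2\|J_1\|_{L^2}^2\bigr) - l\lambda_i\|J_1\|_{L^2}^2 \le \|F\|_{L^2}\|J_1\|_{L^2}.
\]
When $l\lambda_i\le 0$ this directly gives $|l\lambda_i|\|J_1\|_{L^2}\le \|F\|_{L^2}$; when $l\lambda_i>0$ the hypothesis $l\lambda_i\le c_7|\nu l|^{\f12}$ yields
\[
|l\lambda_i|\|J_1\|_{L^2}\le \|F\|_{L^2} + c_7|\nu l|^{\f12}\|J_1\|_{L^2}.
\]

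Next I would rewrite the equation as
\[
-\nu\widehat{\Delta}_{(1)}J_1 + \ir l(\lambda_r-r^2)J_1 = F + l\lambda_i J_1,
\]
and apply Proposition \ref{prop:ellip-inhom-v1} with $n=1$, $s=1$ and the real parameter $\lambda_r$. This gives
\[
\bigl(\nu l^2+|\nu l|^{\f12}+|\nu\lambda_r l^2|^{\f13}\bigr)\|J_1\|_{L^2} + \nu^{\f16}|l|^{\f56}\|rJ_1\|_{L^1} \le C\bigl(\|F\|_{L^2}+|l\lambda_i|\|J_1\|_{L^2}\bigr).
\]
Combining this with the energy bound yields
\[
\bigl(|\nu l|^{\f12}+|\nu\lambda_r l^2|^{\f13}\bigr)\|J_1\|_{L^2}+ \nu^{\f16}|l|^{\f56}\|rJ_1\|_{L^1}\le C\|F\|_{L^2} + Cc_7|\nu l|^{\f12}\|J_1\|_{L^2},
\]
and I would choose $c_7$ small so that the last term is absorbed on the left. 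To replace $\lambda_r$ by $\lambda$ in the final estimate, I would use the interpolation
\[
|\nu\lambda_i l^2|^{\f13} = |\nu l|^{\f16}|l\lambda_i|^{\f13}\le C\bigl(|\nu l|^{\f12}+|l\lambda_i|\bigr),
\]
together with the already established bound on $|l\lambda_i|\|J_1\|_{L^2}$, which yields the full estimate $(|\nu l|^{\f12}+|\nu\lambda l^2|^{\f13})\|J_1\|_{L^2}+\nu^{\f16}|l|^{\f56}\|rJ_1\|_{L^1}\le C\|F\|_{L^2}$. The final assertion (that $F=0$ forces $J_1=0$) is immediate since $|\nu l|^{\f12}>0$.

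I do not expect a serious obstacle here: the problem essentially reduces to the $n=1$ case of the already-proved scalar resolvent estimate of Proposition \ref{prop:ellip-inhom-v1}, so the only real work is the standard perturbative trick for complex $\lambda$ developed for Proposition \ref{prop:ellip-inhom-v2}. The one place to be careful is the choice of $c_7$, which must be small enough to absorb both the $|\nu l|^{\f12}$ perturbation coming from the RHS modification and any implicit constants in the interpolation used to recover $|\nu\lambda l^2|^{\f13}$ from $|\nu\lambda_r l^2|^{\f13}$ and $|l\lambda_i|$.
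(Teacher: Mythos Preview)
Your proposal is correct and follows essentially the same approach as the paper: apply Proposition~\ref{prop:ellip-inhom-v1} with $n=1$, $s=1$ to the equation rewritten with real spectral parameter $\lambda_r$, derive the energy identity to control $|l\lambda_i|\|J_1\|_{L^2}$, and choose $c_7$ small to absorb the perturbation. Your treatment is in fact slightly more careful than the paper's in that you explicitly carry the $\lambda_r$ versus $\lambda$ distinction and close it via the interpolation $|\nu\lambda_i l^2|^{1/3}\le C(|\nu l|^{1/2}+|l\lambda_i|)$, which the paper glosses over.
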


\begin{proof}
For $\la\in \R$, we apply Proposition \ref{prop:ellip-inhom-v1} with $n=1, s=1$ to obtain
 \begin{align*}
   & (|\nu l|^{\f12}+|\nu \lambda l^2|^{\f13})\|J_1\|_{L^2}+\nu^{\f16}|l|^{\f56}\|rJ_1\|_{L^1}\leq C\|F\|_{L^2}.
\end{align*}
We write the equation of $J_1$ as
  \begin{align*}
   &-\nu\widehat{\Delta}_{(1)}J_1+{\ir}l(\lambda_r-r^2)J_1 ={\ir}l\lambda_iJ_1 +F,\quad J_1(1)=0.
  \end{align*}
Then we have
\begin{align}
   & (|\nu l|^{\f12}+|\nu \lambda l^2|^{\f13})\|J_1\|_{L^2}+\nu^{\f16}|l|^{\f56}\|rJ_1\|_{L^1}\leq C\big(|l\lambda_i|\|J_1\|_{L^2}+\|F\|_{L^2}\big).\label{eq:J1-est1-ax}
\end{align}

Let $\|f\|_{(1)}^2= \|\partial_rf\|_{L^2}^2+\|f/r\|_{L^2}^2+l^2\|f\|_{L^2}^2$. Then we have $-\big\langle \widehat{\Delta}_{(1)}f,f\big\rangle=\|f\|_{(1)}^2$ and 
$\mathbf{Re}\big({\ir}l\langle(\lambda-r^2)J_1,J_1\rangle \big)=-l\lambda_i\|J_1\|_{L^2}^2$.  Taking the real part of 
  \begin{align*}
     &-\nu\big\langle \widehat{\Delta}_{(1)}J_1,J_1\big\rangle+{\ir}l\big\langle(\lambda-r^2)J_1,J_1\big\rangle =\langle F,J_1\rangle,
  \end{align*}
we obtain
\begin{align*}
     &\nu\|J_1\|_{(1)}^2-l\lambda_i\|J_1\|_{L^2}^2\leq |\langle F_2,J_1\rangle|\leq \|J_1\|_{L^2}\|F\|_{L^2}.
  \end{align*}
If $l\lambda_i\leq 0$, then we have $-l\lambda_i\|J_1\|_{L^2}\leq \|F\|_{L^2}$. Thus, we deduce that for $l\lambda_i\leq c_7|\nu l|^{\f12}$, 
  \begin{align*}
     &|l\lambda_i|\|J_1\|_{L^2}\leq c_7|\nu l|^{\f12}\|J_1\|_{L^2}+\|F\|_{L^2}.
  \end{align*}
Taking $c_7$ sufficiently small such that $Cc_7\leq 1/2$, we infer from \eqref{eq:J1-est1-ax} that
\begin{align*}
   & (|\nu l|^{\f12}+|\nu \lambda l^2|^{\f13})\|J_1\|_{L^2}+\nu^{\f16}|l|^{\f56}\|rJ_1\|_{L^1}\leq C\|F\|_{L^2}.
\end{align*}

This completes the proof of the proposition.
\end{proof}\smallskip

We denote
\begin{align*}
   &\tilde{A}= |l(\lambda-1)/\nu|^{\f12}+|l/\nu|^{\f13}+1+|l|,\quad \tilde{A}_1=(|l(\lambda-1)/\nu|+1+l^2)^{\f12}.
\end{align*}

\begin{Proposition}\label{prop:OM1}
Let $(\Om_1,W_1)$ solve 
 \begin{align}\label{eq:LNS-OM}
  \left\{\begin{aligned}
     &-\nu\widehat{\Delta}_{(1)}\Omega_1+{\ir}l(\lambda-r^2)\Omega_1=0,\\
     &\Omega_1=\widehat{\Delta}_{(1)}W_1, \quad W_1(1)=0,\quad \Om_1(1)=1.
  \end{aligned}\right.
  \end{align}
There exist constants $c_8, c_9\in(0,1)$ such that if  $l\lambda_i\leq c_8|\nu n l|^{\f12}$ and $0<\nu<c_9\min(|l|,1)$, then we have
\begin{align*}
|\partial_rW_1(1)|& \geq C^{-1}\tilde{A}^{-1}.
\end{align*}
\end{Proposition}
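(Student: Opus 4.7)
The plan is to imitate the proof of Proposition~\ref{prop:Wa-lower} almost verbatim, exploiting the structural parallel between the system \eqref{eq:LNS-OM} and the approximate elliptic system \eqref{eq:WU-app}. The essential observation is that $\widehat\Delta_{(1)}=\widehat\Delta|_{n=1}$ is self-adjoint on $L^2((0,1);r\,\mathrm{d}r)$, so the adaptation is actually cleaner than the general-$n$ argument: the commutator errors between $\widehat\Delta$ and $\widehat\Delta_1$ that complicated Lemma~\ref{lem:error} simply disappear. Following Section~5.3, I split into two regimes that together cover the hypothesis: the \emph{elliptic regime} with $l\lambda_i\leq \max(|l(\lambda-1)|/2,\nu(1+l^2)/3)$ and $|\nu/l|^{1/3}\tilde A_1\geq c_9^{-1}$, and the \emph{Airy regime} with $l\lambda_i\geq \max(|l(\lambda-1)|/2,\nu(1+l^2)/3)$. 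The same elementary computation as in the paragraph preceding Section~5.3.1 (effectively setting $n=1$) shows that under $l\lambda_i\leq c_8|\nu l|^{1/2}$ and $\nu<c_9\min(|l|,1)$, one of the two regimes applies.

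In the elliptic regime, I introduce the constant-coefficient approximation $(\Omega_{1,a},W_{1,a})$ solving
\begin{align*}
-\nu\widehat\Delta_{(1)}\Omega_{1,a}+{\ir}l(\lambda-1)\Omega_{1,a}=0,\quad \widehat\Delta_{(1)}W_{1,a}=\Omega_{1,a},\quad \Omega_{1,a}(1)=1,\ W_{1,a}(1)=0.
\end{align*}
Repeating the estimates of Proposition~\ref{prop:UW-toy} with the test weight $(n^2+r^2l^2)^{-1}$ replaced by $(1+r^2l^2)^{-1}$ and Lemma~\ref{lem:UW-U} recast for $\widehat\Delta_{(1)}$ yields the three analog bounds $|\partial_r W_{1,a}(1)|\geq C^{-1}\tilde A_1^{-1}$, $\|\Omega_{1,a}\|_{L^2}^2\leq C\tilde A_1^{-1}$, and $\|(1-r^2)\Omega_{1,a}\|_{L^2}^2\leq C\tilde A_1^{-3}$. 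The error $(\Omega_{1,e},W_{1,e}):=(\Omega_1-\Omega_{1,a},W_1-W_{1,a})$ then satisfies
\begin{align*}
-\nu\widehat\Delta_{(1)}\Omega_{1,e}+{\ir}l(\lambda-r^2)\Omega_{1,e}=-{\ir}l(1-r^2)\Omega_{1,a},\quad \Omega_{1,e}(1)=0,
\end{align*}
together with $\widehat\Delta_{(1)}W_{1,e}=\Omega_{1,e}$ and $W_{1,e}(1)=0$. Applying Proposition~\ref{prop:J1}---which is exactly the $\widehat\Delta_{(1)}$-inhomogeneous estimate required---gives $\nu^{1/6}|l|^{5/6}\|r\Omega_{1,e}\|_{L^1}\leq C|l|\tilde A_1^{-3/2}$, and combining with a Sobolev trace bound of the form $|\partial_r W_{1,e}(1)|\leq C\|r\Omega_{1,e}\|_{L^1}$ (Lemma~\ref{lem:sob} applied to $\widehat\Delta_{(1)}W_{1,e}=\Omega_{1,e}$) yields
\begin{align*}
|\partial_r W_{1,e}(1)|\leq C|\nu/l|^{-1/6}\tilde A_1^{-3/2}=C\bigl(|\nu/l|^{1/3}\tilde A_1\bigr)^{-1/2}\tilde A_1^{-1}\leq Cc_9^{1/2}\tilde A_1^{-1},
\end{align*}
which is a small fraction of the lower bound on $|\partial_r W_{1,a}(1)|$ once $c_9$ is chosen sufficiently small.

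In the Airy regime, I mirror Section~5.3.2: construct the Airy-type profile $\Omega_{1,a}=r^2 w(r)$ with $w$ from subsection~\ref{sec:airy}, compute the explicit residual $F$ (whose $L^2$ norm is controlled by $\nu L^{1/2}$ plus exponentially decaying Airy terms, with $L=|2l/\nu|^{1/3}$), and use the representation $\partial_r W_{1,a}(1)=\int_0^1 r^3 w(r) J^*(r)\,\mathrm{d}r$ paired with the dual harmonic function $J^*$ from Lemma~\ref{lem:har-bound} specialized to $n=1$ to extract $|\partial_r W_{1,a}(1)|\geq C^{-1}L^{-1}$. The error bound $|\partial_r W_{1,e}(1)|\leq CL^{-2}$ follows from Proposition~\ref{prop:J1} applied to the residual system, and shrinking $c_9$ so that $L\geq 2C^2$ closes this regime. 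Combining the two regimes gives $|\partial_r W_1(1)|\geq C^{-1}\tilde A^{-1}$. The main obstacle is not conceptual but purely the bookkeeping: verifying that every weighted identity, Hardy/interpolation inequality, and exponential-decay estimate from Section~5 remains valid with the substitutions $\widehat\Delta_1\rightsquigarrow\widehat\Delta_{(1)}$ and $n\rightsquigarrow 1$. Self-adjointness of $\widehat\Delta_{(1)}$ genuinely simplifies several identities, but the Airy construction must be redone carefully to confirm that the same $L^{-1}$ scaling of $|\partial_r W_{1,a}(1)|$ survives when the weight $n^2+r^2l^2$ is replaced by $1+r^2l^2$ throughout the pairing against $J^*$.
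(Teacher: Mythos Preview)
Your proposal is correct and matches the paper's proof essentially verbatim: the same two-regime split, the same constant-coefficient approximation in the elliptic case (the paper packages the analog of Proposition~\ref{prop:UW-toy} as Lemma~\ref{lem:UW-app-axs}), the same Airy profile $r^2w$ in the second case, with error bounds via the inhomogeneous estimate and closure by a harmonic-function pairing. One reference correction: since $\widehat\Delta_{(1)}=\widehat\Delta|_{n=1}$ differs from $\widehat\Delta_1|_{n=1}$ by the first-order term $\tfrac{2rl^2}{1+r^2l^2}\partial_r$, the harmonic function needed for both the trace bound $|\partial_rW_{1,e}(1)|\le\|r\Omega_{1,e}\|_{L^1}$ and the pairing $\partial_rW_{1,a}(1)=\langle r^2w,J\rangle$ is the $J$ of Remark~\ref{rem:J} (solving $\widehat\Delta_{(1)}J=0$, with $r^{1+|l|}\le J\le r$ and $\partial_rJ\ge0$), not $J^*$ from Lemma~\ref{lem:har-bound} nor the $\widehat\Delta_1$-based Lemma~\ref{lem:sob}.
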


Assume $0<\nu<c_9\min(|l|,1)$ and take $c=\min(c_7,c_8)$. First of all, if $\Om_1(1)=0$, Proposition \ref{prop:J1} and Proposition \ref{prop:OM1} ensure that  if $l\la_i\le c|\nu nl|^\f12$ or  $\mathbf{Re} s\ge -c|\nu/L_z|^\f12$,  then the solution $\Om_1=J_1=0$ of \eqref{eq:LNS-WU-s}.  
Secondly, if $\Om_1(1)\neq 0$, we can normalize $\Om_1(1)=1$. Now it follows from Proposition \ref{prop:OM1} that  if $l\la_i\le c|\nu nl|^\f12$ or  $\mathbf{Re} s\ge -c|\nu/L_z|^\f12$,  we have $|\partial_rW_2(1)|\geq C^{-1}\tilde{A}^{-1}$, which contradicts 
with $\pa_rW_2(1)=0$. Thus, $\Om_1(1)=0$ and $\Om_1=0$ when $\mathbf{Re} s\ge -c|\nu/L_z|^\f12$.
This shows that for axisymmetric perturbations,  $m_1(\nu)\le -c|\nu/L_z|^{\f12}$.\smallskip


The proof of Proposition \ref{prop:OM1} is similar to Proposition \ref{prop:Wa-lower}.  We need the following lemma.

\begin{Lemma}\label{lem:UW-app-axs}
Let $(U,W)$ solve 
\ben\label{eq:LNS-OM-a}
 -\nu\widehat{\Delta}_{(1)}U+\ir l(\lambda-1)U=0,\quad U=\widehat{\Delta}_{(1)}W,\quad W(1)=0, \,\, U(1)=1.
 \een
If $2l\lambda_i\leq |l(\lambda-1)|$ or $3l\lambda_i\leq \nu(1+l^2)$, then we have
  \begin{align*}
     &\|U\|_{L^2}^2\leq C\tilde{A}_1^{-1},\quad \|(1-r^2)U\|_{L^2}^2\leq C\tilde{A}_1^{-3},\\     
     & |\partial_rU(1)|\leq C\tilde{A}_1,\quad  |\partial_rW(1)|\geq C^{-1}\tilde{A}^{-1}_1.
  \end{align*}
\end{Lemma}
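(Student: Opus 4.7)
The plan is to mirror the proof of Proposition~\ref{prop:UW-toy} line by line, adapting the test functions to the operator $\widehat{\Delta}_{(1)}$. Since $\widehat{\Delta}_{(1)}$ is just $\widehat{\Delta}$ with $n=1$ (lacking the drift term $\frac{2rl^2}{n^2+r^2l^2}\partial_r$ present in $\widehat{\Delta}_1$), I can simplify matters by testing against $U$ directly rather than against the weighted $(n^2+r^2l^2)^{-1}U$. Define
$\|U\|_{(1),s}^2 = \int_0^s(r|\partial_r U|^2 + |U|^2/r + rl^2|U|^2)\,\mathrm{d}r$ and $\|U\|_{(1)}^2 = \|U\|_{(1),1}^2$.

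First I would establish the two key energy identities. Multiplying $-\nu\widehat{\Delta}_{(1)}U + \ir l(\lambda-1)U = 0$ by $\overline{U}$ and integrating on $(0,s)$ against $r\,\mathrm{d}r$, a direct integration by parts gives
\begin{align*}
\nu\|U\|_{(1),s}^2 + \ir l(\lambda-1)\|U\|_{L^2(0,s)}^2 = \nu s\partial_r U(s)\overline{U(s)}.
\end{align*}
Applying Lemma~\ref{lem:la-l} under either sign hypothesis on $l\lambda_i$ yields
$\nu\|U\|_{(1),s}^2 + |l(\lambda-1)|\|U\|_{L^2(0,s)}^2 \leq 2\nu s|\partial_r U(s)||U(s)|$, and at $s=1$ this produces $\tilde{A}_1^2\|U\|_{L^2}^2 \leq C|\partial_r U(1)|$. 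Next, multiplying by $-\overline{W}$, using $\widehat{\Delta}_{(1)}W = U$, integrating by parts twice (the boundary terms collapsing via $W(1)=0$, $U(1)=1$, and the computation $\langle U,W\rangle = -\|W\|_{(1)}^2$), I get
\begin{align*}
\nu\|U\|_{L^2}^2 + \ir l(\lambda-1)\|W\|_{(1)}^2 = \nu\overline{\partial_r W(1)},
\end{align*}
so after another application of Lemma~\ref{lem:la-l} I obtain $\|U\|_{L^2}^2 \leq 2|\partial_r W(1)|$.

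Second, I would combine these with the boundary interpolation $1 = |U(1)|^2 = \int_0^1\partial_r(r|U|^2)\,\mathrm{d}r = \mathbf{Re}\int_0^1 r\overline{U}(U/r + 2\partial_r U)\,\mathrm{d}r \leq C\|U\|_{L^2}\|U\|_{(1)}$, which gives $|\partial_r U(1)||\partial_r W(1)| \geq C^{-1}$. To close the circle I need the matching upper bound $|\partial_r U(1)| \leq C\tilde{A}_1$. This is obtained in exactly the same way as at the end of the proof of Proposition~\ref{prop:UW-toy}: computing $\mathbf{Re}\langle\widehat{\Delta}_{(1)}U,r\partial_r U\rangle$ by integration by parts produces the boundary term $|\partial_r U(1)|^2/2$ together with lower-order contributions controlled by $A_1^2\|U\|_{L^2}\|\partial_r U/\sqrt{1+r^2l^2}\|_{L^2}$ and $(1+l^2)$, and feeding back the first identity to bound $\|U\|_{(1)}^2 \leq 2|\partial_r U(1)|$ one closes the estimate. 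Combined with $\tilde{A}_1^2\|U\|_{L^2}^2 \leq C|\partial_r U(1)|$ this yields $\|U\|_{L^2}^2 \leq C\tilde{A}_1^{-1}$ and $|\partial_r W(1)| \geq C^{-1}\tilde{A}_1^{-1}$.

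Finally, for the exponential decay estimate $\|(1-r^2)U\|_{L^2}^2 \leq C\tilde{A}_1^{-3}$, I would set $\Psi_1(s) = \nu\|U\|_{(1),s}^2 + |l(\lambda-1)|\|U\|_{L^2(0,s)}^2$ and observe that the first identity, combined with $2\nu s|\partial_r U||U| \leq \tilde{A}_1^{-1}(\nu r|\partial_r U|^2 + \tilde{A}_1^2\nu r|U|^2)$ and $\nu r|U|^2 \leq \nu|U|^2/r$ for $r\leq 1$, yields the differential inequality $\Psi_1(r) \leq \tilde{A}_1^{-1}\partial_r\Psi_1(r)$. Gronwall then gives $\Psi_1(r) \leq \Psi_1(1)\mathrm{e}^{-(1-r)\tilde{A}_1} \leq C\nu\tilde{A}_1\mathrm{e}^{-(1-r)\tilde{A}_1}$, which transfers to $\|U\|_{L^2(0,s)}^2 \leq C\tilde{A}_1^{-1}\mathrm{e}^{-(1-s)\tilde{A}_1}$, and integrating $\|(1-r)U\|_{L^2}^2 = \int_0^1 2(1-s)\|U\|_{L^2(0,s)}^2\,\mathrm{d}s$ against this exponential weight yields the $\tilde{A}_1^{-3}$ bound. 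The only subtle point is ensuring that both energy identities are amenable to Lemma~\ref{lem:la-l}, but this is automatic from the stated sign hypothesis and the fact that each identity has the canonical form ``positive real part $+\,\ir l(\lambda-1)\cdot$ positive quantity = boundary term.''
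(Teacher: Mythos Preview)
Your proposal is correct and follows the paper's proof closely: the same two energy identities, the same application of Lemma~\ref{lem:la-l}, the same boundary interpolation $1\le\|2\partial_rU+U/r\|_{L^2}\|U\|_{L^2}$, and the same Gronwall argument for the $\tilde A_1^{-3}$ bound. The one place where the paper deviates from what you suggest is the upper bound $|\partial_rU(1)|\le C\tilde A_1$: rather than testing against $r\partial_r\overline{U}$ as in Proposition~\ref{prop:UW-toy}, the paper multiplies the equation by $2r^2\partial_rU$ (no conjugate) and observes that this produces an exact derivative $\partial_r\Psi_1=\Psi_2$ with
\[
\Psi_1(r)=-\nu(r\partial_rU)^2+\nu(r^2l^2+1)U^2+\ir l(\lambda-1)r^2U^2,\qquad \Psi_2(r)=2r\big(\nu l^2+\ir l(\lambda-1)\big)U^2,
\]
so that $|\Psi_1(1)|\le\int_0^1|\Psi_2|\,\mathrm{d}r\le 2\nu\tilde A_1^2\|U\|_{L^2}^2$ immediately gives $|\partial_rU(1)|^2\le\tilde A_1^2+2\tilde A_1^2\|U\|_{L^2}^2$ and one closes via \eqref{q2pro}. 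Your route via $\mathbf{Re}\langle\widehat{\Delta}_{(1)}U,r\partial_rU\rangle$ works equally well (and is in fact slightly simpler here because $\widehat{\Delta}_{(1)}$ has no drift term, so the weight $\sqrt{1+r^2l^2}$ you mention does not actually appear); the two computations are essentially the real-part versus complex-analytic versions of the same identity.
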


\begin{proof}
We follow the proof of Proposition \ref{prop:UW-toy}.
We get by integration by parts  that 
\begin{align*}
  0 &=\langle -\nu\widehat{\Delta}_{(1)}U+{\ir}l(\lambda-1)U, U\rangle_s \\
  &=\nu\big(\|\partial_rU\|_{L^2(0,s)}^2+\|U/r\|_{L^2(0,s)}^2+l^2\|U\|_{L^2(0,s)}^2\big) -\nu s\partial_rU(s)\overline{U(s)}\\
  &\quad+{\ir}l(\lambda-1)\|U\|_{L^2(0,s)}^2.
\end{align*}
Thanks to $2l\lambda_i\leq |l(\lambda-1)|$ or $3l\lambda_i\leq \nu(1+l^2)$,  we have
\begin{align*}
   &2l\lambda_i\leq |l(\lambda-1)|\ \text{or}\ 3l\lambda_i\|U\|_{L^2(0,s)}^2\leq \nu\big(\|\partial_rU\|_{L^2(0,s)}^2 +\|U/r\|_{L^2(0,s)}^2
   +l^2\|U\|_{L^2(0,s)}^2\big).
\end{align*}
Then we infer from Lemma \ref{lem:la-l}  that 
\begin{align}\label{q2pros}
   &\nu\big(\|\partial_rU\|_{L^2(0,s)}^2+\|U/r\|_{L^2(0,s)}^2+l^2\|U\|_{L^2(0,s)}^2\big)+|l(\lambda-1)|\|U\|_{L^2(0,s)}^2\nonumber\\
   &\leq 2\nu s|\partial_rU(s)||U(s)|,
\end{align}
which gives, by taking $s=1$ using $\|U/r\|_{L^2}\ge\|U\|_{L^2}$, that
\begin{align}\label{q2pro}
   &\tilde{A}_1^{2}\|U\|_{L^2}^2+ \|2\partial_rU+U/r\|_{L^2}^2\leq C|\partial_rU(1)|.
\end{align}

We get by integration by parts again that
\begin{align*}
   0&=\big\langle -\nu\widehat{\Delta}_{(1)}U+{\ir}l(\lambda-1)U,W\big\rangle \\
  &=-\nu\|U\|_{L^2}^2+\nu \overline{\partial_rW(1)} -il(\lambda-1)\big(\|\partial_rW\|_{L^2}^2+\|W/r\|_{L^2}^2 +l^2\|W\|_{L^2}^2\big).
\end{align*}
Thanks to the fact that 
\begin{align*}
   \|\partial_rW\|_{L^2}^2+\|W/r\|_{L^2}^2 +l^2\|W\|_{L^2}^2&=|\langle U,W\rangle| \leq \|W\|_{L^2}\|U\|_{L^2} \\
   &\leq \sqrt{1+l^2}\|W\|_{L^2}(1+l^2)^{-\f12}\|U\|_{L^2},
\end{align*} 
we have $(1+l^2)\big(\|\partial_rW\|_{L^2}^2+\|W/r\|_{L^2}^2 +l^2\|W\|_{L^2}^2\big) \leq \|U\|_{L^2}^2$. Since $2l\lambda_i\leq |l(\lambda-1)|$ or $3l\lambda_i\leq \nu(1+l^2)$, we have
\begin{align*}
   &2l\lambda_i\leq |l(\lambda-1)|\ \text{or}\ 3l\lambda_i\big(\|\partial_rW\|_{L^2}^2+\|W/r\|_{L^2}^2 +l^2\|W\|_{L^2}^2\big)\leq \nu\|U\|_{L^2}^2.
\end{align*}
Then we get by Lemma \ref{lem:la-l}  that 
\begin{align*}
   &\|U\|_{L^2}^2+|l(\lambda-1)/\nu|\big(\|\partial_rW\|_{L^2}^2+\|W/r\|_{L^2}^2 +l^2\|W\|_{L^2}^2\big) \leq 2|\partial_rW(1)|,
\end{align*}
which along with  \eqref{q2pro}  gives 
\begin{align}\label{W2q2>1}
   1&=|U(1)|^2\leq \|r^{-1}\partial_r(rU^2)\|_{L^1}=\|(2\partial_rU+U/r)U\|_{L^2}\nonumber\\
   &\leq \|2\partial_rU+U/r\|_{L^2}\|U\|_{L^2}\leq C|\partial_rU(1)||\partial_rW(1)|.
\end{align}

We define 
\begin{align*}
   &\Psi_1(r)= -\nu(r\partial_rU)^2+\nu (r^2l^2+1)U^2 +{\ir}l(\lambda-1) r^2U^2,\\
   &\Psi_2(r)=2r\big(\nu l^2+{\ir}l(\lambda-1)\big)U^2.
\end{align*}
Notice that 
\begin{align*}
   0&= 2\big(-\nu\widehat{\Delta}_{(1)}U+{\ir}l(\lambda-1)U\big)r^2\p_rU\\
   &=-2\nu \partial_r(r\partial_rU)r\partial_rU +2\nu(r^2l^2+1)U\partial_rU +2{\ir}l(\lambda-1)r^2U\partial_rU\\
   &= -\nu \partial_r(r\partial_rU)^2 +\nu (r^2l^2+1)\partial_r(U^2)+{\ir}l(\lambda-1)r^2\partial_r(U^2).
\end{align*}
Then we have $\partial_r\Psi_1=\Psi_2$ and 
\begin{align*}
   \big|-\nu\big(\partial_rU(1)\big)^2+\nu (1+l^2)+&{\ir}l(\lambda-1)\big|=|\Psi_1(1)| \leq \int_{0}^{1}|\Psi_2(r)|\mathrm{d}r\\
   \leq &\int_{0}^{1}2\nu r\big(l^2+|l(\lambda-1)/\nu|\big)|U|^2\mathrm{d}r= 2\nu \tilde{A}_1^2\|U\|_{L^2}^2.
\end{align*}
which along with \eqref{q2pro} gives
\begin{align*}
   &|\partial_rU(1)|^2\leq \tilde{A}_1^2+ 2\tilde{A}_1^2\|U\|_{L^2}^2\le C\tilde{A}_1^2+1\leq C\tilde{A}_1^2.
   \end{align*}
Then we infer from \eqref{W2q2>1} and \eqref{q2pro}   that 
\beno
|\partial_rW(1)|\geq C^{-1}|\partial_rU(1)|^{-1}\geq C^{-1}\tilde{A}_1^{-1},\quad \|U\|_{L^2}^2\leq C\tilde{A}_1^{-1}.
\eeno 

It remains to  estimate $\|(1-r^2)U\|_{L^2}$. Let
\begin{align*}
   \Psi_3(s)=\nu\big(\|\partial_rU\|_{L^2(0,s)}^2+\|U/r\|_{L^2(0,s)}^2 +l^2\|U\|_{L^2(0,s)}^2\big)+|l(\lambda-1)|\|U\|_{L^2(0,s)}^2.
\end{align*}
Then $\partial_r\Psi_3(r)=\nu r\big(|\partial_rU|^2+ |U/r|^2+l^2|U|^2\big) +r|l(\lambda-1)| |U|^2\geq \nu r\big(|\partial_rU|^2+\tilde{A}_1^2|U|^2\big).$ By \eqref{q2pros}, we have
\begin{align*}
   & \Psi_3(r)\leq 2\nu r|\partial_rU(r)||U(r)| \leq \tilde{A}_1^{-1}\partial_r\Psi_3,\quad \Psi_3(1)\leq 2\nu|\partial_rU(1)|,
\end{align*}
which implies that 
\begin{align*}
   &\Psi_3(r)\leq \Psi_3(1)\mathrm{e}^{-(1-r)\tilde{A}_1}\leq 2\nu |\partial_rU(1)|\mathrm{e}^{-(1-r)\tilde{A}_1}\le C\nu \tilde{A}_1\mathrm{e}^{-(1-r)\tilde{A}_1},
 \end{align*}
and then
\begin{align*}
   &\|U\|_{L^2(0,s)}^2\leq C\nu^{-1}\Psi_3(s)\leq C\tilde{A}_1e^{-(1-s)\tilde{A}_1}.
\end{align*}
Thus, we conclude that 
\begin{align*}
   \|(1-r^2)U\|_{L^2}&\leq C\|(1-r)U\|_{L^2}=C\int_{0}^{1}2(1-s)\|U\|_{L^2(0,s)}\mathrm{d}s \\
   &\leq C\int_{0}^{s}(1-s)\tilde{A}_1^{-1}\mathrm{e}^{-(1-s)\tilde{A}_1}\mathrm{d}s \leq C\tilde{A}_1^{-3}.
\end{align*}

This completes the proof of the lemma.
\end{proof}\smallskip

Now we prove Proposition \ref{prop:OM1}. 

\begin{proof}
As in Proposition \ref{prop:Wa-lower}, it is enough to consider the following two cases. \smallskip

\no {\bf Case 1.}  $l\lambda_i\leq \max(|l(\lambda-1)|/2,\nu(1+l^2)/3)$ and $|\nu/l|^{1/3}\tilde{A}_1\geq 1/c_8$.\smallskip

 Let $(\Om_{1,a}, W_{1,a})$ solve \eqref{eq:LNS-OM-a} and $\Omega_{1,e}=\Omega_1-\Om_{1,a}$. Then $\Omega_{1,e}$ satisfies
  \begin{align*}
     &-\nu\widehat{\Delta}_{(1)}\Omega_{1,e}+{\ir}l(\lambda_r-r^2)\Omega_{1,e} =l\lambda_i\Omega_{1,e}-{\ir}l(1-r^2)\Om_{1,a},\quad \Omega_{1,e}(1)=0.
  \end{align*}
Let $W_{1,e}$ solve $-\widehat{\Delta}_{(1)}W_{1,e}=\Omega_{1,e},\ W_{1,e}(1)=0.$ Then we have
\begin{align*}
   &\nu\big(\|\partial_r\Omega_{1,e}\|_{L^2}^2+\|\Omega_{1,e}/r\|_{L^2}^2+ l^2\|\Omega_{1,e}\|_{L^2}^2\big)+{\ir}l\int_{0}^{1}r(\lambda_r-r^2)|\Omega_{1,e}|^2 \mathrm{d}r-l\lambda_i\|\Omega_{1,e}\|_{L^2}^2\\
   &=\big\langle -{\ir}l(1-r^2)\Om_{1,a}, \Omega_{1,e}\big\rangle \leq \|{\ir}l(1-r^2)\Om_{1,a}\|_{L^2}\|\Omega_{1,e}\|_{L^2}.
\end{align*}
For $l\lambda_i\leq 0$, we use the above inequality, and for $l\lambda_i\geq0$,  we use $|l\lambda_i|\leq c_8|\nu l|^{\f12}$.
We conclude that 
\begin{align}\label{eq:axs-est1}
   &|l\lambda_i|\|\Omega_{1,e}\|_{L^2}\leq c_8|\nu l|^{\f12}\|\Omega_{1,e}\|_{L^2}+\|{\ir}l(1-r^2)\Om_{1,a}\|_{L^2}.
\end{align} 

Using Proposition \ref{prop:ellip-inhom-v1} with $n=1, s=1$, we deduce that 
  \begin{align*}
     |\nu l|^{\f12}\|\Omega_{1,e}\|_{L^2}+\nu^{\f16}|l|^{\f56}\|r\Omega_{1,e}\|_{L^1}\leq& C\left\|l\lambda_i\Omega_{1,e}-{\ir}l(1-r^2)\Om_{1,a}\right\|_{L^2}\\
     \leq&C\big( c_8|\nu l|^{\f12}\|\Omega_{1,e}\|_{L^2}+\|{\ir}l(1-r^2)\Om_{1,a}\|_{L^2}\big).
  \end{align*}
  By Lemma \ref{lem:UW-app-axs}, we have $\|{\ir}l(1-r^2)U_{1,a}\|_{L^2}\leq C|l|\tilde{A}_1^{-\f32}.$ Taking $c_8$ sufficiently small so that $Cc_8\leq 1/2$, we conclude that 
  \begin{align}\label{rOm2}
     \nu^{\f16}|l|^{\f56}\|r\Omega_{1,e}\|_{L^1}&\leq C|l|\tilde{A}_{1}^{-\f32}.
  \end{align}
Let $J$ be as in Remark \ref{rem:J}. Then we infer that
  \begin{align*}
  |\partial_rW_{1,e}(1)|=|\langle J, \Omega_{1,e}\rangle|\leq \|r\Omega_{1,e}\|_{L^1}\leq C|\nu/l|^{-\f16}\tilde{A}_1^{-\f32}.
  \end{align*}
  Then we have
  \begin{align*}
     |\partial_rW_1(1)|&\geq |\partial_rW_{1,a}(1)|-|\partial_rW_{1,e}(1)| \geq C^{-1}\tilde{A}_1^{-1}- C|\nu/l|^{-\f16}\tilde{A}_1^{-\f32} \\
     &\geq C^{-1}\tilde{A}_{1}^{-1}\big(1-C(|\nu/l|^{\f13}\tilde{A}_{1})^{-\f12}\big) \geq C^{-1}\tilde{A}_{1}^{-1}\big(1-C(c_8)^{\f12}\big),
  \end{align*}
 which gives  $|\partial_rW_1(1)|\ge C^{-1}\tilde{A}_{1}^{-1}$  by taking $c_8$ sufficiently small.\smallskip

 \no{\bf Case 2}.  $l\lambda_i\ge \max(|l(\lambda-1)|/2,\nu (n^2+l^2)/3)$. In this case, we have $|\nu/l|^{1/3}\tilde{A}_1\le 1/c_8$.\smallskip
 
 Let $w(r)$ be defined in subsection \ref{sec:airy} with $n=1$, which solves
\begin{align*}
&-\nu\partial_r^2w+\nu(1+l^2)w+{\ir}l(\lambda-2r+1)w=0,\quad w(1)=1.
\end{align*}
A direct calculation gives
\begin{align*}
   &\widehat{\Delta}_{(1)}(r^2w)=\dfrac{\partial_r(r\partial_r(r^2w))}{r}-(1+l^2r^2)w=4w+5r\partial_rw+r^2\partial_r^2w-(1+l^2r^2)w.
\end{align*}
Then we have
\begin{align*}
   &-\nu\widehat{\Delta}_{(1)}(r^2w)+{\ir}l(\lambda-r^2)(r^2w)=F,
\end{align*}
where $F=-\nu (3w+5r\partial_rw+r^2w)-{\ir}lr^2(1-r)^2w$.
Let $\Om_{1,e}=\Omega_1-r^2w$ and $\widehat{\Delta}_{(1)}W_{1,e}=\Om_{1,e},\ W_{1,e}(1)=0$. Then we have
\begin{align*}
   & -\nu\widehat{\Delta}_{(1)}\Om_{1,e}+{\ir}l(\lambda-r^2)\Om_{1,e}=-F,\quad \Om_{1,e}(1)=0.
\end{align*}
As in  \eqref{eq:axs-est1}, we have
\begin{align*}
   &|l\lambda_i|\|\Om_{1,e}\|_{L^2}\leq c_8|\nu l|^{\f12}\|\Om_{1,e}\|_{L^2}+\nu\|F\|_{L^2},
   \end{align*}
and then by Proposition \ref{prop:ellip-inhom-v1} with $n=1, s=1$, we have
\begin{align*}
   & |\nu l|^{\f12}\|\Om_{1,e}\|_{L^2}+\nu^{\f16}|l|^{\f56}\|r\Om_{1,e}\|_{L^1}\leq C\big(c_8|\nu l|^{\f12}\|\Om_{1,e}\|_{L^2}+\nu\|F\|_{L^2}\big),
\end{align*}
Taking $c_8$ sufficiently small, we obtain
\begin{align}\label{q4L1}
   & \|r\Om_{1,e}\|_{L^1}\leq \nu^{\f56}|l|^{-\f56}\|F\|_{L^2}.
\end{align}
Without loss of generality, we assume $l>0$. In present case, we have $|Ld|\le c_8^{-2}$ and  $\mathbf{Im}(Ld)\leq c_8^{\f12}$. 
Taking $c_8$ sufficiently small so that $c_8^\f12\le \delta_0$(now we fix $c_8$), we can apply Lemma \ref{lem:Airy-w} to obtain  
\begin{align*}
   \|F\|_{L^2} &\leq C\nu\big(\|w\|_{L^2}+\|\partial_rw\|_{L^2}\big)+C|l|\|(1-r)^2w\|_{L^2}\\
   &\leq C\nu\big(L^{-\f12}+L^{\f12}\big) +C|l|L^{-\f52}\leq C(\nu L^{\f12}+|l| L^{-\f52})=C\nu^{\f56}|l|^{\f16}.
\end{align*}
Then we infer from Remark \ref{rem:J} and  \eqref{q4L1} that
\begin{align}\label{paW41}
   &|\partial_rW_{1,e}(1)|=|\langle \Om_{1,e}, J \rangle|\leq \|r\Om_{1,e}\|_{L^1}\leq C(\nu^{-\f16}|l|^{-\f56})(\nu^{\f56}|l|^{\f16})=C|\nu/l|^{\f23}.
\end{align}

Let $W_{1,a}$ solve $\widehat{\Delta}_{(1)}W_{1,a}=r^2w,\ W_{1,a}(1)=0$. Then we find that
\begin{align*}
   \partial_rW_{1,a}(1)&=\langle r^2w,J\rangle=\int_{0}^{1}r^3wJ\mathrm{d}r =\int_{0}^{1}r^3\dfrac{A_0'(Ld+1-r)}{A_0'(Ld)}J\mathrm{d}r\\
   &=-\dfrac{A_0(Ld)}{LA'_0(Ld)}+\int_{0}^{1}\dfrac{A_0(L(d+1-r))}{LA_0'(Ld)} \partial_r\big(r^3J\big)\mathrm{d}r
\end{align*}
Thanks to $\partial_rJ\geq 0$, we get by Lemma \ref{lem:Airy-w}  that
\begin{align*}
   L|A_0'(Ld)||\partial_rW_5(1)|&\geq |A_0(Ld)|-\int_{0}^{1}|A_0(L(d+1-r))|\partial_r(r^3J)\mathrm{d}r\\
   &\geq  |A_0(Ld)|-\int_{0}^{1}|A_0(Ld)|\mathrm{e}^{-L(1-r)/3}\partial_r(r^3J)\mathrm{d}r\\
   &= |A_0(Ld)|\dfrac{L}{3}\int_{0}^{1}\mathrm{e}^{-L(1-r)/3}r^3J\mathrm{d}r,
\end{align*}
which gives 
\beno
|\partial_rW_{1,a}(1)|\geq \dfrac{|A_0(Ld)|}{3|A'_0(Ld)|}\int_{0}^{1}\mathrm{e}^{-L(1-r)/3}r^3J\mathrm{d}r.
\eeno
Remark \ref{rem:J} gives
\begin{align*}
   & \int_{0}^{1}\mathrm{e}^{-L(1-r)/3}r^3J\mathrm{d}r \geq \int_{0}^{1}\mathrm{e}^{-L(1-r)/3}r^{4+|l|}\mathrm{d}r\geq \int_{0}^{1}r^{L/3}r^{4+|l|}\mathrm{d}r\geq \dfrac{1}{5+|l|+L/3}.
\end{align*}
Since $|Ld|\leq c_8^{-2}$ and $|\nu/l|^{1/3}\tilde{A}_1\le 1/c_8$, we have $\dfrac{|A_0(Ld)|}{3|A'_0(Ld)|}\geq C^{-1}$ and $5+|l|\leq C(1+l^2)^{\f12}\leq CL$.
Then we conclude that 
\begin{align*}
   &|\partial_rW_{1,e}(1)|\geq \dfrac{|A_0(Ld)|}{3|A'_0(Ld)|}\int_{0}^{1}\mathrm{e}^{-L(1-r)/3}r^3J\mathrm{d}r \geq \dfrac{1}{CL}.
\end{align*}
from which and  \eqref{paW41},  we deduce by taking $c_9$ small enough that 
\begin{align*}
   |\partial_rW_1(1)|&\geq |\partial_rW_{1,a}(1)|-|\partial_rW_{1,e}(1)|\geq C^{-1}L^{-1}-C|\nu/l|^{\f23}\\
   &=C^{-1}L^{-1}-CL^{-2}\geq C^{-1}L^{-1}.
\end{align*}

This completes the proof of the proposition.
\end{proof}

\section{Linear stability for general perturbations} 

In this section, we prove Theorem \ref{thm:stability}. 
Let $0\neq u\in D(\mathcal{L}_\nu)$ solve the resolvent equation 
\beno
su=\mathcal{L}_\nu u.
\eeno 
Our goal is to show that 
\ben\label{eq:spectral bound}
m_0(\nu)\leq -c_0\nu,\quad m_1(\nu)\leq -c_0\nu.
\een

In the sequel, we denote by $\|\cdot\|_{L^2}$ and $\langle \cdot, \cdot\rangle$ the $L^2(\Om)$ norm and inner product in $L^2(\Om)$ respectively. 

\subsection{Upper bound of $m_0(\nu)$}

If $0\neq {u}=(u^1,u^2,u^3)\in D(\mathcal{L}_\nu)\cap\mathcal{H}_0 $, then $\mathbb{P}\big((0,0, {u}\cdot\nabla V)+V\partial_z{u}\big)=\mathbb{P}(0,0,{u}\cdot\nabla V).$ If $(u^1,u^2)\neq 0$, then we have 
\begin{align*}
\mathbf{Re}(s)\|(u^1,u^2)\|_{L^2}^2=&\mathbf{Re}\langle s{u},(u^1,u^2,0)\rangle=\mathbf{Re}\big\langle \mathcal{L}_v{u},(u^1,u^2,0)\big\rangle\\=&\mathbf{Re}\big\langle \nu\mathbb{P}\Delta {u}-\mathbb{P}(0,0,{u}\cdot\nabla V),(u^1,u^2,0)\big\rangle\\
=&\mathbf{Re}\big\langle \nu\Delta {u}-(0,0, {u}\cdot\nabla V),(u^1,u^2,0)\big\rangle\\=&\mathbf{Re}\big\langle \nu\Delta {u},(u^1,u^2,0)\big\rangle=-\mathbf{Re}\big\langle \nu\nabla {u},\nabla(u^1,u^2,0)\big\rangle\\=&-\nu\|\nabla(u^1,u^2)\|_{L^2}^2\leq -C^{-1}\nu\|(u^1,u^2)\|_{L^2}^2.
\end{align*}
Here we used Poincare's inequality. Thus, $\mathbf{Re}(s)\leq -C^{-1}\nu.$

If $(u^1,u^2)= 0$ and $u^3\neq 0$, then we have  $\mathcal{L}_\nu{u}=\nu\mathbb{P}\Delta {u}=\mathbb{P}(0,0,\nu\Delta {u}^3)=(0,0,\nu\Delta {u}^3)$. Thus, $su^3=\nu\Delta {u}^3,\ u^3|_{r=1}=0$. Then we deduce that 
\begin{align*}
&\mathbf{Re}(s)\|u^3\|_{L^2}^2=\mathbf{Re}\langle s {u}^3,u^3\rangle=\mathbf{Re}\langle \nu\Delta{u}^3,u^3\rangle=-\nu\|\nabla u^3\|_{L^2}^2\leq -C^{-1}\nu\|u^3\|_{L^2}^2.
\end{align*}
Thus, $\mathbf{Re}(s)\leq -C^{-1}\nu.$ Combining two cases,  we conclude the upper bound of $m_0(\nu).$ 

\subsection{Upper bound of $m_1(\nu)$}
We introduce $W=(x_1,x_2,0)\cdot u$. First of all, we consider the case of $W=0$. In this case, we have
\beno
 {u}\cdot\nabla V=0,\quad \langle \na P, {u}\rangle=0,\quad \langle V\partial_z{u}, {u}\rangle=0,
\eeno
 which show that 
 \begin{align*}
\mathbf{Re}(s)\|{u}\|_{L^2}^2=&\mathbf{Re}\langle s{u},{u}\rangle=\mathbf{Re}\big\langle \nu\Delta {u}-V\partial_z{u}-(0,0, {u}\cdot\nabla V)-\na P, {u}\big\rangle\\
=&\mathbf{Re}\langle \nu\Delta {u}, {u}\rangle=-\nu\|\nabla {u}\|_{L^2}^2\leq -C^{-1}\nu\|{u}\|_{L^2}^2.
\end{align*}
This shows that $\mathbf{Re}(s)\leq -C^{-1}\nu.$


Next we consider the case of $W\neq 0$.  We derive the following system of $\widehat{W}(n,l), \widehat{U}(n,l) $  in section 2(still denote $\widehat{W}, \widehat{U} $ by $W, U$): 
 \begin{align}\label{eq:LNS-WU-proof}
\left\{\begin{aligned}
&-\nu\big(2\widehat{\Delta}{U}-\widehat{\Delta}_1^*{W}_1\big)+{\ir}l(\lambda-r^2){U}=0,\\
&-\nu \widehat{\Delta}_1{U}+{\ir}l(\lambda-r^2){W}_1+\dfrac{4{\ir}n^2l{W}}{n^2+r^2l^2}
=0,\\
&W_1=\widehat{\Delta}_1{W},\quad {{W}}|_{r=1}=\partial_r{W}|_{r=1}=0,\quad {W}_1|_{r=1}={U}|_{r=1},
\end{aligned}\right. 
\end{align}
where $\la=s/({\ir}l)+1$.

Assume  $0<\nu<c_0\min(|l|,1)$. Let us first consider the case of $n\neq 0$ and $l\neq0$. If ${W}_1|_{r=1}={U}|_{r=1}=0$, Proposition \ref{prop:res-com} ensures
that $(W,U)=0$ in the case when $l\lambda_i\leq c_6|\nu n l|^{\f12}$. If ${W}_1|_{r=1}={U}|_{r=1}\neq 0$, we can normalize them so that ${W}_1|_{r=1}={U}|_{r=1}=1$. Then Proposition \ref{prop:lower} ensures that  if $l\lambda_i\leq c_6|\nu n l|^{\f12}$ and $0<\nu<c_6\min(|l|,1)$,  then  $\pa_rW(1)\neq 0$, which contradicts with $\pa_rW(1)=0$. Thus, we also have 
$W=U=0$.


For the case of  $n=0$ and $l\neq 0$, we  have  ${U}={W}_1$ and $(\Omega_1,W_2)=\big({W}_1/r, {W}/r\big)$ satisfies  
\begin{align*}
  \left\{\begin{aligned}
     &-\nu\widehat{\Delta}_{(1)}\Omega_1 +{\ir}l(\lambda-r^2)\Omega_1=0,\\    
     &\Omega_1=\widehat{\Delta}_{(1)}W_2,\quad W_2|_{r=1}=\partial_rW_2|_{r=1}=0.
  \end{aligned}\right.
  \end{align*}
In last section, we have showed that $\Om_1=W_2=0$ in the case when $l\lambda_i\leq c_8|\nu n l|^{\f12}$. 

Thanks to $\la=s/({\ir}l)+1$, we have $W=0$ when $\mathbf{Re s}\ge -c|\nu/L_z|^\f12$.
This in turn shows that $m_1(\nu)\leq -c|\nu/L_z|^{\f12}\le -c\nu$.\smallskip

Finally, let us point out that  our  argument of linear stability does not need to prove the existence of the solution for the system \eqref{eq:LNS-WU-ar-3}  and \eqref{eq:LNS-hom-3}. It is enough to prove the existence of the solution for the following linear elliptic type equations(in fact, ODE)  such as 
\begin{align*}
&-\nu \widehat{\Delta}_1{U}+{\ir}l(\lambda-r^2)U=0,\quad U(0)=0,\,\,U(1)=1,\\
 &-\nu\widehat{\Delta}_{(1)}U+\ir l(\lambda-1)U=0,\quad U(0)=0,\,\,U(1)=1,\\
&\widehat{\Delta}_1{W}=U,\quad W(0)=0,\,\,W(1)=0,\\
&\widehat{\Delta}_{(1)}W=U,\quad W(0)=0,\,\,W(1)=0.
\end{align*}
The proof is not  hard and we will write a proof in a note.

\section{Appendix}

\subsection{Hardy type inequalities}

\begin{Lemma}\label{lem:hardy-1}
  If $\lambda\geq1$ and $f(1)=0$, then we have
  \begin{align*}
    \|f\|_{L^2} &\leq C\lambda^{-\f13}\|f\|_{1}^{\f13}\big\|(\lambda-r^2)^{\f12}f\big\|_{L^2}^{\f23}.
  \end{align*}
\end{Lemma}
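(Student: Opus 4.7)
The plan is a standard dyadic/interpolation split around the boundary $r=1$, exploiting that $\lambda - r^2$ degenerates there. Introduce a parameter $\delta \in (0,1/2]$ and write
\[
\|f\|_{L^2}^2 = \int_0^{1-\delta} |f|^2 r\,\mathrm{d}r + \int_{1-\delta}^1 |f|^2 r\,\mathrm{d}r.
\]
For the first integral I would use the key algebraic inequality that holds precisely because $\lambda \geq 1$: namely, $\lambda - r^2 \geq \lambda(1 - r^2) \geq \lambda(1-r) \geq \lambda\delta$ on $[0,1-\delta]$. This gives
\[
\int_0^{1-\delta} |f|^2 r\,\mathrm{d}r \leq (\lambda\delta)^{-1}\big\|(\lambda - r^2)^{1/2}f\big\|_{L^2}^2.
\]
This is where the factor $\lambda^{-1}$ enters (which will become $\lambda^{-1/3}$ after optimization), and it is the one place the hypothesis $\lambda\geq 1$ is actually used.

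For the boundary integral on $[1-\delta,1]$, I would use $f(1)=0$ and Cauchy--Schwarz with the weight $s^{-1}$ to get $|f(r)|^2 \leq (-\ln r)\|\partial_r f\|_{L^2}^2 \leq 2\delta\|\partial_r f\|_{L^2}^2$ for $r \in [1-\delta,1]$ (using $-\ln r \leq 2(1-r)$ for $r \geq 1/2$); integrating against $r\,\mathrm{d}r$ over an interval of length $\delta$ yields
\[
\int_{1-\delta}^1 |f|^2 r\,\mathrm{d}r \leq 2\delta^2 \|\partial_r f\|_{L^2}^2 \leq 2\delta^2\|f\|_1^2.
\]
Combining the two estimates gives $\|f\|_{L^2}^2 \leq (\lambda\delta)^{-1}\|(\lambda-r^2)^{1/2}f\|_{L^2}^2 + 2\delta^2\|f\|_1^2$.

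Now I would optimize in $\delta$: the minimizer is $\delta \sim \big(\|(\lambda-r^2)^{1/2}f\|_{L^2}^2 / (\lambda \|f\|_1^2)\big)^{1/3}$, which upon substitution yields exactly $\|f\|_{L^2}^2 \leq C\lambda^{-2/3}\|f\|_1^{2/3}\|(\lambda-r^2)^{1/2}f\|_{L^2}^{4/3}$, i.e., the claim. The only subtlety is the admissibility constraint $\delta \leq 1/2$: if the optimal $\delta$ exceeds $1/2$, then $\|(\lambda-r^2)^{1/2}f\|_{L^2}^2 \gtrsim \lambda \|f\|_1^2$, so the desired right-hand side dominates $\|f\|_1^2$, and one concludes via the Poincar\'e-type bound $\|f\|_{L^2} \leq C\|\partial_r f\|_{L^2} \leq C\|f\|_1$ (which itself follows from the same $-\ln r$ argument integrated over $[0,1]$). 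I do not anticipate a serious obstacle here; the only thing to watch is the correct power of $\lambda$, which the factor $\lambda(1-r^2)\geq \lambda(1-r)$ delivers cleanly.
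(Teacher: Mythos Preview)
Your argument is correct. The paper proceeds differently: it writes $|f|^2 = |f/(1-r^2)|^{2/3}\cdot |(1-r^2)^{1/2}f|^{4/3}$ and applies H\"older's inequality directly to get
\[
\|f\|_{L^2} \leq \big\|f/(1-r^2)\big\|_{L^2}^{1/3}\,\big\|(1-r^2)^{1/2}f\big\|_{L^2}^{2/3},
\]
then bounds the first factor by $C\|f\|_1$ via the classical Hardy inequality (splitting at $r=1/2$), and the second factor by $\lambda^{-1/2}\|(\lambda-r^2)^{1/2}f\|_{L^2}$ using the pointwise bound $(1-r^2)/(\lambda-r^2)\le 1/\lambda$. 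This is a one-shot interpolation with no optimization parameter and no endpoint case to check. Your approach trades the named Hardy inequality for an explicit Cauchy--Schwarz computation and a $\delta$-optimization; it is more self-contained but requires the admissibility check on $\delta$. Both routes lean on the same underlying fact that $\lambda-r^2\ge\lambda(1-r^2)$ when $\lambda\ge 1$; the paper exploits it globally via the weight ratio, you exploit it on the bulk interval $[0,1-\delta]$.
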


\begin{proof}
 By Hardy's inequality under the measure $\mathrm{d}r$, we have
  \begin{align*}
     \left\|\dfrac{f}{1-r^2}\right\|_{L^2(r\mathrm{d}r)}&\leq \left\|\dfrac{f}{1-r^2}\right\|_{L^2((0,1/2);r\mathrm{d}r)}+\left\|\dfrac{f}{1-r^2}\right\|_{L^2([1/2,1];r\mathrm{d}r)}\\
     &\leq \|f\|_{L^2((0,1/2);r\mathrm{d}r)} \left\|\dfrac{1}{1-r^2}\right\|_{L^\infty((0,1/2);r\mathrm{d}r)}+ \sqrt{2}\left\|\dfrac{f}{1-r^2}\right\|_{L^2([1/2,1],\mathrm{d}r)}\\
     &\leq C\big(\|f\|_{L^2(r\mathrm{d}r)}+\|\partial_rf\|_{L^2([1/2,1],\mathrm{d}r)}\big)\\
     &\leq C\big(\|f\|_{L^2(r\mathrm{d}r)}+\|\partial_rf\|_{L^2(r\mathrm{d}r)}\big)\leq C\|f\|_{1} .
  \end{align*}
On the other hand, we have
  \begin{align*}
    \big\|(1-r^2)^{\f12}f\big\|_{L^2}\leq \big\|(\lambda-r^2)^{\f12}f\big\|_{L^2}\left\|\dfrac{1-r^2}{\lambda-r^2}\right\|^{\f12}_{L^\infty}\leq \lambda^{-\f12}\big\|(\lambda-r^2)^{\f12}f\big\|_{L^2}.
  \end{align*}
Thus, we conclude  that
  \begin{align*}
     \|f\|_{L^2}&\leq \left\|\dfrac{f}{1-r^2}\right\|_{L^2}^{\f13}\big\|(1-r^2)^{\f12}f\big\|^{\f23}_{L^2} \leq C\lambda^{-\f13}\|f\|_{1}^{\f13}\big\|(\lambda-r^2)^{\f12}f\big\|_{L^2}^{\f23}.
  \end{align*}
  
  This completes the proof of the lemma.
\end{proof}

\begin{Lemma}\label{lem:hardy-2}
If $\widetilde{r}\in(0,1]$ and  $f(\widetilde{r})=0$, then we have
  \begin{align*}
& \left\|\dfrac{f/(\widetilde{r}^2-r^2)}{n^2+r^2l^2}\right\|_{L^{2}}^2\leq \frac{C}{\widetilde{r}^2(n^2+\widetilde{r}^2l^2)}\int_0^1\left(\dfrac{r|\partial_rf|^2}{n^2+r^2l^2}+\dfrac{|f|^2}{r}\right)\mathrm{d}r.
\end{align*}
\end{Lemma}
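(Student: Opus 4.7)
The plan is to split $(0,1]$ at $r=\widetilde r/2$ and $r=3\widetilde r/2$: on the ``far'' region $B=(0,\widetilde r/2)\cup(3\widetilde r/2,1]$ I would bound $f$ pointwise and integrate directly, whereas on the ``near'' region $A=[\widetilde r/2,3\widetilde r/2]\cap(0,1]$ I would use a localized one-dimensional Hardy inequality. Throughout I may assume $|n|,|l|\ge 1$ and $f(0)=0$, the latter being forced by finiteness of the right-hand side. The pointwise bound (Step~1) is obtained exactly as in the first estimate of the proof of Lemma~\ref{lem:W-L2}: integrating $\partial_s(|f(s)|^2/(n+sl))$ from $0$ to $r$ (using $f(0)=0$), applying AM-GM to the cross term, and using $(n+sl)^2\ge n^2+s^2l^2$ together with $(n+sl)^2\ge 4nsl$ to absorb everything into the integrand of $E:=\int_0^1 \bigl(r|\partial_r f|^2/(n^2+r^2l^2)+|f|^2/r\bigr)\,\mathrm{d}r$, yielding $|f(r)|^2\le CE(n+rl)$ on $(0,1]$.

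For Step~2 (far region), I would exploit that $(\widetilde r^2-r^2)^2\ge 9\widetilde r^4/16$ on $B_-=(0,\widetilde r/2)$, while $r-\widetilde r\ge r/3$ (hence $(\widetilde r^2-r^2)^2\ge r^4/9$) on $B_+=(3\widetilde r/2,1]$. Combined with Step~1 and $(n^2+r^2l^2)\ge(n+rl)^2/2$, this yields
\begin{equation*}
\int_{B_-}\frac{r|f|^2\,\mathrm{d}r}{(\widetilde r^2-r^2)^2(n^2+r^2l^2)^2}\le \frac{CE}{\widetilde r^4}\int_0^{\widetilde r/2}\frac{r\,\mathrm{d}r}{(n+rl)^3},
\end{equation*}
together with a similar inequality on $B_+$ bounded by $CE\int_{3\widetilde r/2}^1 \mathrm{d}r/[r^3(n^2+r^2l^2)^{3/2}]$. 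The substitution $u=n+rl$ evaluates the first integral to $\widetilde r^2/[8n(n+\widetilde r l/2)^2]\le C\widetilde r^2/[n(n^2+\widetilde r^2l^2)]$; the second is at most $C\widetilde r^{-2}(n^2+\widetilde r^2l^2)^{-3/2}$ by pulling out $\inf_{r\ge 3\widetilde r/2}(n^2+r^2l^2)^{-3/2}$ and computing $\int r^{-3}\,\mathrm{d}r$. Since $|n|\ge 1$ and $(n^2+\widetilde r^2l^2)^{1/2}\ge 1$, both are bounded by $CE/[\widetilde r^2(n^2+\widetilde r^2l^2)]$.

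For Step~3 (near region), on $A$ the ratios $r/\widetilde r$, $(\widetilde r+r)/\widetilde r$, and $(n^2+r^2l^2)/(n^2+\widetilde r^2l^2)$ are all comparable to constants, so the $A$-integral is dominated by $\frac{C}{\widetilde r(n^2+\widetilde r^2l^2)^2}\int_A |f|^2/(r-\widetilde r)^2\,\mathrm{d}r$. Fixing a cutoff $\chi\in C_c^\infty$ with $\chi\equiv 1$ on $A$, $\operatorname{supp}\chi\subset[\widetilde r/4,\min(7\widetilde r/4,1)]$, and $|\chi'|\le C/\widetilde r$, the vanishing $(\chi f)(\widetilde r)=0$ lets me apply the one-dimensional Hardy inequality at the pole $\widetilde r$ to $\chi f$ (extended by zero): $\int_A|f|^2/(r-\widetilde r)^2\,\mathrm{d}r\le C\int|\chi'|^2|f|^2\,\mathrm{d}r+C\int\chi^2|\partial_r f|^2\,\mathrm{d}r$. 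On $\operatorname{supp}\chi$, $r\asymp\widetilde r$ converts the second integral into $C(n^2+\widetilde r^2l^2)E/\widetilde r$, while Step~1 combined with $n+\widetilde rl\le n^2+\widetilde r^2l^2$ controls the first by the same quantity, giving the required bound. The main obstacle I anticipate is ensuring Step~2 closes with the sharp prefactor $1/(n^2+\widetilde r^2l^2)$ uniformly in $(n,l,\widetilde r)$: the pointwise bound alone is not enough, and must be combined with the sharp comparison $(n+rl)^2\asymp n^2+r^2l^2$, the region-specific improvement $r-\widetilde r\gtrsim r$ on $B_+$, and the exact evaluation of $\int r/(n+rl)^3\,\mathrm{d}r$ via $u=n+rl$.
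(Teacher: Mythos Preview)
Your approach is correct and genuinely different from the paper's. The paper never splits into near and far regions; instead it first pulls out the factor $\widetilde r^{-1}$ from $|\widetilde r^2-r^2|^{-1}\le \widetilde r^{-1}|r-\widetilde r|^{-1}$ and then performs an \emph{algebraic} splitting of the weight,
\[
\frac{1}{n^2+r^2l^2}=\frac{1}{(n^2+r^2l^2)^{1/2}(n^2+\widetilde r^2l^2)^{1/2}}+\Bigl(\text{remainder}\Bigr),
\]
observing via a direct computation that the remainder divided by $(r-\widetilde r)$ is bounded pointwise by $r^{-1}(n^2+\widetilde r^2l^2)^{-1/2}$. For the main (``frozen'') term it applies the one-dimensional Hardy inequality at the pole $\widetilde r$ to the single function $g(r)=r^{1/2}f(r)/(n^2+r^2l^2)^{1/2}$; the extra terms coming from $\partial_r g$ are exactly absorbed by the $|f|^2/r$ part of $E$. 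So the paper's proof is a short algebraic identity plus one global Hardy, with no pointwise bound on $f$ and no case analysis. Your route trades that clever choice of $g$ for more elementary but longer bookkeeping: a dyadic near/far split, the $L^\infty$ bound $|f|^2\lesssim E(n+rl)$ borrowed from Lemma~\ref{lem:W-L2}, and an explicit evaluation of $\int r(n+rl)^{-3}\,\mathrm{d}r$ to close the far region with the sharp factor $(n^2+\widetilde r^2l^2)^{-1}$. Both give the same constant-free conclusion; the paper's argument is slicker, yours is more modular.

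One small technical slip: when $\widetilde r>2/3$ your cutoff cannot simultaneously satisfy $\chi\equiv1$ on $A=[\widetilde r/2,1]$ and $\operatorname{supp}\chi\subset[\widetilde r/4,1]$ while making $\chi f$ (extended by zero) lie in $H^1(\mathbb R)$, unless $f(1)=0$, which the lemma does not assume. The fix is trivial: drop the cutoff and apply Hardy directly on the interval $A$ (the inequality $\int_A|f|^2/(r-\widetilde r)^2\,\mathrm{d}r\le 4\int_A|\partial_rf|^2\,\mathrm{d}r$ needs only $f(\widetilde r)=0$, no endpoint conditions), then use $r\asymp\widetilde r$ and $n^2+r^2l^2\asymp n^2+\widetilde r^2l^2$ on $A$ to convert $\int_A|\partial_rf|^2$ into $C(n^2+\widetilde r^2l^2)E/\widetilde r$. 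This actually simplifies your Step~3.
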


\begin{proof}
If $f(\widetilde{r})=0$, then we  have
  \begin{align*}
     \left\|\f{f/(\widetilde{r}^2-r^2)}{n^2+r^2l^2}\right\|_{L^2}^2 &\leq  \left\|\f{f/\big((r-\widetilde{r})(r+\widetilde{r})\big)}{n^2+r^2l^2}\right\|_{L^2}^2\leq \f{1}{\widetilde{r}^2} \left\|\f{f/(r-\widetilde{r})}{n^2+r^2l^2}\right\|_{L^2}^2\\
     &\leq \f{2}{\widetilde{r}^2}\bigg(\left\|\f{f/(r-\widetilde{r})}{n^2+r^2l^2} -\f{f/(r-\widetilde{r})}{(n^2+r^2l^2)^{\f12}(n^2+\widetilde{r}^2l^2)^{\f12}}\right\|_{L^2}^2 \\
     &\qquad\quad+\left\|\f{f/(r-\widetilde{r})}{(n^2+\widetilde{r}^2l^2)^{\f12}(n^2+r^2l^2)^{\f12}}\right\|_{L^2}^2\bigg).
  \end{align*}
Thanks to 
  \begin{align*}
     &\left|\f{1}{r-\widetilde{r}}\bigg(\f{1}{(n^2+r^2l^2)^{\f12}}-\f{1}{(n^2+\widetilde{r}^2l^2)^{\f12}}\bigg)\right| \\&=\left|\f{(r+\widetilde{r})l^2}{((n^2+r^2l^2)^{\f12}+(n^2+\widetilde{r}^2l^2)^{\f12}) (n^2+r^2l^2)^{\f12}(n^2+\widetilde{r}^2l^2)^{\f12}}\right|\\
     &\leq \f{1}{r(n^2+\widetilde{r}^2l^2)^{\f12}},
  \end{align*}
we find that
\begin{align*}
   &\left\|\f{f/(r-\widetilde{r})}{n^2+r^2l^2} -\f{f/(r-\widetilde{r})}{(n^2+r^2l^2)^{\f12}(n^2+r^2_0l^2)^{\f12}}\right\|_{L^2}^2 \leq \f{1}{n^2+\widetilde{r}^2l^2}\int_{0}^{1}\f{|f|^2}{r(n^2+r^2l^2)}dr.
\end{align*}
Then by Hardy's inequality and  $f(\widetilde{r})=0$, we have
\begin{align*}
  \left\|\f{f/(r-\widetilde{r})}{(n^2+\widetilde{r}^2l^2)^{\f12}(n^2+r^2l^2)^{\f12}}\right\|_{L^2}^2 &=\f{1}{n^2+\widetilde{r}^2l^2}\int_{0}^{1}\f{1}{(r-\widetilde{r})^2}\left| \f{r^{\f12}f}{(n^2+r^2l^2)^{\f12}}\right|^2dr \\ &=\f{1}{n^2+\widetilde{r}^2l^2}\int_{0}^{1}\f{1}{(r-\widetilde{r})^2}\left| \f{r^{\f12}f}{(n^2+r^2l^2)^{\f12}}-\f{\widetilde{r}^{\f12}f(\widetilde{r})}{(n^2+\widetilde{r}^2l^2)^{\f12}}\right|^2dr \\&\leq \f{C}{n^2+\widetilde{r}^2l^2}\int_{0}^{1}\left|\partial_r\bigg[ \f{r^{\f12}f}{(n^2+r^2l^2)^{\f12}}\bigg]\right|^2dr\\&\leq \f{C}{n^2+\widetilde{r}^2l^2}\int_{0}^{1}\bigg(\f{|f|^2}{r(n^2+r^2l^2)}+\f{ r|\partial_rf|^2}{n^2+r^2l^2}\bigg)dr.
\end{align*}
Thus, we conclude that
\begin{align*}
   \left\|\f{f/(\widetilde{r}^2-r^2)}{n^2+r^2l^2}\right\|_{L^2}^2&\leq\f{C}{n^2+\widetilde{r}^2l^2} \int_{0}^{1}\bigg(\f{|f|^2}{r(n^2+r^2l^2)}+\f{ r|\partial_rf|^2}{n^2+r^2l^2}\bigg)dr\\
   &\leq \f{C}{n^2+\widetilde{r}^2l^2}\int_{0}^{1}\bigg(\f{|f|^2}{r}+\f{ r|\partial_rf|^2}{n^2+r^2l^2}\bigg)dr.
\end{align*}

This completes the proof of the lemma. 
\end{proof}

\subsection{Interpolation inequality} 

\begin{Lemma}\label{lem:interp}
Assume that $\lambda>0,\ s>0,\ g\in H_0^1(0,s)$ and let $r_0=\lambda^{\f12}$. It holds that
  \begin{align*}
    &\|g\|_{L^{1}}\leq 2\|g\|_{L^2}^{\f12}\|(\lambda-r^2)g\|_{L^2}^{\f12},\\
    &r_0\|{g}\|_{L^2}^2\leq C \|(\lambda-r^2){g}\|_{L^2}\|g\|_{1}.
  \end{align*}
\end{Lemma}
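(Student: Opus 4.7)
My plan is to reduce both inequalities to standard one-dimensional interpolation estimates by exploiting the weighted-measure $r\,\mathrm{d}r$ and the structure of $\lambda-r^2=(r_0-r)(r_0+r)$. The cleanest route for the $L^1$ bound is a change of variables: set $u=r^2$ and $\tilde g(u)=g(\sqrt u)$, so that $r\,\mathrm{d}r=\mathrm{d}u/2$ converts all three weighted norms appearing in (a) into ordinary $L^p$ norms of $\tilde g$ on $(0,s^2)$ with the multiplier $\lambda-r^2$ turning into the affine weight $\lambda-u$. Up to fixed constants, the inequality reduces to the classical interpolation
\[
\|\tilde g\|_{L^1(0,s^2)}\le C\,\|\tilde g\|_{L^2}^{1/2}\,\|(\lambda-u)\tilde g\|_{L^2}^{1/2},
\]
which I would prove in one line by splitting the domain into $\{|u-\lambda|<a\}$ (Cauchy--Schwarz gives the factor $\sqrt{2a}\,\|\tilde g\|_{L^2}$) and $\{|u-\lambda|\ge a\}$ (Cauchy--Schwarz against $|\lambda-u|^{-1}$ gives $\sqrt{2/a}\,\|(\lambda-u)\tilde g\|_{L^2}$), and then optimizing in $a>0$. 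Tracking the factors of $\tfrac12$ from $r\,\mathrm{d}r=\mathrm{d}u/2$ yields the constant $2$ claimed in the statement.

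For inequality (b) the natural approach is a direct splitting at scale $\delta>0$ around the critical point $r_0$. On the ``outer'' region $\{|r-r_0|\ge\delta\}$ one has $|\lambda-r^2|=|r-r_0|(r+r_0)\ge r_0\delta$, so $r_0\le |\lambda-r^2|/\delta$ and Cauchy--Schwarz gives
\[
r_0\!\!\int_{|r-r_0|\ge\delta}\!\!|g|^2 r\,\mathrm{d}r\le \delta^{-1}\|(\lambda-r^2)g\|_{L^2}\|g\|_{L^2}.
\]
On the ``inner'' region $\{|r-r_0|<\delta\}$ (with $\delta\le r_0/2$, so that $r\ge r_0/2$ there), I want a pointwise bound $\|g\|_{L^\infty}^2\le C r_0^{-1}\|g\|_1\|g\|_{L^2}$; this follows from $g(s)=0$ and
\[
|g(r)|^2=-2\,\mathbf{Re}\!\int_r^s g'\bar g\,\mathrm{d}r'\le \frac{2}{r_0/2}\!\int_r^s |g'||g|\,r'\,\mathrm{d}r'\le \frac{4}{r_0}\|g\|_1\|g\|_{L^2},
\]
after inserting the weight $r'\ge r_0/2$. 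Combined with the trivial measure estimate $\int_{|r-r_0|<\delta}r\,\mathrm{d}r\le 4\delta r_0$, the inner contribution is $\le C\delta r_0\|g\|_1\|g\|_{L^2}$. Adding the two estimates, dividing by $\|g\|_{L^2}$ and optimizing $\delta$ (balancing $\delta r_0\|g\|_1$ against $\delta^{-1}\|(\lambda-r^2)g\|_{L^2}$) produces exactly the target bound $r_0\|g\|_{L^2}^2\le C\|(\lambda-r^2)g\|_{L^2}\|g\|_1$.

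The only genuine difficulty I foresee is the ``inner'' pointwise estimate in (b): the natural 1D Sobolev bound would produce the unweighted norms $\|g'\|_{L^2(\mathrm{d}r)}\|g\|_{L^2(\mathrm{d}r)}$, but the hypothesis provides the $r\,\mathrm{d}r$-weighted norm $\|g\|_1$. The trick is to restrict the identity $|g(r)|^2=-2\,\mathbf{Re}\int_r^s g'\bar g\,\mathrm{d}r'$ to $r\ge r_0/2$, where $1\le 2r'/r_0$ lets one insert the missing factor of $r'$ at the price of $2/r_0$. I must also verify that the optimal $\delta$ lies in $(0,r_0/2]$; if it does not, a direct bound at $\delta=r_0/2$ (where $\delta^{-1}$ and $\delta$ are both comparable to powers of $r_0$) handles the borderline case without changing the final constant.
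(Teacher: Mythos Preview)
Your argument for inequality (a) is correct and is essentially the paper's proof with the substitution $u=r^2$ made explicit: the paper splits on $E_1=\{|\lambda-r^2|\le\tilde\delta\}$ and computes $\|1\|_{L^2(E_1)}$ and $\|1/(\lambda-r^2)\|_{L^2(E_1^c)}$ via $r\,\mathrm{d}r=\mathrm{d}(r^2)/2$, which is exactly your decomposition in the $u$-variable.

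For inequality (b) your route is genuinely different from the paper's, and the endpoint case you flag is a real gap. The paper does not decompose at all; it produces the exact identity
\[
\mathbf{Re}\Big\langle(\lambda-r^2)g,\ \frac{2r\partial_r g+3g}{r+r_0}\Big\rangle=r_0\|g\|_{L^2}^2
\]
by one integration by parts, and then Cauchy--Schwarz together with $\|(2r\partial_r g+3g)/(r+r_0)\|_{L^2}\le 2\|\partial_r g\|_{L^2}+3\|g/r\|_{L^2}\le C\|g\|_1$ finishes immediately. Your near--far decomposition is correct when the optimal $\delta^*\le r_0/2$, but your proposed fallback ``take $\delta=r_0/2$'' does \emph{not} close: in the regime $A:=\|(\lambda-r^2)g\|_{L^2}\gg r_0^3\|g\|_1$ the bound at $\delta=r_0/2$ only yields $r_0\|g\|_{L^2}\lesssim r_0^{-1}A$, hence $r_0\|g\|_{L^2}^2\lesssim r_0^{-3}A^2$, and there is no way to convert one factor of $A$ into $\|g\|_1$ without an inequality of the form $\|g\|_{L^2}\lesssim r_0\|g\|_1$, which is false in general.

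The clean repair of your scheme is to replace the restricted pointwise bound by the uniform one
\[
r|g(r)|^2=\int_0^r\partial_{r'}\big(r'|g|^2\big)\,\mathrm{d}r'\le \|g/r\|_{L^2}\|g\|_{L^2}+2\|\partial_r g\|_{L^2}\|g\|_{L^2}\le C\|g\|_1\|g\|_{L^2},
\]
valid for all $r\in(0,s)$ (using $g(0)=0$ and $\|g/r\|_{L^2}\le\|g\|_1$, the same ingredient the paper uses). Then the inner contribution is $\int_{\{|r-r_0|<\delta\}}|g|^2r\,\mathrm{d}r\le C\delta\|g\|_1\|g\|_{L^2}$ with no constraint on $\delta$, and the optimisation goes through unconditionally. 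Compared to the paper, your corrected argument is more robust (it generalises to other weights) but longer; the paper's integration-by-parts identity is slicker and gives the result in one stroke.
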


\begin{proof}
Without loss of generality, we may assume that $\|g\|_{L^2}\neq 0$ and $\|(\lambda-r^2)g\|_{L^2}\neq0$. Let $E_1=\{r\in[0,s]:|\lambda-r^2|\leq \tilde{\delta}\}$, $E_1^c=[0,s]\setminus E_1$ for $\tilde{\delta}>0$. Then we have
\begin{align*}
   &\|1\|_{L^2(E_1)}=\bigg(\int_{r\in E_1}r\mathrm{d}r\bigg)^{\f12}\leq \bigg(\int_{|r^2-\lambda|\leq \tilde{\delta},r>0}\mathrm{d}r^2/2\bigg)^{\f12}\leq \tilde{\delta}^{\f12},
\end{align*}
and
\begin{align*}
   \|1/(\lambda-r^2)\|_{L^2(E_1^c)}&=\bigg(\int_{r\in E_1^c}r/(\lambda-r^2)^2\mathrm{d}r\bigg)^{\f12}\\
   &\leq \bigg(\int_{|r^2-\lambda|> \tilde{\delta},r>0}1/(\la-r^2)^2\mathrm{d}r^2/2\bigg)^{\f12}\leq \tilde{\delta}^{-\f12}.
\end{align*}
Thus, we obtain
\begin{align*}
   \|g\|_{L^1}&\leq \|g\|_{L^1(E_1)}+\|g\|_{L^1(E_1^c)}\\
   &\leq \|g\|_{L^2}\|1\|_{L^2(E_1)}+\|(\lambda-r^2)g\|_{L^2}\|1/(\lambda-r^2)\|_{L^2(E_1^c)}
   \\&\leq \tilde{\delta}^{\f12}\|g\|_{L^2}+\tilde{\delta}^{-\f12}\|(\lambda-r^2)g\|_{L^2},
\end{align*}
which gives the first inequality by taking $\tilde{\delta}=\|(\lambda-r^2)g\|_{L^2}^{\f12}\|g\|_{L^2}^{-\f12}$.

We get by integration by parts that
  \begin{align*}
  2\mathbf{Re}\big\langle (\lambda-r^2){g},r\partial_r {g}/(r+r_0)\big\rangle&=\big\langle r(\lambda-r^2)/(r+r_0),\partial_r |{g}|^2\big\rangle=\big\langle r(r_0-r),\partial_r |{g}|^2\big\rangle\\&=-\big\langle r^{-1}\partial_r[r^2(r_0-r)], |{g}|^2\big\rangle=-\big\langle 2r_0-3r, |{g}|^2\big\rangle,
\end{align*}
which gives
 \begin{align*}
 \mathbf{Re}\big\langle (\lambda-r^2){g},(2r\partial_r {g}+3g)/(r+r_0)\big\rangle&=-\big\langle 2r_0-3r, |{g}|^2\big\rangle+3\big\langle (\lambda-r^2)/(r+r_0), |{g}|^2\big\rangle\\&=\big\langle 3r-2r_0, |{g}|^2\rangle+3\langle (r_0-r), |{g}|^2\big\rangle=r_0\|{g}\|_{L^2}^2,
\end{align*}
from which, we infer that
\begin{align*}
  r_0\|{g}\|_{L^2}^2\leq& \|(\lambda-r^2){g}\|_{L^2}\|(2r\partial_r {g}+3g)/(r+r_0)\|_{L^2}\\ \leq& \|(\lambda-r^2){g}\|_{L^2}\big(2\|\partial_r {g}\|_{L^2}+3\| {g}/r\|_{L^2}\big)\leq C \|(\lambda-r^2){g}\|_{L^2}\|g\|_{1}.
\end{align*}
This proves the second inequality of the lemma.
\end{proof} 

\subsection{Bounds on  harmonic functions} 

\begin{Lemma}\label{lem:har-bound}
  Let $J(r)$ solve $\widehat{\Delta}_1J=0,\ J(0)=0, J (1)=1$. Then $J(r)$ is increasing in $(0,1]$ and there holds
  \begin{align*}
   r^{|n|}\mathrm{e}^{-|l|(1-r)}\le r^{|n|+|l|}\le J(r)\le r^{|n|}.
\end{align*}
\end{Lemma}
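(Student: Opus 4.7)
The plan is to treat the ODE $\widehat{\Delta}_1 J = 0$ as a second-order linear equation of the form $J'' + a(r)J' + b(r)J = 0$ with $a(r) = 1/r - 2rl^2/(n^2+r^2l^2)$ and $b(r) = -n^2/r^2 - l^2 \le 0$. Because the zeroth-order coefficient is nonpositive, the classical comparison principle applies on $[0,1]$: any $v \in C([0,1])$ with $\widehat{\Delta}_1 v \ge 0$ on $(0,1)$ and $v \le 0$ at $r = 0,1$ satisfies $v \le 0$ throughout. I would prove the two-sided pointwise bound by producing explicit sub- and supersolutions matching the boundary data, and then deduce monotonicity from the divergence form
$$\partial_r\!\Bigl(\tfrac{rJ'}{n^2+r^2l^2}\Bigr) = \tfrac{J}{r}$$
derived in Section~2.

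The upper bound $J(r) \le r^{|n|}$ is immediate: a one-line calculation gives $\widehat{\Delta}_1(r^{|n|}) = -l^2 r^{|n|}\bigl(1 + \tfrac{2|n|}{n^2+r^2l^2}\bigr) \le 0$, so $r^{|n|}$ is a supersolution with $r^{|n|}|_{r=0}=0$ and $r^{|n|}|_{r=1}=1$, and comparison gives $J \le r^{|n|}$ at once.

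The lower bound is the main obstacle. One might first try $r^{|n|+|l|}$ itself as a subsolution, but this fails: a direct computation shows $\widehat{\Delta}_1(r^{|n|+|l|})$ becomes \emph{negative} near $r=1$ whenever $|l|$ is large compared with $|n|$. I would therefore introduce the auxiliary function $\phi(r) = r^{|n|}\mathrm{e}^{-|l|(1-r)}$, which has the same boundary values as $J$. Using $\phi'/\phi = |n|/r + |l|$ and $\phi''/\phi = (|n|/r+|l|)^2 - |n|/r^2$, one finds after a short calculation
$$\frac{\widehat{\Delta}_1\phi}{\phi} = \frac{|l|\bigl[(2|n|+1)n^2 + (2|n|-1)r^2 l^2 - 2|n||l|\,r\bigr]}{r(n^2+r^2l^2)}.$$
The bracketed quantity is a quadratic in $r$ with leading coefficient $(2|n|-1)l^2 > 0$ and discriminant $4n^2l^2(2-4n^2) < 0$ (since $|n|\ge 1$), so it is strictly positive on $[0,1]$. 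Hence $\widehat{\Delta}_1\phi > 0$, and the comparison principle yields $\phi(r) \le J(r)$. Finally, the elementary inequality $\log r \le r-1$ (valid for all $r>0$) gives $r^{|l|} \le \mathrm{e}^{-|l|(1-r)}$, so $r^{|n|+|l|} \le \phi(r) \le J(r)$, which together with $\phi \le J$ furnishes the whole chain of lower bounds in the claim.

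For monotonicity, set $f(r) = rJ'(r)/(n^2+r^2l^2)$, so that $f'(r) = J(r)/r$. The lower bound just proved gives $J \ge r^{|n|+|l|} > 0$ on $(0,1]$, so $f$ is strictly increasing there. If $f(0^+) = c \ne 0$, then for small $r$ we would have $J'(r) \sim cn^2/r$, hence $J(r) \sim cn^2\log r \to \mp\infty$, contradicting the boundary condition $J(0)=0$; therefore $f(0^+) = 0$. Strict monotonicity then forces $f(r) > 0$ on $(0,1]$, and since $J'(r) = (n^2/r + rl^2)\,f(r)$, we conclude $J'(r) > 0$ on $(0,1]$, completing the proof.
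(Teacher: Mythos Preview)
Your proof is correct and follows essentially the same route as the paper: the same supersolution $r^{|n|}$, the same subsolution $\phi(r)=r^{|n|}e^{-|l|(1-r)}$ (with the same computation showing $\widehat{\Delta}_1\phi\ge 0$, just packaged as a discriminant argument rather than the completed-square form $2n^2+(|n|-r|l|)^2$ the paper uses), and the same flux argument $\partial_r\bigl(\tfrac{rJ'}{n^2+r^2l^2}\bigr)=J/r>0$ for monotonicity. The only variation is in the comparison step itself: you invoke the maximum principle directly (valid since the zeroth-order coefficient is strictly negative), whereas the paper first proves $J\ge 0$ by the same interior-extremum argument and then uses a Wronskian identity to show that $J/\phi$ is monotone; both devices are standard and yield the same conclusion here.
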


\begin{proof}
We first prove that $J\geq 0$. Let $r_1\in[0,1]$ so that $J(r_1)=\min\{J\}$. If $J(r_1)<0$, then $r_1\neq 0,1$, and then $\partial_r^2J(r_1)\geq 0$, $\partial_rJ(r_1)=0$. On the other hand, 
\begin{align*}
   0&=\widehat{\Delta}_1J(r_1) =\partial_r^2 J(r_1)+ \frac{\partial_r J(r_1)}{r_1}-\frac{n^2+r_1^2l^2}{r_1^2}J(r_1) -\dfrac{2r_1l^2}{n^2+r^2_1l^2}\partial_r J(r_1)\\
   &=\partial_r^2 J(r_1)-\frac{n^2+r_1^2l^2}{r_1^2} J(r_1)>0,
\end{align*}
which is a contradiction. So,  $J\geq 0$.

Let $ \ph(r)=r^{|n|}\mathrm{e}^{-|l|(1-r)}$. Then $r\partial_r\ph=(|n|+r|l|)\ph $ and
\begin{align*}\widehat{\Delta}_1\ph&=\frac{\partial_r(r\partial_r\ph)}{r}-\frac{n^2+r^2l^2}{r^2}\ph-\dfrac{2rl^2}{n^2+r^2l^2}\partial_r\ph\\&
=\frac{\partial_r((|n|+r|l|)\ph)}{r}-\frac{n^2+r^2l^2}{r^2}\ph-\dfrac{2l^2(|n|+r|l|)\ph}{n^2+r^2l^2}\\&
=\frac{(|n|+r|l|)^2\ph}{r^2}+\frac{|l|\ph}{r}-\frac{n^2+r^2l^2}{r^2}\ph-\dfrac{2l^2(|n|+r|l|)\ph}{n^2+r^2l^2}\\&
=\frac{2|n|r|l|\ph}{r^2}+\frac{|l|\ph}{r}-\dfrac{2l^2(|n|+r|l|)\ph}{n^2+r^2l^2}\\&
=\frac{(2|n|+1)|l|\ph}{r}-\dfrac{2l^2(|n|+r|l|)\ph}{n^2+r^2l^2}\\&
=\frac{2(|n|-1)|l|\ph}{r}+\dfrac{|l|(3(n^2+r^2l^2)-2r|l|(|n|+r|l|))\ph}{r(n^2+r^2l^2)}\\&
=\frac{2(|n|-1)|l|\ph}{r}+\dfrac{|l|(2n^2+(|n|-r|l|)^2)\ph}{r(n^2+r^2l^2)}\geq0,
\end{align*}
which implies
\begin{align*}
0&\leq\dfrac{r(J\widehat{\Delta}_1\ph-\ph\widehat{\Delta}_1 J)}{n^2+r^2l^2} =J\bigg(\partial_r\dfrac{r\partial_r\ph}{n^2+r^2l^2}- \dfrac{\ph}{r} \bigg)-\ph\bigg(\partial_r\dfrac{r\partial_rJ}{n^2+r^2l^2}- \dfrac{J}{r} \bigg) \\
 &=J\partial_r\dfrac{r\partial_r\ph}{n^2+r^2l^2}-\ph\partial_r\dfrac{r\partial_rJ}{n^2+r^2l^2} =\partial_r\dfrac{r(J\partial_r\ph-\ph\partial_rJ)}{n^2+r^2l^2}.
\end{align*}
So, $\dfrac{r(J\partial_r\ph-\ph\partial_rJ)}{n^2+r^2l^2}$ is increasing, then $\dfrac{r(J\partial_r\ph-\ph\partial_rJ)}{n^2+r^2l^2}\geq 0$, hence \begin{align*}
   & J\partial_r\ph-\ph\partial_rJ\geq0.
\end{align*}
Then we infer that
\begin{align*}0\leq r(J\partial_r\ph-\ph\partial_rJ)=-r\ph^2\partial_r(J/\ph).
\end{align*}
Thus, $J/\ph$ is decreasing in $(0,1]$, $J(r)/\ph(r)\geq J(1)/\ph(1)=1,\ J(r)\geq\ph(r)=r^{|n|}\mathrm{e}^{-|l|(1-r)}$ for $r\in(0,1]$.

Let $\varphi_1=r^{|n|}$. Then we have
\begin{align*}
   \widehat{\Delta}_1\varphi_1&=-l^2r^{|n|}-\dfrac{2|n|l^2r^{|n|}}{n^2+r^2l^2}\leq 0,
\end{align*}
which implies
\begin{align*}
0&\geq\dfrac{r(J\widehat{\Delta}_1\varphi_1-\varphi_1\widehat{\Delta}_1J)}{n^2+r^2l^2} =\partial_r\dfrac{r(J\partial_r\varphi_1-\varphi_1\partial_rJ)}{n^2+r^2l^2}.
\end{align*}
So, $\dfrac{r(J\partial_r\varphi_1-\varphi_1\partial_rJ)}{n^2+r^2l^2}$ is decreasing, then $\dfrac{r(J\partial_r\varphi_1-\varphi_1\partial_rJ)}{n^2+r^2l^2}\leq 0$, hence,
 \begin{align*}
 J\partial_r\varphi_1-\varphi_1\partial_rJ\leq 0.
\end{align*}
Thus, we infer that $0\leq r(J\partial_r\varphi_1-\varphi_1\partial_rJ)=-r\varphi_1^2\partial_r(J/\ph).$ Then $J/\ph$ is decreasing in $(0,1]$, $J(r)/\varphi_1(r)\geq J(1)/\varphi_1(1)=1,\ J(r)\leq\varphi_1(r)=r^{|n|}$ for $r\in(0,1]$.

Finally, we prove that $J$ is increasing. By the continuity of $J$, we have that $\lim_{r\rightarrow 0^+}{J}'(r)={J}'(0)= \lim_{r\rightarrow 0^+}J(r)/r\geq \lim_{r\rightarrow 0^+}r^{|n|+|l|}/r=0$. Thanks to 
\begin{align*}
   0=& \widehat{\Delta}_{1}J=\dfrac{n^2+r^2l^2}{r}\left(\partial_r\dfrac{r\partial_rJ}{n^2+r^2l^2} -\dfrac{J}{r}\right),
\end{align*}
we have
\begin{align*}
   & \partial_r\dfrac{r\partial_rJ}{n^2+r^2l^2}=\dfrac{J}{r}\geq r^{|n|+|l|-1}> 0,\quad r\in(0,1].
\end{align*}
Then $\dfrac{r\partial_rJ}{n^2+r^2l^2}$ is increasing and  $\dfrac{r\partial_rJ}{n^2+r^2l^2}>\lim_{r\rightarrow 0^+}\dfrac{r\partial_rJ}{n^2+r^2l^2} \geq 0,\ r\in(0,1]$, which means that $\partial_rJ>0, r\in(0,1]$. That is, $J$ is increasing. 
\end{proof}

\begin{remark} 
The estimate of $J$ can be proved via the explicit formula. 
Let  $I=r\partial_r J/(n^2+r^2l^2)$. Thanks to 
\begin{align*}
   & \dfrac{r\widehat{\Delta}_1J}{n^2+r^2l^2}= \partial_r\left(\dfrac{r\partial_r J}{n^2+r^2l^2}\right)-\dfrac{ J}{r}=0,
\end{align*}
we find that $ J=r\partial_rI$ and
\begin{align*}
   &r\widehat{\Delta} I= \partial_r(r\partial_rI)-\dfrac{n^2+r^2l^2}{r}I= \partial_r J-\partial_r J=0\Longrightarrow \widehat{\Delta}I=0.
 \end{align*}
Let $t=irl$. Then we have
\begin{align*}
   &\big(t\partial_t(t\partial_t)+(t^2-n^2)\big)I=0.
\end{align*}
This is exactly the Bessel equation, whose solution is given by\begin{align*}
   &I(r)=c_1\sum_{k=0}^{+\infty}(-1)^{k}\dfrac{t^{|n|+2k}|n|!}{2^{ 2k}k!(|n|+k)!}=c\sum_{k=0}^{+\infty}\dfrac{(rl)^{|n|+2k}|n|!}{2^{ 2k}k!(|n|+k)!}=cI_n(rl).
\end{align*}
Then $J(r)=crlI_n'(rl)=c J_n(rl)$ with
\begin{align*}
J_n(x)=\sum_{k=0}^{+\infty}\dfrac{(|n|+2k)x^{|n|+2k}|n|!}{2^{ 2k}k!(|n|+k)!}.
\end{align*}
Thus, $J(r)= J_n(rl)/ J_n(l)$ due to $cJ_n(rl)=J(1)=1$.
\end{remark}

\begin{remark}\label{rem:J}
Let $J$ solve $\widehat{\Delta}_{(1)}J=0,\ J(0)=0, J(1)=1$. Then we have
\begin{align*}
   & Cr\geq J(r) \geq  r\mathrm{e}^{-|l|(1-r)} \geq r^{1+|l|},\quad \partial_rJ\geq 0.
\end{align*}

Indeed, it is easy to show that  if $\widehat{\Delta}_{(1)}g\leq 0,\ g|_{r=0,1}\geq 0$, then $g\geq 0$.
Let $ \phi(r)=r\mathrm{e}^{-|l|(1-r)}$.  We have
\begin{align*}\widehat{\Delta}_{(1)}\phi&=\frac{\partial_r(r\partial_r\phi)}{r} -\frac{1+r^2l^2}{r^2}\phi
=\frac{\partial_r((1+r|l|)\phi)}{r}-\frac{1+r^2l^2}{r^2}\phi\\&
=\frac{(1+r|l|)^2\phi}{r^2}+\frac{|l|\phi}{r}-\frac{1+r^2l^2}{r^2}\phi
=\frac{2r|l|\phi}{r^2}+\frac{|l|\phi}{r}\geq0,
\end{align*}
and  $\widehat{\Delta}_{(1)}(r)=\dfrac{r-1-r^2l^2}{r^2}\leq 0.$ On the other hand, $(J-\phi)|_{r=0,1}=(r-J)|_{r=0,1}=0$. Then we have $J-\phi\geq0$ and $r-J\geq 0$, which gives  
\begin{align*}
   &  & r\geq J(r) \geq  r\mathrm{e}^{-|l|(1-r)} \geq r^{1+|l|}.
\end{align*}
Since $J'(0)=\lim_{r\rightarrow0^{+}}J(r)/r\geq 0$ and $\partial_r(r\partial_rJ)=(1+r^2l^2)J/r\geq 0$, we deduce that $r\partial_rJ\geq \lim_{r\rightarrow0^{+}} (rJ)\geq 0$.
\end{remark}

\begin{remark}\label{rem:J-star}
 Let $J^{*}=(n^2+l^2)J/(n^2+r^2l^2)$. Then we have
  \begin{align*}
     & \widehat{\Delta}_{1}^{*}J^{*}=0,\quad J^*(0)=0, \quad J^*(1)=1.
  \end{align*}
Moreover, we have $|J^*(r)|\leq Cr$. Indeed,  let $\varphi_{1}=r(9n^2+r^2l^2)/(9n^2+l^2)$. Then $\varphi_1(0)=0,\ \varphi_1(1)=1$ and
  \begin{align*}
     &(9n^2+l^2)\widehat{\Delta}_{1}\varphi_1 = 9n^2\widehat{\Delta}_1(r) +l^2 \widehat{\Delta}_1(r^{3})\\
     &= 9n^2\Big(\dfrac{1}{r}-\dfrac{n^2+r^2l^2}{r}-\dfrac{2rl^2}{n^2+r^2l^2}\Big) +l^2\Big(9r-r(n^2+r^2l^2)-\dfrac{6r^3l^2}{n^2+r^2l^2}\Big)\\
     &\leq 9n^2\Big(\dfrac{1}{r}-\dfrac{n^2+r^2l^2}{r}\Big) +9rl^2 =9(1-n^2)(n^2/r+rl^2)\leq 0,
     \end{align*}
which implies that  $J(r)\leq r(9n^2+r^2l^2)/(9n^2+l^2)$. Thus, $J^*(r)\le 9r$.
  
\end{remark}

\subsection{Sobolev type inequality}

\begin{Lemma}\label{lem:sob}
If $f(0)=f(1)=0$, then we have
\begin{align*}
   &|\partial_rf(1)|^2\leq 2\|\widehat{\Delta}f\|_{L^2}\|\partial_rf\|_{L^2}\le C\|\widehat{\Delta}_1f\|_{L^2}\|\partial_rf\|_{L^2},\\
   &|\partial_rf(1)|\leq  C\|r\widehat{\Delta}_1f\|_{L^1}.
\end{align*}
\end{Lemma}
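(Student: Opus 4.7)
The plan is to prove the three estimates in sequence, starting from an integration-by-parts identity that expresses $|\partial_r f(1)|^2$ as a boundary-free integral.

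\emph{First inequality.} Since $f(0)=f(1)=0$ and $f$ is smooth, $r^2|\partial_r f|^2$ vanishes at $r=0$, so
\begin{align*}
|\partial_rf(1)|^2=\int_0^1\partial_r(r^2|\partial_rf|^2)\,dr=2\int_0^1 r|\partial_rf|^2\,dr+2\mathbf{Re}\int_0^1 r^2\overline{\partial_rf}\,\partial_r^2f\,dr.
\end{align*}
I would substitute $r^2\partial_r^2 f=r^2\widehat{\Delta}f-r\partial_rf+n^2 f+l^2r^2 f$, then integrate the two $f$–terms by parts using the boundary data: $n^2\int_0^1\partial_r|f|^2\,dr=0$ and $l^2\int_0^1 r^2\partial_r|f|^2\,dr=-2l^2\int_0^1 r|f|^2\,dr$. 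After cancellation this leaves
\begin{align*}
|\partial_rf(1)|^2=2\mathbf{Re}\int_0^1 r\,\overline{\partial_rf}\,\widehat{\Delta}f\cdot r\,dr-2l^2\|f\|_{L^2}^2 \cdot(\text{sign})\le 2|\langle r\partial_r f,\widehat{\Delta}f\rangle|,
\end{align*}
dropping the non-positive $l^2$ term; Cauchy–Schwarz in $L^2(r\,dr)$ together with $\|r\partial_rf\|_{L^2}\le\|\partial_rf\|_{L^2}$ (since $r\le 1$) finishes the first estimate.

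\emph{Second inequality.} I would decompose $\widehat{\Delta}f=\widehat{\Delta}_1 f+\frac{2rl^2}{n^2+r^2l^2}\partial_rf$ and carry out the same computation as in Lemma 4.1 of the paper, but \emph{keeping track of the boundary term at $r=1$}. A careful double IBP in the cross term $2\mathbf{Re}\langle\widehat{\Delta}f,\frac{2rl^2}{n^2+r^2l^2}\partial_rf\rangle$ produces exactly the boundary contribution $\frac{2l^2}{n^2+l^2}|\partial_rf(1)|^2$ plus $\|\frac{2rl^2}{n^2+r^2l^2}\partial_rf\|_{L^2}^2$, yielding the key identity
\begin{align*}
\|\widehat{\Delta}f\|_{L^2}^2=\|\widehat{\Delta}_1f\|_{L^2}^2+\frac{2l^2}{n^2+l^2}|\partial_rf(1)|^2\le\|\widehat{\Delta}_1f\|_{L^2}^2+2|\partial_rf(1)|^2.
\end{align*}
Combining with the first inequality $|\partial_rf(1)|^2\le 2\|\widehat{\Delta}f\|_{L^2}\|\partial_rf\|_{L^2}$ and Young's inequality gives $\|\widehat{\Delta}f\|_{L^2}^2\le 2\|\widehat{\Delta}_1f\|_{L^2}^2+C\|\partial_rf\|_{L^2}^2$. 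To close this one needs the Poincaré-type bound $\|\partial_rf\|_{L^2}\le C\|\widehat{\Delta}_1f\|_{L^2}$, which I would obtain from $\mathbf{Re}\langle\widehat{\Delta}_1f,f\rangle=-\|f\|_1^2+\int\frac{2rn^2l^2|f|^2}{(n^2+r^2l^2)^2}\,dr$ combined with the AM–GM bound $n^2+r^2l^2\ge 2|nrl|$ (so the last integral is absorbed by $\tfrac12\|f\|_1^2$) and the 1D Poincaré inequality $\|f\|_{L^2}\le C\|\partial_rf\|_{L^2}$ coming from $f(1)=0$.

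\emph{Third inequality.} Let $J^*(r)=(n^2+l^2)J(r)/(n^2+r^2l^2)$ as in Remark 8.3, which solves $\widehat{\Delta}_1^*J^*=0$ with $J^*(1)=1$ and satisfies $|J^*(r)|\le Cr$. A single integration by parts of $\int_0^1\widehat{\Delta}_1 f\cdot \overline{J^*}\,r\,dr$, using $f(0)=f(1)=0$, produces only the boundary term $\partial_rf(1)\overline{J^*(1)}=\partial_rf(1)$ and an interior term that vanishes because $\widehat{\Delta}_1^*J^*=0$; thus
\begin{align*}
|\partial_rf(1)|=\Bigl|\int_0^1\widehat{\Delta}_1f\cdot\overline{J^*}\,r\,dr\Bigr|\le C\int_0^1 r|\widehat{\Delta}_1f|\cdot r\,dr=C\|r\widehat{\Delta}_1f\|_{L^1}.
\end{align*}
The main obstacle is the bookkeeping in the middle inequality: tracking the boundary term produced by the IBP identity between $\widehat{\Delta}$ and $\widehat{\Delta}_1$, and ensuring all constants remain uniform in $n,l$ (in particular handling $n=0$ separately and exploiting $n^2+r^2l^2\ge 2|nrl|$ to absorb the extra drift term in $\widehat{\Delta}_1$).
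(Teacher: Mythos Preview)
Your proposal is correct. The first and third inequalities follow the paper almost exactly: for the third you both test against the harmonic $J^*$ of Remark~8.3, and for the first your substitution of $r^2\partial_r^2f$ in terms of $\widehat{\Delta}f$ is a minor variant of the paper's route (the paper instead bounds $\|\partial_r(r\partial_rf)\|_{L^2}\le\|\widehat{\Delta}f\|_{L^2}$ using the orthogonality identity $\|r^{-1}\partial_r(r\partial_rf)-n^2f/r^2\|_{L^2}^2=\|r^{-1}\partial_r(r\partial_rf)\|_{L^2}^2+n^4\|f/r^2\|_{L^2}^2+2n^2\|\partial_rf\|_{L^2}^2$ together with $\|\widehat{\Delta}f\|_{L^2}^2\ge\|r^{-1}\partial_r(r\partial_rf)-n^2f/r^2\|_{L^2}^2$); both arguments land on the same constant~$2$.

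For the middle inequality your route is genuinely different and somewhat more elaborate. The paper does not use any boundary identity: it simply writes $\widehat{\Delta}f=\widehat{\Delta}_1f+\tfrac{2rl^2}{n^2+r^2l^2}\partial_rf$, bounds the drift term by $|l/n|\,\|f\|_1$, and then uses $\|f\|_1^2\le -2\mathbf{Re}\langle f,\widehat{\Delta}_1f\rangle\le 2|l|^{-1}\|\widehat{\Delta}_1f\|_{L^2}\|f\|_1$ to get $\|f\|_1\le 2|l|^{-1}\|\widehat{\Delta}_1f\|_{L^2}$, yielding directly $\|\widehat{\Delta}f\|_{L^2}\le 3\|\widehat{\Delta}_1f\|_{L^2}$. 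Your exact identity $\|\widehat{\Delta}f\|_{L^2}^2=\|\widehat{\Delta}_1f\|_{L^2}^2+\tfrac{2l^2}{n^2+l^2}|\partial_rf(1)|^2$ is correct (it is precisely the boundary correction to the $\widehat{\Delta}^2=\widehat{\Delta}_1^*\widehat{\Delta}_1$ lemma) and gives sharper information, but closing it requires the extra Young/Poincar\'e step. Note also that you do not actually need the 1D Poincar\'e from $f(1)=0$: since $|n|,|l|\ge 1$ throughout Section~4, the definition of $\|\cdot\|_1$ already gives $\|f\|_{L^2}\le\|f\|_1$, which is how the paper closes.
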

\begin{proof}
Using the facts that 
  \begin{align*}
     |\partial_rf(1)|^2&\leq \|(r\partial_rf)^2\|_{L^\infty}\leq \left\|\partial_r[(r\partial_rf)^2]\right\|_{L^1(\mathrm{d}r)}\leq 2\|r\partial_rf\partial_r(r\partial_rf)\|_{L^1(\mathrm{d}r)}\nonumber\\ &\leq 2\|\partial_r(r\partial_rf)\|_{L^2(rdr)}\|\partial_rf\|_{L^2(r\mathrm{d}r)}
  \end{align*}
 and 
  \begin{align*}
     \|\partial_r(r\partial_rf)-n^2f/r\|_{L^2}^2&=\|\partial_r(r\partial_rf)\|_{L^2} +\|n^2f/r\|_{L^2}^2-2\mathbf{Re}(\langle\partial_r(r\partial_rf),n^2f/r \rangle)\\
     &=\|\partial_r(r\partial_rf)\|_{L^2} +\|n^2f/r\|_{L^2}^2+ 2n^2\|\partial_rf\|_{L^2}^2,
  \end{align*}
we deduce that 
  \begin{align*}
     |\partial_rf(1)|^2&\leq 2\|\partial_r(r\partial_rf)\|_{L^2(r\mathrm{d}r)}\|\partial_rf\|_{L^2(r\mathrm{d}r)}\leq 2\|\partial_r(r\partial_rf)-n^2f/r\|_{L^2(r\mathrm{d}r)}\|\partial_rf\|_{L^2(r\mathrm{d}r)}\\
     &\leq 2\|\partial_r(r\partial_rf)/r-n^2f/(r^2)\|_{L^2(r\mathrm{d}r)}\|\partial_rf\|_{L^2(r\mathrm{d}r)}\leq 2\|\widehat{\Delta}f\|_{L^2}\|\partial_rf\|_{L^2}.
  \end{align*}
Using the fact that  
\beno
\|f\|_{1}^2\leq -2\mathbf{Re}\langle\widehat{\Delta}_{1}f,f\rangle\leq 2\|\widehat{\Delta}_1f\|_{L^2}\|f\|_{L^2}\leq 2|l|^{-1}\|\widehat{\Delta}_1f\|_{L^2}\|f\|_{1},
\eeno
we deduce that  $\|f\|_{1}\leq 2|l|^{-1}\|\widehat{\Delta}_1f\|_{L^2}$  and 
 \begin{align*}
    &\left\|\dfrac{2rl^2\partial_rf}{n^2+r^2l^2} \right\|_{L^2}\leq |l/n|\|\partial_rf\|_{L^2}\leq |l/n|\|f\|_{1}\leq 2\|\widehat{\Delta}_1f\|_{L^2},
 \end{align*}
which gives 
 \begin{align*}
 \|\widehat{\Delta}f\|_{L^2}\leq \|\widehat{\Delta}_1f\|_{L^2}+ \left\|\dfrac{2rl^2\partial_rf}{n^2+r^2l^2} \right\|_{L^2}\leq 3\|\widehat{\Delta}_1f\|_{L^2}.
 \end{align*}
Thus, we obtain 
\begin{align*}
 |\partial_rf(1)|^2\leq 2\|\widehat{\Delta}f\|_{L^2}\|\partial_rf\|_{L^2}\le 6\|\widehat{\Delta}_1f\|_{L^2}\|\partial_rf\|_{L^2}.
\end{align*}

Let $J^*$ solve the equation 
\begin{align*}
   &\widehat{\Delta}_{1}^*J^*=0,\quad J^*(0)=0,\quad J^*(1)=1.
\end{align*}  
By Remark \ref{rem:J-star}, we have $J^*\leq Cr$ and then
\begin{align*}
   &|\partial_rf(1)|=|\langle\widehat{\Delta}_{1}f, J^*\rangle| \leq \|J^*\widehat{\Delta}_1f\|_{L^1}\leq  C\|r\widehat{\Delta}_1f\|_{L^1},
\end{align*}
which gives the second inequality.
\end{proof}

\subsection{Some basic inequalities}

  \begin{Lemma}\label{lem:A}
  Let  $A(s)=|l(\lambda-s^2)/\nu|^{\f12}+|ls/\nu|^{\f13}+|l|+|n/s|$ and $A=A(1)$. Then  it holds that for $s\in(0,1]$,
  \begin{align*}
     &\int_s^1A(r)\mathrm{d}r\geq C^{-1}(1-s)A.
  \end{align*}
\end{Lemma}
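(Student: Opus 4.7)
The plan is to decompose $A(r)=T_1(r)+T_2(r)+T_3(r)+T_4(r)$ with
\[
T_1(r)=|l(\lambda-r^2)/\nu|^{\f12},\quad T_2(r)=|lr/\nu|^{\f13},\quad T_3(r)=|l|,\quad T_4(r)=|n/r|,
\]
and establish $\int_s^1 T_j(r)\,\rd r\ge C^{-1}(1-s)\,T_j(1)$ for each $j$ separately.  Three of the four terms are essentially trivial: $T_3$ is constant, $T_4$ is decreasing in $r$ so $\int_s^1 T_4\ge (1-s)|n|=(1-s)T_4(1)$, and for $T_2$ an explicit computation gives $\int_s^1 |lr/\nu|^{\f13}\,\rd r=\tfrac{3}{4}|l/\nu|^{\f13}(1-s^{4/3})\ge \tfrac{3}{4}(1-s)|l/\nu|^{\f13}$.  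The real content is the bound for $T_1$, which is the step I expect to be the main obstacle because the integrand can vanish when $\lambda\in(s^2,1)$ is real.

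For $T_1$, I first reduce to real $\lambda$ by observing $|\lambda-r^2|\ge \max(|\lambda_r-r^2|,|\lambda_i|)$, so
\[
|\lambda-r^2|^{\f12}\ge \tfrac12\bigl(|\lambda_r-r^2|^{\f12}+|\lambda_i|^{\f12}\bigr),
\]
and the triangle inequality $|\lambda-1|^{\f12}\le |\lambda_r-1|^{\f12}+|\lambda_i|^{\f12}$ then reduces everything to the real-$\lambda$ statement
\[
\int_s^1|\lambda-r^2|^{\f12}\,\rd r\ge C^{-1}(1-s)\,|\lambda-1|^{\f12}.
\]
To prove this, I would use the substitution $u=r^2$, which yields
\[
\int_s^1|\lambda-r^2|^{\f12}\,\rd r=\int_{s^2}^1\frac{|\lambda-u|^{\f12}}{2\sqrt u}\,\rd u\ \ge\ \tfrac12\int_{s^2}^1|\lambda-u|^{\f12}\,\rd u,
\]
since $\sqrt u\le 1$ on $[s^2,1]$. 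I would then split into three cases by the location of $\lambda$: (i) $\lambda\ge 1$ or $\lambda\le s^2$, handled by the mean value theorem applied to $x\mapsto x^{3/2}$ (giving a clean bound $\ge c(1-s^2)|\lambda-1|^{\f12}$); and (ii) $\lambda\in(s^2,1)$, in which case the integrand vanishes at $u=\lambda$ and one gets
\[
\tfrac12\int_{s^2}^1|\lambda-u|^{\f12}\,\rd u=\tfrac13\bigl[(\lambda-s^2)^{\f32}+(1-\lambda)^{\f32}\bigr].
\]
To conclude $(\lambda-s^2)^{\f32}+(1-\lambda)^{\f32}\ge c(1-s^2)(1-\lambda)^{\f12}$, I will split once more by whether $1-\lambda\ge(1-s^2)/2$ or $\lambda-s^2\ge(1-s^2)/2$: in the first case $(1-\lambda)^{\f32}=(1-\lambda)(1-\lambda)^{\f12}\ge \tfrac{1-s^2}{2}(1-\lambda)^{\f12}$ directly, and in the second case $(\lambda-s^2)^{\f32}\ge \bigl(\tfrac{1-s^2}{2}\bigr)^{\f32}\ge \tfrac{1-s^2}{2}(1-\lambda)^{\f12}$ because $1-\lambda\le \tfrac{1-s^2}{2}$. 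Finally using $1-s^2\ge 1-s$, summing the four contributions and collecting constants yields the claimed lower bound for $\int_s^1 A(r)\,\rd r$.
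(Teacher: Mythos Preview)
Your plan is correct and follows essentially the same strategy as the paper: decompose $A(r)$ term by term, reduce $T_1$ to the real-$\lambda$ inequality $\int_s^1|\lambda_r-r^2|^{1/2}\,\rd r\ge C^{-1}(1-s)|\lambda_r-1|^{1/2}$, and then split by the position of $\lambda_r$. The paper works directly in the $r$-variable using the midpoint $s_1=(s+1)/2$ and restricts to subintervals $[(s_1+1)/2,1]$ or $[s,s_1]$; your change of variable $u=r^2$ is a tidy alternative that makes the case $\lambda\in(s^2,1)$ particularly transparent via the explicit formula $\tfrac13[(\lambda-s^2)^{3/2}+(1-\lambda)^{3/2}]$.

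One small point to tighten: in case (i) with $\lambda\le s^2$, the mean value theorem for $x\mapsto x^{3/2}$ gives
\[
\tfrac{2}{3}\bigl[(1-\lambda)^{3/2}-(s^2-\lambda)^{3/2}\bigr]=\xi^{1/2}(1-s^2),\qquad \xi\in(s^2-\lambda,\,1-\lambda),
\]
and the lower endpoint $s^2-\lambda$ can be much smaller than $1-\lambda$, so the MVT alone does not yield $\ge c(1-s^2)(1-\lambda)^{1/2}$. The fix is immediate: with $a=s^2-\lambda$, $b=1-\lambda$ one has
\[
b^{3/2}-a^{3/2}=(b^{1/2}-a^{1/2})(b+\sqrt{ab}+a)\ge \frac{b}{b^{1/2}+a^{1/2}}\,(b-a)\ge \tfrac12 b^{1/2}(b-a),
\]
since $a\le b$. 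The case $\lambda\ge 1$ is fine as you wrote it (there the MVT gives $\xi\ge \lambda-1$).
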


\begin{proof}
   Let $\lambda_r=\mathbf{Re}\lambda,\ \lambda_i=\mathbf{Im}\lambda.$ We first claim that
   \begin{align}\label{eq:A-est1}
      &\int_{s}^{1}|\lambda_r-r^2|^{\f12}\mathrm{d}r\geq C^{-1}(1-s)|\lambda_r-1|^{\f12}.
   \end{align}
   By \eqref{eq:A-est1}, we have
   \begin{align*}
      &\int_{s}^{1}|\lambda-r^2|^{\f12}\mathrm{d}r\geq C^{-1}\int_{s}^{1}\big(|\lambda_r-r^2|^{\f12}+|\lambda_i|^{\f12}\big)\mathrm{d}r\\
      &\geq C^{-1}\big(|1-s||\lambda_r-1|^{\f12}+|1-s||\lambda_i|^{\f12}\big)\geq C^{-1}|1-s||\lambda-1|^{\f12}.
   \end{align*}
 It is easy to see that 
   \begin{align*}
      \int_{s}^{1}|r|^{\f13}\mathrm{d}r&\geq \int_{(s+1)/2}^{1}|r|^{\f13}\mathrm{d}r\geq  \int_{(s+1)/2}^{1}\big(\dfrac{s+1}{2}\big)^{\f13}\mathrm{d}r\\
      &\geq \dfrac{1-s}{2}\big(\dfrac{s+1}{2}\big)^{\f13}\geq \dfrac{1-s}{2^{4/3}},
   \end{align*}
   and
   \begin{align*}
      &\int_{s}^{1}|r|^{-1}\mathrm{d}r\geq \int_{s}^{1}1\mathrm{d}r=1-s.
   \end{align*}
 Summing up, we obtain
 \begin{align*}
    \int_{s}^{1}A(r)\mathrm{d}r&=\int_{s}^{1} \big(|l(\lambda-r^2)/\nu|^{\f12}+|lr/\nu|^{\f13}+|l|+|n/r|\big)\mathrm{d}r\\
    &\geq C^{-1}(1-s)\big(l(\lambda-1)/\nu|^{\f12}+|l/\nu|^{\f13}+|l|+|n|\big)=C^{-1}(1-s)A.
 \end{align*}
 
 It remains to prove \eqref{eq:A-est1}.
 Let $s_1=(s+1)/2$, then $s_1\in(1/2,1].$  \smallskip
 
 \no\textbf{Case A}. $\lambda_r\leq s_1^{2}.$ 
 
 In this case, we have
   \begin{align*}
     \int_{s}^{1}|\lambda_r-r^2|^{\f12}\mathrm{d}r&\geq \int_{(s_1+1)/2}^{1}|\lambda_r-r^2|^{\f12}\mathrm{d}r\geq \int_{(s_1+1)/2}^{1}(r^2-\lambda_r)^{\f12}\mathrm{d}r\\
     &\geq \int_{(s_1+1)/2}^{1}\left(\Big(\dfrac{s_1+1}{2}\Big)^2-\lambda_r\right)^{\f12}\mathrm{d}r =\dfrac{1-s_1}{2}\left(\dfrac{(s_1+1)^{2}-4\lambda_r}{4}\right)^{\f12}\\
     &= \dfrac{1-s}{4}\left(\dfrac{(s_1^2-\lambda_r)+2(s_1-\lambda_r)+1-\lambda_r}{4}\right)^{\f12}\\
     &\geq  \dfrac{1-s}{4} \left(\dfrac{1-\lambda_r}{4}\right)^{\f12}= \dfrac{(1-s)|\lambda_r-1|^{\f12}}{8}.
   \end{align*}
   Here we used $s_1^2-\lambda_r\geq 0$ and $2(s_1-\lambda_r)\geq 2(s_1^2-\lambda_r)\geq 0$.

\no \textbf{Case B}. $s_1^{2}\leq \lambda_r\leq 1$. 
   
   In this case, we have
   \begin{align*}
      &\int_{s}^{1}|\lambda_r-r^2|^{\f12}\mathrm{d}r\geq \int_{s}^{s_1}|\lambda_r-r^2|^{\f12}\mathrm{d}r= \int_{s}^{s_1}(\lambda_r-r^2)^{\f12}\mathrm{d}r\\
      &\geq \int_{s}^{s_1}\left(s_1^2-r^2\right)^{\f12}\mathrm{d}r\geq \int_{s}^{s_1}s_1^{\f12}\left(s_1-r\right)^{\f12}\mathrm{d}r=\frac{2}{3}s_1^{\f12}\left(s_1-s\right)^{\f32}\\&\geq \frac{2}{3}\Big(\f12\Big)^{\f12}\left(s_1-s\right)^{\f32} =\dfrac{\sqrt{2}}{3}\Big(\dfrac{1-s}{2}\Big)^{\f32}.
   \end{align*}
Thanks to $\lambda_r\geq s_1^2=\Big(\dfrac{s+1}{2}\Big)^2$, we have $1-\lambda_r\leq 1-\Big(\dfrac{s+1}{2}\Big)^2=\dfrac{1-s}{2}\cdot\dfrac{s+3}{2}\\ \leq (1-s)$. Then we conclude
   \begin{align*}
      &\int_{s}^{1}|\lambda_r-r^2|^{\f12}\mathrm{d}r\geq \dfrac{\sqrt{2}}{3}\big(\dfrac{1-s}{2}\big)^{\f32}\geq \dfrac{\sqrt{2}}{3}\Big(\dfrac{1-s}{2}\Big)\Big(\dfrac{1-\lambda_r}{2}\Big)^{\f12}=\dfrac{(1-s)|1-\lambda_r|^{\f12}}{6}.
   \end{align*}

  \no \textbf{Case C}. $1\leq \lambda_r$.
  
   In this case, we have
   \begin{align*}
      &\int_{s}^{1}|\lambda_r-r^2|^{\f12}\mathrm{d}r\geq \int_{s}^{1}|\lambda_r-1|^{\f12}\mathrm{d}r=(1-s)|\lambda_r-1|^{\f12}.
   \end{align*}
 Then  \eqref{eq:A-est1} follows by combining three cases. 
 \end{proof}
 
 \begin{Lemma} \label{lem:la-l}
 Let $\lambda\in\C,\ \lambda_r=\mathbf{Re}\lambda,\ \lambda_i=\mathbf{Im}\lambda$, and $a,b,l\in\R,\ a\geq 0,\ b\geq 0,\ l\neq 0$
 and $s\in (0,1]$. 
 If $2l\lambda_i\leq |l(\lambda-s^2)|$ or $3bl\lambda_i\leq a$, then it holds that 
 \beno
  |a|+|l(\lambda-s^2)b|\leq 2|a+{\ir}l(\lambda-s^2)b|. 
 \eeno
\end{Lemma}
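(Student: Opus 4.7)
The plan is to reduce the inequality to a single clean algebraic form and then dispose of three sub-cases by short AM--GM estimates, with one subtle choice of weight. Writing $t:=lb\lambda_i$ and $x:=lb(\lambda_r-s^2)$, one has
\[
a+\ir l(\lambda-s^2)b=(a-t)+\ir x,\qquad |l(\lambda-s^2)b|^2=t^2+x^2,
\]
and, since $a\ge 0$, the desired inequality squares to
\[
2a\sqrt{t^2+x^2}\;\le\;3a^2-8at+3t^2+3x^2. \qquad(\star)
\]
Under this renaming the two alternative hypotheses become, respectively, $3t\le a$ and (after squaring $2t\le\sqrt{t^2+x^2}$) $3t^2\le x^2$; note also that if $t\le 0$ the latter is automatic, so the hypotheses together cover all cases.

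I would first dispatch $t\le 0$: then $-8at\ge 0$ and the crude $2a\sqrt{t^2+x^2}\le a^2+t^2+x^2$ is already enough for $(\star)$. In the subcase $t>0$ with $3t\le a$, I would apply the weighted AM--GM $2a\sqrt{t^2+x^2}\le a^2/3+3(t^2+x^2)$; the excess in $(\star)$ simplifies to $8a(a/3-t)\ge 0$.

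The delicate case is $t>0$ with $3t^2\le x^2$. The constraint gives $\sqrt{t^2+x^2}\le 2|x|/\sqrt{3}$, so it remains to control $4a|x|/\sqrt{3}$. The trick is to use the weighted AM--GM $4a|x|/\sqrt{3}\le a^2+\tfrac{4}{3}x^2$ (this weight is pinned down by the requirement that the leftover quadratic form in $(a,t)$ become a perfect square). Substituting, the gap in $(\star)$ is $2a^2-8at+3t^2+\tfrac{5}{3}x^2$, and applying $x^2\ge 3t^2$ once more yields at least $2a^2-8at+8t^2=2(a-2t)^2\ge 0$. The main obstacle is precisely locating this AM--GM weight: more naive splittings (such as $\sqrt{t^2+x^2}\le t+|x|$ or the symmetric $2AB\le A^2+B^2$) fail to leave enough slack to absorb the large negative $-8at$ under the weak hypothesis $3t^2\le x^2$, whereas the asymmetric weight $3/4$ matches the coefficients exactly.
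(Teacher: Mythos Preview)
Your proof is correct. The substitution $t=lb\lambda_i$, $x=lb(\lambda_r-s^2)$ cleanly reduces the statement to the scalar inequality $(\star)$, and each of your three sub-cases closes as claimed; in particular the weighted AM--GM $4a|x|/\sqrt{3}\le a^2+\tfrac{4}{3}x^2$ in the third case does give exactly the slack $2(a-2t)^2\ge 0$.

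The paper's argument is organized a little differently and, as a result, avoids the delicate third case entirely. Rather than squaring the full inequality, it bounds $|a+\ir l(\lambda-s^2)b|^2=(a-t)^2+x^2$ from below. Under the first hypothesis $2t\le\sqrt{t^2+x^2}$ (which already absorbs $t\le 0$), one gets in one stroke
\[
(a-t)^2+x^2=a^2-2at+t^2+x^2\ \ge\ a^2-a\sqrt{t^2+x^2}+(t^2+x^2),
\]
and the identity $u^2-uv+v^2=\tfrac14(u+v)^2+\tfrac34(u-v)^2$ with $v=\sqrt{t^2+x^2}$ immediately gives $\ge\tfrac14(a+v)^2$. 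Under the second hypothesis $3t\le a$ one uses $-2at\ge -\tfrac{2}{3}a^2$, arriving at $a^2/3+v^2\ge\tfrac14(a+v)^2$. So the paper handles the whole range covered by your Cases~1 and~3 with a single line and no search for an AM--GM weight. Your route works just as well; the paper's organization simply buys you a shorter argument by postponing the squaring.
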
 

\begin{proof}
First of all, we consider the case of $s=1$.  If $2l\lambda_i\leq |l(\lambda-1)|$, then we have
   \begin{align*}|a+{\ir}l(\lambda-1)b|^2&=|a-l\lambda_ib|^2+(l(\lambda_r-1)b)^2 =a^2-2al\lambda_ib+(l\lambda_ib)^2+(l(\lambda_r-1)b)^2\\
      &\geq a^2-ab|l(\lambda-1)|+(l\lambda_ib)^2+(l(\lambda_r-1)b)^2= a^2-ab|l(\lambda-1)|+|l(\lambda-1)b|^2\\
      &=(|a|+|l(\lambda-1)b|)^2/4+3(|a|-|l(\lambda-1)b|)^2/4\geq (|a|+|l(\lambda-1)b|)^2/4.
       \end{align*} If $a\geq 3bl\lambda_i$, then we have
   \begin{align*}
      |a+{\ir}l(\lambda-1)b|^2&=a^2+(l\lambda_ib)^2-2al\lambda_ib+(l(\lambda_r-1)b)^2\\
      &\geq a^2+(l\lambda_ib)^2-2a^2/3+(l(\lambda_r-1)b)^2\geq a^2/3+(l\lambda_ib)^2+(l(\lambda_r-1)b)^2\\
      &=a^2/3+|l(\lambda-1)b|^2\geq \big(|a|+|l(\lambda-1)b|\big)^2/4.
   \end{align*}
   Here we used $f^2/3+g^2\geq (|f|+|g|)^2/4$. This proves the case of $s=1$.
   
   For $s\in (0,1]$,  let $\lambda_1={\lambda}-s^2+1$, then we have 
   \beno
   2l\mathbf{Im}(\lambda_1)\leq |l(\lambda_1-1)|\quad \text{or}\quad 3b\mathbf{Im}(\lambda_1)\le a.
   \eeno 
 This is reduced to the case of $s=1$.  Thus, 
    \begin{align*}
     & |a|+|l(\lambda_1-1)b|\leq 2|a+{\ir}l(\lambda_1-1)|,
  \end{align*}
  which gives $|a|+|l({\lambda}-s^2)b|\leq 2|a+{\ir}l({\lambda}-s^2)|$.  
    \end{proof}
 
 \subsection{Some estimates involving the  Airy function}\label{sec:airy}
 
 Let $Ai(y)$ be the Airy function, which is a nontrivial solution of $f''-yf=0$.  Let
\begin{align*}
f_1(y)=Ai(\mathrm{e}^{{\ir}\f{\pi}{6}}y),\quad f_2(y)=Ai(\mathrm{e}^{{\ir}\f{5\pi}{6}}y).
\end{align*}
Then $f_1$ and $f_2$ are two linearly independent solutions of $f''-{\ir}yf=0$.
 We denote
\begin{align*}
&A_0(z)=\int_{e^{{\ir}\pi/6}z}^{\infty}Ai(t)dt=e^{{i}\pi/6}\int_{z}^{\infty}Ai(e^{{\ir}\pi/6}t)dt.
\end{align*}

\begin{Lemma}\label{lem:Airy-p1}
There exists $c>0$ and $\delta_0>0$ so that for $\textbf{Im}z\le \delta_0$,
\begin{align}
&\left|\f{A_0'(z)}{A_0(z)}\right|\lesssim1+|z|^{\f12},\quad {\rm Re}\f{A_0'(z)}{A_0(z)}\leq\min(-1/3,-c(1+|z|^{\f12})).
\end{align}
Moreover, for ${{\bf Im }z}\le \delta_0$, we have 
\beno
 \Big|\f{A_0''(z)}{A_0(z)}\Big|\le C(1+|z|).
 \eeno
\end{Lemma}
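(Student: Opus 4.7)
The plan is to combine an asymptotic analysis of $A_0'/A_0$ and $A_0''/A_0$ for large $|z|$ with a compactness argument on any bounded slice of the half-plane $\{\mathbf{Im}\,z\le\delta_0\}$.  From the definition $A_0(z)=\int_{e^{i\pi/6}z}^{\infty}Ai(s)\,ds$ one reads off $A_0'(z)=-e^{i\pi/6}Ai(e^{i\pi/6}z)$ and $A_0''(z)=-e^{i\pi/3}Ai'(e^{i\pi/6}z)$, and the Airy equation $Ai''(w)=wAi(w)$ then yields the identity $A_0'''(z)=izA_0'(z)$, which I will use to propagate quantitative information off the real axis into the thin strip $0<\mathbf{Im}\,z\le\delta_0$.

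To handle large $|z|$ I will integrate by parts in the integral representation via $Ai(s)=s^{-1}Ai''(s)$, obtaining
\[
A_0(z)=-\frac{Ai'(e^{i\pi/6}z)}{e^{i\pi/6}z}+\int_{e^{i\pi/6}z}^{\infty}\frac{Ai'(s)}{s^{2}}\,ds,
\]
and then insert the classical asymptotics $Ai(w)\sim\tfrac{1}{2\sqrt{\pi}}w^{-1/4}e^{-\frac{2}{3}w^{3/2}}$ and $Ai'(w)\sim-\tfrac{1}{2\sqrt{\pi}}w^{1/4}e^{-\frac{2}{3}w^{3/2}}$, valid in any closed subsector $|\arg w|\le\pi-\varepsilon$.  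After simplification this should give
\[
\frac{A_0'(z)}{A_0(z)}\sim-e^{i\pi/6}(e^{i\pi/6}z)^{1/2}\bigl(1+O(|z|^{-3/2})\bigr),\qquad \frac{A_0''(z)}{A_0(z)}\sim iz.
\]
Writing $z=|z|e^{i\theta}$ with $\theta\in(-\pi,\pi]$, the condition $\mathbf{Im}\,z\le\delta_0$ restricts, once $|z|\ge 2\delta_0$, to $\theta\in[-\pi,\arcsin(\delta_0/|z|)]\cup[\pi-\arcsin(\delta_0/|z|),\pi]$.  I will track the principal branch of $(e^{i\pi/6}z)^{1/2}$ carefully: it picks up a factor $-1$ when $\arg(e^{i\pi/6}z)$ crosses $\pi$, i.e.\ when $\theta$ exceeds $5\pi/6$, so that $\mathbf{Re}(A_0'/A_0)\sim-|z|^{1/2}\cos(\pi/4+\theta/2)$ on the first interval and $\mathbf{Re}(A_0'/A_0)\sim+|z|^{1/2}\cos(\pi/4+\theta/2)$ on the second.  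In both intervals the cosine factor has $|\cos|\ge\tfrac{1}{\sqrt{2}}-O(\delta_0/|z|)$ and the sign works out so that $\mathbf{Re}(A_0'/A_0)\le-c|z|^{1/2}$ uniformly; the estimate $|A_0''/A_0|\le C(1+|z|)$ is immediate from the second asymptotic.

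For bounded $|z|$ I will run a compactness argument on the closed set $K_M=\{|z|\le M,\mathbf{Im}\,z\le\delta_0\}$, using that $A_0$ is entire, that a direct calculation gives $A_0(0)=\int_0^{\infty}Ai(s)\,ds=\tfrac{1}{3}\neq0$, and that $A_0'(0)/A_0(0)=-3^{1/3}e^{i\pi/6}/\Gamma(2/3)$, whose real part $\approx -0.92$ is strictly less than $-1/3$.  Since every zero of $Ai$ is real negative, the zeros of $A_0'(z)=-e^{i\pi/6}Ai(e^{i\pi/6}z)$ all lie on the ray $\arg z=5\pi/6$, which sits in the open upper half-plane; to locate the zeros of $A_0$ itself I will track them under analytic continuation from $\mathbf{Im}\,z=0$, where $A_0(x)$ is known to be nonvanishing, using the ODE $A_0'''=izA_0'$ together with the already-established uniform control on $A_0''/A_0'$ to prevent zeros from drifting into $\{0<\mathbf{Im}\,z\le\delta_0\}\cap K_M$ once $\delta_0$ is small enough.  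Continuity of $A_0'/A_0$ and $A_0''/A_0$ on the resulting compact set then delivers the uniform bounds $|A_0'/A_0|\le C$, $|A_0''/A_0|\le C$, and (by continuity from the explicit value at the origin, after a further shrinking of $\delta_0$ if necessary) $\mathbf{Re}(A_0'/A_0)\le-1/3$, matching seamlessly with the asymptotic regime.

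The hardest part will be (i) pinning down the correct branch of $(e^{i\pi/6}z)^{1/2}$ on the narrow upper slice $\theta\in(5\pi/6,\pi]$, where the Stokes phenomenon for $Ai$ is active and the naïve formula acquires the sign flip that is precisely what preserves the negativity of $\mathbf{Re}(A_0'/A_0)$ there, and (ii) verifying that no zero of $A_0$ slips into the strip $0<\mathbf{Im}\,z\le\delta_0$, which is what ultimately pins down the admissible size of $\delta_0$.  I plan to settle both by working with $A_0(z)=\int_{e^{i\pi/6}z}^{\infty}Ai(s)\,ds$ along a steepest-descent contour adapted to $\zeta(u)=\tfrac{2}{3}u^{3/2}$, which simultaneously selects the correct branch of $(e^{i\pi/6}z)^{1/2}$ on the upper slice and yields quantitative lower bounds on $|A_0|$ away from its zero set.
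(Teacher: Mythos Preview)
Your approach is a genuine from-scratch attack, whereas the paper does almost none of this work. The paper simply \emph{cites} \cite{CLWZ} for the entire first display (the bound on $|A_0'/A_0|$ and the negativity of its real part), and then deduces the $A_0''/A_0$ bound in three lines: set $F=A_0'/A_0$, note that $F$ is analytic on $\{\mathbf{Im}\,z\le 2\delta_0\}$ (implicitly using that the cited estimate is stated for a slightly larger strip, so $A_0$ has no zeros there), apply Cauchy's integral formula on a circle of radius $\delta_0$ to get $|F'(z)|\le \delta_0^{-1}\sup_{|\zeta-z|=\delta_0}|F(\zeta)|\le C(1+|z|^{1/2})$, and conclude via the identity $A_0''/A_0=F'+F^2$. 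This is much shorter than your asymptotic-plus-compactness scheme, and it cleanly separates the hard analytic input (imported from \cite{CLWZ}) from the soft consequence.

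Your outline would recover the content of the cited result rather than invoke it, which is useful if one wants a self-contained account. The large-$|z|$ asymptotics and the sign analysis of $-e^{i\pi/6}(e^{i\pi/6}z)^{1/2}$ are correct in spirit. However, two steps in your compact-region argument are not yet justified. First, you assert that $A_0(x)$ is ``known to be nonvanishing'' for real $x$; this is exactly the sort of fact that \cite{CLWZ} supplies, and your proposal does not give an independent reason for it. Second, the plan to ``track zeros under analytic continuation'' using the ODE $A_0'''=izA_0'$ and control of $A_0''/A_0'$ is circular as stated: you have located the zeros of $A_0'$, not those of $A_0$, and bounds on $A_0''/A_0'$ do not by themselves preclude zeros of $A_0$ from entering the strip. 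If you want a self-contained proof, the cleanest route for the nonvanishing of $A_0$ in $\{\mathbf{Im}\,z\le 0\}$ is to show directly that $\mathbf{Re}\,F(x)<0$ on the real axis (e.g.\ via an energy identity for $A_0'''=izA_0'$ or a Gr\"onwall argument along the real line starting from your computed value at $z=0$), which simultaneously excludes zeros of $A_0$ and provides the base for a perturbation into the thin upper strip.
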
 

\begin{proof}
The first part of the lemma comes from \cite{CLWZ}. 
Let $F(z)=\f{A_0'(z)}{A_0(z)}$. Then $F$ is analytic for ${{\bf Im }z}\leq 2\delta_0$. By Cauchy formula, we have
\begin{align*}
F'(z)=\f1{2\pi i}\oint_{|\zeta-z|=r}\f{F(\zeta)d\zeta}{(\zeta-z)^2},\quad \forall\ {{\bf Im }z}\leq \delta_0,\ r=\delta_0.
\end{align*}
Then we have
\begin{align*}
&|F'(z)|\leq\f1{r}\sup_{|\zeta-z|=r}|F(\zeta)|\leq C\f{1+(|z|+r)^{\f12}}{r}\leq C(1+|z|^{\f12}),
\end{align*}
which gives
\begin{align*}
\Big|\f{A_0''(z)}{A_0(z)}\Big|=\big|F'(z)+F(z)^2\big|\leq C(1+|z|^{\f12})+ C(1+|z|^{\f12})^2\leq C(1+|z|).
\end{align*} 
\end{proof}

We denote 
\beno
L=|2l/\nu|^{\f13},\quad  d=(\lambda-1)/2-{\ir}(n^2+l^2)\nu/(2l).
\eeno
We define $w(r)$ in the following way: if $l<0$, let
\begin{align*}
w(r)=Ai\big(\mathrm{e}^{{\ir}\f{5\pi}{6}}L(r-1-d)\big)/Ai\big(-\mathrm{e}^{{\ir}\f{5\pi}{6}}Ld\big),
\end{align*}
and if $l>0,$ let
\begin{align*}
w(r)=Ai\big(-\mathrm{e}^{{\ir}\f{\pi}{6}}L(r-1-d)\big)/Ai\big(\mathrm{e}^{{\ir}\f{\pi}{6}}Ld\big)
=A_0'\big(L(d+1-r)\big)/A_0'\big(Ld\big).
\end{align*}
It is easy to verify that 
\begin{align*}
&-\nu\partial_r^2w+\nu(n^2+l^2)w+{\ir}l(\lambda-2r+1)w=0,\quad w(1)=1.
\end{align*}

\begin{Lemma}\label{lem:Airy-w}
Let $\delta_0$ be  in the Lemma \ref{lem:Airy-p1} and fix $C_0>0$.  If $\mathbf{Im}(Ld)\leq \delta_0$ and $|Ld|\leq C_0$, then 
for $t>0,\ r\in(0,1]$, 
\begin{align*}
|A_0\big(Ld+t)\big)|\leq \mathrm{e}^{-t/3}|A_0\big(Ld\big)|,
\end{align*}
and 
\begin{align*}
&|w(r)|\leq C\mathrm{e}^{-L(1-r)/4},\quad |\partial_rw(r)|\leq CL\mathrm{e}^{-L(1-r)/4},\quad \|w\|_{L^2}\leq CL^{-\f12},\\ 
&\|\partial_rw\|_{L^2}\leq CL^{\f12},\ \|(1-r)^2w\|_{L^2}\leq CL^{-\f52},\ \|(1-r^2)w\|_{L^2}\leq CL^{-\f32}.
\end{align*}
\end{Lemma}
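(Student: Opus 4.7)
The plan is to reduce everything to a single exponential decay estimate for $A_0$ along horizontal translates, push this through to a pointwise exponential bound on $w$, and finally integrate to recover the $L^2$-type estimates. For the first claim, since $\mathbf{Im}(Ld+t)=\mathbf{Im}(Ld)\le \delta_0$ for every real $t>0$, Lemma \ref{lem:Airy-p1} gives $\mathbf{Re}(A_0'/A_0)(Ld+t)\le -1/3$ along this horizontal ray. Setting $g(t):=\log|A_0(Ld+t)|$, we have $g'(t)=\mathbf{Re}(A_0'/A_0)(Ld+t)\le -1/3$, and integrating from $0$ to $t$ yields $|A_0(Ld+t)|\le e^{-t/3}|A_0(Ld)|$.

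Next, for the pointwise bound on $w$ in the case $l>0$, where $w(r)=A_0'(z_r)/A_0'(Ld)$ with $z_r:=L(d+1-r)=Ld+L(1-r)$, I will write
\begin{align*}
w(r) \;=\; \frac{A_0'(z_r)}{A_0(z_r)}\cdot\frac{A_0(z_r)}{A_0(Ld)}\cdot\frac{A_0(Ld)}{A_0'(Ld)}.
\end{align*}
By Lemma \ref{lem:Airy-p1} the first factor is controlled by $C(1+|z_r|^{1/2})\le C(1+L^{1/2}(1-r)^{1/2})$ using $|Ld|\le C_0$; the second factor is at most $e^{-L(1-r)/3}$ by the previous step; and the third factor is bounded by a constant because $|A_0'(Ld)/A_0(Ld)|$ is pinned between two positive constants when $|Ld|\le C_0$ (the upper bound is supplied directly by Lemma \ref{lem:Airy-p1}, and the matching lower bound follows from its real-part estimate $\mathbf{Re}(A_0'/A_0)\le -c$). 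The elementary inequality $(1+x^{1/2})e^{-x/3}\le Ce^{-x/4}$, valid for $x\ge 0$, then yields $|w(r)|\le Ce^{-L(1-r)/4}$. For $\partial_r w(r)=-LA_0''(z_r)/A_0'(Ld)$ I will use the bound $|A_0''/A_0|\le C(1+|z_r|)\le C(1+L(1-r))$ from Lemma \ref{lem:Airy-p1}, together with the same exponential factor and the same third-factor bound, to obtain $|\partial_r w(r)|\le CLe^{-L(1-r)/4}$.

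The case $l<0$ I will handle by the conjugation $(l,\lambda)\mapsto(-l,\bar\lambda)$, under which $L$ is invariant and $d\mapsto\bar d$; a direct check shows that the $w_\star$ corresponding to the conjugated parameters satisfies the complex conjugate of the original equation, so $w(r)=\overline{A_0'(L\bar d+L(1-r))/A_0'(L\bar d)}$ in this case, and the previous paragraph applies verbatim to $\bar w$. The integral bounds will then follow by inserting the pointwise estimates into the definitions and changing variable $s=L(1-r)$: for instance
\begin{align*}
\|w\|_{L^2}^2\le C\int_0^1 e^{-L(1-r)/2}\,r\,dr\le CL^{-1},\qquad
\|(1-r)^2 w\|_{L^2}^2\le CL^{-5}\int_0^\infty s^4 e^{-s/2}\,ds,
\end{align*}
and analogously $\|\partial_r w\|_{L^2}^2\le CL^2\cdot L^{-1}=CL$ and $\|(1-r^2)w\|_{L^2}\le C\|(1-r)w\|_{L^2}\le CL^{-3/2}$. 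The only delicate point will be the two-sided control of the factor $A_0(Ld)/A_0'(Ld)$; once that is in hand, the rest is a mechanical unpacking of Lemma \ref{lem:Airy-p1}.
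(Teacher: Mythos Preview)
Your proposal is correct and follows essentially the same approach as the paper: integrate $\mathbf{Re}(A_0'/A_0)\le -1/3$ along the horizontal ray to get the exponential decay of $A_0$, split $w$ into the same three factors, bound $|A_0(Ld)/A_0'(Ld)|$ via the real-part estimate, and then integrate the pointwise decay. The paper handles $l<0$ simply by writing ``without loss of generality $l>0$'', which is exactly the conjugation you spell out; note, however, that under $(l,\lambda)\mapsto(-l,\bar\lambda)$ the hypothesis becomes $\mathbf{Im}(L\bar d)\le\delta_0$, i.e.\ $\mathbf{Im}(Ld)\ge -\delta_0$, so the lemma as stated is really tailored to $l>0$ (and is only ever invoked in that case in the paper).
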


\begin{proof}
 Without loss of generality,  we may assume $l>0$. By Lemma \ref{lem:Airy-p1}, we have
  \begin{align*}
     \left|\dfrac{A_0(Ld+t)}{A_0(t)}\right|&= \left|\exp\big(\ln(A_0(Ld+t))-\ln(A_0(Ld))\big)\right| = \left|\exp\bigg(\int_{0}^{t}\dfrac{A_0'(Ld+s)}{A_0(Ld+s)}\mathrm{d}s\bigg)\right|\\
     &\leq  \exp\bigg(\int_{0}^{t}\mathbf{Re}\dfrac{A_0'(Ld+s)}{A_0(Ld+s)}\mathrm{d}s\bigg) \leq \exp\bigg(-\int_{0}^{t}1/3\mathrm{d}s\bigg)\leq \mathrm{e}^{-t/3}.
  \end{align*}
  This shows that $|A_0\big(Ld+t)\big)|\leq \mathrm{e}^{-t/3}|A_0\big(Ld\big)|$. 
  
  Thanks to $\mathbf{Re}\dfrac{A_0'(z)}{A_0(z)}\leq -1/3<0$, we have $\left|\mathbf{Re}\dfrac{A_0'(z)}{A_0(z)}\right| \geq 1/3$ and 
  \begin{align*}
     & \left|\dfrac{A_0(z)}{A_0'(z)}\right|=\left|\dfrac{A_0'(z)}{A_0(z)}\right|^{-1}\leq \left|\mathbf{Re}\dfrac{A_0'(z)}{A_0(z)}\right|^{-1} \leq 3,
  \end{align*}
which along with Lemma \ref{lem:Airy-p1} gives
\begin{align*}
   |w(r)|&=\left|\dfrac{A_0'(L(d+1-r))}{A_0'(Ld)}\right| =\left|\dfrac{A_0(Ld)}{A_0'(Ld)}\right| \left|\dfrac{A_0(L(d+1-r))}{A_0(Ld)}\right| \left|\dfrac{A_0'(L(d+1-r))}{A_0(L(d+1-r))}\right| \\
   &\leq 3e^{-L(1-r)/3}\left|\dfrac{A_0'(L(d+1-r))}{A_0(L(d+1-r))}\right| \leq C\big(1+|Ld|+L(1-r)\big)^{\f12}e^{-L(1-r)/3}\\
   &\leq Ce^{-L(1-r)/4},
\end{align*} 
and 
\begin{align*}
   |\partial_rw(r)| &=L\left|\dfrac{A_0(Ld)}{A_0'(Ld)}\right| \left|\dfrac{A_0(L(d+1-r))}{A_0(Ld)}\right| \left|\dfrac{A_0''(L(d+1-r))}{A_0(L(d+1-r))}\right| \\
   &\leq 3Le^{-L(1-r)/3}\left|\dfrac{A_0''(L(d+1-r))}{A_0(L(d+1-r))}\right| \leq CL(1+|Ld|+L(1-r))e^{-L(1-r)/3}\\
   &\leq CLe^{-(1-r)/4}.
\end{align*} 
Then it is easy to verify that
\begin{align*}
     &\|w\|_{L^2}\leq \left(\int_{0}^{1}C\mathrm{e}^{-L(1-r)/2}r\mathrm{d}r\right)^{\f12}\leq CL^{-\f12},\\
     &\|\partial_rw\|_{L^2}\leq \left(\int_{0}^{1}CL^2\mathrm{e}^{-L(1-r)/2}r\mathrm{d}r\right)^{\f12}\leq CL^{\f12},\\
     & \|(1-r)^2w\|_{L^2}\leq \left(\int_{0}^{1}C(1-r)^2\mathrm{e}^{-L(1-r)/2}r\mathrm{d}r\right)^{\f12}\leq CL^{-\f52},\\
    & \|(1-r^2)w\|_{L^2}\leq C\left(\int_{0}^{1}(1-r)^2\mathrm{e}^{-L(1-r)/2}r\mathrm{d}r\right)^{\f12}\leq CL^{-\f32}.
    \end{align*}
    
This proves the lemma.
\end{proof}

\section*{Acknowledgement}
Z. Zhang is partially supported by NSF of China under Grant 11425103.

\end{document}